\documentclass[a4paper,11 pt,twoside]{article}

\usepackage[T1]{fontenc}
\usepackage[english]{babel}
\usepackage{times}

\usepackage{amsmath}
\usepackage{amsfonts}
\usepackage{amssymb}
\usepackage{amsmath}
\usepackage{amsthm}
\usepackage{graphicx}
\usepackage{stmaryrd} 
\usepackage{enumerate}
\usepackage{multirow}
\usepackage{hyperref}
\usepackage[all]{hypcap}
\usepackage{fancyhdr}
\usepackage{upgreek}
\usepackage{wasysym} 

\usepackage{dsfont} 
\usepackage{mathrsfs} 
\usepackage{verbatim}
\usepackage[font=small,labelfont=bf]{caption}


\setlength{\oddsidemargin}{-0.0in}
\setlength{\textwidth}{6.3in}
\setlength{\topmargin}{-0.5in}
\setlength{\textheight}{9.0in}
\evensidemargin\oddsidemargin
\setlength{\headheight}{14pt}


\makeatletter
\g@addto@macro\th@plain{\thm@headpunct{}}
\makeatother
\newtheoremstyle{plain*}     {\medskipamount}{\medskipamount}{\itshape}{}{\bfseries\scshape}{}{5pt}{}
\theoremstyle{plain*}
\newtheorem*{theorem*}{Theorem}
\newtheorem*{proposition*}{Proposition}
\newtheoremstyle{plain}     {\medskipamount}{\medskipamount}{\itshape}{}{\bfseries\scshape}{.}{1em}{}
\theoremstyle{plain}
\newtheorem{theorem}{Theorem}
\newtheorem{lemma}{Lemma}

\newtheorem{proposition}[lemma]{Proposition}

\newtheorem{definition}[lemma]{Definition}
\newtheorem{remark}[lemma]{Remark}

\newtheorem{assumption}{Assumption}

\renewenvironment{proof}[1][Proof]{\begin{trivlist}
\item[\hskip \labelsep {\bfseries #1}]}{\end{trivlist}}

\def\runtitle#1{\gdef\@runtitle{\runninghead@case{#1}}}                      \def\@runtitle{}
\def\runauthor#1{{\def\etal{et al.}\gdef\@runauthor{\runninghead@case{#1}}}} \def\@runauthor{}

\makeatletter
\DeclareFontFamily{OMX}{MnSymbolE}{}
\DeclareSymbolFont{MnLargeSymbols}{OMX}{MnSymbolE}{m}{n}
\SetSymbolFont{MnLargeSymbols}{bold}{OMX}{MnSymbolE}{b}{n}
\DeclareFontShape{OMX}{MnSymbolE}{m}{n}{
    <-6>  MnSymbolE5
   <6-7>  MnSymbolE6
   <7-8>  MnSymbolE7
   <8-9>  MnSymbolE8
   <9-10> MnSymbolE9
  <10-12> MnSymbolE10
  <12->   MnSymbolE12
}{}
\DeclareFontShape{OMX}{MnSymbolE}{b}{n}{
    <-6>  MnSymbolE-Bold5
   <6-7>  MnSymbolE-Bold6
   <7-8>  MnSymbolE-Bold7
   <8-9>  MnSymbolE-Bold8
   <9-10> MnSymbolE-Bold9
  <10-12> MnSymbolE-Bold10
  <12->   MnSymbolE-Bold12
}{}

\let\llangle\@undefined
\let\rrangle\@undefined
\DeclareMathDelimiter{\llangle}{\mathopen}%
                     {MnLargeSymbols}{'164}{MnLargeSymbols}{'164}
\DeclareMathDelimiter{\rrangle}{\mathclose}%
                     {MnLargeSymbols}{'171}{MnLargeSymbols}{'171}
\makeatother


\def\cqfd{ \hfill $\blacksquare$}

\newcommand{\lbr}{\llbracket} 
\newcommand{\rbr}{\rrbracket}

\newcommand{\lambdaN}{\lambda_{\mbox{\tiny $N$}}}  
\newcommand{\lambdaNe}{\lambda_{\mbox{\tiny $N,\epsilon$}}}

\newcommand{\tiN}{\mbox{\tiny $N$}}
\newcommand{\tik}{\mbox{\tiny $k$}}

\newcommand{\Modun}{\textsc{Model 1}}
\newcommand{\Munw}{\textsc{Model 1-w}}
\newcommand{\Mdeux}{\textsc{Model 2}}

\newcommand{\fk}{f_{\tik}}
\newcommand{\gk}{g_{\tik}}
\newcommand{\gNk}{g_{\tik}^{\tiN}}

\newcommand{\frg}{\mathfrak{g}}  
\newcommand{\frf}{\mathfrak{f}}

\newcommand{\DN}{D_{\tiN}}

\newcommand{\DNMunw}{D_{\tiN}^{\mbox{\tiny\textsc{Mod $1$-w}}}}
\newcommand{\DNMde}{D_{\tiN}^{\mbox{\tiny\textsc{Mod $2$}}}}

\newcommand{\LN}{L_{\tiN}}

\newcommand{\LNMunw}{L_{\tiN}^{\mbox{\tiny\textsc{Mod $1$-w}}}}

\newcommand{\VNe}{V_{\mbox{\tiny $N,\epsilon$}}}

\newcommand{\nuN}{\nu_{\mbox{\tiny $N$}}}
\newcommand{\bbQN}{\bbQ^{\mbox{\tiny $N$}}}
\newcommand{\bbPN}{\bbP^{\mbox{\tiny $N$}}}
\newcommand{\bPMun}{\bP^{\mbox{\tiny\textsc{Mod $1$}}}}
\newcommand{\bPMunw}{\bP^{\mbox{\tiny\textsc{Mod $1$-w}}}}
\newcommand{\bPMde}{\bP^{\mbox{\tiny\textsc{Mod $2$}}}}
\newcommand{\bQMun}{\bQ^{\mbox{\tiny\textsc{Mod $1$}}}}
\newcommand{\bQMunw}{\bQ^{\mbox{\tiny\textsc{Mod $1$-w}}}}
\newcommand{\bQMde}{\bQ^{\mbox{\tiny\textsc{Mod $2$}}}}

\newcommand{\zetaun}{\zeta^{\mbox{\tiny $(1)$}}}
\newcommand{\zetaunn}{\zeta^{\mbox{\tiny $(1)$},n}}

\newcommand{\zetade}{\zeta^{\mbox{\tiny $(2)$}}}
\newcommand{\zetaden}{\zeta^{\mbox{\tiny $(2)$},n}}

\newcommand{\zetai}{\zeta^{\mbox{\tiny $(i)$}}}
\newcommand{\zetainftyun}{\zeta^{\mbox{\tiny $\infty,(1)$}}}
\newcommand{\zetainftyde}{\zeta^{\mbox{\tiny $\infty,(2)$}}}

\newcommand{\zetaNun}{\zeta^{\mbox{\tiny $N,(1)$}}}
\newcommand{\zetaNde}{\zeta^{\mbox{\tiny $N,(2)$}}}

\newcommand{\bmuN}{\mathbf{\mu_{\mbox{\tiny $N$}}}}  
\newcommand{\piN}{\mathbf{\pi_{\mbox{\tiny $N$}}}}   

\newcommand{\bmuNMde}{\mathbf{\mu^{\mbox{\tiny\textsc{Mod $2$}}}_{\mbox{\tiny $N$}}}}

\newcommand{\piNMun}{\mathbf{\pi^{\mbox{\tiny\textsc{Mod $1$}}}_{\mbox{\tiny $N$}}}}
\newcommand{\piNMunw}{\mathbf{\pi^{\mbox{\tiny\textsc{Mod $1$-w}}}_{\mbox{\tiny $N$}}}}
\newcommand{\piNMde}{\mathbf{\pi^{\mbox{\tiny\textsc{Mod $2$}}}_{\mbox{\tiny $N$}}}}

\newcommand{\pinMun}{\mathbf{\pi^{\mbox{\tiny\textsc{Mod $1$}}}_{\mbox{\tiny $n$}}}}
\newcommand{\piNnMunw}{\mathbf{\pi^{\mbox{\tiny\textsc{Mod $1$-w}}}_{\mbox{\tiny $N-n$}}}}
\newcommand{\pinMunw}{\mathbf{\pi^{\mbox{\tiny\textsc{Mod $1$-w}}}_{\mbox{\tiny $n$}}}}

\newcommand{\pN}{p_{\mbox{\tiny $N$}}}  
\newcommand{\qN}{q_{\mbox{\tiny $N$}}}


\newcommand{\cCN}{\cC_{\mbox{\tiny $N$}}}
\newcommand{\cCNMun}{\cC^{\mbox{\tiny\textsc{Mod $1$}}}_{\mbox{\tiny $N$}}}
\newcommand{\cCNMunw}{\cC^{\mbox{\tiny\textsc{Mod $1$-w}}}_{\mbox{\tiny $N$}}}
\newcommand{\cCnMun}{\cC^{\mbox{\tiny\textsc{Mod $1$}}}_{\mbox{\tiny $n$}}}

\newcommand{\cCNnMunw}{\cC^{\mbox{\tiny\textsc{Mod $1$-w}}}_{\mbox{\tiny $N-n$}}}
\newcommand{\cCNMde}{\cC^{\mbox{\tiny\textsc{Mod $2$}}}_{\mbox{\tiny $N$}}}
\newcommand{\cCNnMde}{\cC^{\mbox{\tiny\textsc{Mod $2$}}}_{\mbox{\tiny $N,n$}}}

\newcommand{\cCMun}{\cC^{\mbox{\tiny\textsc{Mod $1$}}}}
\newcommand{\cCMunw}{\cC^{\mbox{\tiny\textsc{Mod $1$-w}}}}
\newcommand{\cCMde}{\cC^{\mbox{\tiny\textsc{Mod $2$}}}}

\newcommand{\cEN}{\cE_{\mbox{\tiny $N$}}}
\newcommand{\cENMun}{\cE^{\mbox{\tiny\textsc{Mod $1$}}}_{\mbox{\tiny $N$}}}
\newcommand{\cENMunw}{\cE^{\mbox{\tiny\textsc{Mod $1$-w}}}_{\mbox{\tiny $N$}}}
\newcommand{\cENMde}{\cE^{\mbox{\tiny\textsc{Mod $2$}}}_{\mbox{\tiny $N$}}}

\newcommand{\cON}{\cO_{\mbox{\tiny $2N$}}}
\newcommand{\cOk}{\cO_{\mbox{\tiny $2k\!\!+\!\!1$}}}
\newcommand{\cOkl}{\cO_{\mbox{\tiny $2k\!\!+\!\!1\!,\!l$}}}
\newcommand{\cVN}{\cV_{\mbox{\tiny $N$}}}
\newcommand{\GNk}{G_{\mbox{\tiny $N$}}^{\mbox{\tiny $k$}}}

\newcommand{\tcBDG}{\mathtt{c}_{\textup{\tiny BDG}}}
\newcommand{\tcInterpo}{\mathtt{c}_{\textup{\tiny Interpo}}}

\newcommand{\cLN}{\cL^{\mbox{\tiny $N$}}}
\newcommand{\cRN}{\cR^{\mbox{\tiny $N$}}}
\newcommand{\cLNun}{\cL^{\mbox{\tiny $1\!,\!N$}}}
\newcommand{\cLNde}{\cL^{\mbox{\tiny $2\!,\!N$}}}
\newcommand{\cRNun}{\cR^{\mbox{\tiny $1\!,\!N$}}}
\newcommand{\cRNde}{\cR^{\mbox{\tiny $2\!,\!N$}}}
\newcommand{\Mun}{M^{\mbox{\tiny $(1)$}}}
\newcommand{\Mde}{M^{\mbox{\tiny $(2)$}}}
\newcommand{\Mi}{M^{\mbox{\tiny $(i)$}}}
\newcommand{\MNun}{M^{\mbox{\tiny $N,(1)$}}}
\newcommand{\MNde}{M^{\mbox{\tiny $N,(2)$}}}

\newcommand{\Minftyun}{M^{\mbox{\tiny $\infty,(1)$}}}

\newcommand{\Lun}{L^{\mbox{\tiny $(1)$}}}

\newcommand{\Li}{L^{\mbox{\tiny $(i)$}}}
\newcommand{\LNun}{L^{\mbox{\tiny $N,(1)$}}}

\newcommand{\rmhun}{\mathrm{h}^{\mbox{\tiny $(1)$}}}
\newcommand{\rmhde}{\mathrm{h}^{\mbox{\tiny $(2)$}}}
\newcommand{\rmhi}{\mathrm{h}^{\mbox{\tiny $(i)$}}}

\newcommand{\rmhN}{\mathrm{h}^{\mbox{\tiny $N$}}}
\newcommand{\rmhNun}{\mathrm{h}^{\mbox{\tiny $N,(1)$}}}
\newcommand{\rmhinfty}{\mathrm{h}^{\mbox{\tiny $\infty$}}}
\newcommand{\rmhinftyun}{\mathrm{h}^{\mbox{\tiny $\infty,(1)$}}}

\newcommand{\upetaun}{\upeta^{\mbox{\tiny $(1)$}}}

\newcommand{\etaun}{\eta^{\mbox{\tiny $(1)$}}}
\newcommand{\etade}{\eta^{\mbox{\tiny $(2)$}}}
\newcommand{\hun}{h^{\mbox{\tiny $(1)$}}}
\newcommand{\hde}{h^{\mbox{\tiny $(2)$}}}
\newcommand{\hi}{h^{\mbox{\tiny $(i)$}}}

\newcommand{\brmh}{\mathrm{\bar{h}}}  
\newcommand{\brmhi}{\mathrm{\bar{h}}^{\mbox{\tiny $(i)$}}}
\newcommand{\brmhun}{\mathrm{\bar{h}}^{\mbox{\tiny $(1)$}}}
\newcommand{\brmhde}{\mathrm{\bar{h}}^{\mbox{\tiny $(2)$}}}
\newcommand{\Hrmhi}{\hat{\mathrm{h}}^{\mbox{\tiny $(i)$}}}
\newcommand{\Hrmhun}{\hat{\mathrm{h}}^{\mbox{\tiny $(1)$}}}

\newcommand{\Hbrmhi}{\hat{\bar{\mathrm{h}}}^{\mbox{\tiny $(i)$}}}
\newcommand{\Hbrmhun}{\hat{\bar{\mathrm{h}}}^{\mbox{\tiny $(1)$}}}

\newcommand{\Wun}{W^{\mbox{\tiny $(1)$}}}
\newcommand{\Wde}{W^{\mbox{\tiny $(2)$}}}
\newcommand{\Wi}{W^{\mbox{\tiny $(i)$}}}

\newcommand{\Bun}{B^{\mbox{\tiny $(1)$}}}
\newcommand{\Bde}{B^{\mbox{\tiny $(2)$}}}

\newcommand{\rmh}{\mathrm{h}}

\newcommand{\tun}{\mathtt{1}}
\newcommand{\tc}{\mathtt{c}}

\newcommand{\tk}{\mathtt{k}}

\newcommand{\rL}{\mathrm{L}}

\newcommand{\rS}{\mathrm{S}}

\newcommand{\bP}{\mathbf{P}}
\newcommand{\bQ}{\mathbf{Q}}

\newcommand{\bbC}{\mathbb{C}}
\newcommand{\bbD}{\mathbb{D}}

\newcommand{\bbM}{\mathbb{M}}
\newcommand{\bbN}{\mathbb{N}}

\newcommand{\bbP}{\mathbb{P}}
\newcommand{\bbQ}{\mathbb{Q}}
\newcommand{\bbR}{\mathbb{R}}

\newcommand{\bbZ}{\mathbb{Z}}

\newcommand{\cA}{\mathcal{A}}

\newcommand{\cC}{\mathcal{C}}

\newcommand{\cE}{\mathcal{E}}
\newcommand{\cF}{\mathcal{F}}

\newcommand{\cH}{\mathcal{H}}
\newcommand{\cI}{\mathcal{I}}

\newcommand{\cL}{\mathcal{L}}

\newcommand{\cN}{\mathcal{N}}
\newcommand{\cO}{\mathcal{O}}

\newcommand{\cR}{\mathcal{R}}

\newcommand{\cV}{\mathcal{V}}
\newcommand{\cW}{\mathcal{W}}

\newcommand{\ccA}{\mathscr{A}}

\newcommand{\ccC}{\mathscr{C}}

\newcommand{\ccF}{\mathscr{F}}

\newcommand{\ccP}{\mathscr{P}}

          
\hypersetup{
citecolor=blue,
colorlinks=true, 
breaklinks=true, 
urlcolor= blue, 
linkcolor= black, 
bookmarksopen=true, 
pdftitle={Scaling limits}, 
}

\begin{document}

\title{ \textbf{\textsc{Scaling limits of weakly asymmetric interfaces}}}
\author{Alison M. {\sc Etheridge}
\thanks{Department of Statistics, University of Oxford, 1 South Parks Road, Oxford OX1 3TG, UK. \textsl{Email:} etheridg@stats.ox.ac.uk} \and Cyril {\sc Labb\'e}
\thanks{Mathematics Institute,
University of Warwick, Coventry CV4 7AL, UK.
\textsl{Email:} c.labbe@warwick.ac.uk} }
\vspace{2mm}

\pagestyle{fancy}
\fancyhead[LO]{}
\fancyhead[CO]{\sc{A.~Etheridge and C.~Labb\'e}}
\fancyhead[RO]{}
\fancyhead[LE]{}
\fancyhead[CE]{\sc{Scaling limits of weakly asymmetric interfaces}}
\fancyhead[RE]{}

\date{\small\today}

\maketitle

\begin{abstract}
We consider three models of evolving interfaces intimately related to the weakly asymmetric simple exclusion process with $N$ particles on a finite lattice of $2N$ sites. Our \Modun\ defines an evolving bridge on $[0,1]$, our \Munw\ an evolving excursion on $[0,1]$ while our \Mdeux\ consists of an evolving pair of non-crossing bridges on $[0,1]$. Based on the observation that the invariant measures of the dynamics depend on the area under (or between) the interface(s), we characterise the scaling limits of the invariant measures when the asymmetry of the exclusion process scales like $N^{-\frac{3}{2}}$. Then, we show that the scaling limits of the dynamics themselves are expressed in terms of variants of the stochastic heat equation. In particular, in \Munw\ we obtain the well-studied reflected stochastic heat equation introduced by Nualart and Pardoux~\cite{NualartPardoux92}.

\medskip

\noindent
{\bf MSC 2010 subject classifications}: Primary 60K35; Secondary 60H15, 82C22.\\
 \noindent
{\bf Keywords}: {\it Stochastic heat equation; Simple exclusion process; Area; Interface; Reflection.}
\end{abstract}

\maketitle

\section{Introduction}
Consider a collection of $N$ particles located on the linear lattice $\{1,2,\ldots,2N\}$ and subject to the exclusion rule that prevents two particles from sharing the same site. A particle configuration $\eta$ is therefore an element of $\{0,1\}^{2N}$ with $N$ occurrences of $1$, each $1$ encoding the presence of a particle. We denote by $\cENMun$ this state-space, the reason for the superscript will be made clear below. The \textit{simple exclusion process} consists of the following dynamics on $\cENMun$: each particle, independently of the others, jumps to its left (respectively its right) at rate $\pN$ (respectively $\qN$) if the target site is unoccupied. Notice that we do not consider periodic boundary conditions on our lattice so that a particle at site $1$ (respectively at site $2N$) cannot jump to its left (respectively to its right). When $\pN\ne\qN$ but $\pN/\qN \rightarrow 1$ as $N\rightarrow\infty$, the process is called the weakly asymmetric simple exclusion process (WASEP). In the present work, we introduce three models of interfaces intimately related to this process. Our \Modun\ defines an evolving interface which turns out to be the height function associated with a WASEP. Our \Munw\ is obtained from \Modun\ by adding the condition that the interface remains non-negative. Our \Mdeux\ consists of a pair of interfaces, each being associated to a WASEP, but with the condition that these interfaces cannot cross. We refer to Figure \ref{FigLattice1} for an illustration. In any of the three models, the area under the interface - or between the two interfaces - will play a central r\^ole. The main results of this paper consist of the characterisation of the scaling limits of these three dynamics via variants of the stochastic heat equation.

Our \Modun\ is related to evolutional (or dynamical) Young diagrams, we refer in particular to the works of Funaki and Sasada~\cite{FunakiSasada10} and Funaki, Sasada, Sauer and Xie~\cite{FunakiSasadaSauerXie13} where the authors study the scaling limits of Young diagrams conditioned on their area. We also refer to Dunlop, Ferrari and Fontes~\cite{DunlopFerrariFontes02} for the study of the long-time behaviour of a setting similar to our \Munw\ but on the infinite lattice $\bbZ$.

These interfaces can also be interpreted as polymers. In particular our \Munw, in the symmetric case $\pN=\qN$, coincides with the case $\lambda=1$ of the polymer model considered by Lacoin~\cite{Lacoin13} and Caputo, Martinelli and Toninelli~\cite{CaputoMartinelliToninelli08}. Indeed, in these references the authors consider the measure $\lambda^{\#\{x:h(x)=0\}}$ on the set of non-negative lattice paths $h$ (or polymers) starting at $0$ and ending at $0$ after $2N$ steps; therefore the case $\lambda =1$ yields the uniform measure. The dynamics considered by the authors is the corner flip dynamics with rates that can depend on $\lambda$ when the interface touches the wall: in the particular case $\lambda=1$ this is exactly our dynamics. In his paper, Lacoin studies the dynamical interface scaled by a factor $\frac{1}{2N}$ and shows that the scaling limit is given by the heat equation with Dirichlet boundary conditions: therefore, the hydrodynamical limit does not feel the effect of the wall. Notice that the invariant measure of this dynamics scales like $\sqrt{2N}$. In the present work, we look at this precise scaling, that is, we divide the interface by a factor $\frac{1}{\sqrt{2N}}$ and investigate the existence of a scaling limit. It turns out that under this scaling, the interface feels the effect of the wall so that we need to deal with some random reflecting measure at height $0$. We obtain the Nualart-Pardoux~\cite{NualartPardoux92} reflected stochastic heat equation in the limit, see the precise statement below. We also refer to Caravenna and Deuschel~\cite{CaravennaDeuschel08,CaravennaDeuschel09} for various results on the static behaviour of related models of polymers.

\begin{figure}[h!]
\centering
\includegraphics[width=6cm,angle=0]{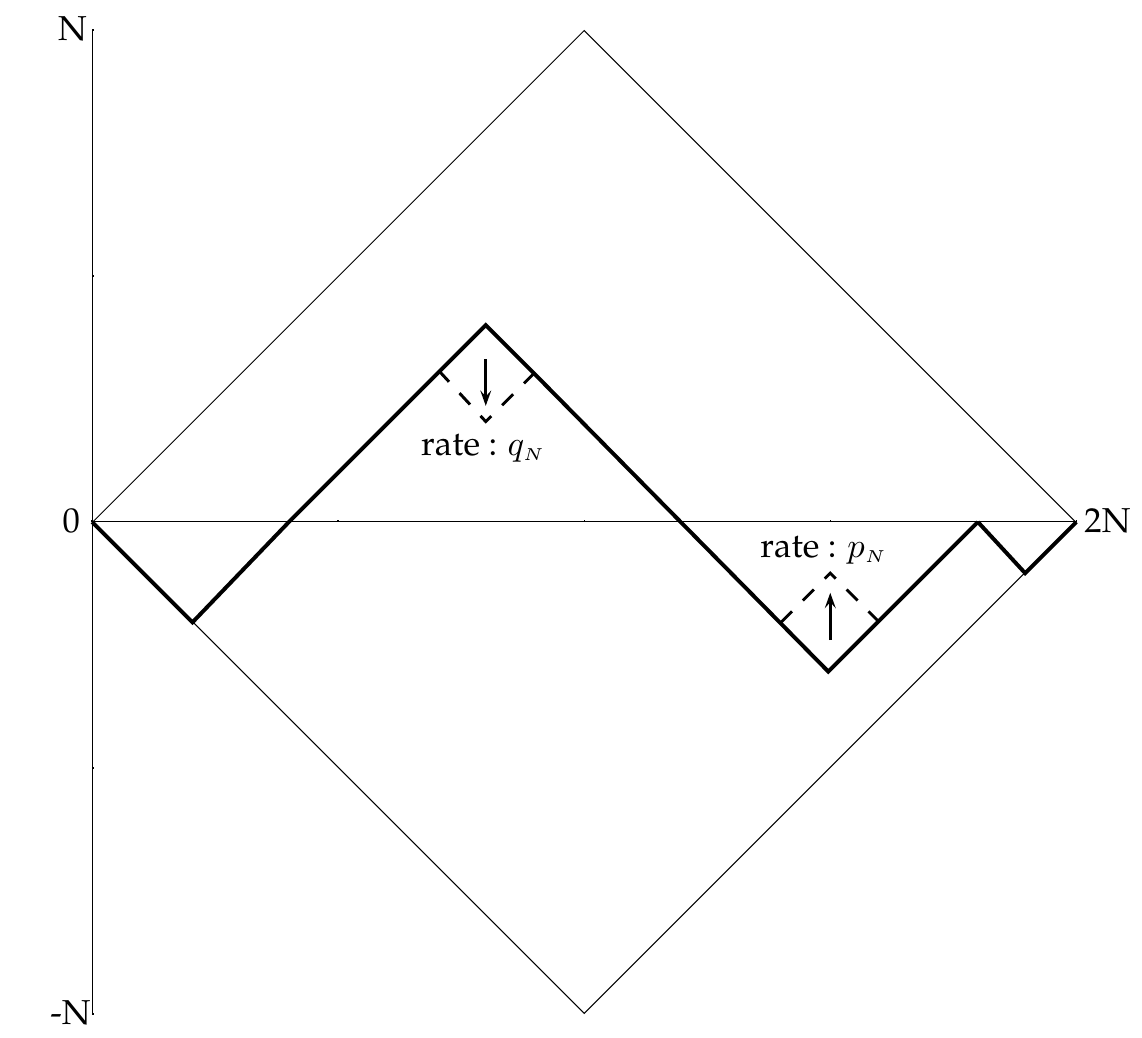}\hfill
\includegraphics[width=6cm,angle=0]{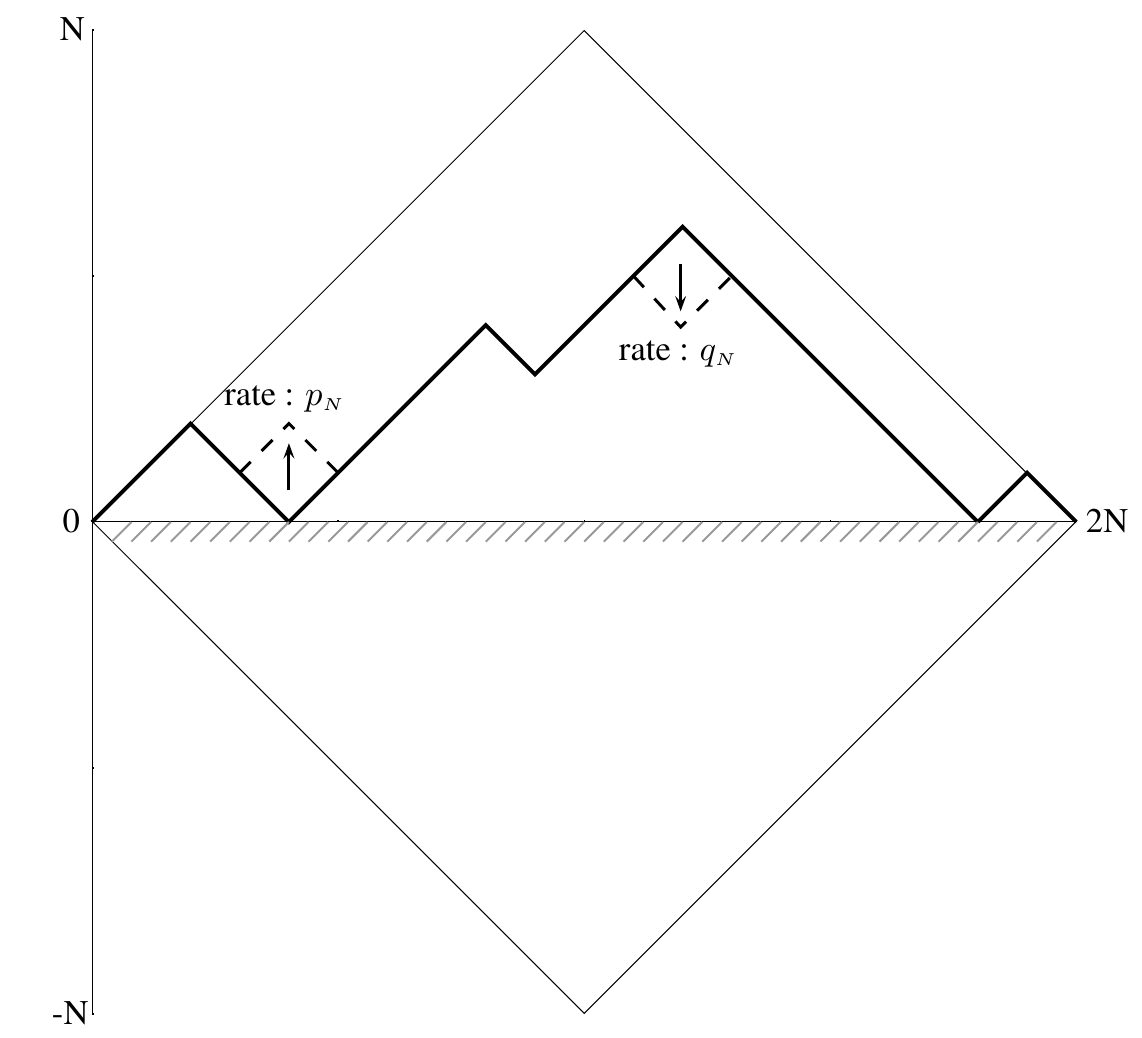}\hfill
\includegraphics[width=6cm,angle=0]{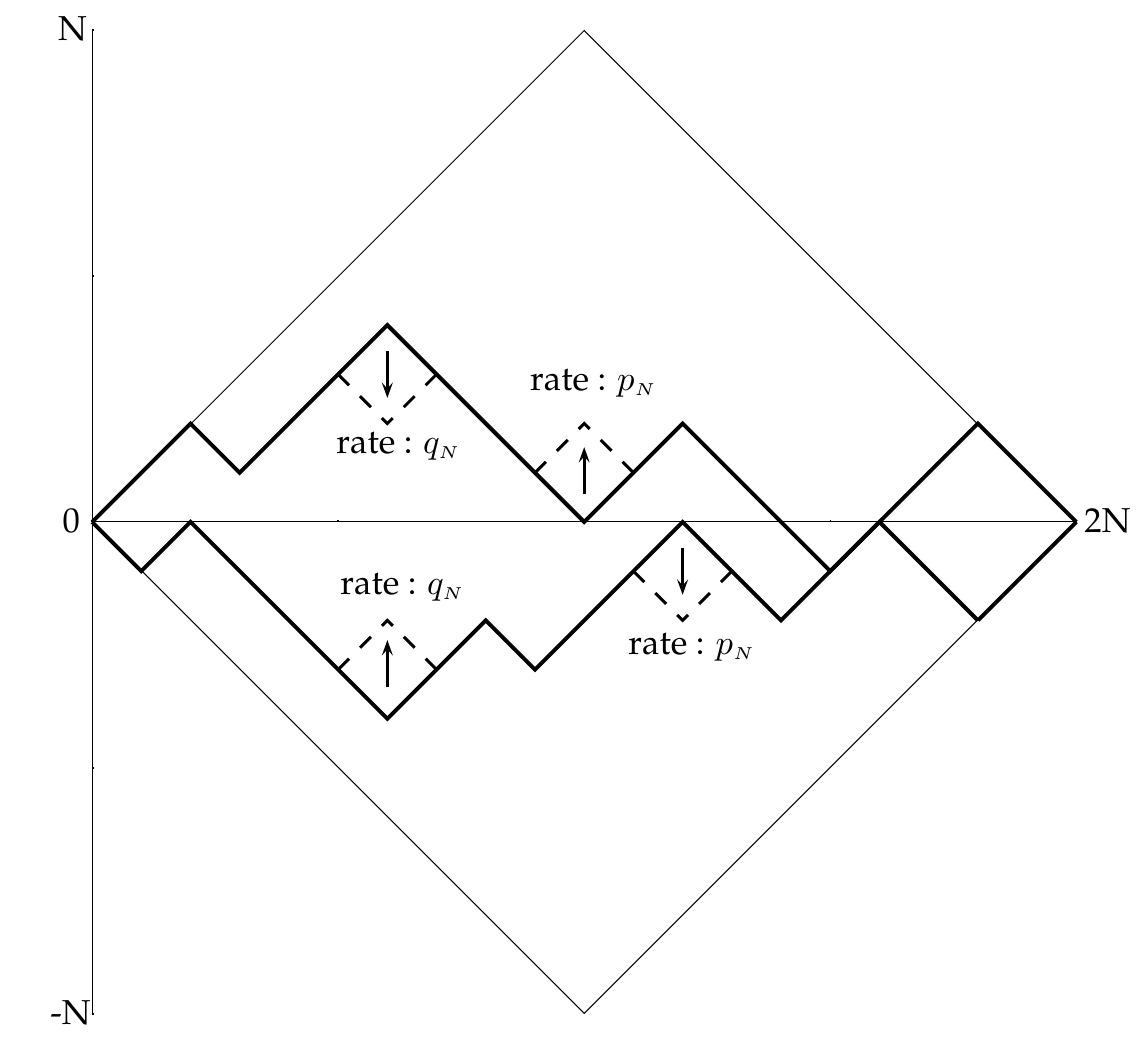}
\caption{Upper left \Modun, upper right \Munw, bottom \Mdeux. We have drawn the un-scaled interfaces; to obtain $\rmh$ one needs to rescale the interval $[0,2N]$ into $[0,1]$ and to divide the height of the interfaces by $\sqrt{2N}$.}\label{FigLattice1}
\end{figure}

Our models are discrete counterparts of the so-called $\nabla \varphi$ interface models. Let us recall that a $\nabla \varphi$ interface model is a finite system of coupled oscillators: each oscillator solves an SDE with a Brownian noise and a drift that depends on its position relative to its neighbours. In our models, one can interpret the collection of values $h(0),h(\frac{1}{N}),\ldots,h(\frac{N-1}{N}),h(1)$ as discrete oscillators which solve an SDE driven by a Poisson noise and a drift equal to the discrete Laplacian. We refer to Giacomin, Olla and Spohn~\cite{GiacominOllaSpohn01} for a setting similar to our \Modun\ but in higher dimension, to Funaki and Olla~\cite{FunaOll01} for a study of a $\nabla \varphi$ interface model constrained by a wall, and to Funaki~\cite{Funaki05} for a general review of $\nabla\varphi$ interface models.

Our motivation for \Mdeux\ came from the study of hybrid zones in population genetics. We suppose that each individual in a population undergoing biparental mating carries one of two forms (alleles) of a gene. Two parents of the same type have greater reproductive success than parents of different types. To caricature this situation we impose $\pN < \qN$ so that the two interfaces tend to move towards one another. The `hybrid zone' corresponds to the region between the two interfaces.

\medskip

\noindent Before we state our results, we need to describe our models more precisely. The underlying idea in any of the models is to consider lattice paths on $[0,2N]$ that start at $0$, make $+1/-1$ steps and come back to $0$ after $2N$ steps. In order to investigate potential scaling limits, we actually need to rescale these lattice paths suitably. Let us now provide the rigorous definitions.
\paragraph{\Modun.} Fix an integer $N\!\geq\! 1$ and set $k_{\tiN}:=\frac{k}{2N}$ for any $k\in\lbr 0,2N\rbr:=\{0,1,\ldots,2N\}$. Our state-space is the set 
\[\cCNMun:= \left\{
h:[0,1]\rightarrow\bbR\mbox{ s.t. }\;\;\;\begin{array}{l}
h(0)=h(1)=0,\\
h(k_{\tiN})=h((k-1)_{\tiN})\pm\frac{1}{\sqrt{2N}},\;\;\forall k\in\lbr 1,2N\rbr,\\
h\mbox{ is affine on every interval }[(k-1)_{\tiN},k_{\tiN}]
\end{array}
\right\}.\]
We write $\Delta$ for the discrete Laplacian on $\cCNMun$:
\[\forall k\in\lbr 1,2N-1\rbr,\;\;\;\Delta h(k_{\tiN}) := h\big((k\!+\!1)_{\tiN}\big)-2\,h\big(k_{\tiN}\big)+h\big((k\!-\!1)_{\tiN}\big).\]
Note that $\Delta$ implicitly depends on $N$ but this will never cause any confusion. The definition of $\cCNMun$ implies that $\Delta h(k_{\tiN})$ can only take the values $-\frac{2}{\sqrt{2N}}$, $0$ and $\frac{2}{\sqrt{2N}}$. Consequently we will write $\{\Delta h(k_{\tiN}) < 0\}$ and $\{\Delta h(k_{\tiN})>0\}$ to denote the first and third cases respectively.

For every $k\in\lbr 1,2N-1\rbr$, let $\pN(k_{\tiN})$ and $\qN(k_{\tiN})$ be two positive real numbers such that $\pN(k_{\tiN})+\qN(k_{\tiN})=(2N)^2$. We consider a probability space $(\Omega^{\tiN},\ccF^{\tiN},P^{\tiN})$ on which are defined two collections of independent Poisson processes $\cLN(k_{\tiN}),k\in \lbr 1,2N\!-\!1 \rbr$ and $\cRN(k_{\tiN}),k\in \lbr 1,2N\!-\!1 \rbr$ with jump rates $\pN(k_{\tiN})$ and $\qN(k_{\tiN})$ respectively. For a given initial condition $\rmh_0\!\in\!\cCNMun$, we define the $\cCNMun$-valued process $t\mapsto\rmh_t$ as the unique solution of the following finite system of stochastic differential equations:
\begin{equation}\label{Eq:DefMun}
\forall k\in\lbr 1,2N-1\rbr,\;\;\;d\rmh_t(k_{\tiN}) = \frac{2}{\sqrt{2N}}\Big(d\cLN_t(k_{\tiN})\, \tun_{\{\Delta \rmh_t(k_{\tiN}) >0\}} - d\cRN_t(k_{\tiN}) \,\tun_{\{\Delta \rmh_t(k_{\tiN}) <0\}}\Big).
\end{equation}
The process $\rmh$ can be informally described as follows. If at position $k_{\tiN}$ we have a local maximum, \textit{i.e.}~$\Delta\rmh_t(k_{\tiN}) < 0$, then at rate $\qN(k_{\tiN})$ the process $\rmh_t(k_{\tiN})$ jumps to $\rmh_t(k_{\tiN})-\frac{2}{\sqrt{2N}}$ so that it becomes a local minimum, \textit{i.e.}~$\Delta\rmh_t(k_{\tiN}) > 0$. The converse occurs at rate $\pN(\cdot)$. Recall the state-space $\cENMun$ introduced at the beginning of the article. Our process can be viewed as the evolving height function associated with a simple exclusion process. Indeed, there is a well-known correspondence between $\cENMun$ and $\cCNMun$: a positive/negative slope on $[(k-1)_{\tiN}, k_{\tiN}]$ corresponds to the presence/absence of a particle at the $k$-th site. The dynamics on $\cCNMun$, once translated in terms of $\cENMun$, defines the so-called simple exclusion process: flipping a local maximum downward corresponds to a jump of a particle to its right and vice-versa.

Let $\cCMun \!\supset\! \cCNMun$ be the space of continuous functions on $[0,1]$ that vanish at the boundaries. We denote by $\bbQN$ the distribution of $(\rmh_t,t\geq 0)$ on $\bbD([0,\infty),\cCMun)$ taken to be the Skorohod space of c\`adl\`ag $\cCMun$-valued functions. To emphasise the initial condition, we will write $\bbQN_{\nuN}$ when $\rmh_0$ is a random variable independent of the Poisson processes and distributed according to a given probability measure $\nuN$ on $\cCNMun$.\vspace{-10pt}
\paragraph{\Munw.} We define a modification of the first model by adding a reflecting wall for the interface at $0$. The state-space $\cCNMunw$ is the restriction of that of \Modun\ to the non-negative functions:
\[\cCNMunw := \left\{h\in \cCNMun\mbox{ s.t. }\;\; h(x)\geq 0,\;\; \forall x\in[0,1]\right\}.\]
All the previous definitions still hold except that the system of stochastic differential equations is now:
\begin{equation}\label{Eq:DefMunw}
\forall k\in\lbr 1,2N\!-\!1\rbr,\;\;\;d\rmh_t(k_{\tiN}) = \frac{2}{\sqrt{2N}}\Big(d\cLN_t(k_{\tiN})\, \tun_{\{\Delta \rmh_t(k_{\tiN}) >0\}} -  d\cRN_t(k_{\tiN}) \,\tun_{\{\Delta \rmh_t(k_{\tiN}) <0;\rmh_t(k_{\tiN})>\frac{1}{\sqrt{2N}}\}}\Big).
\end{equation}
The additional condition on the second term prevents the interface from becoming negative: if $\Delta \rmh_t(k_{\tiN})\! <0$ and $\rmh_t(k_{\tiN})=\frac{1}{\sqrt{2N}}$ then $\rmh_t((k-1)_{\tiN})=\rmh_t((k+1)_{\tiN})=0$ and a downward jump would make $\rmh_t(k_{\tiN})$ negative. We also set
\begin{equation}\label{EqZeta}
\forall t\geq 0,\forall x\in(0,1),\;\; \zeta(dt,dx) := \sum_{k=1}^{2N-1}\frac{2\qN(k_{\tiN})}{(2N)^{\frac{3}{2}}}\tun_{\{\Delta \rmh_t(k_{\tiN})<0 ; \rmh_t(k_{\tiN})=\frac{1}{\sqrt{2N}}\}}\delta_{\{k_{\tiN}\}}(dx)dt
\end{equation}
which is a random element of the space $\bbM$ of Borel measures on $[0,\infty)\!\times\!(0,1)$ satisfying $\int_{[0,t]\times(0,1)}x(1-x)\zeta(ds,dx)<\infty$ for every $t\geq 0$. We refer to Subsection \ref{Subsection:TightnessZeta} for the definition of the topology on this set of measures. The study of this random measure is necessary in order to characterise the scaling limit of $\rmh$. Indeed, the derivative in time of $\rmh$ in \Munw\ is the same as that in \Modun\ plus a reflection term involving the measure $\zeta$. At the limit $N\rightarrow\infty$, this random measure cannot be explicitly expressed in terms of $\rmh$ so that it needs to be obtained as a limit from the discrete setting.

The set $\cCMunw\!\supset\! \cCNMunw$ is taken to be the set of non-negative continuous functions on $[0,1]$ that vanish at the boundaries. Then we define $\bbQN_{\nuN}$ as the distribution of the pair $(\rmh,\zeta)$ on the product space $\bbD([0,\infty),\cCMunw)\times\bbM$ when $\rmh_0$ is a random variable independent of the Poisson processes and is distributed according to a given probability measure $\nuN$ on $\cCNMunw$.\vspace{-10pt}

\paragraph{\Mdeux.} The state-space $\cCNMde$ is the following set of pairs of interfaces:
\[\cCNMde := \left\{h=(\hun,\hde)\mbox{ s.t. }\;\;\hun,\hde \in \cCNMun\mbox { and }\; \hun(x) \geq \hde(x),\;\; \forall x\in[0,1]\right\}.\]
We call $\hun$ the upper interface and $\hde$ the lower interface. Let us describe the dynamics informally. The upper interface follows the same dynamics as in \Modun\ while the lower interface follows the opposite dynamics, that is, it jumps upward at rate $\qN(\cdot)$ and downward at rate $\pN(\cdot)$. Additionally, any jump that would break the ordering of the interfaces is erased. Formally, we define four collections of independent Poisson processes $\cLNun(k_{\tiN}),\cRNde(k_{\tiN}),k\in \lbr 1,2N\!-\!1 \rbr$ and $\cRNun(k_{\tiN}),\cLNde(k_{\tiN}),k\in \lbr 1,2N\!-\!1 \rbr$ with jump rates $\pN(k_{\tiN})$ for the first two and $\qN(k_{\tiN})$ for the last two. Then $t\mapsto\rmh_t:=(\rmhun_t,\rmhde_t)$ is the unique solution of the following system of stochastic differential equations:
\begin{equation}\label{Eq:DefMde}
\begin{split}
d\rmhun_t(k_{\tiN}) = \frac{2}{\sqrt{2N}}\Big(d\cLNun_t(k_{\tiN})\, \tun_{\{\Delta \rmhun_t(k_{\tiN}) >0\}} - d\cRNun_t(k_{\tiN}) \,\tun_{\{\Delta \rmhun_t(k_{\tiN}) <0;\rmhun_t(k_{\tiN}) > \rmhde_t(k_{\tiN})\}}\Big),\\
d\rmhde_t(k_{\tiN}) = \frac{2}{\sqrt{2N}}\Big(d\cLNde_t(k_{\tiN})\, \tun_{\{\Delta \rmhde_t(k_{\tiN}) >0;\rmhun_t(k_{\tiN})>\rmhde_t(k_{\tiN})\}} - d\cRNde_t(k_{\tiN}) \,\tun_{\{\Delta \rmhun_t(k_{\tiN}) <0\}}\Big).
\end{split}
\end{equation}
The condition $\rmhun_t(k_{\tiN}) > \rmhde_t(k_{\tiN})$ prevents the upper interface from passing below the lower interface, and vice-versa. We also introduce two random measures as follows:
\begin{equation}\label{EqZeta2}
\begin{split}
\zetaun(dt,dx) := \sum_{k=1}^{2N-1}\frac{2\qN(k_{\tiN})}{(2N)^{\frac{3}{2}}}\tun_{\{\Delta \rmhun_t(k_{\tiN})<0 ; \rmhun_t(k_{\tiN})=\rmhde_t(k_{\tiN})\}}\delta_{\{k_{\tiN}\}}(dx)dt,\\
\zetade(dt,dx) := \sum_{k=1}^{2N-1}\frac{2\qN(k_{\tiN})}{(2N)^{\frac{3}{2}}}\tun_{\{\Delta \rmhde_t(k_{\tiN})>0 ; \rmhun_t(k_{\tiN})=\rmhde_t(k_{\tiN})\}}\delta_{\{k_{\tiN}\}}(dx)dt.
\end{split}
\end{equation}
They are both random elements of the space $\bbM$ introduced above. Then we define $\bbQN_{\nuN}$ as the law of $(\rmh,\zetaun,\zetade)$ on $\bbD([0,\infty),\cCMde)\times\bbM\times\bbM$, under which $\rmh_0=(\rmhun_0,\rmhde_0)$ is a random variable with law $\nuN$ and independent of the Poisson processes. Here $\cCMde\!\supset\! \cCNMde$ denotes the space of continuous $\bbR^2$-valued functions $h=(\hun,\hde)$ on $[0,1]$ such that $\hun(x)\geq \hde(x)$ for every $x\in[0,1]$ and both $\hun,\hde$ vanish at the boundaries of $[0,1]$.\\

Let us emphasise our deliberate use of the same symbol $\bbQN$ in any of the three models in order to alleviate the notation. Moreover we will sometimes drop the superscript associated to the model and use the generic notation $\cCN$ and $\cC$ whenever a result applies indifferently to any of the three models. For any probability measure $\nu$ on $\cC$, we adopt the usual notation $\nu[F]:=\int_{\cC}F(h)\nu(dh)$ to denote the $\nu$-expectation of a measurable map $F:\cC\rightarrow\bbR$. Let us also introduce the notation
\[ \left\|h\right\|_{\cC}=\begin{cases}\sup_{x\in[0,1]}|h(x)|&\mbox{ in \Modun\ and \Munw},\\
\sup_{x\in[0,1]}|\hun(x)| + |\hde(x)|&\mbox{ in \Mdeux}.\end{cases}\]

\subsection{Main results}
We start with a result whose statement - in the case of \Modun\ - already appears in various forms in the literature, see for instance Janowsky and Lebowitz~\cite{JanowskyLebowitz92} or Funaki and Sasada~\cite{FunakiSasada10}.
\begin{proposition}\label{PropInvMeasure}
For every $N\geq 1$, the continuous-time Markov chain defined by any of the three models admits a unique invariant, reversible probability measure $\bmuN$ defined as follows:
\[\forall h\in\cCN,\;\; \bmuN(h) = \frac{1}{Z_{\tiN}}\exp\Big((2N)^{\frac{3}{2}}\ccA_{\tiN}(h)\Big)\]
where $Z_{\tiN}$ is a normalising constant and where $\ccA_{\tiN}(h)$ refers to the discrete weighted area under the interface
\[ \ccA_{\tiN}(h)=\begin{cases}\displaystyle\frac{1}{4N}\sum_{k=1}^{2N-1}\log\bigg(\cfrac{\pN(k_{\tiN})}{\qN(k_{\tiN})}\bigg)h(k_{\tiN})&\mbox{ in \Modun\ and \Munw},\\
\displaystyle\frac{1}{4N}\sum_{k=1}^{2N-1}\log\bigg(\cfrac{\pN(k_{\tiN})}{\qN(k_{\tiN})}\bigg)\big(\hun(k_{\tiN})-\hde(k_{\tiN})\big)&\mbox{ in \Mdeux}.\end{cases}\]
\end{proposition}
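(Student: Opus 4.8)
The plan is to verify the detailed balance (reversibility) equations directly for the continuous-time Markov chain, from which uniqueness of the invariant measure follows by irreducibility. Since all three models have finite state-spaces $\cCN$, it suffices to produce a probability measure $\bmuN$ satisfying $\bmuN(h)\,c(h,h') = \bmuN(h')\,c(h',h)$ for every ordered pair of states $h,h'$, where $c(h,h')$ denotes the jump rate from $h$ to $h'$; then $\bmuN$ is automatically invariant, and if moreover the chain is irreducible on $\cCN$, it is the unique invariant (and reversible) probability measure. So the argument splits into two parts: (i) irreducibility, and (ii) detailed balance.

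For irreducibility, in \Modun\ one observes that any state can be brought to the ``flat'' reference path (or, say, the minimal zigzag) by successively flipping down local maxima, and all these moves have positive rate since $\pN(k_{\tiN}),\qN(k_{\tiN})>0$; reversing, any state is reachable from any other. For \Munw\ the same works inside the non-negative cone: flipping down local maxima while respecting $\rmh_t(k_{\tiN})>\tfrac{1}{\sqrt{2N}}$ still lets one reach the all-zero path, and one checks the constraint never blocks reaching the minimal element. For \Mdeux\ one lowers $\rmhun$ and raises $\rmhde$ until they are as close as the ordering permits; again every such move has positive rate. This part is routine and I would treat it briefly.

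The heart of the matter is detailed balance. Fix a state $h$ and a site $k\in\lbr 1,2N-1\rbr$ with $\Delta h(k_{\tiN})<0$ (a local max); the only transition at $k$ is the downward flip $h\mapsto h'$, where $h'$ agrees with $h$ except $h'(k_{\tiN}) = h(k_{\tiN}) - \tfrac{2}{\sqrt{2N}}$, and the reverse transition $h'\mapsto h$ is the upward flip at $k$ (since $\Delta h'(k_{\tiN})>0$). In \Modun\ the forward rate is $\qN(k_{\tiN})$ and the backward rate is $\pN(k_{\tiN})$, so detailed balance demands
\[
\frac{\bmuN(h')}{\bmuN(h)} = \frac{\qN(k_{\tiN})}{\pN(k_{\tiN})}.
\]
With the proposed formula this ratio is $\exp\!\big((2N)^{3/2}(\ccA_{\tiN}(h') - \ccA_{\tiN}(h))\big)$, and since $h'$ and $h$ differ only in the value at $k_{\tiN}$, by decreasing it by $\tfrac{2}{\sqrt{2N}}$,
\[
(2N)^{3/2}\big(\ccA_{\tiN}(h') - \ccA_{\tiN}(h)\big) = (2N)^{3/2}\cdot\frac{1}{4N}\log\!\Big(\frac{\pN(k_{\tiN})}{\qN(k_{\tiN})}\Big)\cdot\Big(-\frac{2}{\sqrt{2N}}\Big) = -\log\frac{\pN(k_{\tiN})}{\qN(k_{\tiN})},
\]
so the ratio equals $\qN(k_{\tiN})/\pN(k_{\tiN})$ exactly, as required. (For any pair $h,h'$ not related by a single admissible flip, both rates vanish and detailed balance is trivial.)

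It remains to handle the two variants. For \Munw\ the transitions are a subset of those of \Modun: the indicator in \eqref{Eq:DefMunw} only removes downward flips that would violate non-negativity, i.e.\ transitions leaving $\cCNMunw$, and it never removes a transition between two states of $\cCNMunw$ — if $h,h'\in\cCNMunw$ are related by a downward flip at $k$ then necessarily $h(k_{\tiN})>\tfrac{1}{\sqrt{2N}}$, so the flip is active. Hence the rates on $\cCNMunw$ coincide with the \Modun\ rates, the same computation applies, and one only needs to check $\ccA_{\tiN}$ is the restriction of the \Modun\ functional, which it is by definition. For \Mdeux, a flip at site $k$ changes one of $\hun,\hde$ by $\pm\tfrac{2}{\sqrt{2N}}$, hence changes $\hun(k_{\tiN})-\hde(k_{\tiN})$ by $\mp\tfrac{2}{\sqrt{2N}}$; reading off the rates from \eqref{Eq:DefMde} — downward flip of $\hun$ at rate $\pN$ versus its reverse (upward flip of $\hun$) at rate $\qN$, when $\hun(k_{\tiN})>\hde(k_{\tiN})$ strictly on both sides so the ordering constraint is inactive — gives exactly the same ratio identity, and likewise for the $\hde$-flips. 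At the boundary of $\cCNMde$ (where $\hun(k_{\tiN})=\hde(k_{\tiN})$) the constraint removes only transitions leaving $\cCNMde$, so again the rates between states of $\cCNMde$ are unaffected. I expect the only genuinely delicate point to be bookkeeping: making sure that in each model the constraint indicators suppress exactly the moves exiting the state-space and never an internal move, so that the chain restricted to $\cCN$ has the same rate structure as \Modun\ up to the change of area functional; once that is pinned down, detailed balance and hence Proposition~\ref{PropInvMeasure} follow immediately.
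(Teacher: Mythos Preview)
Your approach is essentially the same as the paper's: verify detailed balance directly by computing how a single flip changes the weighted area, and appeal to irreducibility on the finite state-space for uniqueness. The paper's proof is terser (it does not spell out irreducibility or the constraint-handling you discuss), but the core computation is identical.

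One slip to fix: in your \Mdeux\ paragraph you write ``downward flip of $\hun$ at rate $\pN$ versus its reverse (upward flip of $\hun$) at rate $\qN$''. Reading \eqref{Eq:DefMde} and the rate assignments just above it, the downward flip of $\hun$ is driven by $\cRNun$, which has rate $\qN(k_{\tiN})$, and the upward flip by $\cLNun$ at rate $\pN(k_{\tiN})$ --- exactly as in \Modun. With the rates swapped back your area computation goes through; as written, the ratio would come out inverted. The analogous check for $\hde$ (upward flip at rate $\qN$, downward at rate $\pN$) confirms the paper's organising remark that every jump occurring at rate $\pN$ increases the area $\hun-\hde$ and every jump at rate $\qN$ decreases it.
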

\noindent The area under the interface is a key quantity in the study of our models. Based on this observation, we investigate the scaling limits of this invariant measure when $N$ goes to infinity. We denote by $\bPMun$ the distribution on $\cCMun$ of the Brownian bridge and by $\bPMunw$ the distribution on $\cCMunw$ of the normalised Brownian excursion. Furthermore, $\bPMde$ is taken to be the distribution on $\cCMde$ of the $2$-dimensional Dyson Brownian bridge, which is also called the $2$-watermelon; this process is the unique solution of the following system of stochastic differential equations:
\begin{equation*}
\begin{cases}
d\hun(x) = -\frac{\hun(x)}{1-x}dx+ \frac{1}{\hun(x)-\hde(x)}dx + d\Bun(x),\;\;\;x\in(0,1),\\
d\hde(x) = -\frac{\hde(x)}{1-x}dx+ \frac{1}{\hde(x)-\hun(x)}dx + d\Bde(x),\;\;\;x\in(0,1),\\
\hun(0)=\hun(1)=\hde(0)=\hde(1)=0,\;\;\; \hun(x) \geq \hde(x),
\end{cases}
\end{equation*}
where $\Bun,\Bde$ are two independent standard Brownian motions. We refer to Dyson~\cite{Dyson62} and to Theorem 2.6 in Gillet~\cite{Gillet03} for details. The form taken by the invariant measure motivates an asymmetry that vanishes at rate $(2N)^{-\frac{3}{2}}$. In the following statement, $\bP$ and $\bQ$ will appear without superscript in order to alleviate notation.
\begin{theorem}\label{ThCVInvMeasure}
Let $\sigma$ be a Riemann-integrable function from $[0,1]$ into $\bbR$ and set
\begin{equation}\label{Eq:PnQn}
\pN(\cdot)+\qN(\cdot) := (2N)^2\;\;\;,\;\;\;\log\frac{\pN(\cdot)}{\qN(\cdot)} := 4\sigma(\cdot)(2N)^{-\frac{3}{2}}.
\end{equation}
Then $\bmuN \Rightarrow \bQ $ as $N\rightarrow\infty$, in the sense of weak convergence of probability measures on $\cC$, where $\bQ$ is defined via its Radon-Nikodym derivative with respect to $\bP$
$$ d\bQ(h) := \frac{\exp\big(\ccA_\sigma(h)\big)}{Z}\,d\bP(h).$$
Here $Z$ is a normalising constant and $\ccA_\sigma(h)$ is the weighted area defined as follows:
\[\ccA_{\sigma}(h)=\begin{cases}2\int_0^1 \sigma(x)h(x)dx&\mbox{ in \Modun\ and \Munw,}\\
2\int_0^1 \sigma(x)\big(\hun(x)-\hde(x)\big)dx&\mbox{ in \Mdeux.}\end{cases}\]
Moreover for every $\lambda > 0$, $\sup_{N\geq 1}\bmuN\big[e^{\lambda\left\|h\right\|_{\cC}}\big] < \infty$.
\end{theorem}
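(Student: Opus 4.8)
The plan is to prove Theorem~\ref{ThCVInvMeasure} in three stages: first establish the uniform exponential moment bound $\sup_{N\geq 1}\bmuN[e^{\lambda\|h\|_{\cC}}]<\infty$, then use it to get tightness of $(\bmuN)_{N\geq 1}$, and finally identify the limit via convergence of finite-dimensional distributions together with a Radon--Nikodym / change-of-measure argument against the reference bridge/excursion/watermelon law $\bP$.

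\textbf{Step 1: Exponential moments.} By Proposition~\ref{PropInvMeasure}, $\bmuN(h)\propto \exp((2N)^{3/2}\ccA_{\tiN}(h))$, and under the scaling \eqref{Eq:PnQn} one has $(2N)^{3/2}\ccA_{\tiN}(h)=\frac{2}{4N}\sum_k 4\sigma(k_{\tiN})h(k_{\tiN})\cdot\frac{(2N)^{3/2}}{(2N)^{3/2}} = \frac{2}{N}\sum_k \sigma(k_{\tiN})h(k_{\tiN})$, which is a Riemann sum converging to $2\int_0^1\sigma(x)h(x)\,dx$ and is bounded in absolute value by $C\|h\|_{\cC}$ uniformly in $N$ (since $\sigma$ is bounded, being Riemann-integrable). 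Hence $\bmuN(h)\leq \frac{1}{Z_{\tiN}}e^{C\|h\|_{\cC}}$ pointwise, but this alone is not enough — one needs a lower bound on $Z_{\tiN}$ and an upper bound on $\bmuN[e^{\lambda\|h\|_{\cC}}]$ in terms of a \emph{reference} measure. The clean way: write $\bmuN$ as a tilt of the uniform measure $\pi_{\tiN}$ on $\cCN$ (the measure $\bmuN$ with $\sigma\equiv 0$), so $\bmuN[e^{\lambda\|h\|_{\cC}}] = \pi_{\tiN}[e^{\lambda\|h\|_{\cC}+(2N)^{3/2}\ccA_{\tiN}(h)}]/\pi_{\tiN}[e^{(2N)^{3/2}\ccA_{\tiN}(h)}] \leq e^{2C\|\sigma\|_\infty}\,\pi_{\tiN}[e^{(\lambda+C)\|h\|_{\cC}}]$ after bounding $|(2N)^{3/2}\ccA_{\tiN}|\leq C\|h\|_{\cC}\leq C$... no: $\|h\|_{\cC}$ is unbounded. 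Better: bound the numerator by $\pi_{\tiN}[e^{(\lambda+C)\|h\|_{\cC}}]$ and the denominator from below by $\pi_{\tiN}[e^{-C\|h\|_{\cC}}]\geq \pi_{\tiN}[e^{-C\|h\|_{\cC}}\tun_{\|h\|_{\cC}\leq 1}]\geq c>0$ uniformly (the uniform measure on $\cCN$ puts mass bounded below on small-height configurations — a simple combinatorial fact, or use that $\pi_{\tiN}$ is the law of a scaled simple random walk bridge and invoke a local CLT). Thus everything reduces to the bound $\sup_N \pi_{\tiN}[e^{\mu\|h\|_{\cC}}]<\infty$ for every $\mu>0$, which follows from the reflection principle / Doob's maximal inequality for the simple random walk bridge: $\pi_{\tiN}(\|h\|_{\cC}>a)\leq C e^{-c a^2(2N)/(2N)}$, i.e. Gaussian tails in $a\sqrt{2N}/\sqrt{2N}=a$, uniformly in $N$. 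For \Mdeux\ the same works coordinatewise after a union bound.

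\textbf{Step 2 and 3: Tightness and identification.} Tightness of $(\bmuN)_N$ as measures on $\cC$ follows from the Step-1 moment bound plus a modulus-of-continuity estimate, again inherited from the random walk bridge representation (Kolmogorov--Chentsov, uniformly in $N$) — or, more slickly, from the fact that $d\bmuN/d\pi_{\tiN}$ is uniformly bounded above and below times an $\|h\|_{\cC}$-dependent factor with uniformly integrable exponential moments, and $\pi_{\tiN}\Rightarrow \bP$ (Donsker / KMT for the bridge; Kaigh's theorem for the excursion; the analogous statement for non-crossing bridges converging to the $2$-watermelon). Given tightness, let $\bmu_\infty$ be any subsequential limit. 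To identify it with $\bQ$, test against bounded continuous $F:\cC\to\bbR$: $\bmuN[F] = \pi_{\tiN}[F\,e^{(2N)^{3/2}\ccA_{\tiN}}]/\pi_{\tiN}[e^{(2N)^{3/2}\ccA_{\tiN}}]$. The functional $h\mapsto (2N)^{3/2}\ccA_{\tiN}(h)=\frac{2}{N}\sum_k\sigma(k_{\tiN})h(k_{\tiN})$ converges, as $N\to\infty$ and along the coupling $\pi_{\tiN}\Rightarrow\bP$, to $\ccA_\sigma(h)=2\int_0^1\sigma(x)h(x)\,dx$ \emph{uniformly on compact sets of $\cC$} (it is a Riemann sum whose error is controlled by $\|\sigma\|_\infty$ times the oscillation of $h$, and $\sigma$ Riemann-integrable handles the remaining discretization of $\sigma$ itself via dominated convergence against the — a.s. continuous — path $h$). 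Combined with the uniform exponential integrability from Step~1 (which gives uniform integrability of $e^{(2N)^{3/2}\ccA_{\tiN}}$ under $\pi_{\tiN}$, since $\sup_N\pi_{\tiN}[e^{2|(2N)^{3/2}\ccA_{\tiN}|}]\leq \sup_N\pi_{\tiN}[e^{2C\|h\|_{\cC}}]<\infty$), this lets me pass to the limit in both numerator and denominator: $\bmuN[F]\to \bP[F e^{\ccA_\sigma}]/\bP[e^{\ccA_\sigma}]=\bQ[F]$. Since the subsequential limit is always $\bQ$, the full sequence converges.

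\textbf{Main obstacle.} The genuinely delicate point is the lower bound $\inf_N Z_{\tiN}/(\text{something})>0$, equivalently $\inf_N\pi_{\tiN}[e^{(2N)^{3/2}\ccA_{\tiN}}]>0$ and its counterpart uniform integrability — and, in the \Munw\ and \Mdeux\ cases, ensuring that the discrete constrained reference measures $\pi_{\tiN}$ actually converge to the Brownian excursion / $2$-watermelon with control on fluctuations near the constraint. For \Modun\ this is classical (Donsker for bridges, KMT coupling for the tails), but for \Munw\ one must invoke a Kaigh-type invariance principle for the excursion together with a uniform maximal inequality for the conditioned walk; for \Mdeux\ one needs the analogous limit theorem for non-intersecting bridges converging to Dyson's Brownian bridge, with a uniform tail bound on $\sup(\hun-\hde)$ and on each $\|h^{(i)}\|_\infty$. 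I would cite the standard invariance principles and concentrate the new work on the uniform Gaussian-tail maximal inequalities for the (possibly conditioned) walks, which is where the whole argument stands or falls.
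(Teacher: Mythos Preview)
Your proposal is correct and follows essentially the same approach as the paper: write $\bmuN$ as an exponential tilt of the uniform measure $\piN$, invoke the weak convergence $\piN\Rightarrow\bP$ together with the uniform bound $\sup_N\piN[e^{\mu\|h\|_\cC}]<\infty$, and pass to the limit in the ratio $\piN[F\,e^{(2N)^{3/2}\ccA_{\tiN}}]/\piN[e^{(2N)^{3/2}\ccA_{\tiN}}]$ via uniform integrability. The tightness step you list is in fact redundant once you show $\bmuN[F]\to\bQ[F]$ for all bounded continuous $F$, and the paper proceeds directly this way.

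On the point you flag as the main obstacle: for \Modun\ and \Munw\ the paper simply cites Khorunzhiy--Marckert for the uniform exponential moments of the bridge and excursion maxima, rather than redoing maximal inequalities. For \Mdeux\ the paper does \emph{not} attack the non-crossing pair directly; instead it uses the bijection sending $(\hun,\hde)$ to the pair $s=\tfrac{1}{2}(\hun+\hde)$, $d=\tfrac{1}{2}(\hun-\hde)$, which (after recording which steps are $(\uparrow\uparrow)/(\downarrow\downarrow)$ versus $(\uparrow\downarrow)/(\downarrow\uparrow)$) decouples under $\piNMde$ into an unconstrained bridge and an independent non-negative excursion of shorter lengths. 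This reduces the watermelon exponential-moment bound to the already-known bridge and excursion cases via Cauchy--Schwarz, and is cleaner than proving a new maximal inequality for non-intersecting walks.
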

Although many results have been established on the WASEP when the asymmetry is of order $N^{-1}$ -see for instance G\"artner~\cite{Gartner88}, De Masi, Presutti and Scacciatelli~\cite{DeMasiPresuttiScacciatelli89}, Kipnis, Olla and Varadhan~\cite{KOV89} - the investigation of an asymmetry that scales like $N^{-3/2}$ seems to be new. We now turn our attention to the scaling limits of the dynamics itself.
\begin{assumption}\label{Assumption}
The asymmetry is given by (\ref{Eq:PnQn}) with $\sigma$ a $\frac{1}{2}$-H\"older function on $[0,1]$.
\end{assumption}
\noindent The H\"older condition on the map $\sigma$ is only needed in the proof of the large deviation result of Subsection \ref{SubsectionSuperExpo}. In the following statements, $\bbC([0,\infty),\cC)$ denotes the space of $\cC$-valued continuous maps endowed with the topology of uniform convergence on compact intervals of time.
\begin{theorem}\label{ThCVModel1}
Consider \Modun\ under Assumption \ref{Assumption}. Let $(\nuN)_{N\geq 1}$ be a sequence of probability measures on $\cCNMun$ that converges weakly toward a given probability measure $\nu$ on $\cCMun$ and such that there exists $\tc_{\textup{\tiny init}} > 0$ and $\upbeta_{\textup{\tiny init}} >0$ such that
\[\sup_{N\geq 1}\nuN\bigg[\sup_{x\ne y\in[0,1]}\frac{|h(x)-h(y)|}{|x-y|^{\upbeta_{\textup{\tiny init}}}}\bigg] \leq \tc_{\textup{\tiny init}}.\]
Then $\bbQN_{\nuN} \Rightarrow \bbQ_\nu$ as $N\rightarrow\infty$, in the sense of weak convergence of probability measures on the space $\bbD([0,\infty),\cCMun)$. Here $\bbQ_\nu$ is the distribution on $\bbC([0,\infty),\cCMun)$ under which $\rmh_0$ has law $\nu$ and $\rmh$ is the solution of the stochastic heat equation:
\begin{eqnarray}\label{EqSHE}\textsc{SHE}\begin{cases}
\displaystyle\partial_t \rmh_t(x) = \frac{1}{2}\partial^2_x \rmh_t(x) + \sigma(x) + \dot{W}(t,x),\\
\rmh_t(0)=\rmh_t(1)=0.
\end{cases}
\end{eqnarray}
Here $\dot{W}$ is a space-time white noise.
\end{theorem}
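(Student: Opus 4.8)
The strategy is the standard tightness-plus-identification-of-the-limit route for establishing convergence of a sequence of càdlàg processes to the solution of an SPDE. Write the evolution equation \eqref{Eq:DefMun} in its Duhamel (mild) form: letting $\Delta$ denote the rescaled discrete Laplacian and $e^{t\Delta/2}$ the associated semigroup on $\cCNMun$, we have
\[
\rmh_t = e^{t\Delta/2}\rmh_0 + \int_0^t e^{(t-s)\Delta/2}\,b_N(\rmh_s)\,ds + M^N_t ,
\]
where $b_N$ collects the ``drift after compensation'' terms and $M^N$ is the martingale obtained by compensating the Poisson processes $\cLN,\cRN$. The first task is to identify the drift: at a site $k_{\tiN}$ where $\Delta\rmh(k_{\tiN})>0$ the compensated rate of an upward jump is $\pN(k_{\tiN})$ and where $\Delta\rmh(k_{\tiN})<0$ the compensated rate of a downward jump is $\qN(k_{\tiN})$, and using $\pN+\qN=(2N)^2$ together with $\log(\pN/\qN)=4\sigma(2N)^{-3/2}$ one Taylor-expands $\pN=\tfrac12(2N)^2(1+2\sigma(2N)^{-3/2}+O(N^{-3}))$, $\qN$ similarly. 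A short computation then shows that the drift splits as $\tfrac12\Delta\rmh_t(k_{\tiN})\cdot(2N)^2 \cdot \tfrac{1}{(2N)^{?}} + \sigma(k_{\tiN}) + o(1)$ — that is, after the parabolic rescaling (space by $1/(2N)$, time by $(2N)^2$ is already built into the rates) the symmetric part produces exactly $\tfrac12\partial_x^2\rmh$ and the asymmetric part produces exactly the constant-in-height forcing $\sigma(x)$; the surviving lower-order terms, and the difference between $b_N(\rmh)$ and the target $\tfrac12\Delta\rmh+\sigma$, must be shown to vanish in the appropriate norm as $N\to\infty$. This is a routine but slightly delicate expansion; the $\tfrac12$-Hölder hypothesis on $\sigma$ is what controls the Riemann-sum error $|\sum \sigma(k_{\tiN})\,\cdot - \int\sigma\,\cdot|$ uniformly.

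Second, control of the martingale part. The predictable quadratic variation of the scalar martingale $M^N(k_{\tiN})$ is $\langle M^N(k_{\tiN})\rangle_t = \tfrac{4}{2N}\int_0^t\big(\pN(k_{\tiN})\tun_{\{\Delta\rmh_s(k_{\tiN})>0\}}+\qN(k_{\tiN})\tun_{\{\Delta\rmh_s(k_{\tiN})<0\}}\big)ds$, and since $\pN,\qN \sim \tfrac12(2N)^2$ and exactly one of the two indicators is active whenever $\Delta\rmh(k_{\tiN})\ne0$, this is $\sim \tfrac{4}{2N}\cdot\tfrac12(2N)^2 \cdot(\text{fraction of time the site is ``active''})\cdot t$. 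When tested against a smooth function $\varphi$ on $[0,1]$, $\langle \langle M^N,\varphi\rangle\rangle_t$ converges to $t\int_0^1\varphi(x)^2\,dx$, which is precisely the covariance structure of space–time white noise integrated against $\varphi$. One then invokes a martingale central limit theorem (e.g. the functional CLT for martingales with continuous limit, since jumps are $O(N^{-1/2})\to0$) to conclude $M^N\Rightarrow W$ in the cylindrical sense, jointly with the convergence of the other terms. For tightness of $(\rmh_t)$ itself in $\bbD([0,\infty),\cCMun)$ one uses the mild formulation: the stochastic convolution $\int_0^t e^{(t-s)\Delta/2}dM^N_s$ is estimated by a discrete Burkholder–Davis–Gundy / Kolmogorov argument to obtain uniform Hölder bounds in space and time (this is where the heat-kernel smoothing $\|e^{t\Delta/2}\|$-type estimates enter, and where the constants $\tcBDG,\tcInterpo$ flagged in the preamble will be used), and the deterministic convolution $\int_0^t e^{(t-s)\Delta/2}b_N(\rmh_s)ds$ is Hölder-controlled likewise; combined with the hypothesis on $\nuN$ giving uniform Hölder control of $\rmh_0$ and the fact that $\|e^{t\Delta/2}\|$ is a contraction, this yields tightness and, in particular, that every subsequential limit lives in $\bbC([0,\infty),\cCMun)$.

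Third, identification of the limit. Any subsequential limit $\rmh$ is, by the above, a continuous process satisfying the mild equation $\rmh_t = e^{t\Delta/2}$-analogue with continuous Laplacian, i.e. $\rmh_t = P_t\rmh_0 + \int_0^t P_{t-s}(\sigma)\,ds + \int_0^t P_{t-s}\,dW_s$ where $P_t$ is the Dirichlet heat semigroup on $[0,1]$; this is exactly the mild formulation of \textsc{SHE} \eqref{EqSHE}, which has a unique solution, so the full sequence converges. The passage to the limit inside the integrals requires continuity of the maps $(h,\text{noise})\mapsto$ (Duhamel terms) on the relevant path spaces together with uniform integrability from the moment bound $\sup_N\bmuN[e^{\lambda\|h\|_\cC}]<\infty$ of Theorem~\ref{ThCVInvMeasure} (or rather its dynamic/transient analogue following from the Hölder estimates). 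The main obstacle, and the technically heaviest step, is the tightness/regularity estimate for the stochastic convolution driven by $M^N$: one must get \emph{uniform-in-$N$} Hölder bounds despite the noise being a rescaled Poisson (not Gaussian) martingale on a lattice, which forces careful discrete heat-kernel estimates and a moment computation of order $>2$ (via discrete BDG) to feed Kolmogorov's continuity criterion — the expansion of the drift and the martingale CLT are comparatively mechanical once this is in hand.
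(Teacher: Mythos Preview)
Your tightness programme is essentially the paper's: write the mild formulation via the discrete Dirichlet heat kernel, estimate the stochastic convolution by Burkholder--Davis--Gundy plus discrete heat-kernel bounds, and close with Kolmogorov. That part is fine.

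The gap is in the identification of the limit. You write that the compensated drift becomes $\tfrac12\Delta\rmh + \sigma(k_{\tiN}) + o(1)$ and that the bracket $\llangle\langle M^N,\varphi\rangle\rrangle_t$ converges to $t\int_0^1\varphi^2$, calling both steps ``routine''. They are not. After compensation the drift carries the term
\[
\frac{2}{\sqrt{2N}}\Big(\pN(k_{\tiN})-\tfrac{(2N)^2}{2}\Big)\,\tun_{\{\Delta\rmh_t(k_{\tiN})\ne 0\}}
\;\approx\; 2\sigma(k_{\tiN})\,\tun_{\{\Delta\rmh_t(k_{\tiN})\ne 0\}},
\]
and the bracket against $\varphi$ is approximately $2\int_0^t\langle \tun_{\{\Delta\rmh_s(\cdot)\ne 0\}},\varphi^2\rangle_{\tiN}\,ds$. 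In particle variables, $\tun_{\{\Delta\rmh(k_{\tiN})\ne 0\}}=\upeta(k)(1-\upeta(k{+}1))+(1-\upeta(k))\upeta(k{+}1)$: a genuinely nonlinear local function of the configuration whose time average is \emph{not} a deterministic constant by Taylor expansion alone. To replace it by its local-equilibrium value $\tilde\Phi(a)=2a(1-a)=\tfrac12$ (at density $a=\tfrac12$) one needs a replacement lemma of Kipnis--Olla--Varadhan type; in the paper this is exactly Theorem~\ref{ThSuperExpo}, proved by a one-block/two-block argument with a super-exponential bound so that it survives the change of measure away from stationarity. Without this step neither the drift nor the quadratic variation converges to what you want, and a bare martingale CLT does not apply.

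Relatedly, your explanation of the r\^ole of the $\tfrac12$-H\"older hypothesis on $\sigma$ is off: it is not used to control a Riemann-sum error in the drift, but to bound the Radon--Nikodym derivative between the asymmetric and symmetric dynamics when transferring the super-exponential estimate from the symmetric, stationary case to the general initial condition (Appendix~\ref{AppendixSuperExpo}). Once the replacement lemma is in place, the paper closes via a martingale problem (discrete martingales $\MNun_t(\varphi)$, $\LNun_t(\varphi)$ converging to their continuum counterparts) rather than a direct mild-solution limit; either packaging works, but the replacement step is unavoidable.
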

\noindent Recall the definition of the space $\bbM$ from above. Recall also that $\bmuN$ stands for the invariant probability measure.
\begin{theorem}\label{ThCVModel1w}
Consider \Munw\ under Assumption \ref{Assumption}. Then $\bbQN_{\bmuN} \Rightarrow \bbQ$ as $N\rightarrow\infty$, in the sense of weak convergence of probability measures on the product space $\bbD([0,\infty),\cCMunw)\times\bbM$ endowed with the product topology. Here $\bbQ$ is the distribution on $\bbC([0,\infty),\cCMunw)\times\bbM$ under which $\rmh_0$ has law $\bQMunw$ and $(\rmh,\zeta)$ is the solution of the Nualart-Pardoux reflected stochastic heat equation~\cite{NualartPardoux92}:
\begin{equation}\label{EqRSHE}\textsc{RSHE}\begin{cases}\begin{split}
&\displaystyle\partial_t \rmh_t(x) = \frac{1}{2}\partial^2_x \rmh_t(x) + \sigma(x) + \zeta(dt,dx) + \dot{W}(t,x),\\
&\displaystyle \rmh_t(x) \geq 0\;\;\;,\;\;\;\rmh_t(0)=\rmh_t(1)=0,\\
&\displaystyle\int_{[0,\infty)\times(0,1)}\!\!\!\!\rmh_t(x)\zeta(dt,dx)=0.\end{split}\end{cases}
\end{equation}
Here $\dot{W}$ is a space-time white noise.
\end{theorem}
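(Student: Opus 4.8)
The plan is to run the classical three-step scheme: (a) write a semimartingale decomposition of the tested process $\langle\rmh_t,\phi\rangle$ that is the discrete counterpart of the weak formulation of \textsc{RSHE}; (b) prove tightness of $(\bbQN_{\bmuN})_{N\geq1}$ on $\bbD([0,\infty),\cCMunw)\times\bbM$; (c) show that every subsequential limit solves \textsc{RSHE}, and conclude with the uniqueness theorem of Nualart and Pardoux~\cite{NualartPardoux92}. The hypothesis $\rmh_0\sim\bmuN$ — which makes the process stationary, $\rmh_t\sim\bmuN$ for all $t$ — is used both to import from Theorem~\ref{ThCVInvMeasure} the limiting initial law $\bQMunw$ and the uniform bounds ($\sup_N\bmuN[e^{\lambda\|h\|_{\cC}}]<\infty$ and, via $\bmuN\Rightarrow\bQMunw$, the uniform spatial equicontinuity of the marginals), and — most crucially — to control the reflection measure.

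For (a), fix $\phi\in C^\infty([0,1])$ with $\phi(0)=\phi(1)=0$, set $\langle f,\phi\rangle=\frac1{2N}\sum_{k=1}^{2N-1}f(k_{\tiN})\phi(k_{\tiN})$, and test (\ref{Eq:DefMunw}) against $\phi$. Compensating the Poisson processes and writing $\tun_{\{\Delta\rmh_s(k_{\tiN})<0;\,\rmh_s(k_{\tiN})>\frac1{\sqrt{2N}}\}}=\tun_{\{\Delta\rmh_s(k_{\tiN})<0\}}-\tun_{\{\Delta\rmh_s(k_{\tiN})<0;\,\rmh_s(k_{\tiN})=\frac1{\sqrt{2N}}\}}$, the compensator of the reflection jumps is precisely the measure $\zeta^N$ of (\ref{EqZeta}), and one obtains
\[\langle\rmh_t,\phi\rangle=\langle\rmh_0,\phi\rangle+\int_0^t\Big(\tfrac12\langle\rmh_s,\Delta_N\phi\rangle+\langle\rmf_s^N,\phi\rangle\Big)ds+\int_{[0,t]\times(0,1)}\!\phi\,d\zeta^N+M_t^{N,\phi},\]
where $\Delta_N\phi:=(2N)^2\Delta\phi\to\phi''$ uniformly (after summation by parts, using that $\rmh$ and $\phi$ vanish at the boundary), $\rmf_s^N(k_{\tiN})=2\sigma(k_{\tiN})\tun_{\{\Delta\rmh_s(k_{\tiN})\neq0\}}$ up to a uniformly vanishing term, and $M^{N,\phi}$ is a martingale with jumps of size $O(N^{-1/2})$ and predictable bracket $\frac1N\sum_k\phi(k_{\tiN})^2\int_0^t\tun_{\{\Delta\rmh_s(k_{\tiN})\neq0\}}\,ds$ up to a vanishing term. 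This is the discrete weak form of \textsc{RSHE}; the analysis of the three terms not involving $\zeta^N$ is exactly that used for \Modun\ in the proof of Theorem~\ref{ThCVModel1}, resting on the super-exponential (Boltzmann--Gibbs type) replacement of Subsection~\ref{SubsectionSuperExpo} that converts space-time averages of $g(k_{\tiN})\tun_{\{\Delta\rmh_s(k_{\tiN})\neq0\}}$ into $\tfrac12\,g(k_{\tiN})$ — and this is where Assumption~\ref{Assumption} is used.

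For (b), the one-time marginals $\rmh_t\sim\bmuN$ are tight in $\cCMunw$, with uniform-in-$N$ spatial equicontinuity, because $\bmuN\Rightarrow\bQMunw$ (Theorem~\ref{ThCVInvMeasure}); combined with the decomposition, an Aldous-type argument — bounding drift increments with the moment bound, martingale increments with the BDG inequality (constant $\tcBDG$) and the bracket, and $\zeta^N$-increments with the a priori estimate below — gives tightness of $(\rmh^N)$ in $\bbD([0,\infty),\cCMunw)$, with all limit points concentrated on $\bbC([0,\infty),\cCMunw)$ since the jumps are $O(N^{-1/2})$. Tightness of $(\zeta^N)$ in $\bbM$ follows from the a priori bound $\sup_N\bbE\big[\int_{[0,t]\times(0,1)}x(1-x)\,\zeta^N(ds,dx)\big]\leq C(t)$: taking expectations in the decomposition with $\phi(x)=x(1-x)$ (or a discrete surrogate), stationarity forces $\bbE[\langle\rmh_t,\phi\rangle]=\bbE[\langle\rmh_0,\phi\rangle]$, while $\bbE[\langle\rmh_s,\Delta_N\phi\rangle]$ and $\bbE[\langle\rmf_s^N,\phi\rangle]$ are bounded uniformly in $N$ and $s$ (moment bound, boundedness of $\sigma$), which leaves $\bbE[\int\phi\,d\zeta^N]$ bounded; by the topology on $\bbM$ defined in Subsection~\ref{Subsection:TightnessZeta} this yields tightness, hence tightness of $(\rmh^N,\zeta^N)$ on the product space.

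For (c), take a subsequence with $(\rmh^N,\zeta^N)\Rightarrow(\rmh,\zeta)$ (and realise it with a.s. convergence by Skorokhod, so that $\rmh^N\to\rmh$ uniformly on compacts since the limit has continuous paths). The initial law is $\bQMunw$ by Theorem~\ref{ThCVInvMeasure}; in the decomposition, $\langle\rmh_s,\Delta_N\phi\rangle\to\langle\rmh_s,\phi''\rangle$, $\langle\rmf_s^N,\phi\rangle\to\int_0^1\sigma\phi$ and $\langle M^{N,\phi}\rangle_t\to t\|\phi\|_{L^2}^2$ (the last two by the replacement estimate), so, the jump sizes vanishing, $M^{N,\phi}$ converges to a continuous Gaussian martingale with that bracket, realised over all $\phi$ as $\int_0^t\!\int_0^1\phi\,dW$ for a space-time white noise $W$; and $\int\phi\,d\zeta^N\to\int\phi\,d\zeta$, first for $\phi$ compactly supported in $(0,1)$ and then for all admissible $\phi$ via the weighted-mass bound. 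Hence the first line of \textsc{RSHE} holds, and $\rmh_t\geq0$ passes to the limit since it is a closed condition. For the contact condition, note that $\zeta^N$ charges $(s,k_{\tiN})$ only where $\rmh_s(k_{\tiN})=\tfrac1{\sqrt{2N}}$, so $\int_{[0,t]\times(0,1)}x(1-x)\rmh_s(x)\,\zeta^N(ds,dx)=\tfrac1{\sqrt{2N}}\int_{[0,t]\times(0,1)}x(1-x)\,\zeta^N(ds,dx)\to0$ in $L^1$; a lower-semicontinuity argument for $(\mathrm g,\mu)\mapsto\int x(1-x)\mathrm g_s(x)\,\mu(ds,dx)$ (cutting off near $\{0,1\}$ and using the uniform-on-compacts convergence $\rmh^N\to\rmh$) then gives $\int x(1-x)\rmh_s\,\zeta=0$, whence $\int_{[0,\infty)\times(0,1)}\rmh_s(x)\,\zeta(ds,dx)=0$ because $x(1-x)>0$ on $(0,1)$, $\rmh\geq0$, and $\zeta$ charges only $(0,1)$. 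Thus every limit point solves \textsc{RSHE}, and uniqueness~\cite{NualartPardoux92} upgrades this to $\bbQN_{\bmuN}\Rightarrow\bbQ$. I expect the main obstacle to be the reflection measure: establishing the uniform-in-$N$ weighted-mass bound on $\zeta^N$ (which keeps it from either blowing up or disappearing in the limit) and passing cleanly to the contact condition in the weak topology of $\bbM$; a secondary difficulty is that the replacement estimate of Subsection~\ref{SubsectionSuperExpo} must be run in the constrained non-negative geometry, which is why Assumption~\ref{Assumption} is imposed. Granted Theorem~\ref{ThCVModel1}, the remaining identification of $W$ and of the Laplacian drift is routine.
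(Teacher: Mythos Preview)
Your outline follows the right scheme, and your $L^1$ mass bound on $\zeta^N$ (test against $\phi(x)=x(1-x)$ and use stationarity) is correct --- it even yields tightness of $\zeta^N$ in $\bbM$ more directly than the paper, which deduces it only \emph{after} $\rmh$-tightness is in hand. The contact-condition argument in (c) is also essentially the paper's.

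The real gap is the tightness of $\rmh$ in $\bbD([0,\infty),\cCMunw)$. Your plan is to run an Aldous-type argument on $\langle\rmh,\phi\rangle$ using the a~priori $\zeta^N$ bound, but that bound is only $L^1$ and only for deterministic time windows: stationarity gives $\bbE\big[\int_{[s,t]}\phi\,d\zeta^N\big]=O(t-s)$, which is neither a high enough moment for Kolmogorov (exponent $1$, not $>1$) nor applicable at stopping times ($\rmh_\tau$ is not $\bmuN$-distributed). And even if you grant tightness of each tested process $\langle\rmh,\phi\rangle$, that only places you in a weak topology; ``spatial equicontinuity of the one-time marginals'' does not by itself upgrade to the sup norm --- a quantitative interpolation is required. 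The paper sidesteps all of this by \emph{not} separating $\zeta$ from the drift: a Feynman--Kac/Kipnis--Varadhan estimate (Lemma~\ref{LemmaFKDrift}) bounds exponential moments of the Fourier coefficients of the \emph{full} drift, reflection term included, producing an $L^{2p}$ time-modulus for $\brmh$ in the negative Sobolev space $\cH_{-\alpha}$ (Proposition~\ref{Prop:UnifBoundLyonsZheng}); this is then interpolated against the uniform $\cW^{\eta,r}$ bound on stationary marginals (Lemma~\ref{Lemma:UnifBoundSobolev}, resting on Lemma~\ref{Lemma:SlobodeckijWalk}) to reach the sup norm (Proposition~\ref{PropHolderSpaceTimeBar}). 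Only then does the paper extract $\zeta^N$-tightness from the decomposition (Subsection~\ref{Subsection:TightnessZeta}).

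A smaller issue in (c): the functional $m\mapsto\int_{[0,t]\times(0,1)}\phi\,dm$ is \emph{not} continuous on $\bbM$, because the sharp time cut-off at $t$ is not among the admissible test functions. Passing $\int_{[0,t]}\phi\,d\zeta^N\to\int_{[0,t]}\phi\,d\zeta$ therefore needs a sandwich argument with the smooth cut-offs $\rho_{p,q}$ and the continuity in $t$ of the other terms of the decomposition, as carried out in the proof of Lemma~\ref{LemmaCVMtNMt}.
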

\noindent Finally, we consider the most elaborate model.
\begin{theorem}\label{ThCVModel2}
Consider \Mdeux\ under Assumption \ref{Assumption}. Then the sequence $\bbQN_{\bmuN}$ is tight for the topology of weak convergence of probability measures on $\bbD([0,\infty),\cCMunw)\times\bbM\times\bbM$. Furthermore, if we let $\bbQ$ be the limit of a converging subsequence, then $\bbQ$ is supported by $\bbC([0,\infty),\cCMunw)\times\bbM\times\bbM$ and under $\bbQ$, $\rmh_0$ has law $\bQMde$ and $(\rmh,\zetaun,\zetade)$ satisfies:
\begin{equation}\label{Eq:SHEs}\textsc{Pair \scriptsize{of}\normalsize\ RSHEs}\begin{cases}\begin{split}
&\displaystyle\partial_t \rmhun_t(x) = \frac{1}{2}\partial^2_x \rmhun_t(x) + \sigma(x) + \zetaun(dt,dx) + \dot{W}^{\mbox{\tiny $(1)$}}(t,x),\\
&\displaystyle\partial_t \rmhde_t(x) = \frac{1}{2}\partial^2_x \rmhde_t(x) - \sigma(x) - \zetade(dt,dx) + \dot{W}^{\mbox{\tiny $(2)$}}(t,x),\\
&\displaystyle \rmhun_t(x) \geq \rmhde_t(x)\;\;\;,\;\;\;\rmhun_t(0)=\rmhun_t(1)=\rmhde_t(0)=\rmhde_t(1)=0,\\
&\displaystyle\int_{[0,\infty)\times(0,1)}\!\!\!\!\!\big(\rmhun_t(x)-\rmhde_t(x)\big)\big(\zetaun(dt,dx)+\zetade(dt,dx)\big)=0.\end{split}\end{cases}
\end{equation}
Here $\dot{W}^{\mbox{\tiny $(1)$}}$ and $\dot{W}^{\mbox{\tiny $(2)$}}$ are two independent space-time white noises.
\end{theorem}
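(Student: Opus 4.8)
The plan is to follow the same three-step programme that underlies Theorems~\ref{ThCVModel1} and~\ref{ThCVModel1w}: (i) prove tightness of the laws $\bbQN_{\bmuN}$ on $\bbD([0,\infty),\cCMde)\times\bbM\times\bbM$, (ii) identify any subsequential limit as a solution of the martingale problem associated with \textsc{Pair of RSHEs}, and (iii) conclude. First I would write the two interface evolutions in \Mdeux\ in Dynkin form, exactly as in \eqref{Eq:DefMde}: each of $\rmhun_t(k_\tiN)$ and $\rmhde_t(k_\tiN)$ equals the discrete Laplacian term (the generator part, which converges to $\tfrac12\partial_x^2$ after the parabolic rescaling already built into $\cCNMde$), plus the drift coming from the asymmetry \eqref{Eq:PnQn} (which converges to $\pm\sigma$ by the Riemann-sum estimate used for Theorem~\ref{ThCVInvMeasure}), plus a reflection term which is precisely $\zetaun$ or $-\zetade$ as defined in \eqref{EqZeta2}, plus a martingale term. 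The martingale part, being a compensated sum of two collections of Poisson integrals each scaled by $2/(2N)^{1/2}$ with rate $(2N)^2$ at each of the $2N-1$ sites, has predictable quadratic variation converging to $t\,|\!|\cdot|\!|_{L^2}^2$, so by a martingale CLT (Brown, or the Rebolledo criterion as surely invoked for \Modun) it converges to space-time white noise; the cross-variation of the noise driving $\rmhun$ and the noise driving $\rmhde$ vanishes because the four Poisson families $\cLNun,\cRNun,\cLNde,\cRNde$ are independent, which yields the two \emph{independent} white noises $\dot W^{(1)},\dot W^{(2)}$ in the limit.

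For tightness, I would control each interface separately: since $\rmhun$ alone is dominated (via a monotone coupling) by a copy of the \Munw\ dynamics and $-\rmhde$ likewise, the a priori moment bound $\sup_N\bmuN[e^{\lambda\|h\|_\cC}]<\infty$ from Theorem~\ref{ThCVInvMeasure}, together with the Kolmogorov-type modulus estimates already developed for \Munw\ (using stationarity under $\bmuN$, the BDG inequality constant $\tcBDG$, and the interpolation constant $\tcInterpo$ the paper has set aside), gives tightness of $(\rmhun)_N$ and $(\rmhde)_N$ in $\bbC([0,\infty),\cCMunw)$, hence of the pair. Tightness of the reflecting measures $\zetaun,\zetade$ in $\bbM$ follows, as in \Munw, by reading them off from the Dynkin decomposition: $\zetaun([0,t]\times(0,1))$ is bounded by the total variation of $\rmhun$ minus its Laplacian, drift and martingale parts over $[0,t]$, and the weight $x(1-x)$ is handled using that each interface vanishes at the endpoints; the uniform bound on these quantities from stationarity gives a compact-containment condition in $\bbM$. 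Any subsequential limit $\bbQ$ is then automatically supported on continuous paths.

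It remains to characterise the limit. Passing to the limit in the Dynkin formulation identifies $\rmh=(\rmhun,\rmhde)$ as solving the first two equations of \eqref{Eq:SHEs} with the correct noises, the ordering $\rmhun_t\geq\rmhde_t$ and the boundary conditions being preserved under weak limits (closedness of these constraints), and $\rmh_0\sim\bQMde$ by Theorem~\ref{ThCVInvMeasure}. The genuinely delicate point — and the main obstacle — is the complementarity relation $\int(\rmhun_t-\rmhde_t)(\zetaun+\zetade)(dt,dx)=0$. In the discrete model this is exact, since $\zetaun$ (resp.\ $\zetade$) is supported on space-time points where $\rmhun_t(k_\tiN)=\rmhde_t(k_\tiN)$; but in the limit one must rule out mass of $\zetaun+\zetade$ accumulating where $\rmhun_t(x)-\rmhde_t(x)>0$. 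I would handle this exactly as Nualart--Pardoux and as the paper must do for \Munw: fix $\delta>0$, show that the $\bbQN_{\bmuN}$-expectation of $\int\mathbf 1_{\{\rmhun_t(x)-\rmhde_t(x)>\delta\}}(\zetaun+\zetade)(dt,dx)$ over $[0,T]$ tends to $0$, using that the reflection at a site $k$ is active only when $\rmhun_t(k_\tiN)-\rmhde_t(k_\tiN)\le \tfrac{?}{\sqrt{2N}}$-scale gap while the continuity modulus estimate forbids the gap from exceeding $\delta$ somewhere nearby unless on an event of vanishing probability, then let $N\to\infty$ and $\delta\to0$. An equivalent, perhaps cleaner route is to use the fact that $\rmhun-\rmhde$ is itself a non-negative reflected process whose difference-dynamics telescopes to a single reflected SHE-type equation driven by $\tfrac{1}{\sqrt2}(\dot W^{(1)}-\dot W^{(2)})$ with reflecting measure $\zetaun+\zetade$ at $0$, reducing the whole identification of the complementarity condition to the one-interface case already treated in Theorem~\ref{ThCVModel1w}. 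Because this last step only delivers the constraint along subsequences and uniqueness for \textsc{Pair of RSHEs} is not asserted, the theorem is stated — correctly — only as tightness plus identification of subsequential limits, and I would not attempt to upgrade it to full convergence.
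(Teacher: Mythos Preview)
Your overall architecture (tightness followed by a martingale-problem identification of subsequential limits) matches the paper, and your treatment of the cross-variation and of the preservation of the ordering/boundary constraints is correct. However, two steps in your sketch are genuine gaps rather than stylistic differences.

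\textbf{Tightness of $\rmh$.} The monotone-coupling claim is not valid. In \Mdeux\ the upper interface $\rmhun$ is reflected off $\rmhde$, which can take arbitrary negative values, so $\rmhun$ need not be non-negative and there is no natural domination by a \Munw\ process; and even if a pointwise order $\rmhun\le\tilde\rmh$ held, it would control suprema, not the modulus of continuity required for tightness in $\bbD$. The paper does \emph{not} reduce to \Munw; it works in \Mdeux\ directly. The key missing ingredient in your sketch is a Feynman--Kac/Dirichlet-form estimate (Lemma~\ref{LemmaFKDrift} in the paper) which shows that, under the stationary measure, the exponential moments of the time-integrated drift of each Fourier mode $\int_s^t \hat d^{(i)}_r(n)\,dr$ scale like $\sqrt{t-s}$ uniformly in $n,N$. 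This yields a $\cH_{-\alpha}$ H\"older bound (Proposition~\ref{Prop:UnifBoundLyonsZheng}), which is then interpolated against the $\cW^{\eta,r}$ bound coming from the invariant measure (Lemma~\ref{Lemma:UnifBoundSobolev}) to obtain H\"older-in-time estimates in the uniform norm. The interpolation constant $\tcInterpo$ you mention is used here, but the $\cH_{-\alpha}$ input is essential and is not a consequence of any coupling.

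\textbf{Identification of the drift and of the quadratic variation.} You describe the convergence of the asymmetry term to $\pm\sigma$ as ``the Riemann-sum estimate used for Theorem~\ref{ThCVInvMeasure}''. It is not: the discrete drift carries the factor $\tun_{\{\Delta\rmhi_s(k_\tiN)\ne 0\}}$, and proving that this local functional may be replaced by its equilibrium value $1/2$ (and similarly that the predictable quadratic variation of $\MNi(\varphi)$ converges to $t\langle\varphi,\varphi\rangle$) requires the super-exponential estimate of Theorem~\ref{ThSuperExpo}. This is exactly where the $\tfrac12$-H\"older assumption on $\sigma$ enters. Without invoking that replacement lemma, your martingale identification does not close.

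On the support condition, your first approach would work but is heavier than needed; the paper simply observes that $F(\rmh,\zetaun,\zetade):=\int\psi(\rmhun-\rmhde)\,d(\zetaun+\zetade)$ is $\bbQ'$-a.s.\ continuous on $\bbD\times\bbM\times\bbM$ and vanishes identically at the discrete level, so it vanishes in the limit. Your second route (passing to the difference $\rmhun-\rmhde$ and invoking Theorem~\ref{ThCVModel1w}) is appealing, but the discrete dynamics of the difference is driven by four Poisson clocks with constraints that do not reproduce the \Munw\ generator, so this reduction would itself require a separate tightness/identification argument rather than a direct citation.
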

\noindent Before proceeding to the proofs, we relate our results to the existing literature. The proof of Theorem \ref{ThCVModel1} is inspired by the convergence techniques used by Bertini and Giacomin~\cite{BertiniGiacomin97} in their celebrated paper on the KPZ equation. It seems that these techniques no longer work in the settings with reflection. Indeed the tightness of the random measure(s) that encodes the time spent at $0$ by the interface(s) needs specific work. Consequently the proofs of Theorems \ref{ThCVModel1w} and \ref{ThCVModel2} use different tools and depend strongly on the process being in the stationary regime. Funaki and Olla~\cite{FunaOll01} proved that the \textsc{RSHE} is the scaling limit of a system of oscillators which is similar to our \Munw. However in their case the oscillators take continuous values in $\bbR$ while our model is discrete; they mentioned in their paper that a discrete setting is probably more difficult to tackle. Also, the discreteness of the setting prevents us from applying the general method developped by Ambrosio, Savar\'e and Zambotti~\cite{AmbrosioSavareZambotti09}: indeed, our stationary measure fails to be log-concave. Let us also comment on the reason why we start from the invariant measure in the two more elaborate models. Actually, we first show tightness in a space of distributions and then, using estimates on the space regularity of the interface under the invariant measure, we obtain tightness in a space of continuous functions by interpolation arguments. Therefore, the initial condition being invariant appears as a technical assumption.\\
Both Nualart and Pardoux~\cite{NualartPardoux92} and Funaki and Olla~\cite{FunaOll01} used the penalisation method to deal with the reflecting measure. In the present paper, we instead show the convergence of $\zeta$ - or $\zetaun,\zetade$ - by martingale techniques; this approach seems to be new.\\
Let us also mention that the \textsc{RSHE} has been studied quite extensively in the recent years. In particular, Zambotti~\cite{Zambotti01} showed that the measure $\bQMunw$ is invariant for this stochastic PDE while Dalang, Mueller and Zambotti~\cite{DalMueZam06} obtained the following beautiful result: almost surely at any time $t>0$ the number of points $x\in(0,1)$ at which the interface vanishes is at most $4$. We also refer to Xu and Zhang~\cite{XuZhang09} for related equations.\\
Finally, let us mention that it would be interesting to investigate similar discrete models whose invariant measure converges to some distribution related to the Brownian motion (for instance, the reflected Brownian motion). The forms taken by the corresponding stochastic PDEs do not seem to be easy to guess.

\begin{remark}
We have not been able to decide whether $\zetaun=\zetade$ in the limit for \Mdeux, even though we believe that this equality holds. Let us point out that Theorem \ref{ThSuperExpo} does not provide such an equality since the functional $\Phi$ involved in the expression of the potential $V$ needs to depend on a finite number of sites of the lattice while the quantity $\zetaun-\zetade$ really depends on the whole interface. The equality $\zetaun=\zetade$ would ensure uniqueness of the limit in Theorem \ref{ThCVModel2} since, then, \textsc{Pair \scriptsize{of}\normalsize\ RSHEs} would just be a linear combination of \textsc{SHE} and \textsc{RSHE}.
\end{remark}

\vspace{-8pt}
\paragraph{Organisation of the paper.} In Section 2, we prove the results related to the invariant measure and we state a large deviation result on the local behaviour of the interfaces which will be necessary to identify the limits. The proof of this result is postponed to Appendix \ref{AppendixSuperExpo}. In Section 3, we present our general approach to proving tightness in any of the three settings. Then we provide the arguments when the processes start from the stationary measure, while the proof for \Modun\ starting from a more general initial condition, is postponed to Appendix \ref{SectionProofTightnessMun}. In Section 4, we identify the limit of the sequence $\bbQN$ and, therefore, complete the proof of Theorems \ref{ThCVModel1}, \ref{ThCVModel1w} and \ref{ThCVModel2}.

\section{The invariant measure}
\subsection{Proof of Proposition \ref{PropInvMeasure}}
We provide a proof that works for the three models; therefore $\cCN$ is any of the three state-spaces. Fix $N\geq 1$. Consider two configurations $h,h' \in \cCN$. Denote by $\lambda(h,h')$ the rate at which the process (in any of the three models) jumps from $h$ to $h'$. We have to prove that
\begin{equation}\label{DetailedBalance}
\exp\Big((2N)^{\frac{3}{2}}\ccA_{\tiN}(h)\Big)\lambda(h,h') = \exp\Big((2N)^{\frac{3}{2}}\ccA_{\tiN}(h')\Big)\lambda(h',h).
\end{equation}
By definition of the dynamics, $\lambda(h,h')\ne 0$ if and only if $h'$ is obtained from $h$ by flipping a local extremum into its counterpart without violating the non-crossing rules if any. By the symmetry of Equation (\ref{DetailedBalance}), we can assume that $\lambda(h,h')=\pN(k_{\tiN})$ for a given $k\in\lbr 1,2N-1\rbr$ so that $\lambda(h',h)=\qN(k_{\tiN})$. The key observation is that any jump that occurs at rate $\pN(\cdot)$ (respectively $\qN(\cdot)$) makes the area increase (respectively decrease). More precisely, we have $(2N)^{\frac{3}{2}}\ccA(h') = (2N)^{\frac{3}{2}}\ccA(h)+\log\Big(\frac{\pN(k_{\tiN})}{\qN(k_{\tiN})}\Big)$. Consequently (\ref{DetailedBalance}) follows.\cqfd

\subsection{Weak asymmetry and the area}
The expression for the invariant measure exhibits an interplay between the area and the ratio of the jump rates. This suggests that we should choose a weak asymmetry that scales consistently with the area. Let us first study the symmetric case $\pN(\cdot)\!=\!\qN(\cdot)\!=\!(2N)^2/2$. We denote by $\piNMun$, $\piNMunw$ and $\piNMde$ the corresponding invariant measures: from Proposition \ref{PropInvMeasure}, we deduce that they are the uniform measures on $\cCNMun$, $\cCNMunw$ and $\cCNMde$ respectively. Recall the definition of the probability measures $\bPMun$, $\bPMunw$ and $\bPMde$ introduced before the statement of Theorem \ref{ThCVInvMeasure}. Recall also that we drop the superscript associated with the model whenever a result can be stated indifferently for the three models.
\begin{lemma}\label{LemmaCVInvSymm}
As $N\rightarrow\infty$, $\piN$ converges weakly to the measure $\bP$ on $\cC$. Moreover for any $\lambda > 0$ we have $\sup_{N\geq 1}\piN\big[e^{\lambda\left\|h\right\|_{\cC}}\big] < \infty$.
\end{lemma}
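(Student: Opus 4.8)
The plan is to prove the two assertions of Lemma~\ref{LemmaCVInvSymm} separately and, for the weak convergence, to treat the three models one at a time, since in each of them $\piN$ is nothing but the law of a linearly interpolated, space-rescaled simple random walk subject to a conditioning: in \Modun\ it is a simple random walk bridge of length $2N$ (with steps $\pm 1/\sqrt{2N}$), in \Munw\ a uniformly chosen Dyck path of semilength $N$ (a pinned non-negative walk), and in \Mdeux\ a uniformly chosen pair of non-colliding bridges, both pinned to $0$ at the two endpoints. In all three cases the weak convergence on $\cC$ (with its uniform topology) will be obtained in the classical two-step manner: convergence of the finite-dimensional marginals, together with tightness in the space of continuous functions.

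For the finite-dimensional marginals I would use local central limit theorems for conditioned walks. In \Modun\ the local limit theorem for the bridge gives directly that $(h(x_1),\dots,h(x_j))$ converges to the corresponding Gaussian vector, i.e.\ to the marginals of the standard Brownian bridge, so $\piNMun\Rightarrow\bPMun$ once tightness is in hand. In \Munw\ one conditions additionally on positivity; the relevant local estimate is the invariance principle for discrete excursions (of Kaigh/Iglehart type), which identifies the limiting marginals with those of the normalised Brownian excursion $\bPMunw$. In \Mdeux\ I would invoke the Karlin--McGregor / Lindström--Gessel--Viennot formula: the joint law of the pair of non-colliding bridges at finitely many times is a ratio of products of simple-random-walk transition probabilities multiplied by a Vandermonde-type determinant, so applying the local limit theorem term by term yields convergence to the finite-dimensional marginals of the $2$-watermelon, consistently with the SDE characterisation of $\bPMde$ recalled before Theorem~\ref{ThCVInvMeasure} (cf.\ the references cited there).

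For tightness in $C$, and for the exponential moment bound, I would rely on a single input: uniform-in-$N$ Gaussian tail estimates for the maxima (and for the increments) of the rescaled walk(s). Writing, in \Modun, $\|h\|_\cC=\max_k|S_k|/\sqrt{2N}$ for the bridge $(S_k)$, the reflection principle gives the exact identity $\piNMun(\max_k S_k\ge u\mid S_{2N}=0)=\binom{2N}{N+u}/\binom{2N}{N}$ (and the analogous bound for the minimum), whence $\piNMun(\|h\|_\cC\ge a)\le 2e^{-a^2}$ uniformly in $N$; the increment bound $\piNMun[\,|h(x)-h(y)|^4\,]\le C|x-y|^2$ follows in the same way, yielding tightness by the Kolmogorov criterion, and integrating the tail gives $\piNMun[e^{\lambda\|h\|_\cC}]=1+\lambda\int_0^\infty e^{\lambda a}\,\piNMun(\|h\|_\cC\ge a)\,da<\infty$ uniformly in $N$. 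For \Munw\ the same estimates are obtained from the reflection principle in a strip (equivalently, from the exact count of Dyck paths confined to $[0,m-1]$), and for \Mdeux\ from the Karlin--McGregor determinantal formula, which in each case reduces the desired tail to an alternating sum of ratios of binomial coefficients that can be bounded by a Gaussian in $a$, uniformly in $N$.

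The hard part will be precisely these uniform tail and increment estimates in the two constrained models. The naive bound ``conditioning on positivity (resp.\ non-crossing) costs a factor equal to the reciprocal of the probability of that event'', which is of polynomial order in $N$, is too lossy: for fixed $a$ this polynomial prefactor is not absorbed by $e^{-ca^2}$, and the resulting bound on $\piN[e^{\lambda\|h\|_\cC}]$ then diverges with $N$. One therefore has to work with the exact reflection-group identities (Dyck paths in a strip, for \Munw) and the exact determinantal formulas (for \Mdeux) and check that the alternating sums telescope so as to leave only genuinely Gaussian tails. The \Mdeux\ case is the most delicate, both because of the two-dimensional determinantal structure and because the two bridges are pinned together at the endpoints, so that the limiting object is the degenerate (``touching'') $2$-watermelon rather than a generic pair of non-intersecting bridges.
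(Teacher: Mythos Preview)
Your treatment of \Modun\ and \Munw\ is fine and essentially what the paper does (the paper simply cites \cite{Kaigh76,Liggett68} for the convergence and \cite{KhorunzhiyMarckert09} for the uniform exponential moments, and your reflection-principle computations are the content of those references).

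For \Mdeux, however, you take a substantially harder route than the paper. The paper does \emph{not} attack the exponential moment bound via Karlin--McGregor determinants. Instead it uses a decoupling bijection: conditionally on the number $2n$ of ``parallel'' increments (those of type $(\uparrow\uparrow)$ or $(\downarrow\downarrow)$), the processes $\tilde{s}$ and $\tilde{d}$ obtained from $s=\tfrac12(\hun+\hde)$ and $d=\tfrac12(\hun-\hde)$ by deleting the constant steps and rescaling are \emph{independent}, with $\tilde{s}\sim\pinMun$ and $\tilde{d}\sim\piNnMunw$. Since $\sup|s|\le\sup|\tilde{s}|$ and $\sup|d|\le\sup|\tilde{d}|$, the exponential moment bound for $\|h\|_\cC$ in \Mdeux\ follows from Cauchy--Schwarz and the already-established bounds for \Modun\ and \Munw. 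This sidesteps entirely the issue you yourself flag as ``most delicate'': there is no alternating determinantal sum to control, and no degeneracy at the endpoints to worry about. Your approach via LGV is plausible in principle, but you have not indicated how the cancellations would actually produce a uniform Gaussian tail, and carrying that out would be considerably more work than the two-line reduction the paper uses. For the weak convergence in \Mdeux\ the paper, like you, defers to an external reference (Gillet~\cite{Gillet03}).
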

\begin{proof}
The convergence of $\piNMun$ (respectively $\piNMunw$) towards $\bPMun$ (respectively $\bPMunw$) is a classical result, see~\cite{Kaigh76,Liggett68}. The uniform bounds for the exponential moments were obtained by Khorunzhiy and Marckert in~\cite{KhorunzhiyMarckert09}. Let us consider \Mdeux. Gillet proved the convergence result in~\cite{Gillet03}. Let us show the uniform bound for the exponential moments. The underlying idea of our proof is to study the paths $s:=\frac{\hun+\hde}{2}$ and $d:=\frac{\hun-\hde}{2}$. First, observe that on any interval $[(k-1)_{\tiN},k_{\tiN}]$ the pair $(\hun,\hde)$ has four possible increments: $\hun$ and $\hde$ both increase - we denote this event by $(\uparrow\uparrow)_k$; $\hun$ and $\hde$ both decrease $(\downarrow\downarrow)_k$; $\hun$ increases and $\hde$ decreases $(\uparrow\downarrow)_k$; $\hun$ decreases and $\hde$ increases $(\downarrow\uparrow)_k$. Fix a pair $(\hun,\hde)\in\cCNMde$. The non-crossing condition $\hun \geq \hde$ imposes that
\[ \sum_{i=1}^k\tun_{(\uparrow\downarrow)_i} \geq \sum_{i=1}^k\tun_{(\downarrow\uparrow)_i}\;\;\;,\;\;\;\forall k\in\lbr 1,2N\rbr.\]
Furthermore $\hun(1)=\hde(1)=0$ yields the existence of an integer $n\in\lbr 0,N\rbr$ such that:
\begin{equation*}
\sum_{i=1}^{2N}\tun_{(\uparrow\uparrow)_i} = \sum_{i=1}^{2N}\tun_{(\downarrow\downarrow)_i} = n\;\;\;\;,\;\;\;\;\sum_{i=1}^{2N}\tun_{(\uparrow\downarrow)_i} = \sum_{i=1}^{2N}\tun_{(\downarrow\uparrow)_i} = N-n.
\end{equation*}
We will denote by $\cCNnMde$ the subset of $\cCNMde$ restricted to the paths that fulfil these conditions for a given value $n$. For a given $(\hun,\hde)\in\cCNnMde$, let us denote by $\imath$ the subset of $\lbr 1,2N\rbr$ consisting of the indices of the increments of the form $(\uparrow\uparrow)$ or $(\downarrow\downarrow)$ in $(\hun,\hde)$. Plainly $\imath$ belongs to the collection $\cI(n)$ of subsets of $\lbr 1,2N\rbr$ with $2n$ elements; we will denote by $\imath(j),j\in \lbr1,2n\rbr$ the elements of $\imath$ in increasing order. Then we define the path $\tilde{s}$ as the following element of $\cCnMun$:
\[ \tilde{s}\Big(\frac{k}{2n}\Big) := \frac{1}{\sqrt{2n}} \sum_{j=1}^k\Big(\tun_{(\uparrow\uparrow)_{\imath(j)}}
-\tun_{(\downarrow\downarrow)_{\imath(j)}}\Big)\;\;\;,\;\;\;\forall k \in \lbr1,2n\rbr.\]
In words, $\tilde{s}$ makes $+\frac{1}{\sqrt{2n}}$ at any step $(\uparrow\uparrow)$, $-\frac{1}{\sqrt{2n}}$ at any step $(\downarrow\downarrow)$ and does not evolve at any other step of $(\hun,\hde)$. Similarly we define $\tilde{d}$ as the following element of $\cCNnMunw$:
\[ \tilde{d}\Big(\frac{k}{2n}\Big) := \frac{1}{\sqrt{2(N-n)}} \sum_{j=1}^k\Big(\tun_{(\uparrow\downarrow)_{\lbr 1,2N\rbr\backslash\imath(j)}}
-\tun_{(\downarrow\uparrow)_{\lbr 1,2N\rbr\backslash\imath(j)}}\Big)\;\;\;,\;\;\;\forall k \in \lbr1,2(N-n)\rbr,\]
where $\lbr 1,2N\rbr\backslash\imath(j)$ stands for the $j$-th element, in the increasing order, in the set $\lbr 1,2N\rbr\backslash\imath$. The map:
\begin{align*}
\cCNnMde &\rightarrow\cCnMun\times\cCNnMunw\times\cI(n)\\
(\hun,\hde)&\mapsto(\tilde{s},\tilde{d},\imath)
\end{align*}
is a bijection. Since $\piNMde$ is the uniform measure on $\cCNMde$, we deduce that $\piNMde(\,\cdot\, |\, \cCNnMde)$ is the uniform measure on $\cCNnMde$. Consequently, under $\piNMde(\,\cdot\, |\, \cCNnMde)$, the pair $(\tilde{s},\tilde{d})$ is distributed according to $\pinMun\otimes\piNnMunw$. The paths $s:=\frac{\hun+\hde}{2}$ and $d:=\frac{\hun-\hde}{2}$ are obtained from $\tilde{s}$ and $\tilde{d}$ by inserting constant steps and rescaling suitably so that
\[ \sup_{[0,1]} |s| = \frac{\sqrt{2n}}{\sqrt{2N}}\sup_{[0,1]} |\tilde{s}|\;\;\;,\;\;\;\sup_{[0,1]} |d| = \frac{\sqrt{2(N-n)}}{\sqrt{2N}}\sup_{[0,1]} |\tilde{d}|,\]
and, we deduce that
\[ \sup_{N\geq 1}\piNMde\big[e^{\lambda\sup|s|}\big] \!\leq\! \sup_{n\geq 1}\pinMun\big[e^{\lambda\sup|h|}\big] < \infty\,,\,\sup_{N\geq 1}\piNMde\big[e^{\lambda\sup|d|}\big] \!\leq\! \sup_{n\geq 1}\pinMunw\big[e^{\lambda\sup|h|}\big] < \infty.\]
Since $\hun=s+d$ and $\hde=s-d$, the Cauchy-Schwarz inequality yields for every $\lambda > 0$ and $i\in\{1,2\}$
\[ \piNMde\big[e^{\lambda\sup|\hi|}\big] \leq \sqrt{\piNMde\big[e^{2\lambda\sup|s|}\big]\piNMde\big[e^{2\lambda\sup|d|}\big]}.\]
Since $\left\|h\right\|_{\cC} \leq \sup_{[0,1]}|\hun| + \sup_{[0,1]}|\hde|$ another application of the Cauchy-Schwarz inequality completes the proof.\cqfd
\end{proof}
Before we proceed to the proof of Theorem \ref{ThCVInvMeasure}, let us state without proof a well-known result that we will use on several occasions.
\begin{lemma}\label{Lemma:CVUnifInteg}
Let $X_n,n\geq 1$ be a sequence of random variables that converges in distribution to a random variable $X$. Assume that there exists $p>1$ such that the expectation of $|X_n|^p$ is uniformly bounded in $n\geq 1$, then the first moment of $X_n$ converges to the first moment of $X$.
\end{lemma}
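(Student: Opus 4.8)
The plan is to derive the claim from two classical ingredients: a uniform bound on $L^p$-norms with $p>1$ forces the family $(X_n)_{n\geq 1}$ to be uniformly integrable, and uniform integrability combined with convergence in distribution upgrades to convergence of first moments. I would carry this out directly through a truncation, so that everything stays at the level of the laws and no coupling is needed.

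First I would record the uniform integrability and the integrability of the limit. Write $C:=\sup_{n\geq 1}\bbE[|X_n|^p]<\infty$. Since $x\mapsto|x|^p$ is continuous and $|X_n|\Rightarrow|X|$, Fatou's lemma (or the portmanteau theorem) gives $\bbE[|X|^p]\leq\liminf_n\bbE[|X_n|^p]\leq C$; in particular $\bbE[|X|]\leq C^{1/p}<\infty$ since $p>1$. Next, for $M>0$ the elementary bound $|x|\,\tun_{\{|x|>M\}}\leq M^{1-p}|x|^p$ yields
\[\sup_{n\geq 1}\bbE\big[|X_n|\,\tun_{\{|X_n|>M\}}\big]\ \leq\ C\,M^{1-p}\,,\]
and the same bound holds with $X_n$ replaced by $X$; both tend to $0$ as $M\to\infty$.

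Then I would truncate and pass to the limit. Fix $M>0$ and set $\phi_M(x):=(x\wedge M)\vee(-M)$, which is bounded and continuous and satisfies $|x-\phi_M(x)|=(|x|-M)^+\leq|x|\,\tun_{\{|x|>M\}}$. By the definition of weak convergence, $\bbE[\phi_M(X_n)]\to\bbE[\phi_M(X)]$ as $n\to\infty$. Combining this with the tail estimate of the previous step,
\[\limsup_{n\to\infty}\big|\bbE[X_n]-\bbE[X]\big|\ \leq\ \sup_{n\geq 1}\bbE\big[|X_n|\,\tun_{\{|X_n|>M\}}\big]+\bbE\big[|X|\,\tun_{\{|X|>M\}}\big]\ \leq\ 2C\,M^{1-p}\,,\]
and letting $M\to\infty$ yields $\bbE[X_n]\to\bbE[X]$.

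Since this is a textbook fact there is no genuine obstacle; the only points needing care are that the truncation $\phi_M$ must be chosen both bounded \emph{and} continuous so that it can be fed straight into the weak-convergence hypothesis, and that the $L^p$-bound has to be transferred to $X$ via Fatou before the tail $\bbE[|X|\,\tun_{\{|X|>M\}}]$ can be made small. An equally short alternative would realise the convergence almost surely on a common probability space through Skorohod's representation theorem and then conclude via the Vitali (or de la Vall\'ee Poussin) convergence theorem.
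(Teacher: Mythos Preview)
Your argument is correct and complete: the $L^p$ bound with $p>1$ gives uniform integrability via the elementary tail estimate, the continuous bounded truncation $\phi_M$ lets you pass to the limit using only the weak-convergence hypothesis, and the transfer of the $L^p$ bound to the limit $X$ via Fatou justifies the tail control on $X$. Note that the paper does not actually prove this lemma---it is introduced with the words ``let us state without proof a well-known result''---so there is no paper argument to compare against; your proof is exactly the standard one.
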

\noindent\textit{Proof of Theorem \ref{ThCVInvMeasure}}. Fix a Riemann-integrable function $\sigma$ and take $\log(\frac{\pN(\cdot)}{\qN(\cdot)}) = 4\sigma(\cdot)(2N)^{-\frac{3}{2}}$ so that
\[ (2N)^{\frac{3}{2}}\ccA_{\tiN}(h)=\begin{cases}\displaystyle\frac{2}{2N}\sum_{k=1}^{2N-1}\sigma(k_{\tiN})h(k_{\tiN})&\mbox{ in \Modun\ and \Munw,}\\
\displaystyle\frac{2}{2N}\sum_{k=1}^{2N-1}\sigma(k_{\tiN})\big(\hun(k_{\tiN})-\hde(k_{\tiN})\big)&\mbox{ in \Mdeux.}\end{cases}\]
We drop the superscript associated with the models since our proof works verbatim for the three models. From now on, we work on $\cC$ and we see $\piN$ and $\bmuN$ as measures on this space. We want to prove that for any bounded continuous map $F$ from $\cC$ to $\bbR$ we have
\begin{equation}\label{EqWeakCV}
\bmuN[F] \underset{N\rightarrow\infty}{\longrightarrow} \bQ[F].
\end{equation}
We observe that
\[ \bmuN[F] = \frac{\piN[F(h) \exp((2N)^{\frac{3}{2}}\ccA_{\tiN}(h))]}{\piN[\exp((2N)^{\frac{3}{2}}\ccA_{\tiN}(h))]}\;\;,\;\;\bQ[F] = \frac{\bP[F(h) \exp(\ccA_{\sigma}(h))]}{\bP[\exp(\ccA_{\sigma}(h))]}.\]
To prove (\ref{EqWeakCV}), we show that the numerator (resp.~denominator) of the expression on the left converges to the numerator (resp.~denominator) of the expression on the right. By continuity of $F$ and $\ccA_{\sigma}$, the pushforward of $\piN$ through $h\mapsto F(h)\exp(\ccA_{\sigma}(h))$ converges weakly to the pushforward of $\bP$ through the same map. Using the boundedness of $\sigma$ and the uniform exponential bound on $\left\|h\right\|_{\cC}$ obtained in Lemma \ref{LemmaCVInvSymm}, we deduce that 
\[\sup_{N\geq 1} \piN\big[F^2(h)\exp(2\ccA_{\sigma}(h))\big] < \infty.\]
Consequently, Lemma \ref{Lemma:CVUnifInteg} ensures that $\piN[F(h) \exp(\ccA_{\sigma}(h))]$ converges to $\bP[F(h) \exp(\ccA_{\sigma}(h))]$. It remains to show that
\[\piN\big[F(h) \big(\exp((2N)^{\frac{3}{2}}\ccA_{\tiN}(h))-\exp(\ccA_{\sigma}(h))\big)\big] \underset{N\rightarrow\infty}{\longrightarrow} 0.\]
The same argument as above shows that the second moment of this random variable is uniformly bounded in $N\geq 1$. Furthermore the Riemann-integrability of $\sigma$ and the convergence of $\piN$ towards $\bP$ ensure the convergence in probability of this random variable to $0$ so that the result follows from Lemma \ref{Lemma:CVUnifInteg}.\cqfd\vspace{4pt}\\
In the following proposition, we give a description of $\bQMun$.
\begin{proposition}
Consider \Modun\ and take $\sigma$ Riemann-integrable. Under $\bQMun$, the process $x\mapsto h(x)-(1-x)\int_0^x\frac{2}{(1-y)^2}\int_y^1\sigma(u)(1-u)du\,dy$ is a Brownian bridge on $[0,1]$. In the particular case where $\sigma$ is constant, we obtain that $x\mapsto h(x)- \sigma x(1-x)$ is a Brownian bridge.
\end{proposition}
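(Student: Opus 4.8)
The plan is to recognise $\bQMun$ as a Gaussian measure: it is the Brownian bridge law $\bPMun$ tilted by the exponential of a \emph{linear} functional of the path, so it is again Gaussian, with the same covariance as the bridge and only its mean shifted; then I would compute that mean shift and check it equals the function in the statement. Concretely: write $h$ for the canonical process, which under $\bPMun$ is a standard Brownian bridge on $[0,1]$, i.e.\ a centered Gaussian process with covariance $k(x,y)=x\wedge y-xy$. Since $\sigma$ is Riemann-integrable, hence bounded and Borel, and $h$ is a.s.\ continuous, $L(h):=\ccA_\sigma(h)=2\int_0^1\sigma(x)h(x)\,dx$ is a well-defined random variable; it is Gaussian, being the $L^2(\bPMun)$-limit of Riemann sums, which are finite linear combinations of the jointly Gaussian $h(x_i)$'s. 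By Theorem \ref{ThCVInvMeasure} the normalising constant $Z=\bPMun[e^{L(h)}]$ is finite (and clearly positive), so $\bQMun$ is equivalent to $\bPMun$ with density $e^{L(h)}/Z$.

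Next I would apply the elementary Cameron--Martin fact for Gaussian tilts. For any finite linear combination $M(h)=\sum_i\lambda_i h(x_i)$, using that $M+L$ is centered Gaussian under $\bPMun$,
\[\bQMun[e^{M(h)}]=\frac{\bPMun[e^{M(h)+L(h)}]}{\bPMun[e^{L(h)}]}=\exp\Big(\tfrac12\mathrm{Var}_{\bPMun}(M(h))+\mathrm{Cov}_{\bPMun}(M(h),L(h))\Big),\]
which is the Laplace transform of a Gaussian vector $(h(x_i))_i$ with unchanged covariance $\big(k(x_i,x_j)\big)_{i,j}$ and mean $\big(\mathrm{Cov}_{\bPMun}(h(x_i),L(h))\big)_i$. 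Hence, under $\bQMun$, $h$ has the covariance of a Brownian bridge and mean function $\frf(x):=\mathrm{Cov}_{\bPMun}(h(x),L(h))=2\int_0^1 k(x,y)\sigma(y)\,dy$. Since $\frf(0)=\frf(1)=0$ and $\frf$ is continuous, it follows that $h-\frf$ is, under $\bQMun$, a standard Brownian bridge.

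It then remains to identify $\frf$, i.e.\ to verify $2\int_0^1 k(x,y)\sigma(y)\,dy=(1-x)\int_0^x\frac{2}{(1-y)^2}\int_y^1\sigma(u)(1-u)\,du\,dy$. I would split the left-hand side as $(1-x)\int_0^x y\sigma(y)\,dy+x\int_x^1(1-y)\sigma(y)\,dy$, set $G(y):=\int_y^1\sigma(u)(1-u)\,du$ (so $G'(y)=-(1-y)\sigma(y)$), and integrate by parts in $\int_0^x G(y)(1-y)^{-2}\,dy$ using the antiderivative $(1-y)^{-1}$ of $(1-y)^{-2}$; both sides reduce to $2G(x)-2(1-x)G(0)+2(1-x)\int_0^x\sigma(y)\,dy$. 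For constant $\sigma$ one shortcuts this via $\int_0^1 k(x,y)\,dy=\tfrac12 x(1-x)$, giving $\frf(x)=\sigma x(1-x)$.

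There is no real obstacle here: the argument is a one-line Cameron--Martin computation followed by a bookkeeping integration by parts. The only points needing a little care are that $L(h)$ is genuinely a (Gaussian) random variable under mere Riemann-integrability of $\sigma$, and the finiteness of the iterated integral defining $\frf$ near $x=1$ — there $\int_y^1\sigma(u)(1-u)\,du=O\big((1-y)^2\big)$, so the inner integrand stays bounded and $\frf(1)=0$, consistently with $h\in\cCMun$.
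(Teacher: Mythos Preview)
Your argument is correct and takes a genuinely different route from the paper. The paper proceeds dynamically: it writes the Brownian bridge as $h(x)=W(x)-\int_0^x h(y)(1-y)^{-1}\,dy$ for a $\bPMun$-Brownian motion $W$, computes the density martingale $D(x)=\frac{d\bQMun}{d\bPMun}|_{\cF_x}$ via the conditional law of the bridge after time $x$, and then applies Girsanov's theorem to identify the drift $\llangle\log D,W\rrangle(x)=2\int_0^x(1-y)^{-1}\int_y^1\sigma(u)(1-u)\,du\,dy$ that turns $W$ into a $\bQMun$-Brownian motion $\tilde W$. By contrast, you bypass stochastic calculus entirely by observing that the tilt is by the exponential of a \emph{linear} functional of a Gaussian process, so the tilted law is Gaussian with unchanged covariance and mean $\frf(x)=\mathrm{Cov}_{\bPMun}(h(x),L(h))=2\int_0^1 k(x,y)\sigma(y)\,dy$; the rest is an elementary integration by parts. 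Your approach is shorter and more transparent about \emph{why} only the mean moves, while the paper's Girsanov computation has the advantage of producing directly the SDE satisfied by $h$ under $\bQMun$, which fits the semimartingale viewpoint used elsewhere in the paper. One cosmetic slip: in your splitting of $2\int_0^1 k(x,y)\sigma(y)\,dy$ you dropped the factor $2$, though it reappears correctly in the final reduction to $2G(x)-2(1-x)G(0)+2(1-x)\int_0^x\sigma(y)\,dy$.
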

\begin{proof}
We drop the superscript \Modun\ since there is no possible confusion here. We endow $\cC$ with the filtration $\cF_x,x\in[0,1]$ of the canonical process $x\mapsto h(x)$, and we introduce the $\bP$-martingale $D(x):=\frac{d\bQ}{d\bP}|_{\cF_x}, x\in[0,1]$. Since $\ccA_\sigma(h)=2\int_0^1 \sigma(x)h(x)dx$ we obtain that for all $x\in[0,1]$
$$ D(x) = \exp\Big(2\int_0^x \sigma(y)h(y)dy\Big)\, \frac{\bP\Big[e^{2\int_x^1 \sigma(y)h(y)dy}\, \big|\, \cF_x\Big]}{\bP\Big[e^{2\int_0^1 \sigma(y)h(y)dy}\Big]}. $$
Recall that, under $\bP$, $h$ is a Brownian bridge so that conditionally given $\cF_x$, the process $(h(y)-h(x)\frac{1-y}{1-x},y\in[x,1])$ is a Brownian bridge independent of $\cF_x$. We obtain:
$$ \bP\Big[e^{2\int_x^1 \sigma(y) h(y)dy} \big| \cF_x\Big] = e^{2h(x)\int_x^1\sigma(y)\frac{1-y}{1-x}dy}\bP\Big[e^{2\int_x^1 \sigma(y) \big(h(y)-h(x)\frac{1-y}{1-x}\big)dy}\Big]. $$
Moreover, there exists a $\bP$-Brownian motion $W$ such that for every $x\in[0,1]$, $h(x) = W(x) - \int_0^x h(y)(1-y)^{-1}dy$. A simple application of It\^o's formula to the process $x\mapsto h(x)\int_0^x\sigma(y)\frac{1-y}{1-x}dy$ ensures that
\[ \int_0^x \sigma(y)h(y)dy = h(x)\int_0^x\sigma(y)\frac{1-y}{1-x}dy-\int_0^x\int_0^y\sigma(u)\frac{1-u}{1-y}du\, dW_y.\]
Consequently, we have:
\[ \left\llangle \log D,W \right\rrangle(x)= 2\int_0^x\frac{1}{1-y}\int_y^1\sigma(u)(1-u)du\,dy.\]
Girsanov's theorem (see for instance Revuz and Yor~\cite{RevuzYor} Theorem VIII.1.7) ensures that under $\bQ$ the process
$$x\mapsto\tilde{W}(x):= W(x)-\left\llangle \log D,W \right\rrangle(x)$$
is a continuous martingale with the same bracket as $W$, and so, it is a $\bQ$ Brownian motion. Accordingly for every $x\in[0,1]$, $ h(x) = \tilde{W}(x) - \int_0^x h(y)(1-y)^{-1}dy +2\int_0^x\frac{1}{1-y}\int_y^1\sigma(u)(1-u)du\,dy $. A simple calculation ends the proof.\cqfd
\end{proof}
To end this subsection, we state a technical result useful for the proof of the tightness. For any $\eta \geq 0$ and $r\geq 1$, we introduce the Sobolev-Slobodeckij space:
\begin{equation}\label{Eq:SobolevSlobodeckij}
\cW^{\eta,r} := \Big\{f\in\rL^r([0,1]),\,\int_{[0,1]}|f(x)|^rdx + \int_{[0,1]^2}\frac{|f(x)-f(y)|^r}{|x-y|^{\eta r +1}}dx\,dy =: \left\|f\right\|_{\cW^{\eta,r}}^r < \infty\Big\}.
\end{equation}
\begin{lemma}\label{Lemma:SlobodeckijWalk}
Fix $r\geq 1$, $\eta \in (0,\frac{1}{2})$ and $p \geq 1$. In \Modun\ and \Munw, we have
\[\sup_{N\geq 1}\piN\Big[\left\|h\right\|_{\cW^{\eta,r}}^p\Big] <\infty.\]
In \Mdeux, the same holds true for both $\hun,\hde$.
\end{lemma}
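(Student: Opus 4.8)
I would deduce everything from the single uniform increment estimate
\[ \sup_{N\geq1}\;\piN\big[\,|h(x)-h(y)|^{q}\,\big]\;\leq\;\tc_q\,|x-y|^{q/2},\qquad x,y\in[0,1], \]
valid for every even integer $q$ (and in \Mdeux\ for $\hun$, $\hde$ separately), with $\tc_q$ independent of $N,x,y$. Granting it, fix $\eta\in(0,\tfrac12)$, $r\geq1$, $p\geq1$ and pick an even integer $q\geq\max(r,p)$; by Jensen the estimate holds with $r$ in place of $q$. By \eqref{Eq:SobolevSlobodeckij} and Tonelli,
\[ \piN\big[\|h\|_{\cW^{\eta,r}}^{r}\big]=\int_{[0,1]}\piN\big[|h(x)|^{r}\big]\,dx+\int_{[0,1]^{2}}\frac{\piN\big[|h(x)-h(y)|^{r}\big]}{|x-y|^{\eta r+1}}\,dx\,dy, \]
the first term bounded by $\piN[\|h\|_{\cC}^{r}]<\infty$ (Lemma \ref{LemmaCVInvSymm}), the second by $\tc_q^{r/q}\iint_{[0,1]^{2}}|x-y|^{r/2-\eta r-1}dx\,dy<\infty$ since $r/2-\eta r-1>-1\iff\eta<\tfrac12$; this gives $\sup_N\piN[\|h\|_{\cW^{\eta,r}}^{r}]<\infty$. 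If $p\leq r$, Jensen applied to the concave map $t\mapsto t^{p/r}$ finishes it. If $p>r$, I would write the Gagliardo seminorm as the $L^{r}([0,1]^{2})$-norm of $(x,y)\mapsto|x-y|^{-\eta-1/r}|h(x)-h(y)|$ and apply Minkowski's integral inequality (valid since $p\geq r$), reducing $\piN[\|h\|_{\cW^{\eta,r}}^{p}]$ to a constant times $\iint_{[0,1]^{2}}|x-y|^{-\eta p-1}\piN[|h(x)-h(y)|^{p}]\,dx\,dy$ plus $\piN[\|h\|_{\cC}^{p}]$, again finite uniformly in $N$ by the increment estimate and Lemma \ref{LemmaCVInvSymm}. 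So the whole statement reduces to the displayed inequality.

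\textbf{Increment estimate, \Modun.} Since $h$ is affine on each $[(k-1)_{\tiN},k_{\tiN}]$ with slopes $\pm\sqrt{2N}$, for $|x-y|\leq(2N)^{-1}$ one has $|h(x)-h(y)|\leq\sqrt{2N}|x-y|\leq|x-y|^{1/2}$, and for $|x-y|\geq(2N)^{-1}$ one may replace $x,y$ by the nearest lattice points with error $\leq\sqrt{2N}(2N)^{-1}\leq|x-y|^{1/2}$; hence it suffices to control $\piN[|h(i_{\tiN})-h(j_{\tiN})|^{q}]$ for $0\leq i<j\leq2N$. With $S_{k}:=\sqrt{2N}\,h(k_{\tiN})$, under $\piNMun$ the path $(S_{k})$ is a uniformly chosen $\pm1$ bridge of length $2N$ and the claim becomes $\bbE[(S_{j}-S_{i})^{q}]\leq\tc_q(j-i)^{q/2}$. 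Using the symmetry $k\mapsto2N-k$ and splitting $[i,j]$ at $N$, assume $j\leq N$; then, by the Markov property and Bayes, the density of the bridge law against the unconditioned simple-random-walk law on $\sigma(S_{0},\dots,S_{j})$ is $p_{2N-j}(S_{j})/p_{2N}(0)$, which the local central limit theorem bounds by an absolute constant for $j\leq N$. Hence $\bbE_{\mathrm{bridge}}[(S_{j}-S_{i})^{q}]\leq C\,\bbE_{\mathrm{SRW}}[(S_{j}-S_{i})^{q}]\leq C\tc_q(j-i)^{q/2}$ by the classical moment bound for sums of bounded i.i.d.\ increments.

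\textbf{\Munw, \Mdeux, and the main difficulty.} For \Munw, $(S_{k})$ is a uniform $\pm1$ excursion; conditioning on $S_{i}=a\geq0$ and assuming $j\leq N$, the conditional law of $(S_{i},\dots,S_{j})$ is that of the walk from $a$ conditioned to stay $\geq0$ and return to $0$ at time $2N$, with density against the walk from $a$ killed below $0$, on $\sigma(S_{i},\dots,S_{j})$, equal to $H(j,S_{j})/H(i,a)$ where $H(k,\cdot)\asymp(\,\cdot+1)N^{-3/2}e^{-(\,\cdot\,)^{2}/cN}$ (ballot/Bessel-$3$ asymptotics). Bounding the Gaussian factor by $1$, writing $(S_{j}+1)/(a+1)=1+(S_{j}-S_{i})/(a+1)$, and using the ballot-type bound $\bbE_{\mathrm{SRW}_{a}}[\,|S_{j}-S_{i}|^{q+1}\mathbf{1}_{\{S\geq0\text{ on }[i,j]\}}\,]\leq C_{q}(a+1)(j-i)^{q/2}$, one obtains $\bbE[(S_{j}-S_{i})^{q}\mid S_{i}=a]\leq C_{q}e^{a^{2}/cN}(j-i)^{q/2}$; integrating in $a$ with the uniform sub-Gaussian moment $\sup_{N}\piN[e^{\lambda\|h\|_{\cC}^{2}}]<\infty$ for small $\lambda$ (obtainable by the method of \cite{KhorunzhiyMarckert09}) gives the estimate. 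For \Mdeux\ I would argue similarly, either via the Karlin--McGregor / Lindstr\"om--Gessel--Viennot determinant for the non-crossing pair and the Bessel-type local limit theorem for the resulting Doob transform, or via the substitution $s=\tfrac12(\hun+\hde)$, $d=\tfrac12(\hun-\hde)$ of the proof of Lemma \ref{LemmaCVInvSymm}, which conditionally turns $d$ into an excursion and $s$ into a bridge, the only extra ingredient being control of the (hypergeometric, hence concentrated) time change induced by the inserted constant steps. The hard part will be this last step: naively comparing the constrained walk with the free walk loses a power of $N$ because of the entropic repulsion of the excursion (resp.\ of the pair of interfaces), so one must use the sharp ballot asymptotics, whose $N^{-3/2}$ normalisation is exactly what cancels the push-up and keeps all constants uniform in $N$.
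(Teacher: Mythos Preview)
Your reduction to a uniform increment bound $\piN[|h(x)-h(y)|^{q}]\leq\tc_q|x-y|^{q/2}$ is a perfectly valid route, and for \Modun\ your comparison with the unconditioned walk via the Radon--Nikodym derivative is exactly what the paper does (the paper packages it as a bound on $\piN[\exp(|h(x)-h(y)|/|x-y|^{\alpha})]$ rather than on polynomial moments, but the mechanism is the same). One small slip: in your Minkowski step for $p>r$ the exponent on $|x-y|$ should remain $-\eta r-1$ and the moment should appear as $\piN[|h(x)-h(y)|^{p}]^{r/p}$; the conclusion is unaffected.

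Where your proposal diverges from the paper is for \Munw\ and \Mdeux, and in both cases you are working harder than necessary. For \Munw\ the paper simply invokes the Vervaat transform: a uniform excursion is a cyclic shift of a uniform bridge, and an increment of the excursion over $[x,y]$ equals either a single bridge increment of the same length or the sum of two bridge increments whose lengths add up to $|x-y|$ (when the shift wraps around); in either case the bridge bound transfers directly. Your route via ballot/Bessel--$3$ asymptotics can be made to work, but it produces the factor $e^{a^{2}/cN}=e^{O(h(i_{\tiN})^{2})}$, which forces you to assume the sub-Gaussian moment $\sup_{N}\piN[e^{\lambda\|h\|_{\cC}^{2}}]<\infty$. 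That is strictly stronger than the exponential moment supplied by Lemma~\ref{LemmaCVInvSymm}; it is true, but it is an extra non-trivial input that the Vervaat argument avoids entirely.

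For \Mdeux\ you correctly reach the $(s,d)$ decomposition, but you misidentify the difficulty. The time change $\phi$ taking $s$-time to $\tilde s$-time satisfies the \emph{deterministic} bound $|\phi(x)-\phi(y)|\leq\tfrac{N}{n}|x-y|$ (each step of $s$ corresponds to at most one step of $\tilde s$). Since $|s(x)-s(y)|=\sqrt{n/N}\,|\tilde s(\phi(x))-\tilde s(\phi(y))|$, one gets
\[
\frac{|s(x)-s(y)|}{|x-y|^{\alpha}}\leq (n/N)^{1/2-\alpha}\,\frac{|\tilde s(\phi(x))-\tilde s(\phi(y))|}{|\phi(x)-\phi(y)|^{\alpha}},
\]
and $(n/N)^{1/2-\alpha}\leq1$ because $\alpha<\tfrac12$. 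So the bound for $\tilde s$ (a bridge of length $2n$) passes to $s$ with no probabilistic control of the time change whatsoever; the hypergeometric concentration and the ``sharp ballot asymptotics'' you anticipate are not needed. The same observation handles $d$ via $\tilde d$ and the \Munw\ case.
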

\noindent This result can be seen as a uniform bound on the $\eta$-H\"older regularity of $h$ under $\piN$.
\begin{proof}
We start with \Modun. Fix $\epsilon \in (0,1/3)$ and set $D_\epsilon:=\{0\leq x,y \leq 1: |y-x| \leq \epsilon\}$. For any $h\in\cCMun$, we have
\begin{eqnarray}\label{Eq:IneqSloboWalk}
\int_{[0,1]}|h(x)|^rdx &\leq& \left\|h\right\|_{\cCMun}^r,\\\nonumber
\int_{[0,1]^2}\frac{|h(x)-h(y)|^r}{|x-y|^{\eta r +1}}dx\,dy &\leq& 2\left\|h\right\|_{\cCMun}^r\int_{[0,1]^2\backslash D_\epsilon}\frac{dx\,dy}{|x-y|^{\eta r +1}}+\int_{D_\epsilon}\frac{|h(x)-h(y)|^r}{|x-y|^{\eta r +1}}dx\,dy.
\end{eqnarray}
Since the exponential moments of the supremum norm of $h$ under $\piNMun$ are uniformly bounded in $N\geq 1$ thanks to Lemma \ref{LemmaCVInvSymm}, we only need to bound the moments of the second term on the r.h.s.~of the second line of (\ref{Eq:IneqSloboWalk}). Fix $\delta \in (0,1)$ such that $\eta+\frac{\delta}{r}<\frac{1}{2}$. Using Jensen's inequality in the first line and the existence of $c>0$ such that for all $x\in\bbR$, $|x|^{pr}\leq c e^{|x|}$ in the second line, we obtain
\begin{eqnarray*}
\Big(\int_{D_\epsilon}\frac{|h(x)-h(y)|^r}{|x-y|^{\eta r +\delta}}\frac{dx\,dy}{|x-y|^{1-\delta}}\Big)^p &\leq& C_{\epsilon,\delta}^{p-1}\int_{D_\epsilon}\Big(\frac{|h(x)-h(y)|^r}{|x-y|^{\eta r +\delta}}\Big)^p \frac{dx\,dy}{|x-y|^{1-\delta}}\\
&\leq& c\,C_{\epsilon,\delta}^{p-1}\int_{D_\epsilon}\exp\Big(\frac{|h(x)-h(y)|}{|x-y|^{\eta +\frac{\delta}{r}}}\Big)\frac{dx\,dy}{|x-y|^{1-\delta}}
\end{eqnarray*}
where $C_{\epsilon,\delta}:=\int_{D_\epsilon}\frac{dx\,dy}{|x-y|^{1-\delta}}$. Therefore we have
\[\piNMun\bigg[\Big(\int_{D_\epsilon}\frac{|h(x)-h(y)|^r}{|x-y|^{\eta r +1}}dx\,dy\Big)^p\bigg] \leq c\,C_{\epsilon,\delta}^{p-1}\int_{D_\epsilon}\piNMun\Big[\exp\Big(\frac{|h(x)-h(y)|}{|x-y|^{\eta  +\frac{\delta}{r}}}\Big)\Big] \frac{dx\,dy}{|x-y|^{1-\delta}}.\]
We need to bound the integrand in the right side. We denote by $\gamma_{\tiN}$ the probability measure induced on the space of continuous functions on $[0,1]$ by a simple random walk starting from $0$ and making $2N$ steps (and rescaled diffusively as usual). Notice that this random walk is not conditioned to come back to $0$ nor to stay non-negative. By the independence of the increments of the simple random walk and since $\eta+\frac{\delta}{r}<\frac{1}{2}$, one obtains easily:
\begin{equation}\label{Eq:Gamma}
\sup_{N\geq 1}\sup_{x\ne y}\gamma_{\tiN}\Big[\exp\Big(\frac{|h(x)-h(y)|}{|x-y|^{\eta +\frac{\delta}{r}}}\Big)\Big]<\infty.
\end{equation}
Now observe that for every $N\geq 1$, every $k\in\lbr 1,2N\rbr$ and every $h\in\cCNMun$, we have
\[ {\frac{d\piNMun}{d\gamma_{\tiN}}}\,_{|\cF_{\frac{k}{2N}}}(h) = \frac{\piNMun\Big(\big\{h':\forall i\in\lbr 1,k\rbr, h'(i_{\tiN})=h(i_{\tiN})\big\}\Big)}{\gamma_{\tiN}\Big(\big\{h':\forall i\in\lbr 1,k\rbr, h'(i_{\tiN})=h(i_{\tiN})\big\}\Big)}=2^{k}\binom{2N-k}{N-\frac{k +\sqrt{2N}h(k_{\tiN})}{2}}\binom{2N}{N}^{-1},\]
where $\cF_{x}$ is the sigma-algebra generated by $(h(y),y\in[0,x])$. The maximum of this quantity is reached when $|h(k_{\tiN})|$ equals $0$ or $1$ according as $k$ is even or odd. Stirling's formula then yields
\begin{equation}\label{Eq:BoundRadonNikodymWalks}
\sup_{N\geq 1}\sup_{h\in\cCNMun}{\frac{d\piNMun}{d\gamma_{\tiN}}}\,_{|\cF_{\frac{2}{3}}}(h) < \infty.
\end{equation}
For any $(x,y)\in D_\epsilon$, at least one of these two assertions is satisfied: $x,y\in[0,\frac{2}{3}]$ or $x,y\in[\frac{1}{3},1]$. Since $x\mapsto h(1-x)$ and $x\mapsto h(x)$ have the same distribution under $\piNMun$, and using (\ref{Eq:Gamma}) and (\ref{Eq:BoundRadonNikodymWalks}) we obtain
\[\sup_{N\geq 1}\sup_{(x,y)\in D_\epsilon}\piNMun\Big[\exp\Big(\frac{|h(x)-h(y)|}{|x-y|^{\eta +\frac{\delta}{r}}}\Big)\Big]<\infty.\]
Consequently the lemma is proved under $\piNMun$. Using the Vervaat transform~\cite{Vervaat79} that maps a bridge onto an excursion, this result can easily be extended to $\piNMunw$. Finally in \Mdeux\ the result follows by using the decomposition $s:=\frac{\hun+\hde}{2}$ and $d:=\frac{\hun-\hde}{2}$ introduced in the proof of Lemma \ref{LemmaCVInvSymm}.\cqfd
\end{proof}

\subsection{A large deviation result}\label{SubsectionSuperExpo}
For the proof of Theorems \ref{ThCVModel1}, \ref{ThCVModel1w} and \ref{ThCVModel2}, we will need a uniform estimate on the probability that the interface locally looks like an unconditioned simple random walk. This estimate is originally due to Kipnis, Olla and Varadhan~\cite{KOV89} (see also~\cite{VaradhanLectureNotes}) in the case where the lattice is the torus $\bbZ/\lbr 1,2N\rbr$. In order to state the estimate, we need to introduce some notation. We set $\cON:=\{0,1\}^{2N}$. For every $j\in \bbZ$, we denote by $\tau_j$ the shift by $j$ modulo $2N$ on $\cON$ which is defined as follows. For all $\eta\in\cON$ and all $i\in\lbr 1,2N\rbr$, $\tau_j\eta(i)=\eta(i+j)$ where $i+j$ is taken modulo $2N$. Consider an integer $l\geq 1$ and a map $\Phi:\{0,1\}^l\rightarrow\bbR$. Whenever $2N\geq l$ and for every $\eta\in\cON$, we extend $\Phi$ into a map from $\cON$ into $\bbR$ by setting $\Phi(\eta):=\Phi(\eta(1),\ldots,\eta(l))$. Consequently $\Phi$ is a map from $\cON$ into $\bbR$ that only depends on a fixed number of sites. We also introduce the map $\tilde{\Phi}$ as follows
\[ \tilde{\Phi}(a) = \sum_{\eta\in\cON}\Phi(\eta)a^{\#\{i:\eta(i)=1\}}(1-a)^{\#\{i:\eta(i)=0\}},\;\;\;a \in [0,1].\]
This can be viewed as the expectation of $\Phi$ under the product of $2N$ Bernoulli measures with parameter $a$. Recall from the introduction the definition of the space of particle configurations $\cENMun$ associated to $\cCNMun$. Similarly, we define $\cENMunw$ as the subset of $\cON$ whose elements $\eta$ have $N$ occurrences of $1$ and satisfy the following wall condition:
\[\forall k\in\lbr 1,2N\rbr,\;\;\; \sum_{i=1}^k\eta(i) \geq \frac{k}{2}.\]
Finally we set $\cENMde$ as the set of pairs $\etaun,\etade$ which both belong to $\cENMun$ and satisfy the following non-crossing condition:
\[\forall k\in\lbr 1,2N\rbr,\;\;\; \sum_{i=1}^k(\etaun(i)-\etade(i)) \geq 0.\]
Then we set for every $k\in\lbr 1,2N\!-\!1\rbr$ and every element $\eta$ of $\cENMun$ or $\cENMunw$,
\[ \nabla \eta(k) := \eta(k+1)-\eta(k) \in \{-1;0;1\}.\]
In \Mdeux, we define the same notation for $\etaun$ and $\etade$. Observe that $\nabla\eta$ is the counterpart of $\Delta h$. In any of the three models, the correspondence between $\cCN$ and $\cEN$ allows us to define a process $\upeta:=(\upeta_t,t\geq 0) \in \bbD([0,+\infty),\cEN)$ under the measure $\bbQN_{\nuN}$, where $\nuN$ is a distribution on $\cCN$.
\begin{theorem}\label{ThSuperExpo}(Large deviation)
For any initial distribution $\nuN$ and for every $\delta > 0,t >0$,
\[ \varlimsup_{\epsilon\downarrow 0}\varlimsup_{N\rightarrow\infty}\frac{1}{N}\log\bbQN_{\nuN}\Big(\frac{1}{N}\int_0^t\VNe(\upeta_s)ds > \delta\Big) = -\infty\]
where in \Modun\ and \Munw\ for all $\epsilon > 0$
\[ \VNe(\eta) = \sum_{i=1}^{2N}\bigg|\frac{1}{2\epsilon N + 1}\!\!\sum_{j:|i-j|\leq \epsilon N}\!\!\!\!\Phi(\tau_j\eta)-\tilde{\Phi}\Big(\frac{1}{2\epsilon N + 1}\!\sum_{j:|i-j|\leq \epsilon N}\!\!\eta(j)\Big)\bigg|\]
and in \Mdeux, $\VNe(\eta)$ is taken to be the sum of the same quantities for $\etaun$ and $\etade$.
\end{theorem}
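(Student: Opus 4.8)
The plan is first to dispose of the arbitrary initial law $\nuN$. Since $\bmuN$ has full support on the finite set $\cCN$, the density of $\bbQN_{\nuN}$ with respect to $\bbQN_{\bmuN}$ is $\frac{d\nuN}{d\bmuN}(\upeta_0)\le(\min_{h\in\cCN}\bmuN(h))^{-1}$. By Proposition~\ref{PropInvMeasure} together with the crude bound $|(2N)^{\frac32}\ccA_{\tiN}(h)|\le C\|h\|_{\cC}\le C\sqrt N$ valid on $\cCN$ (each increment of the interface has size $\frac1{\sqrt{2N}}$), one gets $\min_h\bmuN(h)\ge e^{-cN}$ for some $c>0$, hence
\[\frac1N\log\bbQN_{\nuN}\Big(\frac1N\int_0^t\VNe(\upeta_s)\,ds>\delta\Big)\le c+\frac1N\log\bbQN_{\bmuN}\Big(\frac1N\int_0^t\VNe(\upeta_s)\,ds>\delta\Big),\]
so it is enough to prove the statement for $\nuN=\bmuN$.

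\textbf{Feynman--Kac and a variational problem.} Next, for $a>0$, Chebyshev's inequality bounds the probability on the left (with $\nuN=\bmuN$) by $e^{-aN\delta}\,\bbQN_{\bmuN}\big[\exp(a\int_0^t\VNe(\upeta_s)ds)\big]$. As $\bmuN$ is reversible for the dynamics (Proposition~\ref{PropInvMeasure}), the generator $\cG_{\tiN}$ of $\upeta$ is self-adjoint in $L^2(\bmuN)$, so the Feynman--Kac formula and the Rayleigh--Ritz principle give
\[\frac1N\log\bbQN_{\bmuN}\Big[\exp\Big(a\int_0^t\VNe(\upeta_s)ds\Big)\Big]\le\frac tN\,\Lambda_{\tiN,\epsilon}(a),\qquad\Lambda_{\tiN,\epsilon}(a):=\sup_{\langle f^2\rangle_{\bmuN}=1}\Big\{a\langle\VNe,f^2\rangle_{\bmuN}-\cD_{\tiN}(f)\Big\},\]
where $\cD_{\tiN}(f)=\langle(-\cG_{\tiN})f,f\rangle_{\bmuN}\ge0$ is the Dirichlet form of the exclusion dynamics, whose jump rates are of order $(2N)^2$. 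It thus suffices to prove $\varlimsup_{\epsilon\downarrow0}\varlimsup_{N\to\infty}\frac1N\Lambda_{\tiN,\epsilon}(a)\le0$ for every $a>0$; combined with the two displays this yields $\varlimsup_\epsilon\varlimsup_N\frac1N\log\bbQN_{\nuN}(\cdots)\le-a\delta$, and letting $a\to\infty$ concludes.

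\textbf{The one-block estimate.} To control $\Lambda_{\tiN,\epsilon}(a)$ I would carry out the one-block argument of Kipnis--Olla--Varadhan~\cite{KOV89} (see also~\cite{VaradhanLectureNotes}), adapted to the segment and to the constrained state spaces. Write $\VNe=\sum_{i=1}^{2N}V_i$, with $V_i$ depending only on $\eta$ restricted to the box $B_i$ of radius $\epsilon N$ around $i$ (modulo $2N$), and distribute the Dirichlet form: every edge lies in at most $2\epsilon N+1$ of the $B_i$, so $\cD_{\tiN}(f)\ge(2\epsilon N+1)^{-1}\sum_i\cD_{\tiN,B_i}(f)$, which (the box rates being of order $(2N)^2$) turns the variational problem into $\Lambda_{\tiN,\epsilon}(a)\le\sum_i\sup_f\big\{a\langle V_i,f^2\rangle_{\bmuN}-\frac{cN}{\epsilon}\tilde\cD_{B_i}(f)\big\}$, where $\tilde\cD_{B_i}$ is the box Dirichlet form with rates normalised to order one. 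The $O(\epsilon N)$ indices for which $B_i$ meets $\{1,2N\}$ contribute at most $\frac aN\cdot O(\epsilon N)\cdot2\|\Phi\|_\infty=O(a\epsilon)$ and are discarded. For a bulk box one conditions on the configuration outside $B_i$ and on the particle number $\rho$ inside; by convexity this reduces matters to the canonical measure $\nu_{B_i,\rho}$ on $B_i$ (uniform, given $\rho$ particles, among configurations compatible with the exterior and with the wall / non-crossing constraint when present). Using the standard bound $\langle V_i,f^2\rangle_{\nu_{B_i,\rho}}\le\langle V_i\rangle_{\nu_{B_i,\rho}}+\gamma\frac{\mathrm{Var}_{\nu_{B_i,\rho}}(V_i)}{\mathrm{gap}(B_i,\rho)}+\frac1\gamma\tilde\cD_{B_i}(f)$, together with (i) a spectral gap estimate $\mathrm{gap}(B_i,\rho)\ge c(\epsilon N)^{-2}$ for the (constrained) symmetric exclusion on the box and (ii) an equivalence-of-ensembles estimate $\langle V_i\rangle_{\nu_{B_i,\rho}}\to0$ as $N\to\infty$, uniformly in the bulk box and in $\rho$, and choosing $\gamma=\frac{a\epsilon}{cN}$ (recall $\mathrm{Var}_{\nu_{B_i,\rho}}(V_i)=O((\epsilon N)^{-1})$), each bulk term is at most $a\langle V_i\rangle_{\nu_{B_i,\rho}}+O(a^2\epsilon^2)$. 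Summing the at most $2N$ bulk terms gives $\frac1N\Lambda_{\tiN,\epsilon}(a)\le2a\sup_{i,\rho}\langle V_i\rangle_{\nu_{B_i,\rho}}+O(a^2\epsilon^2)+O(a\epsilon)$, and taking $\varlimsup_N$ and then $\varlimsup_\epsilon$ yields the required bound. (In \Mdeux\ the same reasoning is applied to $\etaun$ and $\etade$ separately, the coupling between them entering only through the non-crossing constraint in $\nu_{B_i,\rho}$.)

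\textbf{The main obstacle.} The genuinely new point compared with the torus setting of~\cite{KOV89} is establishing (i) and (ii) for \Munw\ and \Mdeux: one must show that conditioning the exclusion process to keep the associated interface non-negative (resp.\ the pair of interfaces ordered) degrades the $\Theta(\ell^{-2})$ spectral gap on a box of length $\ell$ by at most a constant factor, and that the corresponding constrained canonical measures still satisfy equivalence of ensembles with the product Bernoulli measures. A workable strategy is to compare the constrained box dynamics with the free one by a censoring / monotone-coupling argument, or by a direct variational comparison of Dirichlet forms, together with an equivalence-of-ensembles statement for the uniform measure on non-negative (resp.\ ordered) lattice bridges --- which, away from the boundary of the box, is close to a product measure. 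Finally, the $\frac12$-H\"older regularity of $\sigma$ is used when replacing the weakly asymmetric, spatially inhomogeneous dynamics (and the tilted measure $\bmuN$) by their homogeneous counterparts on each mesoscopic box: the oscillation of $\sigma$ over a box of width $\epsilon$ being $O(\epsilon^{1/2})$, the resulting comparison error stays of order $e^{o(N)}$ and is therefore harmless in the $\frac1N\log$-asymptotics.
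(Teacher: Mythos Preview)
Your reduction to the stationary case and the Feynman--Kac/variational set-up are exactly what the paper does. After that, however, your route and the paper's diverge in two substantial ways, and your version leaves real gaps.

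\textbf{One-block/two-blocks versus a single spectral-gap step.} The paper does \emph{not} work at scale $\epsilon N$ directly. It first introduces an auxiliary microscopic scale $k$ (fixed, then $k\to\infty$ after $N\to\infty$) and splits $\VNe$ into a one-block part (replace $\Phi$ averaged over a box of size $2k{+}1$ by $\tilde\Phi$ of the density there) and a two-blocks part (compare the density at scale $k$ with the density at scale $\epsilon N$). Both steps use only a Dirichlet-form comparison and a compactness argument: one shows that the supremum over $\{f:\DN(f)\le cN\}$, after projection, is attained at densities with \emph{zero} Dirichlet form on the box, i.e.\ at convex combinations of canonical measures, and then the local CLT/equivalence of ensembles closes the argument. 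No quantitative spectral-gap input is needed, and in particular the inequality you invoke,
\[
\langle V_i,f^2\rangle \le \langle V_i\rangle+\gamma\,\frac{\mathrm{Var}(V_i)}{\mathrm{gap}}+\frac1\gamma\,\tilde\cD_{B_i}(f),
\]
is not a standard identity (it does not follow from the spectral gap because $\cD(f^2)$ is not controlled by $\cD(f)$), and you would need to justify it or replace it by the compactness argument. Your one-step scheme at scale $\epsilon N$ also forces you to prove (i) a spectral gap $\gtrsim(\epsilon N)^{-2}$ for the \emph{constrained} box dynamics and (ii) equivalence of ensembles for the \emph{constrained} canonical measures. You correctly flag these as the main obstacle and sketch possible attacks, but none is carried out; as it stands this is the genuine gap in the proposal.

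\textbf{How the paper handles the constraints and the asymmetry.} This is the main idea you are missing. The paper first passes from the asymmetric dynamics $\bbQN$ to the \emph{symmetric} dynamics $\bbPN$ via the explicit Radon--Nikodym derivative (this is where the $\tfrac12$-H\"older regularity of $\sigma$ is actually used: the summation by parts of $\sum_k(\pN(k_{\tiN})-\tfrac{(2N)^2}{2})(\eta(k{+}1)-\eta(k))$ produces increments $\pN((k{-}1)_{\tiN})-\pN(k_{\tiN})$ whose sum is $O(N)$ only because $\sigma$ is $\tfrac12$-H\"older). Under $\bbPN_{\piN}$ the reversible measure is uniform, and---crucially---the Dirichlet form can be rewritten so that the wall/non-crossing constraint becomes \emph{implicit} (one keeps only the transitions with $\nabla\eta(k)=+1$, using reversibility). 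Then, when one projects $f$ onto a small box $\lbr i-k,i+k\rbr$, the comparison is with the Dirichlet form of the \emph{unconstrained} simple exclusion on $\{0,1\}^{2k+1}$, and the canonical measures that appear are the ordinary uniform measures on $\cO_{2k+1,l}$, for which equivalence of ensembles is classical. In other words, the paper never proves a spectral gap or an equivalence of ensembles for the constrained models; it removes the constraint at the level of the Dirichlet-form comparison. Your proposal to ``compare the constrained box dynamics with the free one by censoring/monotone coupling'' is aiming at the same endpoint, but the paper's mechanism is much more direct and is the step you should supply. Your use of the H\"older hypothesis (local homogenisation on mesoscopic boxes) is also different from, and vaguer than, the global $\bbQN/\bbPN$ change of measure that the paper performs.
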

\noindent In the statement of the theorem, all the integers are taken modulo $2N$.
\begin{remark}
It may seem surprising that we need such a super-exponential estimate, rather than just the convergence to $0$ of the probability of the event above. Actually the result is first established under the invariant measure, and then extended to the general case via the Radon-Nikodym derivative w.r.t.~the stationary case. Since this derivative is bounded by a term of order $e^{cN}$, a super-exponential decay allows us to compensate the derivative.
\end{remark}
The structure of the proof is very similar to that of~\cite{KOV89} but some key arguments need to be significantly modified since our state-space is no longer translation invariant and since we have added interaction with a wall in \Munw\ (resp.~between two interfaces in \Mdeux). Below, we describe the method of proof for the three models simultaneously. We denote the generator of our process by $\LN$. For instance, in \Munw\ this is the operator acting on maps $f$ from $\cENMunw$ into $\bbR$ as follows:
\[ \LNMunw f(\eta) := \sum_{k=1}^{2N-1}\big(f(\eta^{k,k+1})-f(\eta)\big)\big(\pN(k_{\tiN})\tun_{\{\nabla\eta(k)=1\}}-\qN(k_{\tiN})\tun_{\{\nabla\eta(k)=-1;\eta^{k,k+1}\in\cENMunw\}}\big),\]
where $\eta^{k,k+1}$ is obtained from $\eta$ by exchanging the values of $\eta(k)$ and $\eta(k+1)$. The condition $\eta^{k,k+1}\in\cENMunw$ in the formula expresses the wall condition.\\
We can associate to $\VNe$ a diagonal operator acting on maps $f$ from $\cEN$ into $\bbR$ as follows:
$$ \forall\eta\in\cEN,\;\; \VNe f(\eta) := \VNe(\eta) f(\eta).$$
Recall that $\bmuN$ is the reversible measure associated with the dynamics. We consider the Hilbert space $\rL^2(\cE_{\tiN},\bmuN)$ of square-integrable maps on $\cE_{\tiN}$ w.r.t. the measure $\bmuN$. Fix $a\in\bbR$. The operator $\LN+a\VNe$ is self-adjoint in $\rL^2(\cE_{\tiN},\bmuN)$; we denote by $\lambdaNe(a)$ its largest eigenvalue. The Feynman-Kac formula (see for instance Appendix 1 - Lemma 7.2 in the book of Kipnis and Landim~\cite{KipnisLandimBook}) ensures that for all $t\geq 0$
$$ \bbQN_{\bmuN}\bigg[\exp\Big(a\int_0^t\VNe(\upeta_s)ds\Big)\bigg] \leq \exp\big(t\lambdaNe(a)\big).$$
For $a>0$, the Markov inequality implies
$$ \frac{1}{N}\log \bbQN_{\bmuN}\Big(\frac{1}{N}\int_0^t\VNe(\upeta_s)ds \geq \delta\Big) \leq t\frac{\lambdaNe(a)}{N}-\delta a.$$
Consequently it suffices to show that for all $a>0$, $\varlimsup_{\epsilon\downarrow 0}\varlimsup_{N\rightarrow\infty}N^{-1}\lambdaNe(a)=0$ in order to prove the theorem under the stationary measure. Let us denote by $\DN$ the Dirichlet form associated with $\LN$. For instance, in \Munw\ this is the operator acting on maps $f\geq 0$ as follows:
\[ \DNMunw(f) \!:=\!\!\!\!\!\!\sum_{\eta\in\cENMunw}\!\!\!\!\frac{\bmuN(\eta)}{2}\sum_{k=1}^{2N-1}\!\!\Big(\sqrt{f(\eta^{k,k+1})}-\sqrt{f(\eta)}\Big)^2\!
\Big(\pN(k_{\tiN})\tun_{\{\nabla\eta(k)=1\}}+\qN(k_{\tiN})
\tun_{\Big\{\substack{\nabla\eta(k)=-1\\\eta^{k,k+1}\in\cENMunw}\Big\}}\Big).\]
The condition $\eta^{k,k+1}\in\cENMunw$ is due to the wall condition. Using the reversibility of $\bmuN$, we have the identity $\bmuN(\eta)\pN(k_{\tiN}) = \bmuN(\eta^{k,k+1})\qN(k_{\tiN})$ whenever $\nabla\eta(k)=+1$, thus we can rewrite the Dirichlet form in such a way that this wall condition becomes implicit:
\[\DNMunw(f) :=
\sum_{\eta\in\cENMunw}\bmuN(\eta)\sum_{k=1}^{2N-1}\Big(\sqrt{f\big(\eta^{k,k+1}\big)}-\sqrt{f(\eta)}\Big)^2\pN(k_{\tiN})\tun_{\{\nabla \eta(k)=1\}}.\]
The same trick can be applied in \Mdeux, see Formula (\ref{Eq:DirichletMdeux}). This is an important remark for the proof. Let us come back to the general case. A simple calculation together with the classical formula for the largest eigenvalue of a symmetric matrix yields
\begin{equation}\label{Eq:EigenValue}
\lambdaNe(a) = \sup_{f\geq 0,\bmuN[f]=1}\Big(a\sum_{\eta}\VNe(\eta)\bmuN(\eta)f(\eta)-\DN(f) \Big),
\end{equation}
where the supremum is taken over all non-negative maps $f$ on $\cEN$ such that $\sum_\eta \bmuN(\eta)f(\eta)=1$. From now on, $f$ will always be of this form. As $\VNe$ is uniformly bounded by $c'N$ for a certain constant $c'\! >\! 0$, it suffices to show that for all $c>0$
\begin{equation}\label{EqObjectiveSuperExpo}
\varlimsup\limits_{\epsilon\downarrow 0}\varlimsup\limits_{N\rightarrow\infty}\sup_{f:\DN(f)\leq cN}\frac{1}{N}\sum_{\eta\in\cEN}\VNe(\eta)\bmuN(\eta)f(\eta)=0
\end{equation}
in order to prove the theorem under the stationary measure. We have chosen to provide a complete proof in Appendix \ref{AppendixSuperExpo} that works both for \Modun\ and \Munw. It can be adapted easily to \Mdeux\ by adding extra terms.

\section{Tightness}\label{Section:Tightness}
The goal of this section is to show tightness of the sequence $\bbQN$ in order to prove Theorems \ref{ThCVModel1}, \ref{ThCVModel1w} and \ref{ThCVModel2}. Even though the state-spaces differ according to the models at stake, the methodology of proof is the same. In \Munw\ and \Mdeux, the definition of the topology on $\bbM$ and the tightness of the random measure(s) is postponed to Subsection \ref{Subsection:TightnessZeta}. Let us just note that we will define a metric on $\bbM$ that makes it a Polish space. Recall that in these two models, we consider the product topology on $\bbD([0,\infty),\cCMunw)\times\bbM$ (respectively on $\bbD([0,\infty),\cCMde)\times\bbM\times\bbM$), so that we can show separately the tightness of $\rmh$ and the tightness of $\zeta$ (respectively of $\zetaun$, $\zetade$).

\subsection{Tightness of \texorpdfstring{$\rmh$}{h}}
To alleviate the notation, we take $\nuN$ equal to the stationary measure $\bmuN$ whenever we deal with \Munw\ and \Mdeux. When we use the generic symbols $\cCN$, $\cC$ and $\bQ$ without superscript, we mean that our results apply indifferently to any model. Tightness of $\rmh$  will follow from the following two properties (see for instance Billingsley~\cite{Billingsley_CVPM}):
\begin{enumerate}[(i)]
\item\label{Tightness1} the sequence $(\nuN,N\geq 1)$ of measures on $\cC$ is tight; and
\item\label{Tightness2} for every $T>0$ we have $\lim\limits_{\beta\downarrow 0}\varlimsup\limits_{N\rightarrow\infty}\bbQN_{\nuN}\Bigg[\sup\limits_{\substack{s,t\in[0,T]\\|t-s|<\beta}}\left\|\rmh_t-\rmh_s\right\|_{\cC}^2\Bigg] = 0$.
\end{enumerate}
Property (\ref{Tightness1}) is actually an hypothesis in our theorems. To show Property (\ref{Tightness2}) we would like to prove that the process $t\mapsto\rmh_t$ is H\"older in space. As this process is not continuous in time, we actually consider its time interpolation $\brmh$ defined as
\begin{equation}\label{EqInterpol}
\brmh_t(x) := \Big(\left\lfloor t(2N)^2 \right\rfloor\!+\!1-t(2N)^2\Big)\rmh_{\frac{\left\lfloor t(2N)^2 \right\rfloor}{(2N)^2}}(x) + \Big(t(2N)^2-\left\lfloor t(2N)^2 \right\rfloor\Big)\rmh_{\frac{\left\lfloor t(2N)^2 \right\rfloor+1}{(2N)^2}}(x),
\end{equation}
which we prove to be H\"older continuous in time. First, we show that the difference between $\rmh$ and $\brmh$ vanishes as $N\rightarrow\infty$.
\begin{lemma}\label{LemmaBdelta}
For all $p>6$ we have $ \lim\limits_{N\rightarrow\infty}\bbQN_{\nuN}\bigg[\sup\limits_{t\in[0,T]}\left\|\brmh_t-\rmh_t\right\|_{\cC}^p\bigg]=0$.
\end{lemma}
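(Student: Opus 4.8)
The plan is to control the supremum over $t\in[0,T]$ of $\|\brmh_t-\rmh_t\|_{\cC}^p$ by reducing it to the increments of $\rmh$ over single microscopic time steps of length $(2N)^{-2}$, and then to estimate these increments via the underlying Poisson noise. First I would observe that, by the very definition \eqref{EqInterpol}, on each interval $t\in\big[\frac{m}{(2N)^2},\frac{m+1}{(2N)^2}\big]$ the interpolant $\brmh_t$ is the linear interpolation between $\rmh_{m/(2N)^2}$ and $\rmh_{(m+1)/(2N)^2}$, while $\rmh_t$ is constant equal to $\rmh_{m/(2N)^2}$ (it is a jump process); hence for such $t$,
\[ \|\brmh_t-\rmh_t\|_{\cC} \;\leq\; \big\|\rmh_{(m+1)/(2N)^2}-\rmh_{m/(2N)^2}\big\|_{\cC}, \]
so that $\sup_{t\in[0,T]}\|\brmh_t-\rmh_t\|_{\cC}^p \leq \max_{0\leq m < T(2N)^2} \|\rmh_{(m+1)/(2N)^2}-\rmh_{m/(2N)^2}\|_{\cC}^p$. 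Taking expectations and bounding the maximum by the sum,
\[ \bbQN_{\nuN}\Big[\sup_{t\in[0,T]}\|\brmh_t-\rmh_t\|_{\cC}^p\Big] \;\leq\; \sum_{0\leq m < T(2N)^2} \bbQN_{\nuN}\Big[\big\|\rmh_{(m+1)/(2N)^2}-\rmh_{m/(2N)^2}\big\|_{\cC}^p\Big]. \]
Thus it suffices to show that each term is $o\big(N^{-2}\big)$ uniformly in $m$, since there are $O(N^2)$ of them.

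Next I would estimate a single microscopic increment. Over a time interval of length $(2N)^{-2}$, the total number of Poisson clock rings among the $2(2N-1)$ relevant processes is, by a union bound over the $2N-1$ sites (each driven by rates summing to $(2N)^2$), stochastically dominated by a Poisson variable of parameter $O\big((2N)^2 \cdot N \cdot (2N)^{-2}\big) = O(N)$ — wait, more carefully: there are $O(N)$ sites, each firing at total rate $(2N)^2$, so the expected number of rings in time $(2N)^{-2}$ is $O(N \cdot (2N)^2 \cdot (2N)^{-2}) = O(N)$; this is far too large. The correct observation is that at any fixed site $k$, only the clock whose indicator is currently active can cause a jump, so the effective jump rate \emph{at a given site} is $\leq (2N)^2$, and the increment of $\rmh(k_{\tiN})$ over one step has a jump of size $2/\sqrt{2N}$ at rate $\leq (2N)^2$; the probability that site $k$ jumps \emph{at all} in time $(2N)^{-2}$ is $\leq (2N)^2 (2N)^{-2} = 1$, which is also useless pointwise. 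The right route is to note that $\|\rmh_{(m+1)/(2N)^2}-\rmh_{m/(2N)^2}\|_\cC \leq \frac{2}{\sqrt{2N}} J_m$ where $J_m$ is the total number of clock rings (across all sites) that actually produce a jump in that time interval, and by stationarity (or, for \Modun, by the initial-regularity hypothesis together with the $L^p$ bound on increments already implicit in Lemma \ref{Lemma:SlobodeckijWalk}-type estimates) $J_m$ has all moments bounded by those of a Poisson$(\lambda_N)$ with $\lambda_N = O(N)$... this still gives $\bbQN[\|\cdot\|_\cC^p] = O(N^{-p/2} N^p) = O(N^{p/2})$ per term, hence $O(N^{2+p/2})$ total — diverging. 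I therefore expect the actual argument to exploit \emph{cancellation}: the increment $\rmh_{(m+1)/(2N)^2}(x)-\rmh_{m/(2N)^2}(x)$ is a compensated-Poisson-type quantity whose $L^p$ norm, by Burkholder--Davis--Gundy (the macro \tcBDG\ is reserved for this) applied to the martingale part plus a bounded-variation drift of order $(2N)^{-2}\cdot \sigma = O(N^{-2})$, scales like the square root of the quadratic variation accumulated in time $(2N)^{-2}$, namely $\big((2N)^{-2}\cdot (2N)^2 \cdot (2N)^{-1}\big)^{1/2}=O(N^{-1/2})$ \emph{per site}, and then a further factor from taking the supremum over the $O(N)$ sites costs only a power of $\log N$ after an $L^p$-to-$L^\infty$ chaining bound. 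This yields $\bbQN[\|\rmh_{(m+1)/(2N)^2}-\rmh_{m/(2N)^2}\|_\cC^p] = O\big(N^{-p/2}(\log N)^{p}\big)$ per term, hence the full sum is $O\big(N^{2-p/2}(\log N)^p\big)$, which tends to $0$ precisely when $p>6$ is replaced by $p>4$; the stated threshold $p>6$ presumably comes from a cruder Kolmogorov-type argument combining the $p$-th moment increment bound with a space-chaining over the $2N$ lattice sites that costs an extra $N^{1/p}\cdot$(something), and I would follow whichever of these bookkeeping routes the paper's earlier estimates make cleanest.

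The main obstacle, then, is the quantitative moment bound on a single microscopic increment $\rmh_{(m+1)/(2N)^2}-\rmh_{m/(2N)^2}$ in the $\|\cdot\|_\cC = \sup$-norm, uniformly in $m$ and in the (stationary, or regular) initial law. Concretely I would: (a) write $\rmh_t(k_{\tiN})$ as its initial value plus a martingale $M^N_t(k_{\tiN})$ (the compensated Poisson integral) plus a drift of order $t\cdot O(N^{-2})$ coming from the $\sigma$-term, using \eqref{Eq:DefMun}–\eqref{Eq:DefMde}; (b) apply BDG to $M^N$ over the window of length $(2N)^{-2}$, noting the quadratic variation of $M^N_\cdot(k_{\tiN})$ grows at rate $\leq \frac{4}{2N}(2N)^2 = O(N)$, so its increment over the window has $L^p$-norm $O(N^{-1/2})$ for each fixed $k$; (c) pass from the pointwise bound to the sup over $k\in\lbr 0,2N\rbr$ by a union bound / $L^p$ maximal inequality, absorbing the resulting polynomial-in-$N$ factor into the exponent (this is where the condition $p>6$ is consumed); (d) sum over the $O(N^2)$ microscopic steps and conclude. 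In the stationary models \Munw\ and \Mdeux\ the uniform-in-$N$ exponential moment bounds of Theorem \ref{ThCVInvMeasure} (and Lemma \ref{LemmaCVInvSymm}) guarantee that the reflection terms $\zeta$, $\zetaun$, $\zetade$, which only \emph{reduce} the size of increments, do not spoil these estimates, so the same computation applies verbatim with the reflected dynamics; in \Modun\ one instead uses the initial-regularity hypothesis of Theorem \ref{ThCVModel1} to handle $t$ near $0$. The routine calculations (Stirling, the precise BDG constant, the chaining exponent) I would relegate to the bound-chasing and not reproduce here.
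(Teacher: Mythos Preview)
Your reduction step contains a genuine error: $\rmh_t$ is \emph{not} constant on the interval $[m/(2N)^2,(m+1)/(2N)^2]$. It is a c\`adl\`ag jump process driven by Poisson clocks that can ring at any time, so $\rmh_t$ may jump several times within a single microscopic step and need not stay between its endpoint values. Hence the displayed bound
\[
\|\brmh_t-\rmh_t\|_{\cC}\;\leq\;\big\|\rmh_{(m+1)/(2N)^2}-\rmh_{m/(2N)^2}\big\|_{\cC}
\]
is not justified. What is true (and is what the paper uses, see \eqref{Eq:BoundInterpo}) is that $|\brmh_t(x)-\rmh_t(x)|$ is bounded by a sum of terms of the form $\sup_{s\in I_m}|\rmh_s(k_{\tiN})-\rmh_{\text{endpoint}}(k_{\tiN})|$, which in turn is bounded by $\tfrac{2}{\sqrt{2N}}$ times the number of rings of $\cLN(k_{\tiN})+\cRN(k_{\tiN})$ on $I_m$, a Poisson$(1)$ variable.

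Once this is fixed, your long detour through BDG, cancellation, and chaining is unnecessary and partly misleading. The paper's argument is much more direct: take a union bound over space \emph{and} time simultaneously. There are $2N$ spatial intervals $[k_{\tiN},(k+1)_{\tiN}]$ and $\lfloor T(2N)^2\rfloor+1$ temporal intervals, so $O(N^3)$ boxes in all. On each box the increment is, as just noted, at most $\tfrac{c}{\sqrt{2N}}$ in $\rL^p$ (a single-site Poisson$(1)$ bound, no martingale structure used). Summing gives
\[
\bbQN_{\nuN}\Big[\sup_{t\in[0,T]}\|\brmh_t-\rmh_t\|_{\cC}^p\Big]\;\leq\;c^p\,T\,(2N)^{3-p/2}\;\longrightarrow\;0
\]
precisely when $p>6$. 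Your guess that a sharper argument yields $p>4$ may be right, but the paper neither needs nor proves it; the threshold $p>6$ comes transparently from $3-p/2<0$, not from a chaining exponent. No appeal to stationarity, to $\sigma$, or to the initial-regularity hypothesis is needed anywhere in this lemma.
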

\begin{proof}
Fix $p > 6$. We start with \Modun\ and \Munw. Suppose there exists $c > 0$ such that for all $N\geq 1$, $k\in\lbr 0, 2N\!-\!1\rbr$, $i\in\big[0,\lfloor T(2N)^2 \rfloor\big]$ we have
\begin{equation}\label{EqBoundB}
\bbQN_{\nuN}\Big[\sup_{x\in[k_{\tiN},(k+1)_{\tiN}]}\sup_{t(2N)^2\in[i,i+1]}\big|\brmh_t(x)-\rmh_{t}(x)\big|^p\Big]^{\frac{1}{p}} \leq \frac{c}{\sqrt{2N}},
\end{equation}
then we deduce that
\begin{eqnarray*}
\bbQN_{\nuN}\Big[\sup_{t\in[0,T]}\left\|\brmh_t-\rmh_t\right\|_{\cC}^p \Big] \!\!\!&\leq&\!\!\! \sum_{k=0}^{2N-1}\sum_{i=0}^{\lfloor T(2N)^2 \rfloor}\bbQN_{\nuN}\Big[\sup_{x\in[k_{\tiN},(k+1)_{\tiN}]}\sup_{t(2N)^2\in[i,i+1]}\big|\brmh_t(x)-\rmh_{t}(x)\big|^p\Big]\\
\!\!\!&\leq&\!\!\! c^pT(2N)^{3-\frac{p}{2}} \underset{N\rightarrow\infty}{\rightarrow} 0.
\end{eqnarray*}
We now prove (\ref{EqBoundB}). Fix $k,i$ as above. The very definition of $\brmh$ yields that for all $x\in[k_{\tiN},(k+1)_{\tiN}]$ and all $t\in[i(2N)^{-2},(i+1)(2N)^{-2}]$
\begin{eqnarray}\label{Eq:BoundInterpo}
|\brmh_t(x)-\rmh_t(x)|&\leq&|\rmh_{i(2N)^{-2}}(k_{\tiN})-\rmh_t(k_{\tiN})|+|\rmh_{i(2N)^{-2}}\big((k+1)_{\tiN}\big)-\rmh_t\big((k+1)_{\tiN}\big)|\\\nonumber
&&\!\!\!\!\!+|\rmh_{(i+1)(2N)^{-2}}(k_{\tiN})-\rmh_t(k_{\tiN})|+|\rmh_{(i+1)(2N)^{-2}}\big((k+1)_{\tiN}\big)-\rmh_t\big((k+1)_{\tiN}\big)|.
\end{eqnarray}
Now observe that $\sup_{t\in[i(2N)^{-2},(i+1)(2N)^{-2}]}|\rmh_{i(2N)^{-2}}(k_{\tiN})-\rmh_t(k_{\tiN})|$ is bounded by $2/\sqrt{2N}$ times the number of jumps of $\cR(k_{\tiN})+\cL(k_{\tiN})$ on the time interval $[i(2N)^{-2},(i+1)(2N)^{-2}]$, that is, $2/\sqrt{2N}$ times a Poisson random variable with parameter $1$. A similar bound holds true for the other three terms. Consequently (\ref{EqBoundB}) follows. For \Mdeux, the proof is almost identical: all the increments displayed in (\ref{Eq:BoundInterpo}) are taken in $\bbR^2$ rather than in $\bbR$ and the Poisson random variable has parameter $2$ rather than $1$, since there are four Poisson processes.\cqfd
\end{proof}
\noindent From now on, we write $\{|t-s|< \beta\}$ for the set $\{s,t \in [0,T]:|t-s|<\beta\}$. Then we observe that for all $p>6$
\begin{equation}\label{EqBoundUnifMoment}
\bbQN_{\nuN}\bigg[\!\sup\limits_{\{|t-s|< \beta\}}\left\|\rmh_t-\rmh_s\right\|_{\cC}^p\bigg]^{\frac{1}{p}}\leq 2\,\bbQN_{\nuN}\bigg[\sup\limits_{t\in[0,T]}\left\|\brmh_t-\rmh_t\right\|_{\cC}^p\bigg]^{\frac{1}{p}}+\bbQN_{\nuN}\Bigg[\bigg(\sup\limits_{\{|t-s|< \beta\}}\frac{\left\|\brmh_t-\brmh_s\right\|_{\cC}}{|t-s|^a} \bigg)^p\Bigg]^{\frac{1}{p}}\beta^a.
\end{equation}
The first term on the r.h.s.~vanishes as $N\rightarrow\infty$, thanks to Lemma \ref{LemmaBdelta}, while the second term on the r.h.s.~is finite whenever $a$ is small enough and $p$ is large enough, as the following result shows.
\begin{proposition}\label{PropHolderSpaceTimeBar}
There exists $p>8$ and $a>0$ such that
\[ \sup_{N\geq 1}\bbQN_{\nuN}\Bigg[\bigg( \sup_{s,t\in[0,T]}\frac{\left\|\brmh_t-\brmh_s\right\|_{\cC}}{|t-s|^a} \bigg)^p\Bigg]^{\frac{1}{p}} < \infty.\]
\end{proposition}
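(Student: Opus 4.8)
The plan is to deduce the statement from a Kolmogorov--Chentsov / Garsia--Rodemich--Rumsey argument, once a moment bound on the increments of $\brmh$, uniform in $N$, has been established; that moment bound will in turn come from the \emph{mild} (Duhamel) formulation of the dynamics. First I would record the mild equation: compensating the driving Poisson processes in \eqref{Eq:DefMun}, \eqref{Eq:DefMunw}, \eqref{Eq:DefMde} and using that $\Delta\rmh_t(k_{\tiN})\in\{-\tfrac{2}{\sqrt{2N}},0,\tfrac{2}{\sqrt{2N}}\}$, one checks that in each of the three models $\rmh$ solves a \emph{linear} equation
\[ \rmh_t \;=\; P^N_t\,\rmh_0 \;+\; \int_0^t P^N_{t-r}\,b_r\,dr \;+\; \int_0^t P^N_{t-r}\,dM_r \;+\; R^N_t, \]
where $P^N_t:=\exp\bigl(\tfrac12(2N)^2 t\,\Delta\bigr)$ is the semigroup on $\lbr 1,2N-1\rbr$ of the rescaled discrete Laplacian with Dirichlet boundary conditions (an honest sub-Markov operator, $\|P^N_t\|_{\infty\to\infty}\le1$), $b_r$ is an adapted drift with $\|b_r\|_\infty\le\tc_\sigma$ produced by the weak asymmetry \eqref{Eq:PnQn}, the coordinates $M(k_{\tiN})$ of $M$ are mutually orthogonal martingales with $d\langle M(k_{\tiN})\rangle_r\le\tc\,N\,dr$, and $R^N\equiv0$ in \Modun\ while $R^N_t=(2N)\int_0^t P^N_{t-r}\,\zeta(dr,\cdot)+\int_0^t P^N_{t-r}\,dM^\zeta_r$ in \Munw\ --- with $M^\zeta$ a martingale satisfying $d\langle M^\zeta(k_{\tiN})\rangle_r\le\tc\,\zeta(dr,\{k_{\tiN}\})$ --- and the analogous expression built from $\zetaun,\zetade$ in \Mdeux.

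Next I would prove that for every $q\ge1$ there is $C_q<\infty$, independent of $N$, with
\[ \bbQN_{\nuN}\bigl[\,|\rmh_t(x)-\rmh_s(y)|^q\,\bigr]\;\le\;C_q\bigl(|t-s|^{q/4}+|x-y|^{q/2}\bigr),\qquad s,t\in[0,T],\ x,y\in[0,1]; \]
since $\brmh$ is the linear interpolation of $\rmh$ on the mesh $(2N)^{-2}\bbZ$, the same bound carries over to $\brmh$ (interpolating only helps). This is the heart of the matter, and it is carried out term by term on the Duhamel decomposition. The contribution of $P^N_t\rmh_0$ is controlled by the spatial regularity of $\rmh_0$ (the H\"older hypothesis on $\nuN$ in \Modun; Lemma~\ref{Lemma:SlobodeckijWalk} together with Theorem~\ref{ThCVInvMeasure} in the stationary case) and the smoothing estimates $\|(P^N_h-\mathrm{Id})g\|_\infty\lesssim h^{\beta/2}[g]_{C^\beta}$ and $\|(P^N_h-\mathrm{Id})P^N_u g\|_\infty\lesssim(h/u)\|g\|_\infty$; the drift term is immediate from $\|b\|_\infty\le\tc_\sigma$, $\|P^N_s\|_{\infty\to\infty}\le1$ and the same estimates (it is Lipschitz-in-time up to a logarithm, far smoother than needed); and the stochastic integral is treated with the Burkholder--Davis--Gundy inequality (constant $\tcBDG$) after bounding $\sum_j(P^N_u(k,j))^2$, $\sum_j(P^N_{u+h}(k,j)-P^N_u(k,j))^2$ and $\sum_j(P^N_u(k,j)-P^N_u(k',j))^2$ by the classical Gaussian-type discrete heat-kernel estimates, which hold uniformly in $N$ and produce exactly the exponents $\tfrac14$ in time and $\tfrac12$ in space. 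Granting this, the Garsia--Rodemich--Rumsey inequality applied in the parabolic metric $\rho\bigl((t,x),(s,y)\bigr)=|t-s|^{1/4}+|x-y|^{1/2}$ gives, uniformly in $N$, a modification of $(t,x)\mapsto\brmh_t(x)$ that is jointly H\"older with any exponent below the critical one and whose $\rho$-H\"older seminorm has finite $q$-th moment (the relevant interpolation constant being $\tcInterpo$); taking $q$ (which we then call $p$) large enough --- in particular $p>8$ --- there is $a>0$ for which $\sup_x|\brmh_t(x)-\brmh_s(x)|\le[\brmh]_{\rho,a}\,|t-s|^{a}$ with $\sup_N\bbQN_{\nuN}\bigl[[\brmh]_{\rho,a}^{\,p}\bigr]<\infty$, which is the claim.

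The main obstacle is the reflection term $R^N$ in \Munw\ and \Mdeux. The prefactor $2N$ in front of $\zeta$ is dangerous, and it is absorbed only because the Dirichlet heat kernel $P^N_u(k,j)$ vanishes \emph{linearly} as $j_{\tiN}\to0$ or $1$, whereas under $\bmuN$ the mass of $\zeta$ concentrates near $x\in\{0,1\}$ --- precisely the defect encoded in the condition $\int x(1-x)\,\zeta(ds,dx)<\infty$ defining $\bbM$. Making this quantitative requires combining a boundary-layer bound for $P^N$ with an estimate, uniform in $N$, on the weighted moments $\bbE_{\bmuN}\bigl[\bigl(\int_{[0,t]\times(0,1)}x(1-x)\,\zeta(dr,dx)\bigr)^q\bigr]$, which one extracts from the explicit density of $\bmuN$ (Proposition~\ref{PropInvMeasure}) and its convergence to the Brownian excursion, resp.\ the $2$-watermelon, in Theorem~\ref{ThCVInvMeasure}; it is the $y^2$-vanishing of the excursion density at the wall that renders these moments finite and lets the $2N$ cancel. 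This is by a wide margin the most technical step, and it is the reason the two reflected models must be started from stationarity --- everything else being the by-now-standard toolbox for discretisations of the stochastic heat equation.
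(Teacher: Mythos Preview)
For \Modun\ your approach is essentially that of the paper (Appendix~\ref{SectionProofTightnessMun}): mild formulation via the discrete Dirichlet heat kernel, term-by-term moment bounds using BDG and kernel estimates, then Kolmogorov.

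For \Munw\ and \Mdeux\ there is a real gap, and the paper takes a completely different route. Your proposed cancellation --- that the linear boundary vanishing of $P^N_u(k,j)$ in $j$ compensates the prefactor $2N$ because under $\bmuN$ the reflection mass sits near the spatial boundary --- is correct only at the level of \emph{first} moments. Indeed, under $\piNMunw$ the probability of a ``tooth'' at unscaled position $k$ is $C_{(k-1)/2}C_{(2N-k-1)/2}/C_N$, which is $O(1)$ for $k=O(1)$ and $O(N^{-3/2})$ in the bulk, and a Green-function computation then gives $\bbE_{\bmuN}\big[(2N)\int_0^t P^N_{t-r}(\cdot,l_{\tiN})\,\zeta(dr,d\cdot)\big]=O(1)$. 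But what you need is a $q$-th moment bound on the \emph{increment} of this term, of order $|t-s|^{\gamma q}$ uniformly in $N$, and that requires controlling $q$-point temporal correlations of the tooth process --- a genuinely dynamical estimate that your sketch does not address and that the $y^2$-vanishing of the static excursion density cannot supply on its own. (Incidentally, your martingale $M^\zeta$ does not arise: the wall modifies the drift only, not the compensated-jump martingale.)

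The paper sidesteps this entirely. Under $\bmuN$ it first proves the increment bound in the \emph{negative} Sobolev space $\cH_{-\alpha}$ (Proposition~\ref{Prop:UnifBoundLyonsZheng}) by Fourier decomposition: for each mode $n$ the full drift $\hat d^{(i)}(n)$ --- reflection included --- is controlled through a Feynman--Kac eigenvalue estimate (Lemma~\ref{LemmaFKDrift}), the key trick being that reversibility lets one rewrite the relevant operator so that the wall/non-crossing constraint disappears from the Dirichlet form. Separately, a bound in $\cW^{\eta,r}$ is immediate from stationarity and Lemma~\ref{Lemma:SlobodeckijWalk}. Interpolating between the two function spaces (this is what $\tcInterpo$ actually refers to --- Triebel's interpolation theorem, not Garsia--Rodemich--Rumsey) yields a H\"older bound in $\cC$, and Kolmogorov finishes. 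The point is that the Lyons--Zheng type estimate delivers exponential moments of the integrated drift in one stroke, with no need to isolate or estimate $\zeta$ pointwise at all.
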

\noindent Letting $N$ tend to infinity and $\beta$ to $0$ in (\ref{EqBoundUnifMoment}), we deduce that Property (\ref{Tightness2}) is verified, so that the tightness of $\rmh$ under $\bbQN_{\nuN}$ now boils down to proving Proposition \ref{PropHolderSpaceTimeBar}. Below we provide the proof when $\rmh$ starts from the stationary measure $\bmuN$, while the specific proof for \Modun\ starting from a measure $\nuN$ that only satisfies the hypothesis of Theorem \ref{ThCVModel1} is postponed to Appendix \ref{SectionProofTightnessMun}, as it relies on different arguments.

\subsection{Proof of Proposition \ref{PropHolderSpaceTimeBar} under the stationary measure}\label{SubsectionTightnessh}
We now restrict ourselves to \Mdeux\ as this is the most involved setting. The arguments can easily be adapted to the other models. For any $\alpha\geq 0$ we define the Sobolev space of distributions:
\[\cH_{-\alpha}=\Big\{f\in \rS'([0,1]): \sum_{n\geq 0}\hat{f}(n)^2(1+n)^{-2\alpha} < \infty \Big\}.\]
Recall also the Sobolev-Slobodeckij spaces introduced in (\ref{Eq:SobolevSlobodeckij}). Fix $T>0$. In order to prove Proposition \ref{PropHolderSpaceTimeBar}, we first obtain a uniform bound on the Sobolev-Slobodeckij norm of the increments of $\brmh$, see Lemma \ref{Lemma:UnifBoundSobolev}, and we show tightness in $\cH_{-\alpha}$, see Proposition \ref{Prop:UnifBoundLyonsZheng}. The proof of Proposition \ref{PropHolderSpaceTimeBar} then relies on an interpolation argument between these two function spaces.
\begin{lemma}\label{Lemma:UnifBoundSobolev}
For any $\eta \in (0,\frac{1}{2})$, any $r\geq 1$ and any integer $p\geq 1$ we have
\[ \sup_{N\geq 1}\bbQN_{\bmuN}\bigg[\left\|\brmhi_t-\brmhi_s\right\|_{\cW^{\eta,r}}^p\bigg]^{\frac{1}{p}} < \infty.\]
\end{lemma}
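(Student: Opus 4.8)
The plan is to reduce the estimate on the time increment $\brmhi_t-\brmhi_s$ to a single-time bound under $\bmuN$, and then to transfer the Sobolev--Slobodeckij bound of Lemma~\ref{Lemma:SlobodeckijWalk} from the uniform measure $\piN$ to $\bmuN$ via the Radon--Nikodym derivative. A preliminary point: the space $\cW^{\eta,r}$ of (\ref{Eq:SobolevSlobodeckij}) is a genuine normed space. Indeed the seminorm part of $\|\cdot\|_{\cW^{\eta,r}}$ is the $\rL^r([0,1]^2,dx\,dy)$-norm of the map $(x,y)\mapsto(f(x)-f(y))|x-y|^{-\eta-1/r}$, hence subadditive, and combining it with $\|\cdot\|_{\rL^r([0,1])}$ through the $\ell^r$-norm on $\bbR^2$ yields a norm by Minkowski's inequality. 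In particular $\|\cdot\|_{\cW^{\eta,r}}$ obeys the triangle inequality and, for $\theta\in[0,1]$, $\|(1-\theta)f+\theta g\|_{\cW^{\eta,r}}\le\|f\|_{\cW^{\eta,r}}+\|g\|_{\cW^{\eta,r}}$.

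By the definition (\ref{EqInterpol}) of the interpolated process, both $\brmhi_t$ and $\brmhi_s$ are convex combinations of $\rmhi$ evaluated at lattice times, so $\brmhi_t-\brmhi_s$ is a linear combination, with coefficients bounded in absolute value by $1$, of the values of $\rmhi$ at at most four fixed times $u_1,\dots,u_4\ge 0$. Using the triangle inequality in $\cW^{\eta,r}$ and then $\big(\sum_{j=1}^4 a_j\big)^p\le 4^{p-1}\sum_{j=1}^4 a_j^p$ (valid since $p\ge 1$), one obtains
\[\bbQN_{\bmuN}\big[\|\brmhi_t-\brmhi_s\|_{\cW^{\eta,r}}^p\big]\le 4^{p}\,\sup_{u\ge 0}\bbQN_{\bmuN}\big[\|\rmhi_u\|_{\cW^{\eta,r}}^p\big].\]
Since $\bbQN_{\bmuN}$ is stationary, the law of $\rmhi_u$ at any fixed time $u$ is the $i$-th marginal of $\bmuN$, so it remains only to show $\sup_{N\ge 1}\bmuN\big[\|\hi\|_{\cW^{\eta,r}}^p\big]<\infty$.

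For this last bound I would use $\frac{d\bmuN}{d\piN}(h)=\frac{\exp((2N)^{3/2}\ccA_{\tiN}(h))}{\piN[\exp((2N)^{3/2}\ccA_{\tiN})]}$, as in the proof of Theorem~\ref{ThCVInvMeasure}. As computed there, $(2N)^{3/2}\ccA_{\tiN}(h)=\frac{2}{2N}\sum_{k=1}^{2N-1}\sigma(k_{\tiN})\big(\hun(k_{\tiN})-\hde(k_{\tiN})\big)$, whence $|(2N)^{3/2}\ccA_{\tiN}(h)|\le 2\|\sigma\|_\infty\|h\|_{\cC}$; moreover $\piN[\exp((2N)^{3/2}\ccA_{\tiN})]$ is positive for each $N$ and converges to $\bP[\exp(\ccA_\sigma)]>0$, hence is bounded below uniformly in $N$. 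Therefore $\frac{d\bmuN}{d\piN}(h)\le C\,e^{2\|\sigma\|_\infty\|h\|_{\cC}}$ with $C$ independent of $N$, and the Cauchy--Schwarz inequality gives
\[\bmuN\big[\|\hi\|_{\cW^{\eta,r}}^p\big]\le C\,\piN\big[e^{4\|\sigma\|_\infty\|h\|_{\cC}}\big]^{1/2}\,\piN\big[\|\hi\|_{\cW^{\eta,r}}^{2p}\big]^{1/2}.\]
The first factor is uniformly bounded by the exponential-moment estimate of Lemma~\ref{LemmaCVInvSymm}, the second by Lemma~\ref{Lemma:SlobodeckijWalk} applied with $2p$ in place of $p$; this completes the proof. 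The argument is the same in \Modun\ and \Munw, with $\hi$ replaced by $h$.

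I do not expect a genuine obstacle here: the substantive regularity input is Lemma~\ref{Lemma:SlobodeckijWalk}, and the rest is bookkeeping. The two points that deserve a moment's care are the subadditivity of $\|\cdot\|_{\cW^{\eta,r}}$ (so that the time interpolation and the increment can be handled termwise) and the uniformity in $N$ of the bound on $\frac{d\bmuN}{d\piN}$, which relies on the lower bound for $\piN[\exp((2N)^{3/2}\ccA_{\tiN})]$ already established in the proof of Theorem~\ref{ThCVInvMeasure}.
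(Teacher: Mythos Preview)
Your proof is correct and follows essentially the same approach as the paper: reduce the increment to a single-time bound via the triangle inequality in $\cW^{\eta,r}$ and stationarity, then transfer from $\bmuN$ to $\piN$ using the Radon--Nikodym derivative together with Lemma~\ref{Lemma:SlobodeckijWalk}. The only cosmetic difference is that the paper bounds $\piN\big[(d\bmuN/d\piN)^2\big]$ directly (as already done in the proof of Theorem~\ref{ThCVInvMeasure}), whereas you use a pointwise bound on $d\bmuN/d\piN$ combined with the exponential-moment estimate of Lemma~\ref{LemmaCVInvSymm}; both routes yield the same conclusion.
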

\begin{proof}
By symmetry, it suffices to consider $i=1$. Observe that
\[ \left\|\brmhun_t\right\|_{\cW^{\eta,r}} \leq \left\|\rmhun_{\frac{\left\lfloor t(2N)^2 \right\rfloor}{(2N)^2}}\right\|_{\cW^{\eta,r}}+\left\|\rmhun_{\frac{\left\lfloor t(2N)^2 \right\rfloor+1}{(2N)^2}}\right\|_{\cW^{\eta,r}}.\]
Thus, by stationarity,
\[ \bbQN_{\bmuN}\bigg[\left\|\brmhun_t-\brmhun_s\right\|_{\cW^{\eta,r}}^p\bigg]^{\frac{1}{p}} \leq 4\,\bmuNMde\Big[\left\|\hun\right\|_{\cW^{\eta,r}}^p\Big]^{\frac{1}{p}}.\]
Recall that $\piNMde$ is the invariant measure in the symmetric case $\pN(\cdot)=\qN(\cdot)$. We have
\[\bmuNMde\Big[\left\|\hun\right\|_{\cW^{\eta,r}}^p\Big]^{\frac{1}{p}} \leq \piNMde\Big[\left\|\hun\right\|_{\cW^{\eta,r}}^{2p}\Big]^{\frac{1}{2p}}\piNMde\Big[\Big(\frac{d\bmuNMde}{d\piNMde}\Big)^2\Big]^{\frac{1}{2p}}.\]
For every $h\in\cCNMde$, we have
\[ \bmuNMde(h)=\frac{\exp((2N)^{\frac{3}{2}}\ccA_{\tiN}(h))}{\sum_{h'\in\cCNMde}\exp((2N)^{\frac{3}{2}}\ccA_{\tiN}(h'))}\;\;\;\;\;\;,\;\;\;\;\;\;\piNMde(h) = \frac{1}{\#\cCNMde},\]
so that the second moment of the Radon-Nikodym derivative can be written
\[\piNMde\Big[\Big(\frac{d\bmuNMde}{d\piNMde}\Big)^2\Big]=\frac{\piNMde\Big[\exp(2(2N)^{\frac{3}{2}}\ccA_{\tiN}(h))\Big]}{\piNMde\Big[\exp((2N)^{\frac{3}{2}}\ccA_{\tiN}(h))\Big]^2}.\]
The r.h.s.~is uniformly bounded in $N\geq 1$, as we showed in the proof of Theorem \ref{ThCVInvMeasure}. Moreover, Lemma \ref{Lemma:SlobodeckijWalk} ensures that $\sup_{N\geq 1}\piNMde\Big[\left\|\hun\right\|_{\cW^{\eta,r}}^{2p}\Big] <\infty$. This completes the proof.\cqfd
\end{proof}
The second result needed for the proof of Proposition \ref{PropHolderSpaceTimeBar} is the following control on the modulus of continuity of $\brmh$ in a Sobolev space of distributions.
\begin{proposition}\label{Prop:UnifBoundLyonsZheng}
For any $\alpha > \frac{1}{2}$ and any integer $p\geq1$ there exists $c > 0$ such that for every $0\leq s \leq t \leq T$
\[ \sup_{N\geq 1}\bbQN_{\bmuN}\bigg[\left\|\brmhi_t-\brmhi_s\right\|_{\cH_{-\alpha}}^{2p} \bigg]^{\frac{1}{2p}} \leq c(t-s)^{\frac{1}{4}}.\]
\end{proposition}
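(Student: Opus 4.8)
The strategy is to exploit the \emph{forward--backward} (Lyons--Zheng) martingale decomposition, which is available here because, by Proposition~\ref{PropInvMeasure}, the process is stationary and reversible under $\bmuN$; this removes the drift, which is the only term carrying an unbounded factor in the Fourier frequency. Let $(e_n)_{n\ge1}$, $e_n(x)=\sqrt2\sin(n\pi x)$, be the $\rL^2([0,1])$-orthonormal eigenbasis of the Dirichlet Laplacian, so that $\widehat h(n)=\langle h,e_n\rangle$ and $\|h\|_{\cH_{-\alpha}}^2=\sum_{n\ge1}(1+n)^{-2\alpha}\widehat h(n)^2$. Since $\brmh_t-\brmh_s$ is, after subtracting a common value, a linear combination with bounded coefficients of increments $\rmh_{t'}-\rmh_{s'}$ at times that are multiples of $(2N)^{-2}$ with $|t'-s'|\le|t-s|+2(2N)^{-2}$ --- and since in the degenerate case $\lfloor t(2N)^2\rfloor=\lfloor s(2N)^2\rfloor$ one has exactly $\|\brmh_t-\brmh_s\|_{\cH_{-\alpha}}=(t-s)(2N)^2\|\rmh_{(i+1)(2N)^{-2}}-\rmh_{i(2N)^{-2}}\|_{\cH_{-\alpha}}$, which is controlled by the $O(1)$ number of Poisson jumps in one microscopic time step --- it suffices by the triangle inequality in $\cH_{-\alpha}$ to bound $\bbQN_{\bmuN}[\|\rmh_t-\rmh_s\|_{\cH_{-\alpha}}^{2p}]$ for $s\le t$ multiples of $(2N)^{-2}$, i.e.\ with $t-s\ge(2N)^{-2}$.

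Fix $n\ge1$ and set $F_n(h):=\langle h,e_n\rangle$ (in \Mdeux, $F_n(h):=\langle\hi,e_n\rangle$). Under $\bbQN_{\bmuN}$ one has
\[F_n(\rmh_t)=F_n(\rmh_0)+\int_0^t\LN F_n(\rmh_u)\,du+M^{(n)}_t,\]
with $M^{(n)}$ a purely discontinuous martingale: a flip at site $k$ changes $F_n$ by $\tfrac{2}{\sqrt{2N}}\langle\psi_k,e_n\rangle$, where $\psi_k$ is the hat function at $k_{\tiN}$, hence $|\Delta M^{(n)}|\le cN^{-3/2}$; and, since each site flips at rate at most $(2N)^2$, the predictable bracket obeys the \emph{deterministic} bound $\langle M^{(n)}\rangle_t-\langle M^{(n)}\rangle_s\le 8N\big(\sum_k\langle\psi_k,e_n\rangle^2\big)(t-s)\le c(t-s)$, uniformly in $n$, $N$, and the current configuration (in \Mdeux\ the non-crossing constraint only lowers rates, so the same holds for both interfaces). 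Reversibility then gives the Lyons--Zheng identity on $[0,T]$,
\[F_n(\rmh_t)-F_n(\rmh_s)=\tfrac12\big(M^{(n)}_t-M^{(n)}_s\big)-\tfrac12\big(\widehat M^{(n)}_{T-s}-\widehat M^{(n)}_{T-t}\big),\]
where $\widehat M^{(n)}$ is the martingale of the time-reversed process and enjoys the same jump and bracket bounds, while the drift has cancelled. The Burkholder--Davis--Gundy inequality for c\`adl\`ag martingales, combined with $\langle M^{(n)}\rangle_t-\langle M^{(n)}\rangle_s\le c(t-s)$ and $|\Delta M^{(n)}|\le cN^{-3/2}$, yields $\bbQN_{\bmuN}[|M^{(n)}_t-M^{(n)}_s|^{2p}]\le c_p\big((t-s)^p+N^{-3p}\big)\le c_p'(t-s)^p$, where we used $t-s\ge(2N)^{-2}$; the same holds for $\widehat M^{(n)}$. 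Hence $\bbQN_{\bmuN}[|F_n(\rmh_t)-F_n(\rmh_s)|^{2p}]^{1/p}\le c_p(t-s)$, uniformly in $n$ and $N$.

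It remains to sum over Fourier modes. By Minkowski's inequality in $\rL^p(\bbQN_{\bmuN})$,
\[\bbQN_{\bmuN}\big[\|\rmh_t-\rmh_s\|_{\cH_{-\alpha}}^{2p}\big]^{1/p}=\bbQN_{\bmuN}\Big[\Big(\sum_n(1+n)^{-2\alpha}|F_n(\rmh_t)-F_n(\rmh_s)|^2\Big)^p\Big]^{1/p}\le\sum_n(1+n)^{-2\alpha}\,\bbQN_{\bmuN}\big[|F_n(\rmh_t)-F_n(\rmh_s)|^{2p}\big]^{1/p},\]
which is $\le c_p(t-s)\sum_n(1+n)^{-2\alpha}<\infty$ precisely because $\alpha>\tfrac12$. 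Thus $\bbQN_{\bmuN}[\|\rmh_t-\rmh_s\|_{\cH_{-\alpha}}^{2p}]^{1/(2p)}\le c(t-s)^{1/2}\le c(t-s)^{1/4}$ for $t-s\le1$; for $t-s\in(1,T]$, stationarity and the uniform exponential moment bound for $\bmuN$ of Theorem~\ref{ThCVInvMeasure} give a uniform constant bound, which is again $\le c(t-s)^{1/4}$. Feeding this back through the reduction to $\brmh$ (in the degenerate regime the cruder estimate $c(t-s)(2N)^2N^{-3/2}\le c(t-s)^{1/2}$ for the microscopic step still gives the claim, since there $t-s\le(2N)^{-2}$) completes the argument. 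The one genuinely essential idea is the appeal to reversibility: a direct bound on $\int_s^t\LN F_n(\rmh_u)\,du$ costs a factor $n^2$, and $\sum_n(1+n)^{-2\alpha}n^{4p}\,\bbQN_{\bmuN}[\sup_u|F_n(\rmh_u)|^{2p}]$ diverges for every $\alpha$; the forward--backward decomposition is exactly what bypasses this obstruction, after which the BDG and Minkowski steps are routine.
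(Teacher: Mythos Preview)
Your proof is correct and takes a genuinely different route from the paper's. The paper does \emph{not} use the Lyons--Zheng decomposition, despite the proposition's label: instead it bounds the drift term $\int_s^t\hat d^{(i)}_r(n)\,dr$ directly via a Feynman--Kac argument (Lemma~\ref{LemmaFKDrift}), showing that the largest eigenvalue of $\LN+a\hat V_n$ is at most $8a^2$ by a variational computation against the Dirichlet form, and hence that the exponential moments of $(t-s)^{-1/2}|\int_s^t\hat d^{(i)}_r(n)\,dr|$ are bounded uniformly in $n,N$. It then handles the martingale part by BDG (applied twice, to $\hat M$ and to $[\hat M]-\langle\hat M\rangle$), assembles the two pieces in Lemma~\ref{Lemma:BoundHrmh}, transfers to the interpolation $\brmh$ in Lemma~\ref{Lemma:BoundUnifHatBarh}, and sums over modes via iterated Cauchy--Schwarz. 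Your forward--backward decomposition eliminates the drift altogether, so you avoid the eigenvalue estimate entirely; the remaining martingale bound is then identical in spirit (pathwise bracket $\le 8(t-s)$, jumps $\le cN^{-3/2}$), and your Minkowski step is equivalent to the paper's Cauchy--Schwarz summation. Your approach even yields the sharper exponent $(t-s)^{1/2}$ before you downgrade to $(t-s)^{1/4}$, whereas the paper only reaches $(t-s)^{1/4}$.

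Two small points. First, the paper defines $\cH_{-\alpha}$ via the cosine basis $\varepsilon_n=\sqrt2\cos(n\pi\cdot)$, not the sine basis; your argument goes through verbatim with the paper's $c_{n,k}$ (which obey the same bound $|c_{n,k}|\le\sqrt2/(2N)$), so just switch bases to match the statement literally. Second, your treatment of the degenerate regime $t-s<(2N)^{-2}$ is terse: the ``$O(1)$ number of Poisson jumps'' is per site, and summing over sites costs a factor $N$ in the moment of $\sum_k J_k$; working this through still gives $\bbQN_{\bmuN}[\|\brmh_t-\brmh_s\|_{\cH_{-\alpha}}^{2p}]^{1/(2p)}\le c(t-s)(2N)^{3/2}\le c(t-s)^{1/4}$, so the conclusion stands, but the intermediate constant you wrote is not quite the right power of $N$.
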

\noindent We postpone the proof of this result to the end of this subsection.\vspace{4pt}\\
\noindent\textit{Proof of Proposition \ref{PropHolderSpaceTimeBar} under the stationary measure.}
We use an interpolation argument inspired by the work of Debussche and Zambotti~\cite{DebusscheZambotti07} p.1721. Fix $b\in(0,\frac{1}{2})$ and set
\[ \eta := \frac{b+1/2}{2}\in (0,1/2) \;\;,\;\; r := \frac{8}{1-2b} \geq 1 \;\;,\;\;\kappa := \frac{14b+25}{12b+26}\in(0,1)\;\;,\;\;\alpha:=1 > 1/2.\]
Then we define $\delta := \kappa\eta-(1-\kappa)\alpha$ and $\frac{1}{q}:=\frac{\kappa}{r}+\frac{1-\kappa}{2}$. Notice that these parameters have been chosen such that we can apply Proposition \ref{Prop:UnifBoundLyonsZheng} and Lemma \ref{Lemma:UnifBoundSobolev}, and such that $(\delta-b)q > 1$. Theorem 1 of Section 4.3.1, Remark 2-b of Section 2.4.2 and Theorem-g of Section 1.3.3 in the book of Triebel~\cite{Triebel78} ensures the existence of a constant $\tcInterpo>0$ which only depends on the parameters of the function spaces at stake such that $\left\|f\right\|_{\cW^{\delta,q}}\leq \tcInterpo\left\|f\right\|_{\cW^{\eta,r}}^\kappa\left\|f\right\|_{\cH^{-\alpha}}^{1-\kappa}$ for every $f\in\cW^{\eta,r}\cap\cH^{-\alpha}$. Using H\"older's inequality we then obtain, for every $p\geq 1$
\[\bbQN_{\bmuN}\bigg[\left\|\brmhi_t-\brmhi_s\right\|_{\cW^{\delta,q}}^p\bigg] \leq \tcInterpo^p\bbQN_{\bmuN}\bigg[\left\|\brmhi_t-\brmhi_s\right\|_{\cW^{\eta,r}}^p\bigg]^\kappa\bbQN_{\bmuN}\bigg[\left\|\brmhi_t-\brmhi_s\right\|_{\cH^{-\alpha}}^p\bigg]^{1-\kappa}.\]
Since we chose the parameters such that $(\delta-b)q > 1$, the space $\cW^{\delta,q}$ is continuously embedded (see for instance Theorem 8.2 in~\cite{Hitchhiker12}) into the H\"older space:
\[ \ccC^{b}([0,1],\bbR):= \bigg\{f:[0,1]\rightarrow\bbR \;\;\text{ s.t. }\;\; f(0)=f(1)=0 \;\;\;,\;\;\; \sup_{x\ne y \in [0,1]}\frac{|f(x)-f(y)|}{|x-y|^b} < \infty\bigg\}.\]
From this observation, and using Proposition \ref{Prop:UnifBoundLyonsZheng} and Lemma \ref{Lemma:UnifBoundSobolev}, we deduce that for any given integer $p>\frac{2}{1-\kappa}$ there exists a constant $c>0$ such that
\[\forall t,s\in[0,T],\;\;\;\sup_{N\geq 1}\bbQN_{\bmuN}\bigg[\left\|\brmhi_t-\brmhi_s\right\|_{\ccC^{b}}^{2p}\bigg] \leq c|t-s|^{\frac{(1-\kappa)p}{2}}.\]
Using Kolmogorov's Continuity Theorem, we deduce the existence of a modification of $(\brmhi_t,t\in[0,T])$ which is $a$-H\"older continuous in time in the $\ccC^{b}$-norm for any $a\in(0,\frac{1-\kappa}{4}-\frac{1}{2p})$. Since $\brmh$ is already continuous in space and time by construction, we deduce that it coincides $\bbQN_{\bmuN}$-a.s.~with its modification. Consequently there exists $c>0$ such that for every $i\in\{1,2\}$
\[ \sup_{N\geq 1}\bbQN_{\bmuN}\Bigg[\bigg( \sup_{s\ne t\in[0,T]}\frac{\sup_{x\in[0,1]}|\brmhi_t(x)-\brmhi_s(x)|}{|t-s|^a} \bigg)^p\Bigg]^{\frac{1}{p}} \leq c\]
and thus
\[ \sup_{N\geq 1}\bbQN_{\bmuN}\Bigg[\bigg( \sup_{s\ne t\in[0,T]}\frac{\sup_{x\in[0,1]}\Big(|\brmhun_t(x)-\brmhun_s(x)|+|\brmhde_t(x)-\brmhde_s(x)|\Big)}{|t-s|^a} \bigg)^p\Bigg]^{\frac{1}{p}} \leq 2c.\]
This completes the proof of Proposition \ref{PropHolderSpaceTimeBar}.\cqfd\vspace{4pt}\\
We now proceed to the proof of Proposition \ref{Prop:UnifBoundLyonsZheng}, which relies on the Fourier decomposition of $\rmh$. Consider the orthonormal basis of $\rL^2([0,1],dx)$ defined by $\varepsilon_0(x)=1$ and $\varepsilon_n(x)=\sqrt{2}\cos(n\pi x)$ for every $n\geq 1$. For every $n\geq 0$ and any tempered distribution $f\in \rS'([0,1])$, we define the $n$-th Fourier coefficient $\hat{f}(n):=\langle f,\varepsilon_n\rangle$. A simple calculation ensures that
\[ \Hrmhi_t(n) = \sum_{k=1}^{2N-1}c_{n,k}\rmhi_t(k_{\tiN}),\;\;\;i\in\{1,2\},\]
where $c_{0,k} := \frac{1}{2N}$ and $c_{n,k} := \frac{4\sqrt{2}N}{(n\pi)^2}\cos(n\pi k_{\tiN})\big(1-\cos(\frac{n\pi}{2N})\big)$ for all $n\geq 1$. Observe that
\begin{equation}\label{Eq:BoundCnk}
\sup_{n,k} |c_{n,k}| \leq \frac{\sqrt{2}}{2N}.
\end{equation}
We deduce from (\ref{Eq:DefMde}) that the Fourier coefficients satisfy, for all $0 \leq s \leq t$,
\begin{equation}\label{Eq:Hrmh}
\Hrmhi_t(n)-\Hrmhi_s(n) = \int_{s}^{t}\hat{d}^{\mbox{\tiny $(i)$}}_u(n)du + \hat{M}^{\mbox{\tiny $(i)$}}_{s,t}(n),
\end{equation}
where for $i=1$ we have
\begin{eqnarray*}
\hat{d}^{\mbox{\tiny $(1)$}}_s(n) &:=& \frac{2}{\sqrt{2N}}\sum_{k=1}^{2N-1}c_{n,k}\big(\pN(k_{\tiN})\tun_{\{\Delta \rmhun_s(k_{\tiN}) >0\}}-\qN(k_{\tiN})\tun_{\{\Delta \rmhun_s(k_{\tiN}) <0;\rmhun_s(k_{\tiN}) >\rmhde_s(k_{\tiN})\}}\big),\\
\hat{M}^{\mbox{\tiny $(1)$}}_{s,t}(n)&:=& \frac{2}{\sqrt{2N}}\int_{s}^{t}\sum_{k=1}^{2N-1}c_{n,k}\Big((d\cLNun_u(k_{\tiN})-\pN(k_{\tiN}) du)\tun_{\{\Delta \rmhun_u(k_{\tiN}) >0\}}\\
&&\hspace{3.0cm}- (d\cRNun_u(k_{\tiN})-\qN(k_{\tiN}) du)\tun_{\{\Delta \rmhun_u(k_{\tiN}) <0;\rmhun_u(k_{\tiN})>\rmhde_u(k_{\tiN})\}}\Big).
\end{eqnarray*}
The expressions for the corresponding processes for $i=2$ follow via obvious modifications. The proof of Proposition \ref{Prop:UnifBoundLyonsZheng} actually relies on three preliminary lemmas.
\begin{lemma}\label{LemmaFKDrift}
$\sup_{n\geq 0, N\geq 1}\sup_{s\leq t}\bbQN_{\bmuN}\bigg[\exp\Big((t-s)^{-\frac{1}{2}}\big|\int_s^t\hat{d}^{\mbox{\tiny $(i)$}}_r(n)dr\big|\Big)\bigg] <\infty.$
\end{lemma}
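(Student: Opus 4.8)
The plan is to apply the Feynman--Kac bound, in the same spirit as in Subsection~\ref{SubsectionSuperExpo}, to the time integral $\int_s^t\hat d^{(i)}_r(n)\,dr$, once one observes that its integrand is $\LN$ applied to a convenient function of the configuration. I work in \Mdeux, the other two models being analogous and simpler. For $n\geq 0$ let $\Phi_n:\cEN\to\bbR$ be defined by $\Phi_n(\eta):=\sum_{k=1}^{2N-1}c_{n,k}\rmhi(k_{\tiN})$, where $\rmhi$ is the ($i$-th) interface associated with the particle configuration $\eta$, so that $\Hrmhi_r(n)=\Phi_n(\upeta_r)$. A direct computation --- equivalently, Dynkin's formula together with (\ref{Eq:Hrmh}) and uniqueness of the semimartingale decomposition --- shows $\hat d^{(i)}_r(n)=(\LN\Phi_n)(\upeta_r)$; in particular $\bmuN[\LN\Phi_n]=0$. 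A flip at position $k$ (a particle exchange between sites $k$ and $k+1$, of rate $c(\eta,\eta^{k,k+1})\leq(2N)^2$) changes $\rmhi(k_{\tiN})$ by $\pm 2(2N)^{-1/2}$ and leaves every other height fixed, so $\Phi_n(\eta^{k,k+1})-\Phi_n(\eta)=\pm 2(2N)^{-1/2}c_{n,k}$ and the carré du champ obeys
\[ \Gamma\Phi_n(\eta):=\sum_{k=1}^{2N-1}\big(\Phi_n(\eta^{k,k+1})-\Phi_n(\eta)\big)^2c(\eta,\eta^{k,k+1})\;\leq\;\frac{4}{2N}(2N)^2\sum_{k=1}^{2N-1}c_{n,k}^2 . \]
The crucial input is now the elementary bound (\ref{Eq:BoundCnk}): it gives $\sum_{k=1}^{2N-1}c_{n,k}^2\leq(2N-1)\cdot 2(2N)^{-2}\leq N^{-1}$, whence $\sup_{\eta}\Gamma\Phi_n(\eta)\leq 8$ uniformly in $n\geq 0$ and $N\geq 1$.

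I then estimate the principal eigenvalue $\lambdaN(a)$ of the self-adjoint operator $\LN+a\,\LN\Phi_n$ on $\rL^2(\cEN,\bmuN)$. By the variational formula (\ref{Eq:EigenValue}), $\lambdaN(a)=\sup\{a\,\bmuN[(\LN\Phi_n)g^2]-\DN(g^2)\}$ over $g\geq 0$ with $\bmuN[g^2]=1$. Writing $\bmuN[(\LN\Phi_n)g^2]$ in summation-by-parts form using the reversibility of $\bmuN$, factoring $g^2(\eta^{k,k+1})-g^2(\eta)=(g(\eta^{k,k+1})-g(\eta))(g(\eta^{k,k+1})+g(\eta))$ and applying Young's inequality with a parameter $A>0$, one obtains
\[ \big|\bmuN[(\LN\Phi_n)g^2]\big|\;\leq\;\frac{1}{2A}\DN(g^2)+\frac{A}{2}\sum_{\eta}\frac{\bmuN(\eta)}{2}\sum_{k=1}^{2N-1}\big(g(\eta^{k,k+1})+g(\eta)\big)^2\big(\Phi_n(\eta^{k,k+1})-\Phi_n(\eta)\big)^2c(\eta,\eta^{k,k+1}). \]
Since $(u+v)^2\leq 2u^2+2v^2$ and, by reversibility, the $g(\eta^{k,k+1})^2$ and $g(\eta)^2$ contributions to that sum coincide, the last sum is at most $2\,\bmuN[g^2\,\Gamma\Phi_n]\leq 16$. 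Choosing $A=|a|/2$ cancels the $\DN(g^2)$-term and leaves $\lambdaN(a)\leq 4a^2$ for every $a\in\bbR$, uniformly in $n$ and $N$. The Feynman--Kac inequality (Appendix~1, Lemma~7.2 in~\cite{KipnisLandimBook}), applied with $a=\pm(t-s)^{-1/2}$ and using stationarity to translate the window to $[0,t-s]$, then gives $\bbQN_{\bmuN}\big[\exp\big(\pm(t-s)^{-1/2}\int_s^t\hat d^{(i)}_r(n)\,dr\big)\big]\leq\exp\big(4(t-s)(t-s)^{-1}\big)=e^{4}$; summing the two signs yields the lemma with uniform constant $2e^{4}$.

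The main obstacle is conceptual rather than computational. The pointwise drift $\hat d^{(i)}_r(n)$ is \emph{not} bounded uniformly in $n$ --- a summation by parts exhibits a term comparable to $n^2$ times a Fourier mode of the interface --- so one cannot control $\int_s^t\hat d^{(i)}_r(n)\,dr$ by $(t-s)\sup|\hat d^{(i)}(n)|$. The gain comes entirely from the diffusive time-averaging built into Feynman--Kac, which replaces the supremum norm of the drift by the $H_{-1}$-type quantity $\sup_\eta\Gamma\Phi_n(\eta)$, and the latter is uniformly bounded precisely because of the cancellation encoded in (\ref{Eq:BoundCnk}). The only other point that needs care is the correct reversibility-based symmetrisation of the cross term $\bmuN[(\LN\Phi_n)g^2]$. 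An alternative route, avoiding the eigenvalue estimate, would be a Lyons--Zheng forward--backward martingale decomposition of $\int_s^t(\LN\Phi_n)(\upeta_r)\,dr$: its two martingale parts have deterministic predictable bracket bounded by a multiple of $(t-s)$ and vanishingly small jumps, hence good exponential moments at the scale $(t-s)^{1/2}$.
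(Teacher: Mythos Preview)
Your proof is correct and follows essentially the same approach as the paper: both recognise $\hat d^{(i)}_r(n)$ as $\LN$ applied to a function with bounded carr\'e du champ (thanks to (\ref{Eq:BoundCnk})), bound the largest eigenvalue of $\LN+a\hat V_n$ via the variational formula and a Young-type inequality, and then apply Feynman--Kac with $a=(t-s)^{-1/2}$. Your presentation is slightly more abstract (carr\'e du champ language) while the paper computes directly from the explicit form of $\hat V_n$, but the argument is the same and your constant $4a^2$ versus the paper's $8a^2$ is immaterial.
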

\noindent The proof of this lemma is similar to that of Lemma 11.3.9 in Kipnis and Landim~\cite{KipnisLandimBook}.
\begin{proof}
We restrict to $i=1$ for simplicity. Until the end of the proof, we use the notations of Subsection \ref{SubsectionSuperExpo} and we work with the canonical process $\upeta$ on $\bbD([0,T],\cENMde)$. We define the following operator $\hat{V}_n$:
\[ \hat{V}_n(\eta) = \frac{2}{\sqrt{2N}}\sum_{k=1}^{2N-1}c_{n,k}\big(\pN(k_{\tiN})\tun_{\{\nabla \etaun(k)=1\}}-\qN(k_{\tiN})\tun_{\{\nabla \etaun(k)=-1;\eta^{(1),k,k+1}\in\cENMde\}}\big),\]
where $\eta^{(1),k,k+1}$ is obtained from $\eta=(\etaun,\etade)$ by exchanging the values $\etaun(k)$ and $\etaun(k+1)$. For any $a \in\bbR$, one can apply the methodology presented in Section \ref{SubsectionSuperExpo} to the operator $\LN + a\hat{V}_n$. Let $\lambdaN(a)$ be its largest eigenvalue which satisfies Formula (\ref{Eq:EigenValue}) where $\DN$ is the Dirichlet form defined for all $f\geq 0$ by
\begin{equation}\label{Eq:DirichletMdeux}
\begin{split}
\DNMde(f) &:= \sum_{\eta\in\cENMde}\bmuNMde(\eta)\sum_{k=1}^{2N-1}\pN(k_{\tiN})\bigg(\Big(\sqrt{f\big(\eta^{(1),k,k+1}\big)}-\sqrt{f(\eta)}\Big)^2\tun_{\{\nabla \etaun(k)=1\}}\\
&\hspace{5.3cm}+\Big(\sqrt{f\big(\eta^{(2),k,k+1}\big)}-\sqrt{f(\eta)}\Big)^2\tun_{\{\nabla \etade(k)=-1\}}\bigg).
\end{split}
\end{equation}
Observe that, using the same argument as in Subsection \ref{SubsectionSuperExpo} for \Munw, we have written the Dirichlet form in such a way that the interaction between the interfaces does not appear. Similarly, for all $a \in \bbR$ the quantity $a\sum_{\eta}\hat{V}_n(\eta)\bmuNMde(\eta)f(\eta)$ can be written:
\[a\frac{2}{\sqrt{2N}}\sum_{k=1}^{2N-1}c_{n,k}\sum_{\eta}\bmuNMde(\eta)\pN(k_{\tiN})\tun_{\{\nabla\etaun(k)=1\}}\Big(f(\eta)-f\big(\eta^{(1),k,k+1}\big)\Big).\]
Fix $f\geq 0$ such that $\bmuNMde[f]=1$. Since for all $\gamma > 0$ we have
\[ \Big|f(\eta)-f\big(\eta^{(1),k,k+1}\big)\Big| \leq \frac{1}{2\gamma}\Big(\sqrt{f(\eta^{(1),k,k+1})}-\sqrt{f(\eta)}\Big)^2 + \frac{\gamma}{2}\Big(\sqrt{f(\eta)}+\sqrt{f(\eta^{(1),k,k+1})}\Big)^2, \]
using (\ref{Eq:BoundCnk}) we see that $\big|a\sum_{\eta}\hat{V}_n(\eta)\bmuNMde(\eta)f(\eta)\big|$ is bounded by
\begin{eqnarray*}
&&\frac{|a|\sqrt{2}}{\gamma(2N)^{\frac{3}{2}}}\sum_{k=1}^{2N-1}\sum_{\eta}\bmuNMde(\eta)\pN(k_{\tiN})\tun_{\{\nabla\etaun(k)=1\}}\Big(\sqrt{f\big(\eta^{(1),k,k+1}\big)}-\sqrt{f(\eta)}\Big)^2\\ &+&\, \frac{|a|\sqrt{2}\gamma}{(2N)^{\frac{3}{2}}}\sum_{k=1}^{2N-1}\sum_{\eta}\bmuNMde(\eta)\pN(k_{\tiN})
\tun_{\{\nabla\etaun(k)=1\}}\Big(\sqrt{f\big(\eta^{(1),k,k+1}\big)}+\sqrt{f(\eta)}\Big)^2.
\end{eqnarray*}
Taking $\gamma =|a|\sqrt{2}(2N)^{-3/2}$, the last expression is bounded above by
\[\DNMde(f) + \frac{2a^2}{(2N)^{3}}\sum_{k=1}^{2N-1}\sum_{\eta}\bmuNMde(\eta)\pN(k_{\tiN})\tun_{\{\nabla\etaun(k)=1\}}\Big(\sqrt{f\big(\eta^{(1),k,k+1}\big)}+\sqrt{f(\eta)}\Big)^2\\
\leq\DNMde(f) + 8a^2,\]
where we use the fact that the $\rL^1$ norm of $f$ equals $1$. The Feynman-Kac formula (see for instance Appendix 1 - Lemma 7.2 in~\cite{KipnisLandimBook}) ensures that for all $t\geq 0$ and all $a > 0$
\[ \bbQN_{\bmuN}\Big[e^{a\big|\int_0^t\hat{d}^{\mbox{\tiny $(1)$}}_s(n)ds\big|}\Big]\leq \bbQN_{\bmuN}\Big[e^{a\int_0^t\hat{d}^{\mbox{\tiny $(1)$}}_s(n)ds}\Big]+\bbQN_{\bmuN}\Big[e^{-a\int_0^t\hat{d}^{\mbox{\tiny $(1)$}}_s(n)ds}\Big] \leq 2e^{8a^2t}.\]
The value $a=1/\sqrt{t}$ and a stationarity argument yield the asserted result.\cqfd
\end{proof}
\begin{lemma}\label{Lemma:BoundHrmh}
For every integer $m\geq 1$ there exists $\tc(m) > 0$ such that for every $0 \leq s \leq t$ and every $N\geq 1$
\[ \sup_{n\geq 0}\bbQN_{\bmuN}\Big[\big(\Hrmhi_t(n)-\Hrmhi_s(n)\big)^{2m}\Big]^{\frac{1}{2m}} \leq \tc(m)\Big(\sqrt{t-s}+\Big(\frac{t-s}{N^3}\Big)^{\frac{1}{4}}+N^{-\frac{5}{4}}\Big).\]
\end{lemma}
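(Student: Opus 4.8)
The plan is to combine the decomposition (\ref{Eq:Hrmh}),
\[
\Hrmhi_t(n)-\Hrmhi_s(n)=\int_s^t \hat{d}^{\mbox{\tiny $(i)$}}_r(n)\,dr+\hat{M}^{\mbox{\tiny $(i)$}}_{s,t}(n),
\]
with Minkowski's inequality, which reduces the claim to separate $2m$-th moment bounds for the drift term and for the martingale term; I treat $i=1$, the case $i=2$ being identical since the lower interface obeys a structurally identical dynamics. The drift term is controlled by Lemma \ref{LemmaFKDrift}: applying the elementary bound $|x|^{2m}\le (2m)!\,e^{|x|}$ to $x=(t-s)^{-\frac12}\int_s^t \hat{d}^{\mbox{\tiny $(i)$}}_r(n)\,dr$ gives
\[
\bbQN_{\bmuN}\Big[\big|{\textstyle\int_s^t}\hat{d}^{\mbox{\tiny $(i)$}}_r(n)\,dr\big|^{2m}\Big]\le (2m)!\,(t-s)^{m}\,\bbQN_{\bmuN}\Big[\exp\big((t-s)^{-\frac12}\big|{\textstyle\int_s^t}\hat{d}^{\mbox{\tiny $(i)$}}_r(n)\,dr\big|\big)\Big]\le \tc(m)\,(t-s)^{m},
\]
uniformly in $n\ge0$ and $N\ge1$; after a $2m$-th root this yields the $\sqrt{t-s}$ term.

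For the martingale term, observe that $u\mapsto \hat{M}^{\mbox{\tiny $(1)$}}_{s,u}(n)$ is a purely discontinuous, square-integrable martingale whose jumps occur at the ringing times of the Poisson clocks $\cLNun(k_{\tiN})$ and $\cRNun(k_{\tiN})$ and have size $\pm\frac{2}{\sqrt{2N}}c_{n,k}$. Hence its quadratic variation over $(s,t]$ is
\[
\xi:=[\hat{M}^{\mbox{\tiny $(1)$}}(n)]_t-[\hat{M}^{\mbox{\tiny $(1)$}}(n)]_s=\frac{2}{N}\sum_{k=1}^{2N-1}c_{n,k}^2\,N_k,
\]
where $N_k$ counts the \emph{accepted} jumps at site $k$ during $(s,t]$; the $N_k$ are independent and each is dominated by a Poisson variable of parameter $(\pN(k_{\tiN})+\qN(k_{\tiN}))(t-s)=(2N)^2(t-s)$. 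The Burkholder--Davis--Gundy inequality then gives $\bbQN_{\bmuN}[(\hat{M}^{\mbox{\tiny $(1)$}}_{s,t}(n))^{2m}]\le \tcBDG(m)\,\bbQN_{\bmuN}[\xi^m]$, so everything reduces to estimating $\bbQN_{\bmuN}[\xi^m]$.

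Rather than expanding the $m$-th power multinomially, I would use the crude domination $\xi\le N^{-3}\Pi$, which holds because $\frac{2}{N}\max_k c_{n,k}^2\le N^{-3}$ by (\ref{Eq:BoundCnk}) and $\sum_k N_k\le \Pi$, where $\Pi$ denotes the total number of clock rings at sites $1,\dots,2N-1$ during $(s,t]$ --- a Poisson variable of parameter $(2N-1)(2N)^2(t-s)\le 8N^3(t-s)$. The elementary Poisson moment bound $\bbE[\Pi^m]\le \tc(m)(\lambda+\lambda^m)$ with $\lambda=8N^3(t-s)$ then yields $\bbQN_{\bmuN}[\xi^m]\le \tc(m)\big((t-s)^m+N^{3-3m}(t-s)\big)$, hence
\[
\bbQN_{\bmuN}\big[(\hat{M}^{\mbox{\tiny $(1)$}}_{s,t}(n))^{2m}\big]^{\frac1{2m}}\le \tc(m)\Big((t-s)^{\frac12}+N^{\frac{3-3m}{2m}}(t-s)^{\frac1{2m}}\Big).
\]
Finally one checks, for $m\ge2$, the elementary inequality $N^{\frac{3-3m}{2m}}(t-s)^{\frac1{2m}}\le \big(\frac{t-s}{N^3}\big)^{\frac14}+N^{-\frac54}$ by distinguishing cases: for $t-s>1$ the whole right-hand side is $\le\tc(m)\sqrt{t-s}$; for $t-s\le N^{-3}$ bound $(t-s)^{\frac1{2m}}\le N^{-\frac{3}{2m}}$ to obtain $N^{-3/2}\le N^{-5/4}$; and for $N^{-3}\le t-s\le 1$ a direct exponent comparison gives $N^{\frac{3-3m}{2m}}(t-s)^{\frac1{2m}}\le N^{-3/4}(t-s)^{1/4}$. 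The case $m=1$ is immediate from $\bbQN_{\bmuN}[(\hat{M}^{\mbox{\tiny $(1)$}}_{s,t}(n))^2]=\bbQN_{\bmuN}[\xi]\le 8(t-s)$. Combining the drift and martingale estimates completes the proof.

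The one genuinely non-mechanical point, I expect, is recognising that the blunt domination $\xi\le N^{-3}\Pi$ loses nothing essential --- both sides have expectation of order $t-s$ --- while being flexible enough to generate simultaneously the bulk contribution $(t-s)^{1/2}$ and the two "one extra jump" corrections $\big(\frac{t-s}{N^3}\big)^{1/4}$ and $N^{-5/4}$ (which come, respectively, from the $\lambda$ and $\lambda^m$ parts of $\bbE[\Pi^m]$ in the two time regimes). The subsequent case analysis is a little fiddly but entirely routine.
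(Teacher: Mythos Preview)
Your argument is correct, with one minor slip: the accepted-jump counts $N_k$ are \emph{not} independent across $k$ (they depend on the interacting dynamics). What is true, and what you actually use, is that each $N_k$ is dominated by the number of clock rings at site $k$, and those dominating Poisson variables are independent; their sum is your $\Pi$. The independence claim is harmless since the bound $\sum_k N_k\le\Pi$ is all that is needed downstream.

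Your route for the martingale term differs from the paper's. The paper applies the Burkholder--Davis--Gundy inequality \emph{twice} via the identity $[\hat{M}]=\llangle\hat{M}\rrangle+\hat{D}$ with $\hat{D}$ itself a martingale (their formula (\ref{Eq:BDGTwice})): the predictable bracket satisfies the almost-sure bound $\llangle\hat{M}\rrangle\le 8(t-s)$, yielding the $\sqrt{t-s}$ contribution immediately, while $[\hat{D}]$ involves fourth powers of the jump sizes and is handled by partitioning time into intervals of length $(2N)^{-2}$; this produces the two correction terms $\big(\tfrac{t-s}{N^3}\big)^{1/4}$ and $N^{-5/4}$ without further case analysis. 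You instead apply BDG once, dominate the quadratic variation by $N^{-3}\Pi$, invoke the Poisson moment bound $\bbE[\Pi^m]\le\tc(m)(\lambda+\lambda^m)$, and then match the resulting term $N^{(3-3m)/(2m)}(t-s)^{1/(2m)}$ to the stated bound by a three-regime case analysis. Your approach is more elementary (a single BDG, no time partition), at the price of the exponent bookkeeping at the end; the paper's approach is more structured and reads off the three terms directly from the decomposition of $[\hat{M}]$ into its compensator and a martingale remainder.
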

\noindent Observe that one cannot expect to have a bound of the form $(t-s)^a$ since otherwise the process $\Hrmhi$ would have a continuous modification by the Kolmogorov continuity criterion. However, the extra terms vanish as $N$ tends to infinity so that any limiting process will be continuous.
\begin{proof}
We restrict to $i=1$ for simplicity. Using (\ref{Eq:Hrmh}) we write
\[\bbQN_{\bmuN}\Big[\big(\Hrmhun_t(n)-\Hrmhun_s(n)\big)^{2m}\Big]^{\frac{1}{2m}} \leq \bbQN_{\bmuN}\bigg[\Big(\int_s^t\hat{d}^{\mbox{\tiny $(1)$}}_r(n)dr\Big)^{2m}\bigg]^{\frac{1}{2m}} + \bbQN_{\bmuN}\Big[\big(\hat{M}^{\mbox{\tiny $(1)$}}_{s,t}(n)\big)^{2m}\Big]^{\frac{1}{2m}}.\]
The bound for the first term on the right hand side is a direct consequence of Lemma \ref{LemmaFKDrift}. To bound the second term, we define the martingale $\hat{D}^{\mbox{\tiny $(1)$}}_{s,t}(n):=\big[\hat{M}^{\mbox{\tiny $(1)$}}_{s,\cdot}(n)\big]_t - \big\llangle \hat{M}^{\mbox{\tiny $(1)$}}_{s,\cdot}(n)\big\rrangle_t$, and we use the Burkholder-Davis-Gundy inequality twice (we refer to Formula (\ref{Eq:BDGTwice}) in Appendix \ref{SectionProofTightnessMun}) to obtain
\[\bbQN_{\bmuN}\Big[\big(\hat{M}^{\mbox{\tiny $(1)$}}_{s,t}(n)\big)^{2m}\Big]^{\frac{1}{2m}} \leq \tcBDG(2m)\Big(\bbQN_{\bmuN}\Big[\big\llangle \hat{M}^{\mbox{\tiny $(1)$}}_{s,\cdot}(n)\big\rrangle_t^{m}\Big]^{\frac{1}{2m}}+\sqrt{\tcBDG(m)}\bbQN_{\bmuN}\Big[\big[\hat{D}^{\mbox{\tiny $(1)$}}_{s,\cdot}(n)\big]_t^{\frac{m}{2}}\Big]^{\frac{1}{2m}}\Big).\]
It is elementary to check that $\bbQN_{\bmuN}$-a.s.~$\big\llangle \hat{M}^{\mbox{\tiny $(1)$}}_{s,\cdot}(n)\big\rrangle_t \leq 8(t-s)$ so that the bound for the corresponding term is immediate. We turn to the quadratic variation and write
\begin{eqnarray*}
&&\big[\hat{D}^{\mbox{\tiny $(1)$}}_{s,\cdot}(n)\big]_t=\sum_{k=1}^{2N-1}\Big(\frac{2c_{n,k}}{\sqrt{2N}}\Big)^4\#\big\{\tau\in(s,t]:\rmhun_\tau(k_{\tiN})\ne \rmhun_{\tau-}(k_{\tiN})\big\}\\
&\leq&N^{-6}\sum_{k=1}^{2N-1}\sum_{j=0}^{\lfloor (t-s)(2N)^2\rfloor}\Big(\cLNun_{s+\frac{j+1}{(2N)^2}}(k_{\tiN})-\cLNun_{s+\frac{j}{(2N)^2}}(k_{\tiN})+\cRNun_{s+\frac{j+1}{(2N)^2}}(k_{\tiN})-\cRNun_{s+\frac{j}{(2N)^2}}(k_{\tiN})\Big).
\end{eqnarray*}
Observe that for each $j$ and each $k$ the random variable on the right hand side of the last equation has a Poisson distribution with mean $1$. Consequently there exists a constant $c'>0$ such that for every $N\geq 1$
\[ \sup_{n\geq 0}\bbQN_{\bmuN}\Big[\big[\hat{D}^{\mbox{\tiny $(1)$}}_{s,\cdot}(n)\big]_t^{\frac{m}{2}}\Big]^{\frac{2}{m}} \leq \frac{c'}{N^5}\big(\lfloor (t-s)(2N)^2\rfloor+1\big),\]
and the asserted bound follows.\cqfd
\end{proof}
\begin{lemma}\label{Lemma:BoundUnifHatBarh}
For every integer $m\geq 1$ there exists $\bar{\tc}(m)>0$ such that for all $0\leq s \leq t \leq T$
\[ \sup_{n\geq 0,N\geq 1}\bbQN_{\bmuN}\Big[\big(\Hbrmhi_t(n)-\Hbrmhi_s(n)\big)^{2m}\Big]^{\frac{1}{2m}} \leq \bar{\tc}(m)(t-s)^{\frac{1}{4}}.\]
\end{lemma}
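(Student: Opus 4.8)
The plan is to deduce the bound for the time-interpolated Fourier coefficients $\Hbrmhi$ from the one already established for the piecewise-constant coefficients $\Hrmhi$ in Lemma \ref{Lemma:BoundHrmh}. Set $h_{\tiN}:=(2N)^{-2}$, $\phi(t):=\lfloor t(2N)^2\rfloor h_{\tiN}$ and $\theta_t:=t(2N)^2-\lfloor t(2N)^2\rfloor\in[0,1)$, so that (\ref{EqInterpol}) can be rewritten as $\brmhi_t=\rmhi_{\phi(t)}+\theta_t\big(\rmhi_{\phi(t)+h_{\tiN}}-\rmhi_{\phi(t)}\big)$ and hence
\[ \Hbrmhi_t(n)-\Hbrmhi_s(n)=\big(\Hrmhi_{\phi(t)}(n)-\Hrmhi_{\phi(s)}(n)\big)+\theta_t\big(\Hrmhi_{\phi(t)+h_{\tiN}}(n)-\Hrmhi_{\phi(t)}(n)\big)-\theta_s\big(\Hrmhi_{\phi(s)+h_{\tiN}}(n)-\Hrmhi_{\phi(s)}(n)\big). \]
Every increment of $\Hrmhi$ occurring here is of the form $\Hrmhi_v(n)-\Hrmhi_u(n)$ with $0\leq u\leq v$, so its $\rL^{2m}(\bbQN_{\bmuN})$-norm is at most $\tc(m)\big(\sqrt{v-u}+((v-u)/N^3)^{1/4}+N^{-5/4}\big)$ by Lemma \ref{Lemma:BoundHrmh}. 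By symmetry one may take $i=1$, and the argument splits into two regimes according to whether $t-s\geq h_{\tiN}$ or $t-s<h_{\tiN}$.

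If $t-s\geq h_{\tiN}$, I would simply bound the three terms above in $\rL^{2m}$ by the triangle inequality, using $\theta_t,\theta_s\leq 1$, and insert Lemma \ref{Lemma:BoundHrmh}. The ``long'' increment has $\phi(t)-\phi(s)\leq (t-s)+h_{\tiN}\leq 2(t-s)$, while the error term $N^{-5/4}$ is $\lesssim(t-s)^{1/4}$ because $t-s\geq h_{\tiN}$ gives $(t-s)^{1/4}\geq(2N)^{-1/2}\geq 2^{-1/2}N^{-5/4}$; together with $t-s\leq T$ this term contributes $\lesssim_{T}(t-s)^{1/4}$. Each of the two single-mesh increments contributes of order $\sqrt{h_{\tiN}}+(h_{\tiN}/N^3)^{1/4}+N^{-5/4}\lesssim N^{-1}$, and here $\sqrt{h_{\tiN}}\leq\sqrt{t-s}\leq T^{1/4}(t-s)^{1/4}$ while $N^{-5/4}$ is absorbed as before. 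This settles the macroscopic regime.

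The microscopic regime $t-s<h_{\tiN}$ is the delicate one and the main obstacle, since there a single-mesh increment of $\Hrmhi$ is of size $N^{-1}$, which is not $\lesssim(t-s)^{1/4}$ when $t-s$ is much smaller than $h_{\tiN}$; one must exploit that $\brmhi$ is affine in time on each mesh cell. In this regime $\lfloor t(2N)^2\rfloor-\lfloor s(2N)^2\rfloor\in\{0,1\}$, so $s$ and $t$ fall in the same or in two adjacent cells. If $\phi(t)=\phi(s)$ the displayed identity collapses to $(\theta_t-\theta_s)\big(\Hrmhi_{\phi(s)+h_{\tiN}}(n)-\Hrmhi_{\phi(s)}(n)\big)$ with $|\theta_t-\theta_s|=(t-s)/h_{\tiN}$; if $\phi(t)=\phi(s)+h_{\tiN}$, substituting $\Hrmhi_{\phi(t)}(n)-\Hrmhi_{\phi(s)}(n)=\Hrmhi_{\phi(s)+h_{\tiN}}(n)-\Hrmhi_{\phi(s)}(n)$ turns it into $(1-\theta_s)\big(\Hrmhi_{\phi(s)+h_{\tiN}}(n)-\Hrmhi_{\phi(s)}(n)\big)+\theta_t\big(\Hrmhi_{\phi(t)+h_{\tiN}}(n)-\Hrmhi_{\phi(t)}(n)\big)$ with $1-\theta_s=(\phi(t)-s)/h_{\tiN}\leq(t-s)/h_{\tiN}$ and $\theta_t=(t-\phi(t))/h_{\tiN}\leq(t-s)/h_{\tiN}$. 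In every case, Lemma \ref{Lemma:BoundHrmh} applied to single-mesh increments yields $\|\Hbrmhi_t(n)-\Hbrmhi_s(n)\|_{\rL^{2m}}\lesssim\frac{t-s}{h_{\tiN}}N^{-1}=4N(t-s)$, and since $t-s<h_{\tiN}$ one has $N(t-s)=N(t-s)^{3/4}(t-s)^{1/4}\leq Nh_{\tiN}^{3/4}(t-s)^{1/4}=2^{-3/2}N^{-1/2}(t-s)^{1/4}\leq 2^{-3/2}(t-s)^{1/4}$. Collecting the two regimes gives the assertion, with $\bar{\tc}(m)$ depending on $m$ and $T$ only.
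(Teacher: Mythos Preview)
Your proof is correct and follows the same two-regime split as the paper's argument. The macroscopic case $t-s\geq(2N)^{-2}$ is handled identically in spirit: decompose into a bulk increment plus two edge corrections and feed each into Lemma~\ref{Lemma:BoundHrmh}, absorbing the error terms $\sqrt{t-s}$, $((t-s)/N^3)^{1/4}$, and $N^{-5/4}$ into $(t-s)^{1/4}$ exactly as you do.

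The one place you differ is the microscopic regime $t-s<(2N)^{-2}$. The paper does not go through Lemma~\ref{Lemma:BoundHrmh} there; instead it bounds $|\brmhun_t(k_{\tiN})-\brmhun_s(k_{\tiN})|$ directly by $\frac{2}{\sqrt{2N}}(t-s)(2N)^2 J_k$ with $J_k$ a Poisson variable counting jumps on the two adjacent mesh cells, then sums against $|c_{n,k}|\leq\sqrt{2}/(2N)$ and uses $(t-s)(2N)^2<(t-s)^{1/4}\sqrt{2N}$. Your route---extracting the factor $(t-s)/h_{\tiN}$ from the piecewise-affine structure and then invoking Lemma~\ref{Lemma:BoundHrmh} on single-mesh increments---is a clean alternative that keeps the whole argument uniformly dependent on that one lemma, at the price of slightly less sharp constants. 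Both are valid and yield the same $(t-s)^{1/4}$ bound.
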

A more technical proof would yield a bound of order $(t-s)^{\frac{1}{2}}$ which is more intuitive since the Fourier modes in the limiting stochastic PDE are Brownian like. However we will not need such an accurate bound.
\begin{proof}
We restrict to $i=1$ for simplicity. Assume first that $t-s < (2N)^{-2}$. We set
\[ \forall k\in\lbr 1,2N-1\rbr,\;\;\; J_k:=\cLNun_{\frac{\lfloor t(2N)^2\rfloor+1}{(2N)^2}}(k_{\tiN})-\cLNun_{\frac{\lfloor s(2N)^2\rfloor}{(2N)^2}}(k_{\tiN})+\cRNun_{\frac{\lfloor t(2N)^2\rfloor+1}{(2N)^2}}(k_{\tiN})-\cRNun_{\frac{\lfloor s(2N)^2\rfloor}{(2N)^2}}(k_{\tiN}).\]
Each $J_k$ is a Poisson random variable with mean at most $2$. Recall that $\brmhun$ is the time interpolation of $\rmhun$, so that $\bbQN_{\bmuN}$-a.s.
\[ \big|\brmhun_t(k_{\tiN})-\brmhun_s(k_{\tiN})\big|\leq\frac{2}{\sqrt{2N}}(t-s)(2N)^2J_k.\]
This implies, together with (\ref{Eq:BoundCnk}), that the Fourier coefficients of $\brmhun$ satisfy $\bbQN_{\bmuN}$-a.s.,
\[ \sup_{n\geq 0}\big|\Hbrmhun_t(n)-\Hbrmhun_s(n)\big|\leq\frac{2\sqrt{2}}{(2N)^{\frac{3}{2}}}(t-s)(2N)^2\sum_{k=1}^{2N-1}J_k.\]
Since $(t-s)(2N)^{2}<1$, we have $(t-s)(2N)^{2}<(t-s)^{\frac{1}{4}}\sqrt{2N}$ and thus
\[ \sup_{n\geq 0}\bbQN_{\bmuN}\Big[\big(\Hbrmhun_t(n)-\Hbrmhun_s(n)\big)^{2m}\Big]^{\frac{1}{2m}} \leq 2\sqrt{2}\,a(2m)(t-s)^{\frac{1}{4}},\]
where $a(2m)$ is the $\rL^{2m}$-norm of a Poisson random variable with mean $2$. The asserted uniform bound follows. Assume now that $t-s \geq (2N)^{-2}$ and write
\[|\Hbrmhun_t(n)-\Hbrmhun_s(n)| \leq |\Hbrmhun_t(n)-\Hbrmhun_{\frac{\lfloor t(2N)^2\rfloor}{(2N)^2}}(n)|+ |\Hbrmhun_{\frac{\lfloor t(2N)^2\rfloor}{(2N)^2}}(n)-\Hbrmhun_{\frac{\lfloor s(2N)^2\rfloor}{(2N)^2}}(n)| +|\Hbrmhun_s(n)-\Hbrmhun_{\frac{\lfloor s(2N)^2\rfloor}{(2N)^2}}(n)|.\]
The bound already obtained applies to the first and third terms, while we use the fact that $\brmhun$ and $\rmhun$ coincide at times of the form $\frac{\lfloor t(2N)^2\rfloor}{(2N)^2}$ to bound the second term using Lemma \ref{Lemma:BoundHrmh} as follows:
\begin{eqnarray*}
&&\sup_{n\geq 0}\bbQN_{\bmuN}\Big[\big(\Hbrmhun_t(n)-\Hbrmhun_s(n)\big)^{2m}\Big]^{\frac{1}{2m}}\\
\!\!\!&\leq&\!\!\! 4\sqrt{2}\,a(2m)(2N)^{-\frac{1}{2}}+ \tc(m)\Big(\frac{\sqrt{\lfloor t(2N)^2 \rfloor-\lfloor s(2N)^2 \rfloor}}{2N}+\Big(\frac{\lfloor t(2N)^2 \rfloor-\lfloor s(2N)^2 \rfloor}{(2N)^2 N^3}\Big)^{\frac{1}{4}}+N^{-\frac{5}{4}}\Big)\\
\!\!\!&\leq&\!\!\!4\sqrt{2}\,a(2m)(t-s)^{\frac{1}{4}}+\tc(m)\Big(\sqrt{3(t-s)} + 3^\frac{1}{4}2^{\frac{3}{4}}(t-s)^{\frac{5}{8}}+2^{\frac{5}{4}}(t-s)^{\frac{5}{8}}\Big).
\end{eqnarray*}
To obtain the third line, we have used the simple inequality $|\lfloor t(2N)^2 \rfloor-\lfloor s(2N)^2 \rfloor| \leq 3(t-s)(2N)^2$ which holds since $t-s \geq (2N)^{-2}$. The asserted uniform bound follows from the fact that, as $s,t \in [0,T]$, we have $(t-s)^{\frac{5}{8}} \leq T^{\frac{3}{8}}(t-s)^{\frac{1}{4}}$ and $\sqrt{t-s} \leq T^{\frac{1}{4}}(t-s)^{\frac{1}{4}}$.\cqfd
\end{proof}
\textit{Proof of Proposition \ref{Prop:UnifBoundLyonsZheng}.} Fix an integer $p\geq 1$ and a real value $\alpha >\frac{1}{2}$. Using the Cauchy-Schwarz inequality $p-1$ times in the first line and Lemma \ref{Lemma:BoundUnifHatBarh} in the second line, one obtains that for all $0\leq s \leq t \leq T$ and all $N\geq 1$
\begin{eqnarray*}
\bbQN_{\bmuN}\bigg[\left\|\brmhi_t-\brmhi_s\right\|_{\cH_{-\alpha}}^{2p} \bigg]\!\!\!&\leq&\!\!\!\!\!\!\! \sum_{n_1,\ldots,n_p \geq 0}\!\!(1+n_1)^{-2\alpha}\ldots(1+n_p)^{-2\alpha}\prod_{j=1}^p\bbQN_{\bmuN}\Big[\big(\Hbrmhi_t(n_j)-\Hbrmhi_s(n_j)\big)^{2^{j+1}}\Big]^{\frac{1}{2^j}}\\
\!\!\!&\leq&\!\!\!\!\!\!\! \sum_{n_1,\ldots,n_p \geq 0}(1+n_1)^{-2\alpha}\ldots(1+n_p)^{-2\alpha}\prod_{j=1}^p\Big(\bar{\tc}(2^{j})(t-s)^{\frac{1}{4}}\Big)^2\\
\!\!\!&\leq&\!\!\! (t-s)^{\frac{p}{2}} \Big(\sum_{n\geq 0}(1+n)^{-2\alpha}\max_{j=1..p}\bar{\tc}^2(2^{j})\Big)^p.
\end{eqnarray*}
This completes the proof.\cqfd

\subsection{Tightness of \texorpdfstring{$\zeta$}{zeta}}\label{Subsection:TightnessZeta}

We equip the set $\bbM$ of Borel measures on $[0,\infty)\times(0,1)$ satisfying $\int_{[0,t]\times(0,1)}x(1-x)m(ds,dx)<\infty$ for every $t\geq 0$, with the smallest topology that makes
\[ m \mapsto \int_{[0,\infty)\times(0,1)}x(1-x)g(t,x)m(dt,dx)\]
continuous, for all maps $g$ that belong to $\ccC_{c}([0,\infty)\times[0,1],\bbR)$, taken to be the set of continuous maps that vanish outside a compact set of $[0,\infty)\times[0,1]$. Let us metrise this topology. Consider the countable set $\ccP$ of polynomials of two variables, $x$ and $t$, with rational coefficients. For each pair of rational values $p,q>0$, let $\rho_{p,q}:[0,\infty)\times[0,1]\rightarrow\bbR$ be a positive smooth function, equal to $1$ on $[0,p]\times[0,1]$, smaller than $1$ on $(p,p+q]\times[0,1]$ and that vanishes on $[p+q,\infty)\times[0,1]$. Let $g_k,k\geq 1$ be an enumeration of the product set $\{f\rho_{p,q}: f\in\ccP \mbox{ and } p,q \in (0,\infty)\cap\bbQ\}$. For every $k\geq 1$, we set $\varphi_k(t,x):=x(1-x)g_k(t,x)$. Then
\[ d(m,m'):=\sum_{k\geq 1}2^{-k}\Big(1\wedge\Big|\int\varphi_k dm-\int\varphi_k dm'\Big|\Big) \]
defines a distance on $\bbM$ that generates the above topology. Indeed, by the Stone-Weierstrass theorem, for every continuous function $g:[0,\infty)\times[0,1]\rightarrow\bbR$ that vanishes outside a compact set, $[0,T]\times[0,1]$ say, and for every $\epsilon > 0$, there exists $k\geq 1$ such that $\left\|g-g_k\right\|_{\infty}<\epsilon$ and $\mbox{supp } g_k \subset [0,T+\epsilon]\times[0,1]$.
\begin{lemma}
The metric space $(\bbM,d)$ is Polish.
\end{lemma}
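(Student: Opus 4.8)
The plan is to realise $(\bbM,d)$ homeomorphically as a $G_\delta$ subset of a Polish space of measures, and then to invoke the classical fact (due to Alexandrov) that a $G_\delta$ subset of a Polish space is itself Polish in the relative topology. It is worth noting at the outset that $d$ is \emph{not} complete: the single atoms $m_j:=2^j\delta_{(t_0,2^{-j})}$ all lie in $\bbM$ and form a $d$-Cauchy sequence, but any limit would be forced to charge the line $\{x=0\}$, which is impossible for an element of $\bbM$. So there is no hope of proving $d$ itself complete; the right completion merely adds mass on the two boundary lines $\{x=0\}$ and $\{x=1\}$, and the content of the statement is that ``carrying no mass on these lines'' is a $G_\delta$ condition.

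I would let $\cM_+\big([0,\infty)\times[0,1]\big)$ denote the set of Radon (locally finite) Borel measures on the locally compact, second countable Hausdorff space $[0,\infty)\times[0,1]$, equipped with the vague topology; it is classical that this space is Polish (see, e.g., Kallenberg's monograph on random measures). I would then define $J:\bbM\to\cM_+\big([0,\infty)\times[0,1]\big)$ by $J(m)(dt,dx):=x(1-x)\,m(dt,dx)$, regarding $m$ as a measure on $[0,\infty)\times[0,1]$ putting no mass on $\{x=0\}\cup\{x=1\}$. Then $J(m)\big([0,T]\times[0,1]\big)=\int_{[0,T]\times(0,1)}x(1-x)\,m(dt,dx)<\infty$ for every $T$, so $J(m)$ is indeed Radon, and $J$ is injective since $x(1-x)>0$ on $(0,1)$. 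As the vague topology is the initial topology for the maps $\mu\mapsto\int g\,d\mu$, $g\in\ccC_c\big([0,\infty)\times[0,1],\bbR\big)$, pulling these back through $J$ gives precisely the maps $m\mapsto\int x(1-x)g\,dm$, which by the very definition of the topology on $\bbM$ generate the topology metrised by $d$. Hence $J$ is a homeomorphism onto its image.

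Next I would identify that image as
\[ J(\bbM)=\Big\{\mu\in\cM_+\big([0,\infty)\times[0,1]\big):\ \mu\big([0,\infty)\times\{0\}\big)=\mu\big([0,\infty)\times\{1\}\big)=0\Big\}. \]
The inclusion ``$\subseteq$'' is immediate, and for ``$\supseteq$'' one notes that if $\mu$ charges neither boundary line then $m:=\frac{1}{x(1-x)}\mu$, viewed as a measure on $[0,\infty)\times(0,1)$, satisfies $\int_{[0,t]\times(0,1)}x(1-x)\,m=\mu\big([0,t]\times(0,1)\big)\le\mu\big([0,t]\times[0,1]\big)<\infty$, so $m\in\bbM$ and $J(m)=\mu$. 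To see that this set is $G_\delta$, for integers $t\ge1$ and $j\ge1$ fix $\psi_{t,j}(s,x):=\theta_t(s)\chi_j(x)\in\ccC_c\big([0,\infty)\times[0,1],\bbR\big)$ with $\mathbf{1}_{[0,t]}\le\theta_t\le\mathbf{1}_{[0,t+1]}$ and $\mathbf{1}_{[0,1/(2j)]}\le\chi_j\le\mathbf{1}_{[0,1/j)}$; the map $\mu\mapsto\int\psi_{t,j}\,d\mu$ is then vaguely continuous. From $\mu\big([0,t]\times\{0\}\big)\le\int\psi_{t,j}\,d\mu\le\mu\big([0,t+1]\times[0,1/j)\big)$ together with $\mu\big([0,t+1]\times[0,1/j)\big)\downarrow\mu\big([0,t+1]\times\{0\}\big)$ as $j\to\infty$, one obtains that $\mu\big([0,\infty)\times\{0\}\big)=0$ if and only if $\inf_{j\ge1}\int\psi_{t,j}\,d\mu=0$ for every integer $t\ge1$, whence
\[ \Big\{\mu:\mu\big([0,\infty)\times\{0\}\big)=0\Big\}=\bigcap_{t\ge1}\ \bigcap_{n\ge1}\ \bigcup_{j\ge1}\Big\{\mu:\textstyle\int\psi_{t,j}\,d\mu<\tfrac1n\Big\} \]
is a countable intersection of vaguely open sets. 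The same argument handles the line $\{x=1\}$, and an intersection of two $G_\delta$ sets is $G_\delta$.

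Combining these steps, $J(\bbM)$ is a $G_\delta$ subset of a Polish space, hence Polish, and therefore the homeomorphic space $(\bbM,d)$ is Polish too (its separability being in any case immediate from the embedding). I expect the main obstacle to be exactly this circle of ideas: recognising that $d$ fails to be complete, that the correct ambient space is $\cM_+\big([0,\infty)\times[0,1]\big)$ rather than $\cM_+\big([0,\infty)\times(0,1)\big)$ — for which $J$ would fail to be a homeomorphism, since the boundary behaviour is invisible to test functions compactly supported in $(0,1)$ — and then encoding the ``no mass on the boundary lines'' constraint as an explicit countable intersection of vaguely open sets.
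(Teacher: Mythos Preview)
Your proof is correct, and in fact your opening observation exposes a genuine gap in the paper's argument. The paper attempts to show that $d$ itself is complete: given a $d$-Cauchy sequence $(m_n)$, it passes to $\nu_n:=x(1-x)m_n$, obtains a vague limit $\nu$ on $[0,\infty)\times[0,1]$ via the Riesz representation theorem, sets $m:=\tfrac{1}{x(1-x)}\nu$ on $[0,\infty)\times(0,1)$, and then asserts without justification that $d(m_n,m)\to0$. Your counterexample $m_j=2^j\delta_{(t_0,2^{-j})}$ shows this last step can fail: here $\nu_j=(1-2^{-j})\delta_{(t_0,2^{-j})}\to\delta_{(t_0,0)}$ vaguely on $[0,\infty)\times[0,1]$, so the paper's $m$ is the zero measure, yet $\int\varphi_k\,dm_j=(1-2^{-j})g_k(t_0,2^{-j})\to g_k(t_0,0)$, which is nonzero for suitable $k$ (e.g.\ any $g_k=\rho_{p,q}$ with $p>t_0$). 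Thus $d$ is not complete and the paper's completeness argument is incorrect as written.

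Your route via Alexandrov's theorem is the natural repair. The embedding $J$ identifies $(\bbM,d)$ homeomorphically with the set of Radon measures on $[0,\infty)\times[0,1]$ charging neither boundary line $\{x=0\}$ nor $\{x=1\}$, and your explicit construction with the $\psi_{t,j}$ shows this set is $G_\delta$ in the vague topology; a $G_\delta$ subset of a Polish space is Polish in the relative topology. The conceptual point the paper misses is precisely that its Riesz limit $\nu$ may leak mass onto the boundary, and that ``no boundary mass'' is a $G_\delta$ rather than a closed condition---which is exactly what forbids completing $d$ directly but still yields Polishness. Your emphasis on taking the ambient space to be $\cM_+([0,\infty)\times[0,1])$ rather than $\cM_+([0,\infty)\times(0,1))$ is also well placed: with the latter choice the boundary behaviour would be invisible to the test functions and $J$ would fail to be a homeomorphism.
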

\begin{proof}
The set of linear combinations of Dirac masses on $([0,\infty)\times(0,1))\cap(\bbQ\times\bbQ)$ with rational coefficients is dense in this metric space, so that it is separable. The completeness can be proved as follows. Let $m_n,n\geq 1$ be a Cauchy sequence for $d$. Define $\nu_n(dt,dx):=x(1-x)m_n(dt,dx)$. Then, by a diagonal argument there exists an increasing sequence $n_i,i\geq 1$ such that for every $k\geq 1$, $\nu_{n_i}(g_k)$ converges as $i\rightarrow\infty$ to a limit denoted by $\Lambda(g_k)$. Consider a continuous function $g$ on $[0,\infty)\times[0,1]$ that vanishes outside a compact set, say $[0,T]\times[0,1]$. Fix $\epsilon>0$ and consider $g_k$ such that $\left\|g-g_k\right\|_{\infty}<\epsilon$ and $\mbox{supp } g_k \subset [0,T+\epsilon]\times[0,1]$. There exists $g_p\geq 0$ such that $g_p \geq \tun_{[0,T+\epsilon]\times[0,1]}$. We write:
\begin{eqnarray*}
\big| \nu_{n_i}(g)- \nu_{n_j}(g)\big| &\leq& \big| \nu_{n_i}(g)- \nu_{n_i}(g_k)\big| + \big| \nu_{n_i}(g_k)- \nu_{n_j}(g_k)\big|+\big| \nu_{n_j}(g_k)- \nu_{n_j}(g)\big|\\
&\leq& \left\|g-g_k\right\|_{\infty}\big(\nu_{n_i}(g_p)+\nu_{n_j}(g_p)\big)+\big| \nu_{n_i}(g_k)- \nu_{n_j}(g_k)\big|
\end{eqnarray*}
Taking $i,j$ large enough, the left side becomes smaller than $\epsilon (3\Lambda(g_p)+ 1)$, so that $(\nu_{n_i}(g),i\geq 1)$ is a Cauchy sequence. We denote by $\Lambda(g)$ the limit. We have defined a positive linear map $\Lambda$ on the set of continuous functions on $[0,\infty)\times[0,1]$ with compact support. By the Riesz representation theorem, there exists a Borel measure $\nu$ on $[0,\infty)\times[0,1]$, finite on compact subsets, such that $\Lambda(g)=\nu(g)$. We then define $m(dt,dx):=\frac{1}{x(1-x)}\nu(dt,dx)$ on $[0,\infty)\times(0,1)$, which clearly belongs to $\bbM$. It is easily checked that $d(m_n,m)$ goes to $0$ as $n\rightarrow\infty$.\cqfd
\end{proof}

\noindent We work in \Mdeux, since the arguments are very similar in \Munw. For $A\subset\bbM$ to be relatively compact, it is necessary and sufficient that for all $k\geq 1$, $\sup_{m\in A} |\int \varphi_k dm| <\infty$. To show tightness of $\zetaun$ under $\bbQN_{\bmuN}$, it suffices to find for every $\epsilon > 0$ a sequence $\lambda_k > 0$ such that
\begin{equation}\label{Eq:Compactness}
\sup_{N\geq 1} \bbQN_{\bmuN}\Big(\Big|\int_{[0,\infty)\times(0,1)}\!\!\!\!\! \varphi_k(t,x)\zetaun(dt,dx)\Big| > \lambda_k\Big) < \epsilon\ 2^{-k}.
\end{equation}
For any two Riemann-integrable functions $g,h$ we define
\begin{equation}\label{Eq:DefIntegrals}
\langle g,h \rangle := \int_{[0,1]}g(x)h(x)dx\;\;\;,\;\;\;\langle g,h \rangle_{\tiN} := \frac{1}{2N}\sum_{k=0}^{2N}g(k_{\tiN})h(k_{\tiN}).
\end{equation}
Notice that $\llangle\cdot\rrangle$ denotes the bracket of a martingale. For every $k\geq 1$, the function $\varphi_k$ introduced at the beginning of this subsection, is compactly supported in $[0,\infty)\times[0,1]$ and vanishes for $x\in\{0,1\}$. Furthermore $\partial_t \varphi_k$ and $\partial^2_x\varphi_k$ exist and are continuous. Using (\ref{Eq:DefMde}), we see that for all $N\geq 1$ the process
\begin{equation}\label{Eq:Mvarphik}
\begin{split}
\MNun_t(\varphi_k)&:=\langle \rmhun_t,\varphi_k(t,\cdot) \rangle_{\tiN} - \langle \rmhun_0,\varphi_k(0,\cdot) \rangle_{\tiN} - \frac{(2N)^2}{2}\int_0^t \langle \Delta\rmhun_s,\varphi_k(s,\cdot) \rangle_{\tiN} ds\\
&\;\;\; - \int_0^t \langle \rmhun_s,\partial_s\varphi_k(s,\cdot) \rangle_{\tiN}ds- \frac{2}{\sqrt{2N}}\int_0^t \big\langle (\pN(\cdot)-\frac{(2N)^2}{2})\tun_{\{\Delta \rmhun_s(\cdot)\ne 0\}},\varphi_k(s,\cdot) \big\rangle_{\tiN}ds\\
&\;\;\;-\int_{[0,t]\times(0,1)}\!\!\!\!\!\varphi_k(s,x)\zetaun(ds,dx)
\end{split}
\end{equation}
is a martingale under $\bbQN_{\bmuN}$.
\begin{lemma}\label{LemmaCVMgales}
For every $k\geq 1$, the sequence of processes $(\MNun_t(\varphi_k),t\geq 0)$ is tight in $\bbD([0,\infty),\bbR)$ and any limit belongs to $\bbC([0,\infty),\bbR)$.
\end{lemma}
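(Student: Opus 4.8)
The plan is to recognise that, once the drift terms subtracted in (\ref{Eq:Mvarphik}) are peeled off, $\MNun(\varphi_k)$ is simply a compensated integral against the independent Poisson processes $\cLNun(j_{\tiN}),\cRNun(j_{\tiN})$, $j\in\lbr 1,2N-1\rbr$, and then to run the standard recipe for tightness of martingales: a uniform (in fact pathwise) control of the predictable quadratic variation $\llangle\MNun(\varphi_k)\rrangle$, together with the observation that the jumps of $\MNun(\varphi_k)$ vanish as $N\to\infty$.

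First, I would write each Poisson differential in (\ref{Eq:DefMde}) as its compensated part plus its compensator and, using $\pN(\cdot)+\qN(\cdot)=(2N)^2$, check that the four $ds$-integral terms in (\ref{Eq:Mvarphik}) --- the discrete Laplacian term, the $\partial_s\varphi_k$ term, the $(\pN(\cdot)-\frac{(2N)^2}{2})$ term and the reflecting term $\int\varphi_k\,\zetaun$ --- together constitute exactly the compensator of $t\mapsto\langle\rmhun_t,\varphi_k(t,\cdot)\rangle_{\tiN}$. This leaves
\[\MNun_t(\varphi_k)=\frac{2}{(2N)^{3/2}}\int_0^t\sum_{j=1}^{2N-1}\varphi_k(s,j_{\tiN})\Big((d\cLNun_s(j_{\tiN})-\pN(j_{\tiN})ds)\tun_{\{\Delta\rmhun_s(j_{\tiN})>0\}}-(d\cRNun_s(j_{\tiN})-\qN(j_{\tiN})ds)\tun_{\{\Delta\rmhun_s(j_{\tiN})<0;\rmhun_s(j_{\tiN})>\rmhde_s(j_{\tiN})\}}\Big),\]
a martingale (consistently with the statement) started from $0$ since the term $\langle\rmhun_0,\varphi_k(0,\cdot)\rangle_{\tiN}$ is subtracted.

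Next, by mutual independence of the $2(2N-1)$ Poisson processes the cross terms drop out of the predictable bracket, leaving
\[\big\llangle\MNun(\varphi_k)\big\rrangle_t=\frac{4}{(2N)^{3}}\int_0^t\sum_{j=1}^{2N-1}\varphi_k(s,j_{\tiN})^2\Big(\pN(j_{\tiN})\tun_{\{\Delta\rmhun_s(j_{\tiN})>0\}}+\qN(j_{\tiN})\tun_{\{\Delta\rmhun_s(j_{\tiN})<0;\rmhun_s(j_{\tiN})>\rmhde_s(j_{\tiN})\}}\Big)ds.\]
Bounding the indicators by $1$, using $\pN(\cdot),\qN(\cdot)\leq(2N)^2$ and that the sum has at most $2N$ terms, this yields the pathwise Lipschitz estimate $0\leq\llangle\MNun(\varphi_k)\rrangle_t-\llangle\MNun(\varphi_k)\rrangle_s\leq 4\|\varphi_k\|_{\infty}^2(t-s)$, uniformly in $N$. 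Two consequences follow: the one-dimensional marginals are tight, since $\bbQN_{\bmuN}[(\MNun_t(\varphi_k))^2]=\bbQN_{\bmuN}[\llangle\MNun(\varphi_k)\rrangle_t]\leq 4\|\varphi_k\|_{\infty}^2 t$ uniformly in $N$; and Aldous's condition holds, because for any stopping times $\tau_N\leq T$ and $\delta_N\downarrow 0$ the optional stopping theorem gives $\bbQN_{\bmuN}[(\MNun_{\tau_N+\delta_N}(\varphi_k)-\MNun_{\tau_N}(\varphi_k))^2]=\bbQN_{\bmuN}[\llangle\MNun(\varphi_k)\rrangle_{\tau_N+\delta_N}-\llangle\MNun(\varphi_k)\rrangle_{\tau_N}]\leq 4\|\varphi_k\|_{\infty}^2\delta_N\to0$, so those increments tend to $0$ in probability. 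Together these give tightness of $(\MNun(\varphi_k))_{N\geq1}$ in $\bbD([0,\infty),\bbR)$ by the classical Aldous--Rebolledo criterion (see e.g.~\cite{Billingsley_CVPM}).

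Finally, for the continuity of the limits: the $ds$-integral terms in (\ref{Eq:Mvarphik}) are continuous in $t$ (the reflecting term because $\zetaun$ carries a $ds$-density), so every jump of $\MNun(\varphi_k)$ is caused by a jump of $\rmhun$, which changes a single coordinate $\rmhun_t(j_{\tiN})$ by $\pm2/\sqrt{2N}$; hence $\sup_{t\geq0}|\MNun_t(\varphi_k)-\MNun_{t-}(\varphi_k)|\leq 2\|\varphi_k\|_{\infty}(2N)^{-3/2}\to0$ deterministically. Since a sequence that is tight in $\bbD$ and whose jumps vanish uniformly has only continuous limit points, every subsequential limit of $(\MNun(\varphi_k))_N$ lies in $\bbC([0,\infty),\bbR)$, as claimed. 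I expect no genuine obstacle here; the only step requiring care is the identification of the compensator in the first paragraph --- in particular that $\int\varphi_k\,\zetaun$ is precisely the part of the drift not already absorbed by the Laplacian and the lower-order terms --- after which everything reduces to the elementary estimates above.
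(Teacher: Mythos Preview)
Your proof is correct and follows essentially the same route as the paper: both compute the predictable bracket, obtain the pathwise Lipschitz bound $\llangle\MNun(\varphi_k)\rrangle_t-\llangle\MNun(\varphi_k)\rrangle_s\leq 4\|\varphi_k\|_\infty^2(t-s)$, and note that the jumps are bounded by $2\|\varphi_k\|_\infty(2N)^{-3/2}$. The only cosmetic difference is that the paper invokes Theorem~VI.4.13 in Jacod--Shiryaev (tightness of the brackets implies $\bbD$-tightness of the martingales), whereas you verify Aldous's criterion by hand; these are equivalent here and rest on the same estimate.
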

\begin{proof}
The bracket of the martingale is given by
\begin{eqnarray*}
\llangle \MNun(\varphi_k)\rrangle_t &=&\frac{4}{(2N)^2}\int_0^t \big\langle \pN(\cdot)\tun_{\{\Delta \rmhun_s(\cdot)>0\}}+\qN(\cdot)\tun_{\{\Delta \rmhun_s(\cdot)<0;\rmhun_s(\cdot)>\rmhde_s(\cdot)\}},\varphi_k^2(s,\cdot) \big\rangle_{\tiN} ds.
\end{eqnarray*}
Since $\bbQN_{\bmuN}$-a.s.~for every $t\geq 0$ we have $\llangle \MNun(\varphi_k)\rrangle_t\leq 4\left\|\varphi_k\right\|^2t$, we deduce that the sequence of bracket processes $(\llangle \MNun(\varphi_k)\rrangle_t,t\geq 0)$ is $\bbC$-tight. Theorem VI.4.13 in Jacod and Shiryaev~\cite{JacodShiryaev} thus implies that the sequence of martingales $(\MNun_t(\varphi_k),t\geq 0),N\geq 1$ is $\bbD$-tight. Since the jumps of these martingales are of vanishing magnitude - at most $2\left\|\varphi_k\right\|(2N)^{-\frac{3}{2}}$ - we deduce that they are actually $\bbC$-tight.\cqfd
\end{proof}
Fix $\epsilon>0$. For every $k\geq 1$, let $T_k >0$ be such that supp $\varphi_k \subset [0,T_k]\times(0,1)$. As a consequence of the tightness of $\rmhun$ and $\MNun(\varphi_k)$, we deduce that for every $k\geq 1$ there exists $\alpha_k>0$ such that
\[\sup_{N\geq 1}\bbQN_{\bmuN}\Big(\sup_{t\in[0,T_k],x\in[0,1]}|\rmhun_t(x)|>\alpha_k\Big) < \epsilon\ 2^{-k-1} \;\;\;,\;\;\;\sup_{N\geq 1}\bbQN_{\bmuN}\Big(|\MNun_{T_k}(\varphi_k)|>\alpha_k\Big) < \epsilon\ 2^{-k-1}.\]
Since $\varphi_k(s,0)=\varphi_k(s,1)=0$ we have
\[ \big|\langle \Delta\rmhun_s,\varphi_k(s,\cdot) \rangle_{\tiN}\big| =\big|\langle \rmhun_s,\Delta\varphi_k(s,\cdot) \rangle_{\tiN} \big| \leq (2N)^{-2}\sup_{[0,1]}|\partial^2_x\varphi(s,\cdot)|\sup_{[0,1]}|\rmhun_s|.\]
Using (\ref{Eq:Mvarphik}) at time $T_k$, we obtain that $\big|\int_{[0,\infty)\times(0,1)}\varphi_k(s,x)\zetaun(ds,dx)\big|$ is bounded by
\[ |\MNun_{T_k}(\varphi_k)|+\sup_{\substack{t\in[0,T_k]\\x\in[0,1]}}|\rmhun_t(x)|\big(2\left\|\varphi_k\right\|+\frac{T_k}{2}\left\|\partial_x^2\varphi_k\right\|+T_k\left\|\partial_s\varphi_k\right\|\big)+2\,T_k\!\sup_{\substack{N\geq 1\\x\in[0,1]}}\frac{|\pN(x)-\frac{(2N)^2}{2}|}{\sqrt{2N}}\left\|\varphi_k\right\|.\]
We deduce the existence of a sequence $\lambda_k$ satisfying (\ref{Eq:Compactness}). This ensures the tightness of $\zetaun$ under $\bbQN_{\bmuN}$. The proof works verbatim for $\zetade$.

\section{Identification of the limit}\label{Section:Limit}
We first give rigorous definitions of the stochastic PDEs of the statements then we complete the proofs of Theorems \ref{ThCVModel1}, \ref{ThCVModel1w} and \ref{ThCVModel2}. Recall Assumption \ref{Assumption} on the asymmetry $\sigma$. We start with the \textsc{RSHE}. Let $\ccC^{2}_c\big((0,1)\big)$ denote the space of compactly supported functions on $(0,1)$ with a continuous second derivative.
\begin{definition}(Nualart-Pardoux~\cite{NualartPardoux92})\label{Def:RSHE}
Consider a probability space $(\Omega,F,P)$ on which are defined a process $(\rmh_t,t\geq 0)$ in $\bbC([0,\infty),\cCMunw)$ and a random measure $\zeta\in\bbM$. We also assume that there exists a cylindrical Wiener process $(W_t,t\geq 0)$ on $\rL^2\big((0,1)\big)$ which is adapted to the natural filtration generated by $\rmh$ and $\zeta$. We say that $(\rmh,\zeta)$ is a solution to \textsc{RSHE} with initial condition $\nu$ if\begin{enumerate}[(i)]
\item The $\cCMunw$-valued random variable $\rmh_0$ has law $\nu$ and is independent of the cylindrical Wiener processes,
\item For any $\varphi\in\ccC^{2}_c\big((0,1)\big)$ we have $P$-a.s.:
\[\langle \rmh_t,\varphi \rangle = \langle \rmh_0,\varphi \rangle + t \langle \sigma,\varphi \rangle + \frac{1}{2}\int_0^t \langle \rmh_s,\varphi'' \rangle ds +\int_{[0,t]\times(0,1)}\!\!\! \varphi(x)\zeta(ds,dx)+\langle\varphi,W_t\rangle,\]
\item $P$-a.s., $\int_{[0,\infty)\times(0,1)} \rmh_t(x)\zeta(dt,dx)=0$.
\end{enumerate}
\end{definition}
The definition of the \textsc{SHE} is even simpler: it suffices to remove the random measure from this definition, so that we do not state it. It turns out that existence and uniqueness hold for these two stochastic PDEs, see Da Prato and Zabczyk~\cite{DaPratoZabczykBook} and Nualart and Pardoux~\cite{NualartPardoux92}. Let us now state our definition of \textsc{Pair \scriptsize{of}\normalsize\ RSHEs}.
\begin{definition}\label{Def:SHEs}
Consider a probability space $(\Omega,F,P)$ on which are defined a process $(\rmh_t,t\geq 0)$ in $\bbC([0,\infty),\cCMde)$ and two random measures $\zetaun,\zetade\in\bbM$. We also assume that there exist two independent cylindrical Wiener processes $(\Wun_t,t\geq 0)$, $(\Wde_t,t\geq 0)$ on $\rL^2\big((0,1)\big)$ which are adapted to the natural filtration generated by $\rmh$, $\zetaun$ and $\zetade$. We say that $(\rmh,\zetaun,\zetade)$ is a solution to \textsc{Pair \scriptsize{of}\normalsize\ RSHEs} with initial condition $\nu$ if\begin{enumerate}[(i)]
\item The $\cCMde$-valued random variable $\rmh_0$ has law $\nu$ and is independent of the cylindrical Wiener processes,
\item For any $\varphi\in\ccC^{2}_c\big((0,1)\big)$ we have for every $i\in\{1,2\}$
\[\langle \rmhi_t,\varphi \rangle = \langle \rmhi_0,\varphi \rangle -(-1)^i t \langle \sigma,\varphi \rangle + \frac{1}{2}\int_0^t \langle \rmhi_s,\varphi'' \rangle ds -(-1)^i\int_{[0,t]\times(0,1)}\!\!\!\!\!\!\!\! \varphi(x)\zetai(ds,dx)+\langle\varphi,\Wi_t\rangle,\]
\item $P$-a.s., $\int_{[0,\infty)\times(0,1)} \big(\rmhun_t(x)-\rmhde_t(x)\big)\big(\zetaun(dt,dx)+\zetade(dt,dx)\big)=0$.
\end{enumerate}
\end{definition}
\noindent We now work with the canonical process $(\rmh,\zetaun,\zetade)$ on $\bbD([0,\infty),\cCMde)\times\bbM\times\bbM$ endowed with the natural filtration $\ccF_t$ generated by $\rmh_s$ for all $s\in[0,t]$ and $\zetaun(B),\zetade(B)$ for all Borel sets $B\subset [0,t]\times(0,1)$. We state a martingale problem that allows to identify a solution to \textsc{Pair \scriptsize{of}\normalsize\ RSHEs}, a similar statement holds for the two other stochastic PDEs.
\begin{proposition}\label{Prop:MgalePb}
Let $\nu$ be a probability measure on $\cCMde$. Suppose that $\bbQ_{\nu}$ is a probability measure on $\bbC([0,\infty),\cCMde)\times\bbM\times\bbM$ under which:\begin{enumerate}[(a)]
\item $\rmh_0$ is distributed according to $\nu$,
\item For every $\varphi,\psi\in\ccC_c^2\big((0,1)\big)$ the processes
\begin{align*}
\Mi_t(\varphi)&:=\langle \rmhi_t,\varphi \rangle - \langle \rmhi_0,\varphi \rangle +(-1)^i t \langle \sigma,\varphi \rangle - \frac{1}{2}\int_0^t \langle \rmhi_s,\varphi'' \rangle ds +(-1)^i\int_{[0,t]\times(0,1)}\!\!\!\!\!\! \varphi(x)\zetai(ds,dx),\\
\Li_t(\varphi)&:=\Mi_t(\varphi)^2-t\langle \varphi,\varphi \rangle,\\
K_t(\varphi,\psi)&:= \Mun_t(\varphi)\Mde_t(\psi)
\end{align*}
are continuous $\ccF_t$-martingales,
\item We have $\int_{[0,\infty)\times(0,1)} \big(\rmhun_t(x)-\rmhde_t(x)\big)\big(\zetaun(dt,dx)+\zetade(dt,dx)\big)=0$.
\end{enumerate}
Then, $(\rmh,\zetaun,\zetade)$ is a solution to \textsc{Pair \scriptsize{of}\normalsize\ RSHEs} with initial condition $\nu$.
\end{proposition}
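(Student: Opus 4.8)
The plan is to run the standard argument that turns such a martingale problem into an SPDE: starting from the martingales $\Mi_t(\varphi)$ and the bracket information encoded by $\Li$ and $K$, construct two independent cylindrical Wiener processes $\Wun,\Wde$ on $\rL^2((0,1))$, adapted to the (usual augmentation of the) filtration $\ccF_t$, with $\Mi_t(\varphi)=\langle\varphi,\Wi_t\rangle$ for every $\varphi\in\ccC_c^2((0,1))$; the three conditions of Definition \ref{Def:SHEs} then follow by merely rearranging the formula for $\Mi_t(\varphi)$ in hypothesis (b). Since the relevant bracket below is deterministic and non-degenerate, no enlargement of the probability space is needed.

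First I would extract the covariance structure. For $\varphi,\psi\in\ccC_c^2((0,1))$ the map $\varphi\mapsto\Mi_t(\varphi)$ is linear, so $\Li_t(\varphi+\psi)-\Li_t(\varphi)-\Li_t(\psi)=2\,\Mi_t(\varphi)\Mi_t(\psi)-2t\langle\varphi,\psi\rangle$ is an $\ccF_t$-martingale, whence $\llangle\Mi(\varphi),\Mi(\psi)\rrangle_t=t\langle\varphi,\psi\rangle$ by polarisation; hypothesis (b) on $K_t(\varphi,\psi)$ gives $\llangle\Mun(\varphi),\Mde(\psi)\rrangle_t=0$. Thus for any finite family $\varphi_1,\dots,\varphi_m\in\ccC_c^2((0,1))$ the $\bbR^{2m}$-valued continuous $\ccF_t$-martingale $(\Mun_\cdot(\varphi_1),\dots,\Mun_\cdot(\varphi_m),\Mde_\cdot(\varphi_1),\dots,\Mde_\cdot(\varphi_m))$ starts at $0$ and has a deterministic bracket, so by L\'evy's characterisation it is a Brownian motion; being an $(\ccF_t)$-Brownian motion it is independent of $\ccF_0$, and its block-diagonal covariance forces the $\Mun$- and $\Mde$-blocks to be independent. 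Next, fixing an orthonormal basis $(e_n)_{n\ge1}$ of $\rL^2((0,1))$ and, for each $n$, a sequence $\varphi_n^k\in\ccC_c^2((0,1))$ with $\varphi_n^k\to e_n$ in $\rL^2$, the isometry $\bbE_{\bbQ_{\nu}}[(\Mi_t(\varphi)-\Mi_t(\psi))^2]=t\|\varphi-\psi\|_{\rL^2}^2$ together with Doob's maximal inequality shows that $(\Mi_\cdot(\varphi_n^k))_k$ is Cauchy in $\rL^2\big(\bbQ_{\nu};\bbC([0,T],\bbR)\big)$ for every $T$. Its limit $\Wi_\cdot(e_n)$ extends linearly and isometrically to a map $\varphi\mapsto\Wi_\cdot(\varphi)$ on $\rL^2((0,1))$ which agrees with $\Mi_\cdot(\varphi)$ on $\ccC_c^2((0,1))$ and, after choosing a version, is $\ccF_t$-adapted (each $\Mi_t(\varphi)$ is $\ccF_t$-measurable, the $\zetai$-integral being an $\ccF_t$-measurable limit of integrals of simple functions). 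Passing the identities above to the limit, $\Wun$ and $\Wde$ are cylindrical Wiener processes on $\rL^2((0,1))$, they are independent by Gaussianity and the vanishing cross-bracket, and both are independent of the $\ccF_0$-measurable random variable $\rmh_0$.

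It remains to collect the conclusions. Hypothesis (a) gives $\rmh_0\sim\nu$, independent of $(\Wun,\Wde)$ by the previous step, which is condition (i) of Definition \ref{Def:SHEs}. Rearranging the formula for $\Mi_t(\varphi)$ in hypothesis (b) and substituting $\Mi_t(\varphi)=\langle\varphi,\Wi_t\rangle$ yields identity (ii) for all $\varphi\in\ccC_c^2((0,1))$, while (iii) is hypothesis (c) verbatim; hence $(\rmh,\zetaun,\zetade)$ solves \textsc{Pair \scriptsize{of}\normalsize\ RSHEs} with initial condition $\nu$.

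The main obstacle is the white-noise construction: checking that the $\rL^2$-isometry extends consistently from $\ccC_c^2((0,1))$ to all of $\rL^2((0,1))$ and produces a genuine cylindrical Wiener process adapted to $\ccF_t$, and --- the ingredient that is new relative to the single-equation setting of Nualart and Pardoux~\cite{NualartPardoux92} --- that the martingale $K_t(\varphi,\psi)$ forces the mutual independence of $\Wun$ and $\Wde$. Everything else is routine bookkeeping.
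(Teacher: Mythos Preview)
Your proposal is correct and follows essentially the same route as the paper: extract the bracket structure from $\Li$ and $K$, extend $\varphi\mapsto\Mi_t(\varphi)$ from $\ccC_c^2((0,1))$ to $\rL^2((0,1))$ by the isometry $\bbE[\Mi_t(\varphi)^2]=t\|\varphi\|_{\rL^2}^2$, build the cylindrical Wiener processes from an orthonormal basis, and read off independence of $\Wun,\Wde$ from the vanishing cross-bracket and independence from $\rmh_0$ from the $(\ccF_t)$-Brownian property. The paper's own proof is terser (it calls the arguments ``standard'') but the ingredients and their assembly are the same.
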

\begin{proof}
The arguments are standard. Property (iii) follows from (c). By density of $\ccC_c^2\big((0,1)\big)$ in $\rL^2\big((0,1)\big)$, for every $t>0$ we can extend the map $\varphi\mapsto t^{-\frac{1}{2}}\Mi_t(\varphi)$ into an isometry from $\rL^2\big((0,1),dx\big)$ into $\rL^2\big(\bbC\times\bbM\times\bbM,\bbQ_\nu\big)$. Then for every $\varphi\in\rL^2\big((0,1),dx\big)$, the process $(\Mi_t(\varphi),t\geq 0)$ is a Brownian motion with variance $t\left\|\varphi\right\|_{\rL^2}^2$ which is adapted to the filtration $(\ccF_t,t\geq 0)$ so that it is independent of $\ccF_0$. Consider the orthonormal basis $(\epsilon_n,n\geq 0)$ of $\rL^2\big((0,1)\big)$ introduced at the beginning of the proof of Proposition \ref{Prop:UnifBoundLyonsZheng}, and define $\Wi_t := \sum_{n\geq 0}\Mi_t(\epsilon_n)\epsilon_n$. This random variable takes values in a distribution space. For each $i\in\{1,2\}$, this is a cylindrical Wiener process on $\rL^2\big((0,1)\big)$. Property (ii) of Definition \ref{Def:SHEs} follows. The fact that $(K_t(\varphi,\psi),t\geq 0)$ is a martingale implies that the covariation of the Brownian motions $(\Mun_t(\varphi),t\geq 0)$ and $(\Mde_t(\psi),t\geq 0)$ vanishes so that they are independent. Consequently, the Gaussian processes $\Wun$ and $\Wde$ are independent. Finally, the independence of these Wiener processes from $\rmh_0$ follows from the independence of the $(M_t(\varphi),t\geq 0)$'s from $\ccF_0$. Property (i) follows.\cqfd 
\end{proof}

\subsection{Conclusion of the proof of Theorems \ref{ThCVModel1}, \ref{ThCVModel1w} and \ref{ThCVModel2}}
From now on, we restrict ourselves to \Mdeux\ as this is the most involved setting. The proof is very similar for the other two models. We have already obtained tightness of the sequence $\bbQN_\bmuN,N\geq 1$. Consider a convergent subsequence, which for simplicity we still denote $\bbQN_\bmuN,N\geq 1$, and let $\bbQ'$ be its limit which is supported by $\bbC([0,\infty),\cCMde)\times\bbM\times\bbM$. To complete the proof of Theorem \ref{ThCVModel2} we only need to show that the conditions of Proposition \ref{Prop:MgalePb} are fulfilled under $\bbQ'$ when the initial condition $\nu$ is taken to be $\bQMde$.\vspace{-10pt}
\paragraph{Martingale relations.} We start with the proofs of the martingale relations on $M^{(i)}_t(\varphi)$ and $L^{(i)}_t(\varphi)$. By symmetry, it suffices to consider $i=1$. The main idea of the proof is to consider discrete versions $\MNun_t(\varphi)$ and $\LNun_t(\varphi)$ of the martingales $\Mun_t(\varphi)$ and $\Lun_t(\varphi)$, and to show that the $\rL^2$ norms of the differences vanish as $N\rightarrow\infty$. Fix a map $\varphi$ in $\ccC_c^2\big((0,1)\big)$, by linearity we can assume that $\varphi\geq 0$. Recall the notation (\ref{Eq:DefIntegrals}). We define
\begin{eqnarray*}
\MNun_t(\varphi)&:=&\langle \rmhun_t,\varphi \rangle_{\tiN} - \langle \rmhun_0,\varphi \rangle_{\tiN}- \frac{2}{\sqrt{2N}}\int_0^t \Big\langle \Big(\pN(\cdot)-\frac{(2N)^2}{2}\Big)\tun_{\{\Delta \rmhun_s(\cdot)\ne 0\}},\varphi \Big\rangle_{\tiN}ds\\
&&\!\!\! - \frac{(2N)^2}{2}\int_0^t \langle \Delta\rmhun_s,\varphi \rangle_{\tiN} ds - \int_{[0,t]\times(0,1)}\!\!\!\!\!\!\varphi(x)\zetaun(ds,dx)
\end{eqnarray*}
and
\begin{eqnarray*}
\LNun_t(\varphi)\!\!\!&:=&\!\!\!\! \big(\MNun_t(\varphi)\big)^2\!-\frac{4}{(2N)^2}\int_0^t \langle \pN(\cdot)\tun_{\{\Delta \rmhun_s(.)>0\}}+\qN(\cdot)\tun_{\{\Delta \rmhun_s(\cdot)<0;\rmhun_s(\cdot)>\rmhde_s(\cdot)\}},\varphi^2 \rangle_{\tiN} ds.
\end{eqnarray*}
Using the stochastic differential equations (\ref{Eq:DefMde}), it is elementary to check that both processes are $\ccF_t$-martingales under $\bbQN_{\bmuN}$. Recall the definition of $\Mun_t(\varphi)$ and $\Lun_t(\varphi)$, which are well-defined random variables on the space $\bbD\times\bbM\times\bbM$.
\begin{lemma}\label{LemmaIdLimit}
For every $t\geq 0$, we have:\begin{itemize}
\item[(a)]$\sup\limits_{N\geq 1}\bbQN_{\bmuN}\Big[\big|\Mun_t(\varphi)\big|^4 \Big] <\infty \;\;,\;\;\sup\limits_{N\geq 1}\bbQN_{\bmuN}\big[\big|\Lun_t(\varphi)\big|^2\big] <\infty$,
\item[(b)]$\bbQN_{\bmuN}\Big[|\MNun_t(\varphi)-\Mun_t(\varphi)|^2\Big] \underset{N\rightarrow\infty}{\longrightarrow} 0\;\;,\;\;\bbQN_{\bmuN}\big[|\LNun_t(\varphi)-\Lun_t(\varphi)|^2\big] \underset{N\rightarrow\infty}{\longrightarrow} 0.$
\end{itemize}
\end{lemma}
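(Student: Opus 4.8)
The plan is to decompose both $\Mun_t(\varphi)$ and its discrete analogue $\MNun_t(\varphi)$ into a handful of comparable pieces and to estimate the differences one piece at a time; the assertions for $\Lun_t(\varphi)$, $\LNun_t(\varphi)$ will then follow by an almost identical, simpler analysis. Two ingredients are used throughout: the stationarity of $\bbQN_{\bmuN}$, so that $\rmhun_s$ has, for each $s\ge0$, the law of the $\hun$-marginal of $\bmuN$; and the uniform bound $\sup_{N\ge1}\bmuN[e^{\lambda\|h\|_{\cC}}]<\infty$ of Theorem \ref{ThCVInvMeasure}, which controls every moment of $\sup_x|\rmhun_s(x)|$ uniformly in $N$. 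For part (a): the terms $\langle\rmhun_0,\varphi\rangle$ and $\langle\rmhun_t,\varphi\rangle$ are bounded by $\|\varphi\|_{\rL^1}\sup_x|\rmhun_\cdot(x)|$; $t\langle\sigma,\varphi\rangle$ is deterministic; $\big\|\int_0^t\langle\rmhun_s,\varphi''\rangle ds\big\|_{\rL^p}\le t\,\bmuN[|\langle h,\varphi''\rangle|^p]^{1/p}$ by Minkowski's integral inequality; and $\int\varphi\,\zetaun$ is a well-defined random variable on $\bbM$ because $\varphi/(x(1-x))$ is bounded on the compact support of $\varphi$. To bound $\int\varphi\,\zetaun$ uniformly I would isolate it in the definition of $\MNun_t(\varphi)$: $\int\varphi\,\zetaun=-\MNun_t(\varphi)+\langle\rmhun_t,\varphi\rangle_{\tiN}-\langle\rmhun_0,\varphi\rangle_{\tiN}-\frac{(2N)^2}{2}\int_0^t\langle\Delta\rmhun_s,\varphi\rangle_{\tiN}ds-\frac{2}{\sqrt{2N}}\int_0^t\langle(\pN(\cdot)-\frac{(2N)^2}{2})\tun_{\{\Delta\rmhun_s(\cdot)\ne0\}},\varphi\rangle_{\tiN}ds$; here $\MNun(\varphi)$ is a martingale whose bracket $[\MNun(\varphi)]_t$ is at most $\sum_k\big(2\varphi(k_{\tiN})(2N)^{-3/2}\big)^2P_k$ with the $P_k$ independent and $\mathrm{Poisson}((2N)^2t)$, so $[\MNun(\varphi)]_t$ is bounded in every $\rL^q$ uniformly in $N$ and Burkholder--Davis--Gundy bounds $\MNun_t(\varphi)$ in every $\rL^p$; the remaining discrete terms are bounded as above, using $\langle\Delta\rmhun_s,\varphi\rangle_{\tiN}=\langle\rmhun_s,\Delta\varphi\rangle_{\tiN}$, $(2N)^2|\Delta\varphi|\le\|\varphi''\|_\infty$ and $\frac{2}{\sqrt{2N}}|\pN-\frac{(2N)^2}{2}|\lesssim\|\sigma\|_\infty$ (which follows from (\ref{Eq:PnQn})). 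Thus $\Mun_t(\varphi)$ is bounded in every $\rL^p$ and $\Lun_t(\varphi)$ in every $\rL^{p/2}$, giving (a); the same estimates bound $\MNun_t(\varphi)$ in every $\rL^p$.

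For part (b), subtracting cancels the two $\int\varphi\,\zetaun$ terms, leaving four differences to be shown to tend to $0$ in $\rL^2$: (i) $\langle\rmhun_t,\varphi\rangle_{\tiN}-\langle\rmhun_t,\varphi\rangle$, (ii) the same at time $0$, (iii) $\frac{2}{\sqrt{2N}}\int_0^t\langle(\pN(\cdot)-\frac{(2N)^2}{2})\tun_{\{\Delta\rmhun_s(\cdot)\ne0\}},\varphi\rangle_{\tiN}ds-t\langle\sigma,\varphi\rangle$, and (iv) $\frac{(2N)^2}{2}\int_0^t\langle\Delta\rmhun_s,\varphi\rangle_{\tiN}ds-\frac12\int_0^t\langle\rmhun_s,\varphi''\rangle ds$. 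Items (i)--(ii) are Riemann-sum-versus-integral estimates: $\rmhun_s$ being affine with slope $\pm\sqrt{2N}$ on each mesh interval, one gets $|\langle\rmhun_s,\varphi\rangle_{\tiN}-\langle\rmhun_s,\varphi\rangle|\lesssim\|\varphi\|_\infty(2N)^{-1/2}+\|\varphi'\|_\infty\sup_x|\rmhun_s(x)|(2N)^{-1}$, which vanishes in $\rL^2$ by Theorem \ref{ThCVInvMeasure}. For (iv) I would sum by parts (no boundary terms, since $\varphi\in\ccC_c^2((0,1))$), reducing to $\frac12\int_0^t\langle\rmhun_s,(2N)^2\Delta\varphi-\varphi''\rangle_{\tiN}ds$ plus a Riemann-sum remainder, and conclude from $(2N)^2\Delta\varphi\to\varphi''$ uniformly (uniform continuity of $\varphi''$) together with Minkowski's inequality and stationarity.

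The main obstacle is item (iii), the one genuinely probabilistic step, where the super-exponential estimate of Theorem \ref{ThSuperExpo} enters. By (\ref{Eq:PnQn}), $\frac{2}{\sqrt{2N}}(\pN(k_{\tiN})-\frac{(2N)^2}{2})=2\sigma(k_{\tiN})+O(\|\sigma\|_\infty^3(2N)^{-3})$ uniformly in $k$, so (iii) reduces to showing $\int_0^t\langle 2\sigma\,\tun_{\{\Delta\rmhun_s(\cdot)\ne0\}},\varphi\rangle_{\tiN}ds\to t\langle\sigma,\varphi\rangle$ in $\rL^2$. In particle variables, $\tun_{\{\Delta\rmhun_s(k_{\tiN})\ne0\}}$ is (up to an irrelevant shift of the site index) the value $\Phi(\tau_k\upetaun_s)$ of the local function $\Phi(\eta):=(\eta(1)-\eta(2))^2$, for which $\tilde\Phi(a)=2a(1-a)$ and $\tilde\Phi(\tfrac12)=\tfrac12$. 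I would then run the standard replacement chain, bounding the $\rL^2$-distance of $\int_0^t\langle 2\sigma\,\tun_{\{\Delta\rmhun_s(\cdot)\ne0\}},\varphi\rangle_{\tiN}ds$ to $t\langle\sigma,\varphi\rangle$ by three contributions: (1) replacing $\Phi(\tau_k\upetaun_s)$ by its average over the box $\{j:|j-k|\le\epsilon N\}$ costs, by Abel summation, at most $2t$ times the modulus of continuity of $\sigma\varphi$ at scale $\epsilon$ (a deterministic bound, uniform in $N$), hence $o(1)$ as $\epsilon\downarrow0$; (2) replacing the box average of $\Phi$ by $\tilde\Phi$ of the box average of the density costs, up to the factor $\|\sigma\varphi\|_\infty$, at most $\frac1N\int_0^t\VNe(\upetaun_s)ds$, which is bounded by $2t$ and, by Theorem \ref{ThSuperExpo}, tends to $0$ in probability (hence in $\rL^2$, by boundedness) as $N\to\infty$ and then $\epsilon\downarrow0$; (3) replacing $\tilde\Phi$ of the box density by $\tfrac12$ costs, since $\tilde\Phi$ is Lipschitz and the box density $\rho^\epsilon_{k,s}$ around $k$ is an increment of $\rmhun_s$ divided by roughly $2\epsilon N$ (the height being the partial sums of $2\eta(\cdot)-1$, each step $\pm(2N)^{-1/2}$), at most of order $\epsilon^{-1}N^{-1/2}\sup_x|\rmhun_s(x)|$ after space averaging, which for each fixed $\epsilon$ tends to $0$ in $\rL^2$ by Theorem \ref{ThCVInvMeasure}. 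After the three replacements the quantity of interest equals $\int_0^t\langle\sigma,\varphi\rangle_{\tiN}ds=t\langle\sigma,\varphi\rangle_{\tiN}$, which converges to $t\langle\sigma,\varphi\rangle$ ($\sigma$ being $\tfrac12$-H\"older); taking $\varlimsup_N$ then $\varlimsup_{\epsilon\downarrow0}$ gives (iii), so $\MNun_t(\varphi)-\Mun_t(\varphi)\to0$ in $\rL^2$.

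For the assertions about $L$, write $\LNun_t(\varphi)-\Lun_t(\varphi)=\big(\MNun_t(\varphi)^2-\Mun_t(\varphi)^2\big)-\big(\tfrac{4}{(2N)^2}\int_0^t\langle\pN(\cdot)\tun_{\{\Delta\rmhun_s(\cdot)>0\}}+\qN(\cdot)\tun_{\{\Delta\rmhun_s(\cdot)<0;\rmhun_s(\cdot)>\rmhde_s(\cdot)\}},\varphi^2\rangle_{\tiN}ds-t\langle\varphi,\varphi\rangle\big)$. Since $\MNun_t(\varphi)-\Mun_t(\varphi)\to0$ in $\rL^2$ while $\MNun_t(\varphi),\Mun_t(\varphi)$ are bounded in every $\rL^p$, interpolation gives $\MNun_t(\varphi)-\Mun_t(\varphi)\to0$ in every $\rL^p$, hence $\MNun_t(\varphi)^2-\Mun_t(\varphi)^2\to0$ in $\rL^2$. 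For the second bracket, $\tfrac{4}{(2N)^2}\pN$ and $\tfrac{4}{(2N)^2}\qN$ tend to $2$ uniformly, and the contribution of $\tun_{\{\rmhun_s(\cdot)=\rmhde_s(\cdot)\}}$ equals $\tfrac{2}{(2N)^{3/2}}\int\varphi^2\,\zetaun$, which tends to $0$ by part (a) (applied to $\varphi^2\in\ccC_c^2((0,1))$); one is thus reduced to $2\int_0^t\langle\tun_{\{\Delta\rmhun_s(\cdot)\ne0\}},\varphi^2\rangle_{\tiN}ds$, treated exactly as in (iii) with target $2\cdot t\cdot\tfrac12\langle\varphi^2,1\rangle=t\langle\varphi,\varphi\rangle$. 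In short, every step apart from item (iii) is bookkeeping (Riemann sums, Taylor expansion of $\Delta\varphi$, Poisson moment bounds, stationarity); the crux is the local replacement of $\tun_{\{\Delta\rmhun_s(\cdot)\ne0\}}$ by its equilibrium value $\tfrac12$, which rests on combining Theorem \ref{ThSuperExpo} with the a priori stationary estimate of Theorem \ref{ThCVInvMeasure} forcing the local particle density to stay near $\tfrac12$.
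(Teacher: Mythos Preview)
Your proposal is correct and follows the same strategy as the paper: bound $\MNun_t(\varphi)$ via BDG and Poisson moment estimates, handle the Riemann-sum and discrete-Laplacian remainders by stationarity together with the exponential moment bound of Theorem \ref{ThCVInvMeasure}, and treat the nonlinear term via the three-step replacement scheme of Theorem \ref{ThSuperExpo} with the local function $\Phi(\eta)=(\eta(1)-\eta(2))^2$ (equivalently, up to a factor $2$, the paper's $\Phi(\eta)=2\eta(1)(1-\eta(2))+2(1-\eta(1))\eta(2)$). The only noteworthy variation is in the $\Lun$ analysis: you eliminate the contribution $\tfrac{2}{(2N)^{3/2}}\int\varphi^2\,\zetaun$ by the uniform $\rL^p$ bound on $\int\varphi^2\,\zetaun$ obtained in part (a), whereas the paper instead appeals to the tightness of $\zetaun$ from Subsection \ref{Subsection:TightnessZeta}; your route is marginally more self-contained, and your Lipschitz bound in step (3) (in place of the paper's quadratic expansion of $\tilde\Phi$ near $\tfrac12$) is cruder but equally sufficient.
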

\begin{proof}
The bound on the second moment of $\Lun_t(\varphi)$ follows from the bound on the fourth moment of $\Mun_t(\varphi)$, so we only need to bound this term uniformly to obtain (a).\\
The Burkholder-Davis-Gundy inequality (we refer to Appendix \ref{SectionProofTightnessMun} for notations) implies
\begin{eqnarray*}
\bbQN_\bmuN\Big[ \Big( \MNun_t(\varphi)\Big)^4 \Big] &\leq& \tcBDG(4)^4\bbQN_\bmuN\Big[\big[\MNun(\varphi)\big]_t^2\Big]\\
&\leq& \frac{16\tcBDG(4)^4}{(2N)^6} \left\|\varphi\right\|^4\bbQN_\bmuN\Big[\Big(\sum_{k=1}^{2N-1}\cLNun_t(k_{\tiN})+\cRNun_t(k_{\tiN})\Big)^2\Big].
\end{eqnarray*}
On the right we have the second moment of a Poisson random variable with mean $t(2N)^3$, this is equal to $t(2N)^3+t^2(2N)^6$. Consequently
\[ \sup_{N\geq 1}\bbQN_\bmuN\Big[\Big(\MNun_t(\varphi)\Big)^4\Big] < \infty.\]
To prove that the same holds for $\Mun_t(\varphi)$ instead of $\MNun_t(\varphi)$, it suffices to bound the fourth moment of $|\Mun_t(\varphi)-\MNun_t(\varphi)|$ uniformly. Actually, let us prove that the fourth moment of this quantity vanishes when $N$ goes to $\infty$. We have
\begin{eqnarray*}
\Mun_t(\varphi)-\MNun_t(\varphi)&=&\langle \rmhun_t,\varphi \rangle-\langle \rmhun_t,\varphi \rangle_{\tiN} - \langle \rmhun_0,\varphi \rangle+ \langle \rmhun_0,\varphi \rangle_{\tiN}\\
&& - \frac{1}{2}\int_0^t \langle \rmhun_s,\varphi'' \rangle ds + \frac{(2N)^2}{2}\int_0^t \langle \Delta\rmhun_s,\varphi \rangle_{\tiN} ds\\
&& - t \langle \sigma,\varphi \rangle + \frac{2}{\sqrt{2N}}\int_0^t \Big\langle \Big(\pN(\cdot)-\frac{(2N)^2}{2}\Big)\tun_{\{\Delta \rmhun_s(\cdot)\ne 0\}},\varphi \Big\rangle_{\tiN}ds.
\end{eqnarray*}
The fourth moments of the terms in the first two lines can be shown to vanish using standard arguments together with the uniform bound of the exponential moments of $\left\|h\right\|_{\cC}$ under the stationary measure $\bmuN$ obtained in Theorem \ref{ThCVInvMeasure}. The term on the third line is more involved and requires Theorem \ref{ThSuperExpo}. We do not provide the details since we will apply this theorem for a very similar term below. Consequently we have uniform bounds on the fourth moment of $\Mun_t(\varphi)$ so that statement (a) of the lemma follows. The above calculations also prove (b) for $\Mun_t(\varphi)$.\\ 
It remains to prove (b) for $\Lun_t(\varphi)$. First observe that the Cauchy-Schwarz inequality yields:
\[ \bbQN_{\bmuN}\big[|(\MNun_t(\varphi))^2-(\Mun_t(\varphi))^2|^2\big] \!\leq\! \bbQN_{\bmuN}\big[|\MNun_t(\varphi)-\Mun_t(\varphi)|^4\big]^{\frac{1}{2}}\bbQN_{\bmuN}\big[|\MNun_t(\varphi)+\Mun_t(\varphi)|^4\big]^{\frac{1}{2}}.\]
The arguments above show that the first term on the right hand side vanishes as $N\rightarrow\infty$ while the second is uniformly bounded. Consequently the left hand side vanishes as $N\rightarrow\infty$.
Let us define 
\[ A_{\tiN} := \Big|\frac{4}{(2N)^2}\int_0^t \langle \pN(\cdot)\tun_{\{\Delta \rmhun_s(\cdot)>0\}}+\qN(\cdot)\tun_{\{\Delta \rmhun_s(\cdot)<0;\rmhun_s(\cdot)>\rmhde_s(\cdot)\}},\varphi^2 \rangle_{\tiN} ds-t\langle \varphi,\varphi \rangle\Big|.\]
To complete the proof of (b) for $\Lun_t(\varphi)$, we only need to show that $\bbQN_\bmuN[A_{\tiN}^2]\rightarrow 0$ as $N\rightarrow\infty$. The random variable $A_{\tiN}$ is bounded by a deterministic constant uniformly in $N\geq 1$ so that it suffices to prove its convergence in probability to $0$. Observe that
\begin{eqnarray*}
A_{\tiN}&\leq&\Big|\int_0^t \Big\langle \frac{4}{(2N)^2}\Big(\pN(\cdot)\tun_{\{\Delta \rmhun_s(\cdot)>0\}}+\qN(\cdot)\tun_{\{\Delta \rmhun_s(\cdot)<0\}}\Big)-1,\varphi^2 \Big\rangle_{\tiN} ds\Big|\\
&&\!\!\!+ t\sum_{k=0}^{2N-1}\Big|\frac{1}{2N}\varphi^2(k_{\tiN})-\int_{\frac{k}{2N}}^{\frac{k+1}{2N}}\varphi^2(u)du\Big|+\frac{2}{(2N)^{\frac{3}{2}}}\int_{[0,t]\times(0,1)}\!\!\!\varphi^2(x)\zetaun(dt,dx).
\end{eqnarray*}
The second term corresponds to the approximation of the Riemann integral, it vanishes as $N\rightarrow\infty$. To show that the third term vanishes in $\bbQN_{\bmuN}$-probability as $N\rightarrow\infty$ we argue as follows: for all rational values $p,q$ such that $p > t$, the random variable $\int_{[0,t]\times(0,1)}\!\!\varphi^2(x)\zetaun(ds,dx)$ is smaller than
\[ \int_{[0,\infty)\times(0,1)}\rho_{p,q}(s)\varphi^2(x)\zetaun(ds,dx) \]
which converges in distribution, by the convergence of the measure $\zetaun$. To bound the first term we apply Theorem \ref{ThSuperExpo} as follows. Recall the notation of Section \ref{SubsectionSuperExpo}. Let $\Phi:\eta\mapsto2\etaun(1)(1-\etaun(2))+2(1-\etaun(1))\etaun(2)$ and observe that $\tilde{\Phi}(a)=4a(1-a)$. Recall that $\tau_k$ denotes the shift by $k$ introduced in Subsection \ref{SubsectionSuperExpo}. Then we write
\begin{eqnarray*}
&&\int_0^t \Big\langle \frac{4}{(2N)^2}\Big(\pN(\cdot)\tun_{\{\Delta \rmhun_s(\cdot)>0\}}+\qN(\cdot)\tun_{\{\Delta \rmhun_s(\cdot)<0\}}\Big)-1,\varphi^2 \Big\rangle_{\tiN} ds\\
&=&\int_0^t \Big\langle \Big(\frac{4\,\pN(\cdot)}{(2N)^2}-2\Big)\tun_{\{\Delta \rmhun_s(\cdot)>0\}}+\Big(\frac{4\,\qN(\cdot)}{(2N)^2}-2\Big)\tun_{\{\Delta \rmhun_s(\cdot)<0\}},\varphi^2 \Big\rangle_{\tiN} ds\\
&&\hspace{-12pt}+\!\int_0^t\sum_{k=0}^{2N-1}\Big(\Phi(\tau_k\upeta_s)-1\Big)\frac{1}{2N}\varphi^2\Big(\frac{k+1}{2N}\Big)ds.
\end{eqnarray*}
The hypotheses made on $\pN,\qN$ imply that the $\bbQN_{\bmuN}$ expectation of the absolute value of the first term on the right goes to $0$ as $N\rightarrow\infty$. To deal with the second term on the right, we introduce $\epsilon >0$ and we write
\begin{equation}\label{Eq:BoundVaradhan}\begin{split}
&\int_0^t\sum_{k=0}^{2N-1}\Big(\Phi(\tau_k\upeta_s)-1\Big)\frac{1}{2N}\varphi^2\Big(\frac{k+1}{2N}\Big)ds\\
&= \int_0^t\sum_{k=0}^{2N-1}\Big(\frac{1}{2\epsilon N+1}\sum_{j:|j-k|\leq \epsilon N}\Phi(\tau_j\upeta_s)-\tilde{\Phi}\Big(\sum_{j:|j-k|\leq \epsilon N}\frac{1}{2\epsilon N+1}\upetaun_s(j)\Big)\Big)\frac{1}{2N}\varphi^2\Big(\frac{k+1}{2N}\Big)ds\\
&+\int_0^t\sum_{k=0}^{2N-1}\bigg(\tilde{\Phi}\Big(\sum_{j:|j-k|\leq \epsilon N}\frac{1}{2\epsilon N+1}\,\upetaun_s(j)\Big)-1\bigg)\frac{1}{2N}\varphi^2\Big(\frac{k+1}{2N}\Big)ds.
\end{split}
\end{equation}
There is a slight abuse of notation in this formula: one should take the integer part of $\epsilon N$ everywhere this term appears. Notice also that all our indices are taken modulo $2N$. For $\epsilon$ small enough, Theorem \ref{ThSuperExpo} ensures that the first term on the right of (\ref{Eq:BoundVaradhan}) vanishes in $\bbQN_\bmuN$-probability as $N\rightarrow\infty$. Now observe that
$$\sum_{j:|j-k|\leq \epsilon N}\frac{1}{2\epsilon N+1}\,\upetaun_s(j)=\frac{1}{2} + \frac{\sqrt{2N}}{2}\,\frac{\rmhun_s\Big(\frac{k}{2N}+\frac{\epsilon N}{2N}\Big)-\rmhun_s\Big(\frac{k}{2N}-\frac{\epsilon N}{2N}\Big)}{2\epsilon N+1}.$$
Since $\tilde{\Phi}(a)=4a(1-a)$, the second term on the right of (\ref{Eq:BoundVaradhan}) can be bounded by
$$t\left\|\varphi^2\right\|\frac{2N}{(2\epsilon N +1)^2}\sup_{s\in[0,T],x\in[0,1]}\big|\rmhun_s(x+2\epsilon)-\rmhun_s(x)\big|^2.$$
For any fixed value $\epsilon$, the $\bbQN_\bmuN$-expectation of the supremum is uniformly bounded in $N\geq 1$ by Assertion (\ref{Tightness2}) of the proof of the tightness stated at the beginning of Section \ref{Section:Tightness}, consequently the whole quantity vanishes in $\bbQN_\bmuN$-probability as $N\rightarrow\infty$.\cqfd
\end{proof}
\noindent Fix $s\geq 0$ and let $\gamma_s:\bbD([0,\infty),\cCMde)\rightarrow\bbR$ be a bounded measurable function, measurable with respect to the $\sigma$-field generated by $\rmh_r,r\in[0,s]$ and continuous at any point in $\bbC([0,\infty),\cCMde)$. We then set $G_s$ to be the following bounded measurable map from $\bbD\times\bbM\times\bbM$ into $\bbR$:
\[ G_s(\rmh,\zetaun,\zetade) = \gamma_s(\rmh)\prod_{j=1}^{n}
\alpha_j\Big(\int_{[0,s]\times(0,1)}a_j(r,x)\zetaun(dr,dx)\Big)
\beta_j\Big(\int_{[0,s]\times(0,1)}b_j(r,x)\zetade(dr,dx)\Big),\]
where $n\geq 1$, $\alpha_j,\beta_j$ are bounded continuous functions on $\bbR$, and $a_j,b_j$ are non-negative compactly supported functions from $[0,\infty)\times(0,1)$ into $\bbR$ that admit a continuous derivative in time and a continuous second derivative in space.
\begin{lemma}\label{LemmaCVMtNMt}
For all $t\geq s$, the distribution of $\MNun_t(\varphi)G_s$ under $\bbQN_\bmuN$ converges to the distribution of $\Mun_t(\varphi)G_s$ under $\bbQ'$, and similarly for $\LNun_t(\varphi)G_s$ and $\Lun_t(\varphi)G_s$.
\end{lemma}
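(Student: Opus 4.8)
The plan is to reduce the statement to a continuous--mapping argument on the Polish space $E:=\bbD([0,\infty),\cCMde)\times\bbM\times\bbM$, on which $\bbQN_{\bmuN}\Rightarrow\bbQ'$. First I would replace the discrete martingale $\MNun_t(\varphi)$ by the functional $\Mun_t(\varphi)$ defined intrinsically on $E$ by the formula of Proposition~\ref{Prop:MgalePb}. By Lemma~\ref{LemmaIdLimit}(b) the $\rL^2(\bbQN_{\bmuN})$--distance between $\MNun_t(\varphi)$ and $\Mun_t(\varphi)$ tends to $0$, and $G_s$ is bounded, so $\bbQN_{\bmuN}[\,|\MNun_t(\varphi)G_s-\Mun_t(\varphi)G_s|\,]\to 0$; hence it suffices to show that the law of $\Phi:=\Mun_t(\varphi)G_s$ under $\bbQN_{\bmuN}$ converges weakly on $\bbR$ to the law of $\Phi$ under $\bbQ'$. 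The same reduction, using the $L$--part of Lemma~\ref{LemmaIdLimit}(b) and the identity $\Lun_t(\varphi)=\Mun_t(\varphi)^2-t\langle\varphi,\varphi\rangle$, will take care of the pair $(\LNun_t(\varphi)G_s,\Lun_t(\varphi)G_s)$.

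Since $\bbQN_{\bmuN}\Rightarrow\bbQ'$, by the continuous mapping theorem (Billingsley~\cite{Billingsley_CVPM}, Theorem~2.7) it is enough to check that the set $D_\Phi\subset E$ of discontinuity points of $\Phi$ is $\bbQ'$--negligible. I would argue that $\Phi$ is continuous at every triple $(\rmh,\zetaun,\zetade)$ with $\rmh\in\bbC([0,\infty),\cCMde)$ and $\int_{\{t\}\times(0,1)}\varphi\,d\zetaun=\int_{\{s\}\times(0,1)}a_j(s,\cdot)\,d\zetaun=\int_{\{s\}\times(0,1)}b_j(s,\cdot)\,d\zetade=0$ for the finitely many test densities entering $G_s$: on continuous paths the maps $\rmh\mapsto\langle\rmhun_t,\varphi\rangle$, $\rmh\mapsto\langle\rmhun_0,\varphi\rangle$, $\rmh\mapsto\int_0^t\langle\rmhun_r,\varphi''\rangle dr$ and $\gamma_s$ are continuous, so the only delicate terms are the integrals of $\varphi,a_j,b_j$ against $\zetaun,\zetade$ over the half--open time boxes. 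For these one sandwiches $\tun_{[0,u]}$ between continuous functions $g^-_\varepsilon\le\tun_{[0,u]}\le g^+_\varepsilon$ with $g^-_\varepsilon\uparrow\tun_{[0,u)}$ and $g^+_\varepsilon\downarrow\tun_{[0,u]}$ as $\varepsilon\downarrow 0$; because every density here equals $x(1-x)$ times a continuous compactly supported function (the spatial supports lie in $(0,1)$), the definition of the topology on $\bbM$ gives convergence of the integrals against $g^\pm_\varepsilon$, and the two bounds coincide in the limit exactly when $\zetaun$ (resp.\ $\zetade$) charges no mass on $\{u\}\times(0,1)$ as seen through the relevant density.

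It then remains to prove the a priori fact that the limiting reflection measures charge no fixed time--fibre: for every fixed $u\ge 0$ and every nonnegative $\psi$ continuous and compactly supported in $(0,1)$, $\bbQ'$--a.s.\ $\int_{\{u\}\times(0,1)}\psi\,d\zetaun=0$ (and similarly for $\zetade$). I would deduce this by showing that $u\mapsto F(u):=\bbQ'\big[\int_{[0,u]\times(0,1)}\psi\,d\zetaun\big]$ is Lipschitz. Applying the identity~(\ref{Eq:Mvarphik}) with the time--independent test function $\psi$ in place of $\varphi_k$, taking $\bbQN_{\bmuN}$--expectations and using stationarity of $\bmuN$, the martingale property of $\MNun(\psi)$, discrete summation by parts (valid since $\psi$ vanishes near the endpoints), and the scaling~(\ref{Eq:PnQn}) of $\pN,\qN$, one obtains $\bbQN_{\bmuN}\big[\int_{(v,u]\times(0,1)}\psi\,d\zetaun\big]\le c\,(u-v)$ with $c$ independent of $N$. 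A second--moment version of the same identity, combined with the moment estimates already used for Lemma~\ref{LemmaIdLimit}, gives uniform integrability in $N$ of $\int\psi\,g\,d\zetaun$ for continuous compactly supported $g$, so one may pass to the limit along the sandwiching functions and conclude $F(u)-F(v)\le c\,(u-v)$. A Lipschitz function has no jumps, hence $\bbQ'\big[\int_{\{u\}\times(0,1)}\psi\,d\zetaun\big]=0$, and nonnegativity of $\psi$ upgrades this to an almost sure statement; one applies it with $u\in\{s,t\}$ and $\psi\in\{\varphi,a_j(s,\cdot),b_j(s,\cdot)\}$.

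Combining these ingredients with the fact, established in Section~\ref{Section:Tightness}, that $\bbQ'$ is carried by $\bbC([0,\infty),\cCMde)\times\bbM\times\bbM$, one gets $\bbQ'(D_\Phi)=0$, and the continuous mapping theorem yields the convergence in distribution of $\MNun_t(\varphi)G_s$; the identical argument (noting that the discontinuity set of $\Lun_t(\varphi)G_s$ is contained in $D_\Phi$) gives the assertion for $\LNun_t(\varphi)G_s$. I expect the main obstacle to be precisely the third paragraph: the hard time cut--offs $[0,t]$ and $[0,s]$ make $\Phi$ a genuinely discontinuous functional on $E$, and ruling out atoms of the limiting measures on the fibres $\{s\}\times(0,1)$ and $\{t\}\times(0,1)$ cannot be done at the level of the limit alone — it forces a return to the discrete identity and an exploitation of stationarity.
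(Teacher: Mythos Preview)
Your argument is correct and takes a genuinely different route from the paper's. Both proofs begin by replacing $\MNun_t(\varphi)$ with $\Mun_t(\varphi)$ via Lemma~\ref{LemmaIdLimit}(b), but then diverge. The paper does \emph{not} attempt a continuous-mapping argument: instead it invokes the tightness of the martingale processes $(\MNun_t(\varphi),t\ge 0)$ from Lemma~\ref{LemmaCVMgales}, passes to an almost-sure setting via Skorokhod representation, and then handles the hard time cut-off by a sandwich between closed and open time intervals. The sandwich collapses not because the limit measure is shown to be atomless in time, but because the \emph{other} side of the identity~(\ref{Eq:MeasureN}) --- namely~(\ref{Eq:DiffMeasure}) --- is manifestly continuous in $t$ once the martingale limit is continuous. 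In other words, the paper deduces the absence of time atoms indirectly from the continuity of $\Minftyun(\varphi)$.

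Your approach is more modular: you isolate the statement ``$\zetaun$ charges no fixed time fibre under $\bbQ'$'' as a standalone a priori fact, proved by transporting the uniform discrete Lipschitz bound on $u\mapsto\bbQN_{\bmuN}\big[\int_{[0,u]\times(0,1)}\psi\,d\zetaun\big]$ to the limit via uniform integrability and the sandwiching functions. This is a cleaner invocation of the continuous-mapping theorem and yields the no-atom property as a byproduct that could be quoted independently. The trade-off is that you must justify uniform integrability carefully (which does follow from the moment bounds behind Lemma~\ref{LemmaIdLimit}), whereas the paper's Skorokhod-based argument sidesteps this by working pathwise and using only weak convergence of measures on closed versus open sets.
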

\noindent We postpone the proof of this lemma to the end of this subsection. Using Lemma \ref{LemmaIdLimit}, Lemma \ref{LemmaCVMtNMt}, and Lemma \ref{Lemma:CVUnifInteg}, we deduce that for all $t\geq s$:
\[\bbQN_{\bmuN}\Big[\MNun_t(\varphi)G_s \Big] \underset{N\rightarrow\infty}{\longrightarrow} \bbQ'\Big[\Mun_t(\varphi)G_s\Big]\;\;,\;\;\bbQN_{\bmuN}\big[\LNun_t(\varphi)G_s\big] \underset{N\rightarrow\infty}{\longrightarrow} \bbQ'\big[\Lun_t(\varphi)G_s\big].\]
Taking the limit as $N\rightarrow\infty$ in the following martingale identities:
$$ \bbQN_{\bmuN}\big[\MNun_t(\varphi)G_s\big]=\bbQN_{\bmuN}\big[\MNun_s(\varphi)G_s\big]\;\;,\;\;\bbQN_{\bmuN}\big[\LNun_t(\varphi)G_s\big]=\bbQN_{\bmuN}\big[\LNun_s(\varphi)G_s\big],$$
we therefore obtain
$$ \bbQ'\big[\Mun_t(\varphi)G_s\big]=\bbQ'\big[\Mun_s(\varphi)G_s\big]\;\;,\;\;\bbQ'\big[\Lun_t(\varphi)G_s\big]=\bbQ'\big[\Lun_s(\varphi)G_s\big].$$
Since the indicator of any closed set of the form $[u,v]\times[a,b] \subset [0,s]\times(0,1)$ can be approximated by functions of the type $a_j$ that appear in $G_s$, a classical argument based on the Monotone Class Theorem shows that $\Mun_t(\varphi)$ and $\Lun_t(\varphi)$ are $\ccF_t$-martingales under $\bbQ'$.

\medskip

\noindent We now prove that $K_t(\varphi,\psi)$ is an $\ccF_t$-martingale under $\bbQ'$. We know that the process $K_t(\varphi,\psi) - \llangle \Mun(\varphi),\Mde(\psi)\rrangle_t$ is an $\ccF_t$-martingale under $\bbQ'$. Since
\begin{equation}\label{EqBrackets}
\big\llangle \Mun(\varphi),\Mde(\psi)\big\rrangle_t = \frac{1}{4}\Big(\big\llangle \Mun(\varphi)+\Mde(\psi)\big\rrangle_t-\big\llangle \Mun(\varphi)-\Mde(\psi)\big\rrangle_t\Big),
\end{equation}
it suffices to show that the two brackets on the right are equal under $\bbQ'$. Using (\ref{Eq:DefMde}), we easily check that $(\MNun_t(\varphi)+\MNde_t(\psi))^2 - \llangle \MNun(\varphi)\rrangle_t-\llangle \MNde(\psi)\rrangle_t$ is an $\ccF_t$-martingale under $\bbQN_{\bmuN}$. Therefore, the same convergence arguments as above show that $(\Mun_t(\varphi)+\Mde_t(\psi))^2-t(\langle\varphi,\varphi\rangle+\langle\psi,\psi\rangle)$ is an $\ccF_t$-martingale under $\bbQ'$. Similarly, we obtain that $(\Mun_t(\varphi)-\Mde_t(\psi))^2-t(\langle\varphi,\varphi\rangle+\langle\psi,\psi\rangle)$ is an $\ccF_t$-martingale under $\bbQ'$ so that (\ref{EqBrackets}) vanishes under $\bbQ'$. This completes the proof of the martingale relations.
\paragraph{Support condition.} Let us show that for all $T>0$ and all $a<b \in (0,1)$ we have $\bbQ'$-a.s.
$$\int_{[0,T]\times(a,b)}\big(\rmhun_t(x)-\rmhde_t(x)\big)(\zetaun+\zetade)(dt,dx)=0.$$
Fix $a<b \in (0,1)$ and $T>0$. Let $\psi$ be a non-negative continuous function with compact support in $[0,\infty)\times(0,1)$ and such that $\psi(t,x)=1$ for all $(t,x)\in[0,T]\times[a,b]$. Then we introduce $F:\bbD\times\bbM\times\bbM\rightarrow\bbR$ as follows:
\[ F(\rmh,\zetaun,\zetade):=\int_{[0,\infty)\times(0,1)}\psi(t,x)\big(\rmhun_t(x)-\rmhde_t(x)\big)(\zetaun+\zetade)(dt,dx).\]
\begin{lemma}
The map $F$ is $\bbQ'$-a.s.~continuous.
\end{lemma}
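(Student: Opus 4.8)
Since $\bbQ'$ is supported by $\bbC([0,\infty),\cCMde)\times\bbM\times\bbM$, it suffices to show that $F$, as a map on $\bbD([0,\infty),\cCMde)\times\bbM\times\bbM$, is continuous at every point $(\rmh,\zetaun,\zetade)$ whose first coordinate $\rmh$ lies in $\bbC([0,\infty),\cCMde)$. Fix such a point. The starting observation is that, because $\psi$ is continuous with compact support contained in some $[0,T']\times[\delta,1-\delta]$ with $\delta\in(0,\tfrac12)$, and because $\rmhun_t(0)=\rmhun_t(1)=\rmhde_t(0)=\rmhde_t(1)=0$, the function
\[ g_{\rmh}(t,x):=\frac{\psi(t,x)\big(\rmhun_t(x)-\rmhde_t(x)\big)}{x(1-x)}\qquad(x\in(0,1)), \]
extended by $0$ on $\{x\in\{0,1\}\}$, is an element of $\ccC_{c}([0,\infty)\times[0,1],\bbR)$ with support inside $\mathrm{supp}\,\psi$. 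Moreover, on $\mathrm{supp}\,\psi$ the factor $x(1-x)$ is bounded below by $\delta(1-\delta)$, so $\rmh\mapsto g_{\rmh}$ is continuous from $\bbC([0,\infty),\cCMde)$, with the topology of local uniform convergence, into $\ccC_{c}$ equipped with the sup-norm (here one uses that $\left\|h\right\|_{\cC}$ controls $\sup_{x}|\hun-\hde|$). In particular $F(\rmh,\zetaun,\zetade)=\int x(1-x)g_{\rmh}\,d(\zetaun+\zetade)$ is exactly an integral of the type that defines the topology on $\bbM$, and it is finite since on $\mathrm{supp}\,\psi$ one has $|\psi(\rmhun-\rmhde)|\leq C_{\rmh}\,x(1-x)$.

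Now let $(\rmh^n,\zetaunn,\zetaden)\to(\rmh,\zetaun,\zetade)$ with $\rmh\in\bbC$. Write $F(\rmh^n,\zetaunn,\zetaden)-F(\rmh,\zetaun,\zetade)=A_n+B_n$, where
\[ A_n:=\int x(1-x)\big(g_{\rmh^n}-g_{\rmh}\big)\,d(\zetaunn+\zetaden),\qquad B_n:=\int x(1-x)g_{\rmh}\,d(\zetaunn+\zetaden)-\int x(1-x)g_{\rmh}\,d(\zetaun+\zetade). \]
By the previous paragraph, $g_{\rmh}\in\ccC_c$, so $B_n\to0$ directly from the definition of convergence in $\bbM$ (applied separately to $\zetaunn\to\zetaun$ and $\zetaden\to\zetade$). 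For $A_n$, set $K:=\mathrm{supp}\,\psi\subset[0,T']\times[\delta,1-\delta]$; then
\[ |A_n|\leq\left\|g_{\rmh^n}-g_{\rmh}\right\|_{\infty}\int_{K}x(1-x)\,(\zetaunn+\zetaden)(ds,dx). \]
Since Skorohod convergence to a continuous limit is equivalent to local uniform convergence, $\sup_{t\leq T',x}|(\rmh^{n,(1)}_t(x)-\rmh^{n,(2)}_t(x))-(\rmhun_t(x)-\rmhde_t(x))|\to0$, whence $\left\|g_{\rmh^n}-g_{\rmh}\right\|_{\infty}\to0$. It remains to bound the integral uniformly in $n$: exactly as in the completeness proof of $(\bbM,d)$, one picks a function $g_p$ from the enumeration $(g_k)_{k\geq1}$ with $g_p\geq\tun_{[0,T']\times[0,1]}\geq\tun_{K}$, so that $\int_{K}x(1-x)\,(\zetaunn+\zetaden)\leq\int\varphi_p\,d\zetaunn+\int\varphi_p\,d\zetaden$, and each sequence on the right converges (hence is bounded) by convergence of $\zetaunn$ and $\zetaden$ in $\bbM$. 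Therefore $A_n\to0$, and $F(\rmh^n,\zetaunn,\zetaden)\to F(\rmh,\zetaun,\zetade)$. Combined with $\bbQ'(\rmh\in\bbC)=1$, this proves the lemma.

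\textbf{Main obstacle.} The only genuinely delicate point is the uniform bound $\sup_{n}\int_{K}x(1-x)\,(\zetaunn+\zetaden)(ds,dx)<\infty$ needed to control $A_n$: the weak topology on $\bbM$ does not a priori furnish uniform mass bounds along convergent sequences, but the presence of nonnegative dominating elements $g_p$ in the defining enumeration of the topology remedies this, precisely as in the proof that $(\bbM,d)$ is Polish. Everything else — that the quotient $g_{\rmh}$ is a bona fide compactly supported test function, that it depends continuously on $\rmh$, and that Skorohod convergence to a continuous limit is uniform on compact time intervals — is routine.
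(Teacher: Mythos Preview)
Your proof is correct and follows essentially the same approach as the paper: the same two-term decomposition (change the path first, then change the measures), the same use of the fact that Skorohod convergence to a continuous limit upgrades to local uniform convergence, and the same device of dominating $\tun_K$ by a nonnegative test function from the defining family to get the uniform mass bound along the sequence. Your presentation via the auxiliary function $g_{\rmh}=\psi(\rmhun-\rmhde)/x(1-x)\in\ccC_c$ is slightly more explicit than the paper's, but the argument is the same.
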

\begin{proof}
Let $(\rmh^n,\zetaunn,\zetaden)$ be a sequence of elements of $\bbD\times\bbM\times\bbM$ that converges to an element $(\rmh,\zetaun,\zetade)$ in $\bbC\times\bbM\times\bbM$. We bound $\big| F(\rmh^n,\zetaunn,\zetaden)-F(\rmh,\zetaun,\zetade)\big|$ by:
\[ \big| F(\rmh^n,\zetaunn,\zetaden)-F(\rmh,\zetaunn,\zetaden)\big| + \big| F(\rmh,\zetaunn,\zetaden)-F(\rmh,\zetaun,\zetade)\big|.\]
The first term is bounded by $\sup_{t\in[0,T]}\left\|\rmh^n_t-\rmh_t\right\|_{\cC}\int\psi(t,x)(\zetaunn+\zetaden)(dt,dx)$. As $n\rightarrow\infty$, the integral converges to $\int\psi(t,x)(\zetaun+\zetade)(dt,dx)$ while the supremum vanishes since $\rmh^n\rightarrow\rmh$ in $\bbD$ and since $\rmh$ belongs to $\bbC$. We deduce that the first term vanishes as $n\rightarrow\infty$. The second term goes to $0$ as $n\rightarrow\infty$ by continuity of the map
\[(\zetaun,\zetade) \mapsto \int_{[0,\infty)\times(0,1)}\psi(t,x)\,(\rmhun_t(x)-\rmhde_t(x))(\zetaun+\zetade)(dt,dx).\]
Since $\bbQ'$ is supported by $\bbC\times\bbM\times\bbM$, this completes the proof.\cqfd
\end{proof}
As a consequence of this lemma, the pushforward of $\bbQN_{\bmuN}$ through $F$ converges weakly to the pushforward of $\bbQ'$ through $F$, and thus, for every $\delta > 0$
\[ \bbQ'\Big(F > \delta\Big) \leq \varliminf\limits_{N\rightarrow\infty}\bbQN_{\bmuN}\Big(F>\delta\Big)
=0.\]
The equality on the right follows from the fact that under $\bbQN_{\bmuN}$, the function $\rmhun-\rmhde$ vanishes on the support of $\zetaun+\zetade$. Finally observe that
\[ F(\rmh,\zetaun,\zetade)\geq \int_{[0,T]\times(a,b)}\big(\rmhun_t(x)-\rmhde_t(x)\big)(\zetaun+\zetade)(dt,dx),\]
so that $\bbQ'$-a.s.~$\int_{[0,T]\times(a,b)}(\rmhun_t(x)-\rmhde_t(x))(\zetaun+\zetade)(dt,dx)\leq\delta$. By taking sequences $T_n\uparrow\infty$, $\delta_n \downarrow 0$, $a_n \downarrow 0$ and $b_n\uparrow 1$ we conclude that $\bbQ'$-a.s.~$\int_{[0,\infty)\times(0,1)}(\rmhun_t(x)-\rmhde_t(x))(\zetaun+\zetade)(dt,dx)=0$. We have proved that $\bbQ'$ fulfills all the conditions of Proposition \ref{Prop:MgalePb}.\vspace{6pt}\\
\textit{Proof of Lemma \ref{LemmaCVMtNMt}.} The proof of Lemma \ref{LemmaCVMgales} ensures that $(\MNun_t(\varphi),t\geq 0)$ under $\bbQN_{\bmuN}$ is tight in $\bbD$ and that any limit is continuous. We first show that any limit has the same distribution as $(\Mun_t(\varphi),t\geq 0)$ under $\bbQ'$. Let us extract a subsequence from $\bbQN_\bmuN, N\geq 1$ such that the sequence of martingales converges, for simplicity we keep the same notation for the subsequence. By the Skorokhod Representation Theorem, there exists a probability space on which is defined a sequence $(\rmhN,\zetaNun,\zetaNde,\MNun(\varphi))$ that converges almost surely to $(\rmhinfty,\zetainftyun,\zetainftyde,\Minftyun(\varphi))$, and such that $(\rmhN,\zetaNun,\zetaNde,\MNun(\varphi))$ has the same distribution as $(\rmh,\zetaun,\zetade,\MNun(\varphi))$ under $\bbQN_\bmuN$. Recall that we have for every $t\geq 0$, 
\begin{align}\label{Eq:MeasureN}
\int_{[0,t]\times(0,1)}\!\!\!\!\!\!\varphi(x)\zetaNun(ds,dx) &= -\MNun_t(\varphi)+\langle \rmhNun_t,\varphi \rangle_{\tiN} - \langle \rmhNun_0,\varphi \rangle_{\tiN}\\\nonumber
&\;\;\;\;\;- \frac{2}{\sqrt{2N}}\int_0^t \Big\langle \Big(\pN(\cdot)-\frac{(2N)^2}{2}\Big)\tun_{\{\Delta \rmhNun_s(\cdot)\ne 0\}},\varphi \Big\rangle_{\tiN}ds\\\nonumber
&\;\;\;\;\;- \frac{(2N)^2}{2}\int_0^t \langle \Delta\rmhNun_s,\varphi \rangle_{\tiN} ds.
\end{align}
Using the arguments in the proof of Lemma \ref{LemmaIdLimit}, we deduce that the left hand side converges in probability to
\begin{equation}\label{Eq:DiffMeasure}
-\Minftyun_t(\varphi)+\langle \rmhinftyun_t,\varphi \rangle - \langle \rmhinftyun_0,\varphi \rangle - t \langle \sigma,\varphi \rangle - \frac{1}{2}\int_0^t \langle \rmhinftyun_s,\varphi'' \rangle ds.
\end{equation}
Up to an extraction, we can assume that the convergence is almost sure. We only need to show that (\ref{Eq:DiffMeasure}) coincides with $\int_{[0,t]\times(0,1)} \varphi(x)\zetainftyun(ds,dx)$. This is not obvious since our topology on $\bbM$ does not ensure continuity of the functional $m\mapsto \int_{[0,\infty)\times(0,1)}f(s,x)m(ds,dx)$ when $f$ is not continuous in time. However, the definition of our topology on $\bbM$ ensures that almost surely, for every pair of rational values $p,q$ the measure $\rho_{p,q}(s)\varphi(x)\zetaNun(ds,dx)$ on $[0,\infty)\times(0,1)$ converges weakly to $\rho_{p,q}(s)\varphi(x)\zetainftyun(ds,dx)$. Since $\varphi$ is non-negative, $[0,t]\times(0,1)$ is a closed subset of $[0,\infty)\times(0,1)$ and $[0,t+\epsilon)\times(0,1)$ is an open subset of $[0,\infty)\times(0,1)$, we obtain that almost surely for all $t,\epsilon > 0$,
\begin{align*}
\varlimsup_{N\rightarrow\infty}\int_{[0,t]\times(0,1)}\!\!\!\!\!\!\varphi(x)\zetaNun(ds,dx) \leq \int_{[0,t]\times(0,1)}\!\!\!\!\!\!\varphi(x)\zetainftyun(ds,dx)
\end{align*}
and
\begin{align*}
\int_{[0,t+\epsilon)\times(0,1)}\!\!\!\!\!\!\varphi(x)\zetainftyun(ds,dx)\leq \varliminf_{N\rightarrow\infty}\int_{[0,t+\epsilon)\times(0,1)}\!\!\!\!\!\!\varphi(x)\zetaNun(ds,dx).
\end{align*}
Consequently almost surely for all $t,\epsilon > 0$,
\[\varlimsup_{N\rightarrow\infty}\int_{[0,t]\times(0,1)}\!\!\!\!\!\!\varphi(x)\zetaNun(ds,dx) \leq \int_{[0,t]\times(0,1)}\!\!\!\!\!\!\varphi(x)\zetainftyun(ds,dx)\leq \varlimsup_{N\rightarrow\infty}\int_{[0,t+\epsilon]\times(0,1)}\!\!\!\!\!\!\varphi(x)\zetaNun(ds,dx).\]
The continuity in time of (\ref{Eq:DiffMeasure}) ensures that as $\epsilon\downarrow 0$, the difference between the rightmost and the leftmost terms in the above inequality tends to zero, so that (\ref{Eq:DiffMeasure}) coincides with $\int_{[0,t]\times(0,1)}\varphi(x)\zetainftyun(ds,dx)$.

\noindent This ensures that the distribution of $\MNun_t(\varphi)$ under $\bbQN_\bmuN$ converges to the distribution of $\Mun_t(\varphi)$ under $\bbQ'$. Recall the expression of $G_s$. To deal with $\MNun(\varphi)G_s$, it suffices to consider the martingales $\MNun(a_j)$ and $\MNde(b_j)$, and to repeat the above arguments in order to show the convergence in probability of
\[\int_{[0,t]\times(0,1)}a_j(r,x)\zetaNun(dr,dx) \;\;\;\;\mbox{ and }\;\;\;\;\int_{[0,t]\times(0,1)}b_j(r,x)\zetaNde(dr,dx)\]
towards
\[\int_{[0,t]\times(0,1)}a_j(r,x)\zetainftyun(dr,dx) \;\;\;\;\mbox{ and }\;\;\;\; \int_{[0,t]\times(0,1)}b_j(r,x)\zetainftyde(dr,dx).\]
Then, one multiplies both sides of (\ref{Eq:MeasureN}) by $G_s(\rmhN,\zetaNun,\zetaNde)$ and passes to the limit as $N\rightarrow\infty$, using the continuity of the map $\gamma_s$ together with the previous convergences. The second part of the statement on $\LNun_t(\varphi)$ and $\Lun_t(\varphi)$ follows from very similar arguments, so we do not provide the details.\cqfd

\appendix

\section{Proof of the large deviation result}\label{AppendixSuperExpo}
This is an adaptation of Kipnis, Olla and Varadhan~\cite{KOV89}.
\subsection{The symmetric case}
We consider \Munw\ in the symmetric case $\pN(\cdot)\!=\!\qN(\cdot)\!=\!(2N)^2/2$. From now on, $\cEN$ denotes $\cENMunw$ and $\bbPN_{\piN}$ is taken to be the measure on $\bbD([0,\infty),\cEN)$ of the process in this symmetric case starting from the invariant measure $\piN$. Recall the expression for $\VNe$. A simple calculation (almost the same as p.120 of~\cite{KOV89}) shows that for all $i \in \lbr 1, 2N\rbr$ and any given $k\geq 1$
\begin{eqnarray*}
&&\Big|\frac{1}{2\epsilon N +1}\!\!\sum_{j:|i-j|\leq \epsilon N}\!\!\!\Phi(\tau_j \eta) -\tilde{\Phi}\Big(\frac{1}{2\epsilon N + 1}\!\!\sum_{j:|i-j|\leq \epsilon N}\!\!\eta(j)\Big)\Big| \leq O\Big(\frac{k}{N}\Big)\\
&+&\frac{1}{2\epsilon N +1}\!\!\sum_{j:|i-j|\leq \epsilon N}\Big| \frac{1}{2k+1}\!\!\sum_{l:|j-l|\leq k}\!\!\Phi(\tau_l \eta) -\tilde{\Phi}\Big(\frac{1}{2k + 1}\!\!\sum_{l:|j-l|\leq k}\!\!\eta(l)\Big) \Big|\\
&+& \frac{\left\|\tilde{\Phi}'\right\|}{(2\epsilon N +1)^2}\sum_{j:|i-j|\leq \epsilon N}\sum_{j':|i-j'|\leq \epsilon N}\frac{1}{2k+1}\,\Big|\!\!\sum_{l:|j'-l|\leq k}\!\!\eta(l) -\!\!\sum_{l:|j-l|\leq k}\!\!\eta(l)\Big|,
\end{eqnarray*}
where the term $O\big(\frac{k}{N}\big)$ is uniform in $i,\eta$ so that its contribution to (\ref{EqObjectiveSuperExpo}) vanishes. The contributions of the second and third term above are dealt with by the following two lemmas. From now on, $f$ is implicitly taken to be non-negative and such that $\piN[f]=1$.
\begin{lemma}
For any $c>0$
$$\varlimsup\limits_{k\rightarrow\infty}\varlimsup\limits_{N\rightarrow\infty}\sup_{f:\DN(f)\leq cN}\frac{1}{N}\sum_{i=1}^{2N}\sum_{\eta\in\cEN}\Big|\frac{1}{2k+1}\!\sum_{j:|i-j|\leq k}\!\!\Phi(\tau_j \eta) -\tilde{\Phi}\Big(\frac{1}{2k + 1}\!\sum_{j:|i-j|\leq k}\!\!\eta(j)\Big) \Big|\piN(\eta)f(\eta)=0.$$
\end{lemma}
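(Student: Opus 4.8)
The plan is to run the one-block argument of Kipnis--Olla--Varadhan~\cite{KOV89}, the only genuine novelty being that the reference measure $\piN=\piNMunw$ is neither translation invariant nor free of the wall. Put $W_i(\eta):=\big|\frac1{2k+1}\sum_{|i-j|\le k}\Phi(\tau_j\eta)-\tilde\Phi\big(\frac1{2k+1}\sum_{|i-j|\le k}\eta(j)\big)\big|=\tau_iW_0(\eta)$; then $W_0$ depends on $\eta$ only through a window $\Lambda$ of $\ell_k:=2(k+l)+1$ consecutive sites, and $\|W_i\|_\infty\le C_0:=2\|\Phi\|_\infty+\|\tilde\Phi\|_\infty$ uniformly in everything. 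The quantity to control is $\frac1N\sum_{i=1}^{2N}\langle W_i,f\rangle_{\piN}$ over densities $f\ge0$, $\piN[f]=1$, with $\DN(f)\le cN$. In the symmetric case $\pN(\cdot)=\qN(\cdot)=(2N)^2/2$, so the rewritten Dirichlet form satisfies $\DN(f)=\mathrm{const}\cdot(2N)^2\,\mathcal D^{0}(\sqrt f)$, where $\mathcal D^{0}$ is the \emph{rate-one} symmetric-exclusion Dirichlet form (sum over the $2N-1$ bonds); hence the hypothesis reads $\mathcal D^{0}(\sqrt f)\le c'/N$ for a constant $c'$ depending only on $c$, and this is what I will use.

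Fix $\theta\in(0,\frac12)$. The $O(\theta N)$ indices with $\min(i,2N-i)<2\theta N$ contribute at most $C_0\cdot O(\theta)$ to the average, so it remains to bound the average over $\theta$-bulk indices. For such an $i$, let $\mu_{\Lambda_i}$ be the marginal of $\piN$ on $\Lambda_i:=i+\Lambda$, let $\hat f_i$ be the marginal density of $f$, and decompose $\hat f_i=\sum_m\alpha_{i,m}\hat f_i^{(m)}$ according to the number $m$ of particles in $\Lambda_i$, $\hat f_i^{(m)}$ a density for the shell measure $\mu_{\Lambda_i,m}$. Since the bonds inside $\Lambda_i$ preserve $m$, one has the exact identity $\sum_m\alpha_{i,m}\mathcal D^{(m)}_{\Lambda_i}(\sqrt{\hat f_i^{(m)}})=\mathcal D^{0}_{\Lambda_i}(\sqrt{\hat f_i})$; marginalising cannot increase a Dirichlet form, so $\mathcal D^{0}_{\Lambda_i}(\sqrt{\hat f_i})\le\beta_i$, the part of $\mathcal D^{0}(\sqrt f)$ carried by the bonds of $\Lambda_i$; and since each bond lies in at most $\ell_k$ of the windows, $\sum_{i}\beta_i\le\ell_k\,\mathcal D^{0}(\sqrt f)=O(\ell_k/N)$.

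Now $\langle W_i,f\rangle_{\piN}=\sum_m\alpha_{i,m}\langle W_0,\hat f_i^{(m)}\rangle_{\mu_{\Lambda_i,m}}$, and I bound each shell term by $\langle W_0,1\rangle_{\mu_{\Lambda_i,m}}+\|W_0\|_\infty\,\|\hat f_i^{(m)}-1\|_{L^1(\mu_{\Lambda_i,m})}$. Assuming (see the last paragraph) that for $\theta$-bulk windows $\mu_{\Lambda_i,m}$ is comparable to the uniform measure $\nu_{\ell_k,m}$ on the shell, uniformly in the $\theta$-bulk index $i$ and in $m\in\{0,\dots,\ell_k\}$, two things follow for $N$ large: (i) $\langle W_0,1\rangle_{\mu_{\Lambda_i,m}}\le 2\,\delta_k$ with $\delta_k:=\sup_{0\le m\le\ell_k}\mathbb E_{\nu_{\ell_k,m}}[W_0]$, and $\delta_k\to0$ as $k\to\infty$ by the classical equivalence of ensembles (the restriction of $\nu_{\ell,m}$ to $l$ sites is $O(1/\ell)$-close in total variation to product Bernoulli$(m/\ell)$, so $\mathbb E_{\nu_{\ell,m}}[\Phi(\tau_j\eta)]=\tilde\Phi(m/\ell)+O(1/\ell)$) together with the $O(1/k)$ variance of the two block averages under $\nu_{\ell_k,m}$ (decay of microcanonical correlations); (ii) since the spectral gap of symmetric exclusion on a segment of $\ell_k$ sites is at least $\gamma_0\ell_k^{-2}$ uniformly in the particle number (classical), the Poincaré inequality for $\mu_{\Lambda_i,m}$ holds with constant of order $\ell_k^{2}$, whence $\|\hat f_i^{(m)}-1\|_{L^1(\mu_{\Lambda_i,m})}\le C\ell_k\big(\mathcal D^{(m)}_{\Lambda_i}(\sqrt{\hat f_i^{(m)}})\big)^{1/2}$. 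Thus $\frac1N\sum_{\theta\text{-bulk }i}\langle W_i,f\rangle_{\piN}\le C\delta_k+\frac{C\ell_k}{N}\sum_i\sum_m\alpha_{i,m}\big(\mathcal D^{(m)}_{\Lambda_i}(\sqrt{\hat f_i^{(m)}})\big)^{1/2}$, and two applications of Cauchy--Schwarz (first in $m$, then in $i$) together with $\sum_i\sum_m\alpha_{i,m}\mathcal D^{(m)}_{\Lambda_i}=\sum_i\mathcal D^{0}_{\Lambda_i}=O(\ell_k/N)$ bound the last term by $C\ell_k^{3/2}/\sqrt N$, which tends to $0$ as $N\to\infty$ for $k$ fixed. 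Letting $N\to\infty$, then $k\to\infty$, then $\theta\downarrow0$ gives the lemma.

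The one step that is not classical is the comparison of $\mu_{\Lambda_i,m}$ with $\nu_{\ell_k,m}$, which is exactly where the wall and the lack of translation invariance enter; for \Modun\ it is automatic, since there $\piN=\piNMun$ is translation invariant and its shell marginals are exactly uniform. For \Munw, the idea is that a uniform non-negative lattice bridge of length $2N$, observed on a window in the $\theta$-bulk, is asymptotically a free bridge: conditionally on the two endpoint heights of $\Lambda_i$ the configuration inside is a free bridge between them forced to stay non-negative, and on a $\theta$-bulk window these heights are of order $\sqrt N\gg\ell_k$ with probability $1-o_N(1)$, on which event the non-negativity constraint disappears; on the complementary event one pays only $\|W_0\|_\infty$. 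Making this quantitative, so as to produce the controlled Radon--Nikodym derivatives used above uniformly over $\theta$-bulk windows and all shells, amounts to a local-limit / ballot-type estimate comparing counts of non-negative paths with counts of free paths, and is the technical heart of the appendix. (\Mdeux\ is handled by the same scheme, replacing $\Phi$ by the sum of its two single-component versions and the wall by the non-crossing constraint, the local comparison now being with a pair of independent free bridges.)
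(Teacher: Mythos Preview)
Your strategy is the right one-block scheme, but the execution diverges from the paper's in a way that creates real work you have not done. The paper does \emph{not} look at the marginals $\mu_{\Lambda_i}$ at all: it averages over all bulk windows $i$ to define a single probability $\fk$ on $\cOk=\{0,1\}^{2k+1}$ via $\fk(\xi)=\frac{1}{2N-2k}\sum_i\sum_{\eta\in\cEN:\,\eta|_{\Lambda_i}=\xi}\piN(\eta)f(\eta)$, and bounds the \emph{uniform-reference} Dirichlet form $D^*(\fk)$ by $\frac{k2^{-2k}}{(2N-2k)(2N)^2}\DN(f)$ through the convexity inequality $(\sqrt{\sum a}-\sqrt{\sum b})^2\le\sum(\sqrt a-\sqrt b)^2$. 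The wall is handled for free because $\DN$ has been rewritten with the indicator $\tun_{\{\nabla\eta=1\}}$, so every swap appearing in the bound stays in $\cEN$. A compactness/lower-semicontinuity argument then reduces to $D^*(\gk)=0$, i.e.\ mixtures of canonical measures on $\cOk$, and equivalence of ensembles finishes. No comparison of $\piN$-marginals with anything is needed.

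Your route, by contrast, hinges on two steps that are not justified. First, ``marginalising cannot increase a Dirichlet form'' is standard only when the bond swap preserves the conditional law of the complement given the window; here it does not, because whether $\eta\in\cENMunw$ depends on the full height profile inside $\Lambda_i$, not just on the boundary values, so for $\nabla\xi(j)=1$ the swap is an injection $S(\xi)\hookrightarrow S(\xi')$ that is typically not onto. The induced Dirichlet form on $\Lambda_i$ therefore has wall-dependent rates and $\mu_{\Lambda_i}$ is not swap-invariant; your inequality $\mathcal D^{0}_{\Lambda_i}(\sqrt{\hat f_i})\le\beta_i$ is not the standard contraction. Second, even granting that step, your Poincar\'e bound uses the spectral gap of SSEP with \emph{uniform} reference on the shell; transferring it to $\mu_{\Lambda_i,m}$ requires exactly the Radon--Nikodym comparison you defer to the last paragraph. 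That comparison (bulk-window marginals of a discrete excursion versus free bridge) is provable but is a genuine local-limit/ballot estimate, uniform over all shells $m$, that you have not carried out. The paper's averaging-plus-uniform-reference trick is precisely what lets one bypass both issues; if you want to keep your window-by-window quantitative approach, you must supply that marginal comparison in full.
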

\begin{proof}
Fix $N\!\geq\! 1,k\!\in\!\lbr 1,N\rbr$. First observe that we can split the sum over $i$ into two sums: the first over $i\in\{1,\ldots,k\}\cup\{2N\!-\!k\!+\!1,\ldots,2N\}$ and the second over the remaining $i$'s. It is a simple matter to check that the first sum is bounded by a quantity of order $k/N$ so that it vanishes when $N$ goes to infinity, $k$ being fixed. To deal with the second sum we set $\cOk:=\{0,1\}^{2k+1}$ and write
\begin{eqnarray}\label{EqSplitSum}
&&\sup_{f:\DN(f)\leq cN}\frac{1}{N}\sum_{i=k+1}^{2N-k}\sum_{\eta\in\cEN}\Big|\frac{1}{2k+1}\!\sum_{j:|i-j|\leq k}\!\!\Phi(\tau_j \eta) -\tilde{\Phi}\Big(\frac{1}{2k + 1}\!\sum_{j:|i-j|\leq k}\!\!\eta(j)\Big) \Big|\piN(\eta)f(\eta)\nonumber\\
&\leq&\!\!\sup_{f:\DN(f)\leq cN}\frac{1}{N}\sum_{i=k+1}^{2N-k}\sum_{\xi\in\cOk}\sum_{\substack{\eta\in\cEN\\\eta_{|\lbr i-k,i+k\rbr}}=\xi}\Big|\frac{1}{2k+1}\sum_{j=0}^{2k}\Phi(\tau_j \xi) -\tilde{\Phi}\Big(\frac{1}{2k+1}\sum_{j=1}^{2k+1}\xi(j)\Big) \Big|\piN(\eta)f(\eta)\nonumber\\
&+&O\Big(\frac{1}{k}\Big).
\end{eqnarray}
The second term on the right bounds the error we make when we replace $\Phi(\tau_{i-k+j} \eta)$ by $\Phi(\tau_j \xi)$; it vanishes when $N$ and $k$ go to infinity. It remains to bound the first term on the right. To that end, we prove an inequality for the Dirichlet form.\\
Consider the symmetric simple exclusion process on $\cOk$ without wall. The uniform measure on $\cOk$ is reversible so that the Dirichlet form associated with this process is given by
\[ D^*(g) := \frac{1}{2}\sum_{\xi\in\cOk}2^{-(2k+1)}\sum_{j=1}^{2k}\Big(\sqrt{g(\xi^{j,j+1})}-\sqrt{g(\xi)}\Big)^2
\tun_{\{\nabla\xi(j)=1\}},\]
for all maps $g:\cOk\rightarrow\bbR_+$. We introduce, in particular, the map
$$\fk(\xi):=\frac{1}{(2N-2k)}\sum_{i=k+1}^{2N-k}\sum_{\substack{\eta\in\cEN\\\eta_{|\lbr i-k,i+k\rbr}}=\xi}\piN(\eta)f(\eta).$$
Recall that $\piN[f]=1$ and observe that $\sum_{\xi\in \cOk}\fk(\xi)=1$. For any two sequences $a_i,b_i \geq 0$ whose sums are finite, the triangle inequality implies $\big(\sqrt{\sum_i a_i}-\sqrt{\sum_i b_i}\big)^2 \leq \sum_i\big(\sqrt{a_i}-\sqrt{b_i}\big)^2$. This yields
\begin{eqnarray*}
D^*(\fk) &\leq& \frac{2^{-(2k+1)}}{2(2N-2k)}\sum_{\xi\in\cOk}\sum_{i=k+1}^{2N-k}\sum_{\substack{\eta\in\cEN\\\eta_{|\lbr i-k,i+k\rbr}}=\xi}\!\!\!\piN(\eta)\\
&&\times\sum_{j=1}^{2k}\Big(\sqrt{f(\eta^{i-k+j-1,i-k+j})}-\sqrt{f(\eta)}\Big)^2\tun_{\{\nabla\eta(i-k+j-1)=1\}}\\
&\leq& \frac{k 2^{-2k}}{2(2N-2k)}\sum_{\eta\in\cEN}\piN(\eta)\sum_{j=1}^{2N-1}\Big(\sqrt{f(\eta^{j,j+1})}-\sqrt{f(\eta)}\Big)^2\tun_{\{\nabla\eta(j)=1\}}\\
&\leq& \frac{k 2^{-2k}}{(2N-2k)(2N)^2}\DN(f).
\end{eqnarray*}
Now observe that the first term on the right of Equation (\ref{EqSplitSum}) can be written
\begin{eqnarray*}
&&\sup_{f:\DN(f)\leq cN}\frac{1}{N}\sum_{\xi\in\cOk}\Big|\frac{1}{2k+1}\sum_{j=0}^{2k}\Phi(\tau_j \xi) -\tilde{\Phi}\Big(\frac{1}{2k+1}\sum_{j=1}^{2k+1}\xi(j)\Big) \Big|(2N-2k)\fk(\xi)\\
&\leq& 2\sup_{\gk:D^*(\gk)\leq \frac{ck2^{-2k}}{2N(2N-2k)}}\sum_{\xi\in\cOk}\Big|\frac{1}{2k+1}\sum_{j=0}^{2k}\Phi(\tau_j \xi) -\tilde{\Phi}\Big(\frac{1}{2k+1}\sum_{j=1}^{2k+1}\xi(j)\Big) \Big|\gk(\xi),
\end{eqnarray*}
where the inequality comes from the bound on the Dirichlet form proved above and the supremum is implicitly taken over the compact set of non-negative maps $\gk$ such that $\sum_\xi\gk(\xi)=1$. Since the Dirichlet form is lower semi-continuous, we deduce that $\{\gk:D^*(\gk)\leq \frac{ck2^{-2k}}{2N(2N-2k)}\}$ is compact (as a closed subset of a compact set). Also if we write
$$F(\gk) := 2\sum_{\xi\in\cOk}\Big|\frac{1}{2k+1}\sum_{j=0}^{2k}\Phi(\tau_j \xi) -\tilde{\Phi}\Big(\frac{1}{2k+1}\sum_{j=1}^{2k+1}\xi(j)\Big) \Big|\gk(\xi),$$
then the map $F$ is continuous and we deduce that for each $N\geq 1$ there exists $\gNk$ realising the supremum. We stress that
$$ \varlimsup\limits_{N\rightarrow\infty}F(\gNk) \leq \sup_{\gk:D^*(\gk)=0}F(\gk).$$
Indeed, take any sub-sequence of $(\gNk,N\geq 1)$ whose image under $F$ converges to the $\varlimsup$ on the left. Then by compactness one can extract a sub-sub-sequence that converges to a limiting point $\gk^{\infty}$ such that $D^*(\gk^{\infty})=0$ and $\sum_\xi \gk^{\infty}(\xi)=1$. To complete the proof, observe that $\cOk$ can be decomposed into $2k+2$ irreducible classes, each corresponding to the subsets $\cOkl\subset\cOk$ with a constant number of particles $l\in\lbr 0,2k\!+\!1\rbr$. For each $l$, the uniform measure $m_l$ on $\cOkl$ is invariant so that $\{\gk:D^*(\gk)=0\}$ is the set of probability distributions on $\cOk$ obtained as convex combinations of the $m_l$'s. Consequently
$$\sup_{\gk:D^*(\gk)=0}F(\gk)=2\sup_{l\in\lbr 0,2k+1\rbr}\sum_{\xi\in\cOkl}{2k+1\choose l}^{-1}\Big|\frac{1}{2k+1}\sum_{j=0}^{2k}\Phi(\tau_j \xi) -\tilde{\Phi}\Big(\frac{l}{2k+1}\Big) \Big|.$$
Using the local central limit theorem (see for instance Step 6 in Chapter 5.4 of~\cite{KipnisLandimBook}) we deduce that $\varlimsup\limits_{k\rightarrow\infty}\sup_{\gk:D^*(\gk)=0}F(\gk)=0$.
\end{proof}
\begin{lemma}
For any $c >0$
$$ \varlimsup\limits_{k\rightarrow\infty}\varlimsup\limits_{\epsilon\downarrow 0}\varlimsup\limits_{N\rightarrow\infty}\!\sup_{f:D(f)\leq cN}\!\frac{1}{N}\sum_{i=1}^{2N}\frac{1}{(2\epsilon N+1)^2}\!\!\!\sum_{\textrm{\tiny$\substack{j:|j-i| \leq \epsilon N\\j':|j'-i|\leq\epsilon N}$}}\sum_{\eta\in\cEN}\frac{1}{2k+1}\bigg| \!\sum_{l=j'-k}^{j'+k}\!\eta(l) -\!\!\sum_{l=j-k}^{j+k}\!\eta(l)\bigg|\piN(\eta)f(\eta)=0 .$$
\end{lemma}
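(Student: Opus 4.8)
The plan is to reduce the statement to the \emph{two-blocks estimate} of Kipnis, Olla and Varadhan~\cite{KOV89} (see also Chapter~5 of~\cite{KipnisLandimBook}) and to adapt that estimate to the wall-constrained, non translation-invariant setting of \Munw. Write $\bar{\eta}^{k}(j):=\frac{1}{2k+1}\sum_{l=j-k}^{j+k}\eta(l)$ for the block average. The first step is to remove the sum over $i$: for fixed $j,j'$ the number of $i\in\lbr1,2N\rbr$ with $|i-j|\le\epsilon N$ and $|i-j'|\le\epsilon N$ is at most $2\epsilon N+1$, and it is zero unless $|j-j'|\le 2\epsilon N$, so the quantity in the statement is bounded by
\[\frac{1}{N(2\epsilon N+1)}\sum_{j}\ \sum_{j':|j-j'|\le 2\epsilon N}\ \int\big|\bar{\eta}^{k}(j')-\bar{\eta}^{k}(j)\big|\,f\,d\piN .\]
Inserting, for each such pair, the average $\bar{\eta}^{B}$ over the block $B$ of size of order $\epsilon N$ that contains both $k$-blocks, and using the triangle inequality, the problem reduces to showing that
\[\frac{1}{N}\sum_{p}\ \int\big|\bar{\eta}^{k}(p)-\bar{\eta}^{B(p)}\big|\,f\,d\piN\ \longrightarrow\ 0\]
in the iterated limit $N\to\infty$, then $\epsilon\downarrow0$, then $k\to\infty$, uniformly over $f\ge0$ with $\piN[f]=1$ and $\DN(f)\le cN$; here $p$ runs over $\lbr1,2N\rbr$ and $B(p)$ denotes a block of size of order $\epsilon N$ containing $[p-k,p+k]$. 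The blocks meeting $\{1,\dots,k\}\cup\{2N-k,\dots,2N\}$ and the discretisation errors in this reduction contribute a fraction of order $k/(\epsilon N)$, which is harmless.

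For the reduced estimate I would follow the two-blocks argument of~\cite{KOV89}. Condition on the $\sigma$-algebra $\cG_{B}$ generated by the restriction of $\eta$ to the complement of $B=B(p)$ together with the number of particles inside $B$. Under $\piN(\,\cdot\,|\,\cG_{B})$ the configuration in $B$ is the canonical (uniform) measure at the prescribed particle number, and by the equivalence of ensembles / local central limit theorem for that measure --- exactly as invoked in the proof of the previous lemma via Step~6 of Chapter~5.4 of~\cite{KipnisLandimBook} --- the average $\bar{\eta}^{k}(p)$ lies within an $\mathrm{L}^{1}$-distance of order $k^{-1/2}$ of the block density $\bar{\eta}^{B}$, uniformly. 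The cost of replacing $f$ by $\piN[f\,|\,\cG_{B}]$ in the integral is controlled, using reversibility and the spectral gap of the finite-volume exclusion process on $B$, by the restricted Dirichlet form of $\sqrt{f}$; after the diffusive speed-up this gap is of order $N^{2}|B|^{-2}$, so the cost involves a factor $|B|^{2}N^{-2}$ times that Dirichlet form, and summing these contributions over a tiling of the lattice against the hypothesis $\DN(f)\le cN$ keeps the whole expression bounded once the limits are taken in the stated order. The precise bookkeeping is the one carried out in~\cite{KOV89}.

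The point where the argument genuinely departs from~\cite{KOV89} --- and which I expect to be the main obstacle --- is the wall. Because of the positivity constraint, $\piN$ is not translation invariant and $\piN(\,\cdot\,|\,\cG_{B})$ is the uniform measure on the configurations of $B$ that have the prescribed particle number \emph{and} satisfy the partial-sum (half-space) constraint inherited from $\cENMunw$. Two facts must therefore be re-established for these constrained canonical measures. First, a spectral-gap lower bound of the right order $N^{2}|B|^{-2}$ for the exclusion process on $B$ restricted to the constraint; this can be obtained by comparison with the unconstrained gap, since restricting a reversible chain to a connected sub-domain does not create a bottleneck, or by invoking known spectral-gap estimates for exclusion with a wall. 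Second, an equivalence-of-ensembles bound for the constrained measure; one way is stochastic domination against the unconstrained canonical measure, together with the observation that inside a block of width of order $\epsilon N$ the constraint is non-binding except on an event whose contribution is $O(\epsilon)$ --- the interface being of height $O(\sqrt N)\ll\epsilon N$, so that the constraint bites only in a boundary layer and near the rare returns of the path to~$0$ --- or alternatively via the lattice-path description of $\cENMunw$ and classical fluctuation estimates for conditioned random walks. The delicate part is making this second point quantitative, i.e.\ ruling out that a density $f$ with $\DN(f)\le cN$ concentrates mass on the atypical blocks. Finally, the passage to \Mdeux\ is as in the authors' remark: one carries the pair of interfaces and the corresponding extra terms, the non-crossing constraint playing exactly the role of the wall, while \Modun, where $\piN$ is exchangeable, is the case originally treated in~\cite{KOV89}.
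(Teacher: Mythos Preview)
Your outline is plausible but takes a route that is both different from, and substantially harder than, the paper's. You reduce to comparing a $k$-block average with an $\epsilon N$-block average, condition on the complement of the large block, and then appeal to a spectral-gap bound and an equivalence-of-ensembles statement for the \emph{wall-constrained} canonical measure on a block of size $\epsilon N$. Those two ingredients are exactly where the difficulty lies, and neither is available off the shelf: the constraint is that all partial sums from the left endpoint stay above the line $k/2$, which is a non-local half-space condition, and your heuristic that ``the constraint is non-binding except on an $O(\epsilon)$ event'' does not survive a density $f$ that may well concentrate on configurations touching the wall. The claim that restricting a reversible chain to a sub-domain preserves the spectral-gap order is also not correct in general.

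The paper sidesteps all of this. Instead of conditioning on a large block, it projects the density $f$ to the pair of \emph{small} boxes $\cOk\times\cOk$ (fixed size $2k+1$, independent of $N$), and introduces three Dirichlet forms $D^{1},D^{2},D^{\circ}$ on this finite space: $D^{1},D^{2}$ are the usual exclusion forms on each box, while $D^{\circ}$ swaps the values at the centres of the two boxes. The whole wall issue disappears for two elementary reasons. First, the full Dirichlet form $\DN$ has already been rewritten, using reversibility, so that only flips with $\nabla\eta(k)=+1$ appear and the wall condition is implicit; this makes the bounds $D^{1}(\fk),D^{2}(\fk)\lesssim (k/N^{3})\DN(f)$ identical to the unconstrained case. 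Second --- and this is the key observation you are missing --- to bound $D^{\circ}(\fk)$ one decomposes the long-range swap $\eta^{j,j'}$ into $2(j'-j)-1$ nearest-neighbour swaps, and because the indicator in $D^{\circ}$ forces $\eta(j)=0,\ \eta(j')=1$, every intermediate configuration in that telescoping sequence automatically stays in $\cENMunw$. No constrained spectral gap, no constrained equivalence of ensembles, no fluctuation estimate for conditioned walks is needed; one finishes by compactness on the finite set $\cOk\times\cOk$ and the local CLT exactly as in the one-block lemma.
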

\begin{proof}
Fix $N\!\geq\! 1$. Observe that the sum over $i$ can be restricted to $\{\lfloor \epsilon N\rfloor +1,\ldots,\lfloor 2N-2\epsilon N \rfloor\}$ since the sum over the remaining $i$'s vanishes when $\epsilon$ goes to $0$. Similarly the sum over $j,j'$ can be restricted to the set
$$ J(i):=\{(j,j'):|j-i|\leq \epsilon N, |j'-i|\leq \epsilon N, j'-j > 2k\}$$
and the term $(2\epsilon N\!+\!1)^2$ can be replaced by $2\,\#J(i)$. Since $\#J(i)$ does not depend on $i$, we can write $\#J$. Consequently we obtain
\begin{equation}\label{EqSupDirichletForm2}
\sup_{f:D(f)\leq cN}\frac{1}{N}\sum_{i=\lfloor 2\epsilon N \rfloor +1}^{\lfloor 2N-2\epsilon N \rfloor}\frac{1}{\#J}\sum_{(j,j') \in J(i)}\sum_{\eta\in\cEN}\frac{1}{2k+1}\bigg| \!\sum_{l=j'-k}^{j'+k}\!\eta(l) -\sum_{l=j-k}^{j+k}\!\eta(l)\bigg|\,\piN(\eta)f(\eta).
\end{equation}
We consider three Dirichlet forms associated to three variants of the simple exclusion process on $\cOk\times\cOk$. From now on, $(\xi_1,\xi_2)$ will implicitly denote an element of the latter set while $\eta$ will designate an element of $\cON$. For all $\gk:\cOk\times\cOk\rightarrow\bbR_+$ we set
\begin{eqnarray*}
D^1(\gk) &:=& \frac{1}{2}\sum_{\xi_1,\xi_2}2^{-2(2k+1)}\sum_{n=1}^{2k}\Big(\sqrt{\gk(\xi_1^{n,n+1},\xi_2)}-\sqrt{\gk(\xi_1,\xi_2)}\Big)^2\tun_{\{\nabla\xi_1(n)=1\}},\\
D^2(\gk) &:=& \frac{1}{2}\sum_{\xi_1,\xi_2}2^{-2(2k+1)}\sum_{n=1}^{2k}\Big(\sqrt{\gk(\xi_1,\xi_2^{n,n+1})}-\sqrt{\gk(\xi_1,\xi_2)}\Big)^2\tun_{\{\nabla\xi_2(n)=1\}},\\
D^\circ(\gk) &:=& \frac{1}{2}\sum_{\xi_1,\xi_2}2^{-2(2k+1)}\Big(\sqrt{\gk\big((\xi_1,\xi_2)^\circ\big)}-\sqrt{\gk(\xi_1,\xi_2)}\Big)^2\tun_{\{\xi_1(k+1)=0,\xi_2(k+1)=1\}},
\end{eqnarray*}
where $(\xi_1,\xi_2)^\circ$ is the configuration obtained from $(\xi_1,\xi_2)$ by exchanging the values of $\xi_1(k+1)$ and $\xi_2(k+1)$. The Dirichlet form $D^1$ (resp. $D^2$) corresponds to a simple exclusion process only acting on $\xi_1$ (resp. $\xi_2$) while $D^\circ$ induces an interaction between $\xi_1$ and $\xi_2$. We now introduce the following map:
$$ \fk(\xi_1,\xi_2):=\sum_{i=\lfloor 2\epsilon N \rfloor +1}^{\lfloor 2N-2\epsilon N \rfloor}\frac{1}{(\lfloor 2N\!-\!2\epsilon N\rfloor \!-\!\lfloor 2\epsilon N\rfloor)\#J}\sum_{(j,j') \in J(i)}\sum_{\textrm{\tiny$\substack{\eta\in\cE_N\\ \eta_{|\lbr j-k,j+k\rbr}=\xi_1\\ \eta_{|\lbr j'-k,j'+k\rbr}=\xi_2}$}}\piN(\eta)f(\eta).$$
By symmetry, we have $D^1(\fk)=D^2(\fk)$ and
\begin{eqnarray*}
D^1(\fk) &\leq& \frac{1}{2}\sum_{\xi_1,\xi_2}\sum_{n=-k}^{k-1}\sum_{i=\lfloor 2\epsilon N \rfloor +1}^{\lfloor 2N-2\epsilon N \rfloor}\frac{2^{-2(2k+1)}}{(\lfloor 2N\!-\!2\epsilon N\rfloor \!-\!\lfloor 2\epsilon N\rfloor)\#J}\sum_{(j,j') \in J(i)}\\
&&\times\!\!\!\sum_{\textrm{\tiny$\substack{\eta\in\cE_N\\ \eta_{|\lbr j-k,j+k\rbr}=\xi_1\\ \eta_{|\lbr j'-k,j'+k\rbr}=\xi_2}$}}\piN(\eta)\Big(\sqrt{f\big(\eta^{j+n,j+n+1}\big)}-\sqrt{f(\eta)}\Big)^2\tun_{\{\nabla\eta(j+n)=1\}}\\
&\leq& \frac{8k(\epsilon N+k)\epsilon N}{\big(\lfloor 2N\!-\!2\epsilon N\rfloor \!-\!\lfloor 2\epsilon N\rfloor\big)(2N)^2\#J}\,\DN(f).
\end{eqnarray*}
Indeed, for a given flip appearing in the Dirichlet form, we have at most $2(\epsilon N+k)$ choices for $i$, $2k$ choices for $j$ and $2\epsilon N$ choices for $j'$. $D^\circ(\fk)$ can be bounded by
\begin{equation*}
\sum_{\xi_1,\xi_2}\sum_{i=\lfloor 2\epsilon N \rfloor +1}^{\lfloor 2N-2\epsilon N \rfloor}\!\frac{(\lfloor 2N\!-\!2\epsilon N\rfloor \!-\!\lfloor 2\epsilon N\rfloor)^{-1}}{2\#J 2^{-2(2k+1)}}\!\!\!\!\sum_{(j,j') \in J(i)}\!\sum_{\textrm{\tiny$\substack{\eta\in\cE_N\\ \eta_{|\lbr j\!-\!k,j\!+\!k\rbr}=\xi_1\\ \eta_{|\lbr j'\!-\!k,j'\!+\!k\rbr}=\xi_2}$}}\!\!\!\!\!\!\!\!\!\piN(\eta)\Big(\sqrt{f\big(\eta^{j,j'}\big)}-\sqrt{f(\eta)}\Big)^2\tun_{\{\eta(j)=0,\eta(j')=1\}},
\end{equation*}
where $\eta^{j,j'}$ is obtained from $\eta$ by exchanging the values $\eta(j)$ and $\eta(j')$. Observe that we have
$$ \eta^{j,j'} = \Big(\ldots\Big(\Big(\big(\ldots\big((\eta^{j,j+1})^{j+1,j+2}\big)\ldots\big)^{j'-1,j'}\Big)
^{j'-2,j'-1}\Big)\ldots\Big)^{j,j+1}.$$
We denote by $\eta_p$ the configuration obtained at the $p$-th step of the above formula, that is, $\eta_0:=\eta$, $\eta_1:=\eta^{j,j+1}$,$\ldots$, $\eta_{2(j'-j)-1}=\eta^{j,j'}$. We stress that all these configurations belong to $\cEN$, this is a consequence of our condition $\{\eta(j)\!=\!0,\eta(j')\!=\!1\}$. We thus have
$$\Big(\sqrt{f\big(\eta^{j,j'}\big)}-\sqrt{f(\eta)}\Big)^2 \leq \big(2(j'-j)-1\big)\sum_{p=1}^{2(j'-j)-1}\Big(\sqrt{f(\eta_p)}-\sqrt{f(\eta_{p-1})}\Big)^2.$$
One obtains $\eta_p$ from $\eta_{p-1}$ by exchanging the values of two consecutive sites. Then a simple calculation ensures the existence of a constant $r > 0$ such that when $k/\epsilon N$ is small enough
$$ D^\circ(\fk) \leq \frac{r\epsilon^2}{N}\,\DN(f).$$
We introduce the set $\GNk(\epsilon)$ of maps $\gk\!:\!\cOk\!\times\!\cOk\!\rightarrow\bbR$ such that $\sum_{\xi_1,\xi_2}\gk(\xi_1,\xi_2)=1$ and
\begin{equation*}
D^1(\gk),D^2(\gk)\leq\frac{4kc(\epsilon N+k)\epsilon N}{\big(\lfloor 2N\!-\!2\epsilon N\rfloor \!-\!\lfloor 2\epsilon N\rfloor\big)2N\#J}\;\;;\;\;D^\circ(\gk) \leq r\epsilon^2c.
\end{equation*}
Expression (\ref{EqSupDirichletForm2}) can be rewritten as follows:
\begin{eqnarray*}
&&\sup_{f:\DN(f)\leq cN}\frac{1}{N}\sum_{\xi_1,\xi_2}\sum_{i=\lfloor 2\epsilon N \rfloor +1}^{\lfloor 2N-2\epsilon N \rfloor}\frac{(\lfloor 2N\!-\!2\epsilon N\rfloor \!-\!\lfloor 2\epsilon N\rfloor)^{-1}}{\#J}\\
&&\times\sum_{(j,j') \in J(i)}\sum_{\textrm{\tiny$\substack{\eta\in\cE_N\\ \eta_{|\lbr j-k,j+k\rbr}=\xi_1\\ \eta_{|\lbr j'-k,j'+k\rbr}=\xi_2}$}}\!\!\!\frac{1}{2k+1}\,\Big| \sum_{l:|j'-l|\leq k}\!\!\!\eta(l) -\!\sum_{l:|j-l|\leq k}\!\!\eta(l)\Big|\piN(\eta)f(\eta)\\
&\leq& 2\sup_{\gk\in \GNk(\epsilon)}\sum_{\xi_1,\xi_2}\frac{1}{2k+1}\,\Big| \sum_{l=1}^{2k+1}\xi_1(l) -\sum_{l=1}^{2k+1}\xi_2(l)\Big|\gk(\xi_1,\xi_2).
\end{eqnarray*}
By the same compactness arguments as in the proof of the previous lemma, it suffices to show that
$$ \varlimsup\limits_{k\rightarrow\infty}\sup_{D^1(\gk)=D^2(\gk)=\Delta(\gk)=0}\sum_{\xi_1,\xi_2}\frac{1}{2k+1}\,\Big| \sum_{l=1}^{2k+1}\xi_1(l) -\sum_{l=1}^{2k+1}\xi_2(l)\Big|\,\gk(\xi_1,\xi_2)=0.$$
We now see $\gk$ as a probability measure on $\cOk\times\cOk$. The conditions $D^1(\gk)=D^2(\gk)=D^\circ(\gk)=0$ imply that $\gk$ is a convex combination of the uniform measures on $\cOk\times\cOk$ with a given number of particles. As at the end of the preceding lemma, the local central limit theorem completes the proof.\cqfd
\end{proof}

\subsection{The asymmetric case}
In the last subsection, we proved Theorem \ref{ThSuperExpo} under $\bbPN_{\piN}$. Let $\cVN(\delta)$ be the Borel subset of $\bbD([0,t],\cEN)$ defined by $\cVN(\delta):=\Big\{\upeta:\frac{1}{N}\int_0^t\VNe(\upeta_s)ds > \delta\Big\}$. Recall that we work implicitly in \Munw, so that we drop the superscript on the state-spaces. For any measure $\nuN$ on $\cCN$ we have
\[ \bbPN_{\nuN}\big(\cVN(\delta)\big) = \int_{\upeta\in\cEN}\tun_{\{\upeta\in\cVN(\delta)\}} \frac{\nuN(\upeta_0)}{\piN(\upeta_0)} d\bbPN_{\piN}(\upeta)\leq 2^{2N}\bbPN_{\piN}\big(\cVN(\delta)\big),\]
so that Theorem \ref{ThSuperExpo} also holds under $\bbPN_{\nuN}$. We now extend it to the asymmetric setting. To that end, we write
\begin{eqnarray*} 
\bbQN_{\nuN}\big(\cVN(\delta)\big) =\int\tun_{\{\upeta\in\cVN(\delta)\}}\frac{d\bbQN_{\nuN}}{d\bbPN_{\nuN}}d\bbPN_{\nuN}(\upeta) \leq \bigg(\bbPN_{\piN}\big(\cVN(\delta)\big)\bigg)^{\frac{1}{2}}\bigg(\int\Big(\frac{d\bbQN_{\nuN}}{d\bbPN_{\nuN}}\Big)^2 d\bbPN_{\nuN}(\upeta)\bigg)^{\frac{1}{2}}.
\end{eqnarray*}
Hence the result for $\bbQN_{\nuN}$ will follow if we can prove the existence of a constant $c > 0$ such that for all $N\geq 1$
\begin{equation}\label{EqBoundRadonNikodym}
\int\Big(\frac{d\bbQN_{\nuN}}{d\bbPN_{\piN}}\Big)^2 d\bbPN_{\piN}(\upeta)=\int\Big(\frac{d\bbQN_{\upeta_0}}{d\bbPN_{\upeta_0}}(\upeta)\Big)^2\Big(\frac{\nuN(\upeta_0)}{\piN(\upeta_0)}\Big)^2 d\bbPN_{\piN}(\upeta) \leq \exp(cN),
\end{equation}
where $\bbQN_{\upeta_0}$ denotes the distribution of the process starting from $\delta_{\upeta_0}$ at time $0$. The assumption on the asymmetry yields the following uniform estimates:
\begin{equation}\label{Eq:EstimatesPn}
\pN(\cdot) = \frac{(2N)^2}{2} + \sigma(\cdot)\sqrt{2N} + \cO\big((2N)^{-1}\big)\;\;,\;\;\qN(\cdot) = \frac{(2N)^2}{2} - \sigma(\cdot)\sqrt{2N} + \cO\big((2N)^{-1}\big).\end{equation}
For any initial condition $\upeta_0\in\cEN$, the measures $\bbQN_{\upeta_0}$ and $\bbPN_{\upeta_0}$ are equivalent and their Radon-Nikodym derivative up to time $t$ is given by (see for instance Appendix 1 - Proposition 2.6 in~\cite{KipnisLandimBook})
\begin{equation}\label{Eq:RadonNikodym}
\begin{split}
\log\frac{d\bbQN_{\upeta_0}}{d\bbPN_{\upeta_0}}(\upeta) &= \int_0^t\sum_{k=1}^{2N-1}\Big(\pN(k_{\tiN})-\frac{(2N)^2}{2}\Big)\big(\tun_{\{\nabla\upeta_s(k)=+1\}}-\tun_{\{\nabla\upeta_s(k)=-1\}}\big)ds\\
&\;\;\;\;-\sum_{k=1}^{2N-1}\Big(\log\frac{2\qN(k_{\tiN})}{(2N)^2} J_t^{k,k+1} + \log\frac{2\pN(k_{\tiN})}{(2N)^2} J_t^{k+1,k}\Big)
\end{split}
\end{equation}
where $J_t^{k,k+1}$ (resp. $J_t^{k+1,k}$) is the number of particles that have jumped from $k$ to $k+1$ (resp. from $k+1$ to $k$) up to time $t$. We rewrite the first term on the right of (\ref{Eq:RadonNikodym}) as follows:
\begin{eqnarray*}
\sum_{k=1}^{2N-1}\Big(\pN(k_{\tiN})-\frac{(2N)^2}{2}\Big)\big(\upeta_s(k+1)-\upeta_s(k)\big)\!\!\!&=&\!\!\!\sum_{k=2}^{2N-1}\Big(\pN((k-1)_{\tiN})-\pN(k_{\tiN})\Big)\,\upeta_s(k)\\
&&+\Big(\pN(1-\frac{1}{2N})-\frac{(2N)^2}{2}\Big)\,\upeta_s(2N)\\
&&-\Big(\pN(\frac{1}{2N})-\frac{(2N)^2}{2}\Big)\,\upeta_s(1)
\end{eqnarray*}
so that the uniform estimates (\ref{Eq:EstimatesPn}) together with the $1/2$-H\"older regularity of $\sigma$ ensures that this last expression is of order $N$ uniformly in $\upeta$. We now focus on the second term on the right of (\ref{Eq:RadonNikodym}) and write this as the sum of
\begin{equation*} A := \sum_{k=1}^{2N-1}\frac{2\sigma(k_{\tiN})}{(2N)^{\frac{3}{2}}}\Big(J_t^{k+1,k}-J_t^{k,k+1}\Big)\;\;,\;\;
B := \cO\big((2N)^{-3}\big)\sum_{k=1}^{2N-1}\Big(J_t^{k,k+1} + J_t^{k+1,k}\Big).
\end{equation*}
A simple calculation shows that $A=\frac{(2N)^{\frac{3}{2}}}{2}\big(\ccA_{\tiN}(\rmh_{t})-\ccA_{\tiN}(\rmh_{0})\big)$ where $\ccA_{\tiN}(\cdot)$ is the discrete weighted area under the interfaces as defined in Proposition \ref{PropInvMeasure}. Consequently we have $|A| \leq \sup|\sigma|\sqrt{2N}$ for every $\upeta$. Concerning $B$, observe that the sum is less than $\sum_k\cLN_t(k_{\tiN})+\cRN_t(k_{\tiN})$ which is a Poisson random variable with mean $t(2N)^3$ under $\bbPN_{\piN}$. Putting all these arguments together we deduce that (\ref{EqBoundRadonNikodym}) is fulfilled. This concludes the proof of Theorem \ref{ThSuperExpo}.\cqfd

\section{Proof of the tightness in \Modun}
\label{SectionProofTightnessMun}
We work in the natural filtration induced by the canonical process $(\rmh_t,t\geq 0)$: all the martingales will be considered w.r.t.~this filtration. Recall the notation $k_{\tiN}=\frac{k}{2N}$. First we rewrite the system of stochastic differential equations (\ref{Eq:DefMun}) in the following semimartingale form
\begin{equation}\label{EqrmhM1}
d\rmh_t(k_{\tiN}) = \frac{(2N)^2}{2}\,\Delta\rmh_t(k_{\tiN})dt 
+\frac{2}{\sqrt{2N}}\Big(\pN(k_{\tiN})-\frac{(2N)^2}{2}\Big)\tun_{\{\Delta \rmh_t(k_{\tiN})\ne 0\}}dt + d M_t(k_{\tiN}),
\end{equation}
where
\[ M_t(k_{\tiN}) := \frac{2}{\sqrt{2N}}\Big(\int_0^t(d\cLN_s(k_{\tiN})-\pN(k_{\tiN}) ds)\tun_{\{\Delta \rmh_s(k_{\tiN}) > 0\}} - \int_0^t(d\cRN_s(k_{\tiN})-\qN(k_{\tiN}) ds)\tun_{\{\Delta \rmh_s(k_{\tiN}) < 0\}}\Big)\]
is a martingale. We introduce the fundamental solution $\frg^{\tiN}\!=\frg$ that solves for all $k,l \in \lbr 0,2N\rbr$
\begin{eqnarray*}
\begin{cases}\partial_t \frg_t(k_{\tiN},l_{\tiN}) \!\!\!\!&= \frac{(2N)^2}{2}\,\Delta \frg_t(k_{\tiN},l_{\tiN}),\\
\frg_0(k_{\tiN},l_{\tiN}) \!\!\!\!&= \delta_{k_{\tiN}}(l_{\tiN}),\\
\frg_t(k_{\tiN},0) \!\!\!\!&= \frg_t(k_{\tiN},1) = \frg_t(0,l_{\tiN}) = \frg_t(1,l_{\tiN}) = 0.\end{cases}
\end{eqnarray*}
Notice that the discrete Laplacian on the first line acts on the map $l_{\tiN}\mapsto\frg_t(k_{\tiN},l_{\tiN})$ for any given $k_{\tiN}$. Classical arguments (see for instance Chapter V p.237 in the book of Spitzer~\cite{Spitzer76}) ensure that for all $t\geq 0$ and all $k,l \in \lbr 0,2N\rbr$ we have
\begin{equation}\label{EqDiscreteHeatKernel}
\frg_t(k_{\tiN},l_{\tiN}) = \frac{1}{N} \sum_{n=1}^{2N-1}\sin\big(n\pi k_{\tiN}\big)\sin\big(n\pi l_{\tiN}\big)e^{(2N)^2 t\big(\cos(\frac{n}{2N}\pi)-1\big)}.
\end{equation}
\begin{remark}
The function $(t,k,l)\mapsto\frg_{t}(k_{\tiN},l_{\tiN})$ is the Green function associated to the differential operator $\partial_t-\frac{(2N)^2}{2}\Delta$. It corresponds to the transition kernel of a continuous-time simple random walk on $\{0,\frac{1}{2N},\ldots,1\}$ sped up by $(2N)^2/2$ and killed at $0$ and $1$.
\end{remark}
\noindent From the fundamental solution, one can write the mild formulation of the semimartingale:
\begin{equation}\label{EqMildDiscrete}
\begin{split}
\rmh_t(k_{\tiN}) &= \sum_{k=0}^{2N}\frg_{t}(k_{\tiN},l_{\tiN})\,\rmh_0(k_{\tiN}) + N_t^t(l_{\tiN})\\
&\;\;\;\;+ \frac{2}{\sqrt{2N}}\sum_{k=0}^{2N}\int_0^t \frg_{t-r}(k_{\tiN},l_{\tiN})\Big(\pN(k_{\tiN})-\frac{(2N)^2}{2}\Big)\tun_{\{\Delta\rmh_r(k_{\tiN})\ne 0\}}dr,
\end{split}
\end{equation}
where we have introduced the collection of martingales $(N_s^t(l_{\tiN}),0 \leq s \leq t),l\in\lbr 0,2N\rbr$ as follows:
$$ N_s^t(l_{\tiN}) := \sum_{k=0}^{2N}\int_0^s \frg_{t-r}(k_{\tiN},l_{\tiN})dM_r(k_{\tiN}). $$
This mild formulation is valid since (\ref{EqMildDiscrete}) defines a process satisfying the stochastic differential equations (\ref{EqrmhM1}) for which pathwise uniqueness is known.\\
Let us introduce some notation. For every $p\in[1,\infty)$, $\left\|F\right\|_{p}$ will denote the $\rL^p$ norm of a real-valued random variable $F$. For any square integrable c\`adl\`ag martingale $(X_t,t\geq 0)$, $[X]$ will denote its quadratic variation. In the particular case of purely discontinuous martingales, we have
\[ \forall t\geq 0,\;\;[X]_t := \sum_{\tau\leq t}\big(X_\tau-X_{\tau-}\big)^2.\]
We also denote by $\left\llangle X\right\rrangle$ the bracket of $X$, defined as the unique predictable process such that $(X^2_t - \left\llangle X\right\rrangle_t,t\geq 0)$ is a martingale. We recall the Burkholder-Davis-Gundy inequality~\cite{LLP80} that ensures, for any $p\in[1,\infty)$, the existence of a constant $\tcBDG(p) > 0$ such that for all $t\geq 0$
\[\left\|X_t\right\|_{p} \leq \tcBDG(p)\sqrt{\left\|\,[X]_t\,\right\|_{p/2}}.\]
Since the process $D_t:=[X]_t-\left\llangle X\right\rrangle_t$ is itself a martingale, for any $p\geq 2$ we have the following inequality
\begin{equation}\label{Eq:BDGTwice}
\left\| X_t \right\|_p \leq \tcBDG(p)\sqrt{\left\|\left\llangle X \right\rrangle_t \right\|_{p/2}} + \tcBDG(p)\sqrt{\tcBDG(p/2)}\left\|\big[D\big]_t \right\|_{p/4}^{\frac{1}{4}}.
\end{equation}
The proof of Proposition \ref{PropHolderSpaceTimeBar} requires a series of lemmas that we now present. From the hypothesis on $\pN,\qN$, we know that there exists $\tilde{\sigma} > 0$ such that for all $N\geq 1$
\begin{equation}\label{EqAsym}
\sup_{k\in\lbr 1,2N-1\rbr}|\qN(k_{\tiN})-\pN(k_{\tiN})| \leq \tilde{\sigma}\sqrt{2N}.
\end{equation}
Fix $T>0$ until the end of the section.
\begin{lemma}\label{LemmaBoundsHeatKernel}The following properties hold true
\begin{enumerate}
\item[\rm{(i)}] For all $N\geq 1$, $t\geq 0$, $k,l \in \llbracket 0,2N\rrbracket$,\hspace{5pt} $\frg_t(k_{\tiN},l_{\tiN}) \leq 1 \wedge \sqrt{\frac{2\pi}{(2N)^2t}}$ .
\item[\rm{(ii)}] Fix $\gamma \in (0,1]$. There exists a constant $\tc_{\textup{\tiny kernel}}'(\gamma,T) > 0$ such that for all $N\in\bbN$, $l \in \llbracket 0,2N\rrbracket$ and all $t\leq t' \in [0,T]$
$$\sup_{k\in\llbracket 0,2N\rrbracket}\big|\frg_{t'}(k_{\tiN},l_{\tiN})-\frg_t(k_{\tiN},l_{\tiN})\big| \leq \frac{\tc_{\textup{\tiny kernel}}'(\gamma,T)}{2N\sqrt{t}}\Big(\frac{t'-t}{t}\Big)^{\gamma}.$$
\item[\rm{(iii)}] For all $N\in\bbN$, $0 \leq s \leq t$ and all $k,l \in \llbracket 0,2N\rrbracket$, $\sup_{r\in[s,t]}\frg_r(k_{\tiN},l_{\tiN}) \leq e^{(2N)^2(t-s)}\frg_t(k_{\tiN},l_{\tiN})$.
\end{enumerate}
\end{lemma}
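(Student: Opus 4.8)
The plan is to derive all three bounds from the two descriptions of $\frg$ available to us: the spectral formula \eqref{EqDiscreteHeatKernel}, and the random-walk interpretation recorded just above, namely that $\frg_t(k_{\tiN},l_{\tiN}) = \bbP_{k_{\tiN}}\big(X_t = l_{\tiN},\, t<\tau\big)$, where $X$ is the continuous-time nearest-neighbour walk on $\{0,\tfrac{1}{2N},\dots,1\}$ with generator $\tfrac{(2N)^2}{2}\Delta$ — hence with total jump rate $(2N)^2$ at each interior site — and $\tau$ its killing time at $\{0,1\}$. The only elementary analytic input needed is the pair of inequalities $\tfrac{2}{\pi^2}\theta^2 \le 1-\cos\theta \le \tfrac{1}{2}\theta^2$ for $\theta\in[0,\pi]$, the right-hand one being standard and the left-hand one (a Jordan-type inequality) checked by a one-variable convexity argument ($f(\theta):=1-\cos\theta-\tfrac{2}{\pi^2}\theta^2$ vanishes at $0$ and $\pi$ and has $f''$ changing sign once).

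For (i), the bound $\frg_t(k_{\tiN},l_{\tiN})\le 1$ is immediate from the probabilistic representation, since the right-hand side is the probability of an event. For the Gaussian bound I would use \eqref{EqDiscreteHeatKernel}: bounding $|\sin(n\pi k_{\tiN})\sin(n\pi l_{\tiN})|\le 1$ and using $\cos(\tfrac{n\pi}{2N})-1 \le -\tfrac{2}{\pi^2}\tfrac{n^2\pi^2}{4N^2} = -\tfrac{n^2}{2N^2}$ gives $(2N)^2 t\,(\cos\tfrac{n\pi}{2N}-1) \le -2n^2 t$, whence
\[
\frg_t(k_{\tiN},l_{\tiN}) \le \frac{1}{N}\sum_{n\ge 1} e^{-2n^2 t}\le \frac{1}{N}\int_0^\infty e^{-2x^2 t}\,dx = \frac{1}{N}\,\frac{1}{2}\sqrt{\frac{\pi}{2t}},
\]
the second inequality because $x\mapsto e^{-2x^2t}$ is decreasing; a trivial numerical check shows $\tfrac{1}{2}\sqrt{\pi/2}\le\sqrt{2\pi}$, so this is $\le\sqrt{2\pi/((2N)^2 t)}$.

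For (ii), set $\lambda_n := (2N)^2\big(1-\cos\tfrac{n\pi}{2N}\big)\ge 0$, so that \eqref{EqDiscreteHeatKernel} yields
\[
\frg_{t'}(k_{\tiN},l_{\tiN}) - \frg_t(k_{\tiN},l_{\tiN}) = \frac{1}{N}\sum_{n=1}^{2N-1}\sin(n\pi k_{\tiN})\sin(n\pi l_{\tiN})\,e^{-\lambda_n t}\big(e^{-\lambda_n(t'-t)}-1\big).
\]
Using $|1-e^{-x}|\le\min(x,1)\le x^\gamma$ for $x\ge 0$, $\gamma\in(0,1]$, together with the two-sided estimate $2n^2 \le \lambda_n \le \tfrac{\pi^2}{2}n^2$, we get $|\frg_{t'}-\frg_t| \le \tfrac{(t'-t)^\gamma}{N}\sum_{n\ge 1}\lambda_n^\gamma e^{-\lambda_n t} \le \tfrac{(\pi^2/2)^\gamma (t'-t)^\gamma}{N}\sum_{n\ge 1} n^{2\gamma} e^{-2n^2 t}$. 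The remaining sum is controlled by $C_\gamma\, t^{-\gamma-1/2}$ uniformly in $t>0$ (split at $n_0\sim t^{-1/2}$; on $n\le n_0$ bound $n^{2\gamma}\lesssim t^{-\gamma}$ and there are $O(t^{-1/2})$ terms; on $n>n_0$ use the calculus bound $x^{2\gamma}e^{-x^2 t}\le C_\gamma t^{-\gamma}$ and compare $\sum e^{-n^2 t}$ with $\int_0^\infty e^{-x^2 t}\,dx$). This rearranges exactly into $\tfrac{c(\gamma)}{2N\sqrt t}\big(\tfrac{t'-t}{t}\big)^\gamma$; the constant can in fact be chosen independent of $T$.

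For (iii), I would argue probabilistically. Whenever $\frg_\cdot(\cdot,l_{\tiN})\not\equiv 0$ the site $l_{\tiN}$ is interior, so $X$ holds at $l_{\tiN}$ for an exponential time of rate $(2N)^2$, and while it does not move it is not killed; by the Markov property, for $r\le t$,
\[
\frg_t(k_{\tiN},l_{\tiN}) \ge \bbP_{k_{\tiN}}\big(X_r=l_{\tiN},\,r<\tau\big)\,e^{-(2N)^2(t-r)} = e^{-(2N)^2(t-r)}\,\frg_r(k_{\tiN},l_{\tiN}),
\]
and taking $r\in[s,t]$ gives $\frg_r(k_{\tiN},l_{\tiN}) \le e^{(2N)^2(t-s)}\frg_t(k_{\tiN},l_{\tiN})$. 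The only mildly delicate point in the whole lemma is the uniform-in-$t$ control of $\sum_n n^{2\gamma}e^{-2n^2t}$ by $t^{-\gamma-1/2}$ in (ii), and keeping track of constants through the two-sided bounds on $\lambda_n$; everything else is routine, and (iii) is the easiest once one adopts the random-walk picture rather than manipulating the spectral sum directly.
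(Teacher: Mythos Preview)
Your proof is correct and follows essentially the same route as the paper. For (i) and (ii) the paper converts the spectral sum into an integral over $[0,1]$ and performs a change of variables before invoking the same two ingredients you use (the two-sided bound on $1-\cos\theta$ and the inequality $1-e^{-x}\le x^\gamma$); your handling of the sum $\sum_{n\ge 1} n^{2\gamma}e^{-2n^2t}\lesssim t^{-\gamma-1/2}$ directly is if anything cleaner, and your remark that the constant in (ii) is actually $T$-independent is correct. For (iii) the paper writes $\frg_t=e^{-(2N)^2t}\sum_{n\ge 0}\frac{((2N)^2t)^n}{n!}g_n(k,l)$ via Poissonization of the discrete-time walk and observes that $e^{(2N)^2t}\frg_t$ is monotone in $t$; your Markov-property argument that the walk stays put at the interior site $l_{\tiN}$ with probability $e^{-(2N)^2(t-r)}$ is the probabilistic twin of that computation.
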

\begin{proof}
First, observe that (\ref{EqDiscreteHeatKernel}) can be rewritten
\begin{equation*}
\frg_t(k_{\tiN},l_{\tiN}) = 2 \int_0^1\sin\big(\left\lfloor 2Nu\right\rfloor\pi k_{\tiN}\big)\sin\big(\left\lfloor 2Nu\right\rfloor\pi l_{\tiN}\big)e^{(2N)^2 t\big(\cos(\frac{\left\lfloor 2Nu\right\rfloor}{2N}\pi)-1\big)}du.
\end{equation*}
Second, recall that for all $a \in [0,\pi]$, $1-a^2/2 \leq \cos(a) \leq 1 - 2a^2/\pi^2$. To prove (i), we write
\begin{eqnarray*}
\frg_t(k_{\tiN},l_{\tiN}) &\leq& 2 \int_0^1 e^{-2(2N)^2 t(u-\frac{1}{2N})^2}du \leq \sqrt{\frac{2\pi}{(2N)^2 t}} \int_\bbR\sqrt{\frac{2(2N)^2 t}{\pi}}e^{-2(2N)^2 tu^2}du = \sqrt{\frac{2\pi}{(2N)^2 t}},
\end{eqnarray*}
where we use the bound on the cosine in the first inequality and we recognise the Gaussian distribution in the second step. Bound (i) follows.\\
We turn to (ii).  Fix $N$ and $\gamma$ as in the statement. For all $k,l\in\llbracket 0,2N\rrbracket$ and all $0 \leq t \leq t' \leq T$, we bound $|\frg_{t'}(k_{\tiN},l_{\tiN})-\frg_t(k_{\tiN},l_{\tiN})|$ by 
\begin{eqnarray*}
&&\frac{2}{\sqrt{(2N)^2 t}}\int_0^{\sqrt{(2N)^2 t}}\Big(1-e^{(2N)^2(t'-t)\big(\cos(\left\lfloor \frac{2Nv}{\sqrt{(2N)^2 t}}\right\rfloor\frac{\pi}{2N})-1\big)} \Big)e^{(2N)^2 t\big(\cos(\left\lfloor \frac{2Nv}{\sqrt{(2N)^2 t}}\right\rfloor\frac{\pi}{2N})-1\big)}dv\\
&\leq&\frac{2}{\sqrt{(2N)^2 t}}\int_0^{\sqrt{(2N)^2 t}}\Big(1-e^{-\frac{(t'-t)\pi^2}{2t}v^2}\Big)e^{-2(v-\sqrt{t})^2}dv.
\end{eqnarray*}
Since $\gamma$ belongs to $(0,1]$, we have $1-e^{-a} \leq a^\gamma$ for all $a\geq 0$, and we deduce that
\begin{eqnarray*}
|\frg_{t'}(k_{\tiN},l_{\tiN})-\frg_t(k_{\tiN},l_{\tiN})|&\leq&\frac{2}{\sqrt{(2N)^2 t}}\Big(\frac{t'-t}{t}\Big)^\gamma\Big(\frac{\pi^2}{2}\Big)^\gamma\int_{-\sqrt{t}}^{\sqrt{(2N)^2 t}}\big|v+\sqrt{t}\big|^{2\gamma}e^{-2v^2}dv.
\end{eqnarray*}
Setting $\tc_{\textup{\tiny kernel}}'(\gamma,T):=2(\pi^2/2)^\gamma\int_\bbR|v+\sqrt{2T}|^{2\gamma}e^{-2v^2}dv$, the asserted bound follows.\\
Finally we observe that
$$ \frg_t(k_{\tiN},l_{\tiN}) = e^{-(2N)^2 t}\sum_{n=0}^{\infty}\frac{((2N)^2 t)^n}{n!}g_n(k,l),$$
where $g_n(k,l)$ is the probability that a discrete time symmetric random walk on $\llbracket 0,2N\rrbracket$, starting from $k$ and killed at $0$ and $2N$, reaches $l$ after $n$ jumps. Bound (iii) then easily follows.\cqfd
\end{proof}
From now on the $\rL^p$ norm is always implicitly taken under the measure $\bbQN_{\nuN}$. 
\begin{lemma}\label{LemmaHolderTime}
Fix $p\in [4,\infty)$ and $\gamma \in (0,\frac{1}{4}\wedge\frac{\upbeta_{\textup{\tiny init}}}{2})$. There exists $\tk_{\textup{\tiny time}}=\tk_{\textup{\tiny time}}(p,T,\gamma) > 0$ such that for all $0 \leq t \leq t' \leq T$ and all $N\geq 1$, under $\bbQN_{\nuN}$ we have
$$ \sup\limits_{l\in\llbracket 0,2N\rrbracket}\left\| \rmh_{t'}\big(l_{\tiN}\big)-\rmh_t\big(l_{\tiN}\big) \right\|_{p} \leq \tk_{\textup{\tiny time}}\Big( (t'-t)^\gamma + \frac{1}{(2N)^{\frac{1}{2}\wedge\upbeta_{\textup{\tiny init}}}}\Big).$$
\end{lemma}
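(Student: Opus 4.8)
The plan is to start from the mild formulation (\ref{EqMildDiscrete}), which expresses $\rmh_{t'}(l_{\tiN})-\rmh_t(l_{\tiN})$ as a sum of three pieces: the increment of the deterministic discrete heat flow of the initial datum, $A:=\sum_{k}\big(\frg_{t'}(k_{\tiN},l_{\tiN})-\frg_{t}(k_{\tiN},l_{\tiN})\big)\rmh_0(k_{\tiN})$; the increment of the stochastic convolution, $B:=N^{t'}_{t'}(l_{\tiN})-N^{t}_{t}(l_{\tiN})$; and the increment of the drift convolution, $C$. It suffices to bound $\|A\|_p$, $\|B\|_p$ and $\|C\|_p$ separately by the right-hand side of the claimed inequality. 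As a preliminary reduction, when $t'-t\le(2N)^{-2}$ one avoids the mild formulation altogether: $|\rmh_{t'}(l_{\tiN})-\rmh_t(l_{\tiN})|$ is at most $\tfrac{2}{\sqrt{2N}}$ times the number of jumps of $\cLN(l_{\tiN})+\cRN(l_{\tiN})$ on $(t,t']$, a Poisson variable of mean $\le1$, whose $\rL^p$ norm is $\le c_p(2N)^{-1/2}\le c_p(2N)^{-\frac12\wedge\upbeta_{\textup{\tiny init}}}$. So from now on we may assume $t'-t>(2N)^{-2}$.

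For the drift term $C$, (\ref{EqAsym}) gives $\tfrac{2}{\sqrt{2N}}\big|\pN(k_{\tiN})-\tfrac{(2N)^2}{2}\big|\le\tilde\sigma$, so that splitting the time-integral at $r=t$ bounds $|C|$ by $\tilde\sigma\big(\sum_k\int_0^t|\frg_{t'-r}-\frg_{t-r}|(k_{\tiN},l_{\tiN})\,dr+\sum_k\int_t^{t'}\frg_{t'-r}(k_{\tiN},l_{\tiN})\,dr\big)$. The second sum is $\le t'-t$ because $\sum_k\frg_s(k_{\tiN},l_{\tiN})\le1$; in the first, Lemma~\ref{LemmaBoundsHeatKernel}(ii) with exponent $\gamma$ yields $\sum_k|\frg_{t'-r}-\frg_{t-r}|(k_{\tiN},l_{\tiN})\le c(t'-t)^\gamma(t-r)^{-1/2-\gamma}$, which is integrable on $[0,t]$ since $\gamma<\tfrac12$, whence $|C|\le c_T(t'-t)^\gamma$ pointwise, so also in $\rL^p$.

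The martingale term $B$ is the crux. It is the terminal value of the martingale $Z_s=\sum_k\int_0^s\psi_r(k_{\tiN})\,dM_r(k_{\tiN})$ with weight $\psi_r(k_{\tiN})=\frg_{t'-r}(k_{\tiN},l_{\tiN})-\frg_{t-r}(k_{\tiN},l_{\tiN})$ for $r\le t$ and $\psi_r(k_{\tiN})=\frg_{t'-r}(k_{\tiN},l_{\tiN})$ for $r\in(t,t']$; the $M(k_{\tiN})$ are orthogonal with $d\llangle M(k_{\tiN})\rrangle_r\le8N\,dr$. Hence $\llangle Z\rrangle_{t'}\le8N\sum_k\int_0^{t'}\psi_r(k_{\tiN})^2\,dr$ is a \emph{deterministic} quantity, and estimating $\sum_k\psi_r^2\le(\sup_k|\psi_r|)(\sum_k|\psi_r|)$ and invoking Lemma~\ref{LemmaBoundsHeatKernel}(i) on $(t,t']$ and Lemma~\ref{LemmaBoundsHeatKernel}(ii) with exponent $2\gamma$ on $[0,t]$ gives $\llangle Z\rrangle_{t'}\le c_T(t'-t)^{2\gamma}$ --- here the restriction $\gamma<\tfrac14$ is used precisely to keep $2\gamma<\tfrac12$, so that $\int_0^t(t-r)^{-1/2-2\gamma}\,dr<\infty$. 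Applying the Burkholder--Davis--Gundy inequality twice through (\ref{Eq:BDGTwice}), the first term is $\le\tcBDG(p)\,c_T^{1/2}(t'-t)^\gamma$, and it remains to treat $\|[D]_{t'}\|_{p/4}$ where $D=[Z]-\llangle Z\rrangle$ is purely discontinuous with jumps $(\Delta Z)^2$. Since $|\Delta Z_\tau|=\tfrac{2}{\sqrt{2N}}|\psi_\tau(k_{\tiN})|$, one has $[D]_{t'}=\tfrac{16}{(2N)^2}\sum_k\sum_{\tau\le t'}\psi_\tau(k_{\tiN})^4$; bounding $\psi^4\le\psi^2\sup|\psi|^2$ and using the decay estimates of Lemma~\ref{LemmaBoundsHeatKernel} (which make $\sup_k|\psi_r(k_{\tiN})|$ and $\sum_k\psi_r(k_{\tiN})^2$ of order $(2N)^{-1}$ except on the short time-window where the kernel is concentrated), followed by a standard concentration bound for the resulting compensated Poisson integral, gives $\|[D]_{t'}\|_{p/4}=O((2N)^{-2})$, so that the second term is $O((2N)^{-1/2})\le c(2N)^{-\frac12\wedge\upbeta_{\textup{\tiny init}}}$. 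Thus $\|B\|_p\le\tk_{\textup{\tiny time}}\big((t'-t)^\gamma+(2N)^{-\frac12\wedge\upbeta_{\textup{\tiny init}}}\big)$.

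Finally, one estimates $A=u_{t'}(l_{\tiN})-u_t(l_{\tiN})$, where $u_s(\cdot):=\sum_k\frg_s(k_{\tiN},\cdot)\rmh_0(k_{\tiN})$ solves the discrete heat equation with datum $\rmh_0\in\cCNMun$; this is the time-modulus of a deterministic heat flow. Writing $\rmh_0(k_{\tiN})=(\rmh_0(k_{\tiN})-\rmh_0(l_{\tiN}))+\rmh_0(l_{\tiN})$, the first part is controlled by Lemma~\ref{LemmaBoundsHeatKernel}(ii) together with the $\upbeta_{\textup{\tiny init}}$-H\"older bound on $\nuN$ from Theorem~\ref{ThCVModel1} (using $\gamma<\upbeta_{\textup{\tiny init}}/2$, and playing it against the pathwise lattice bound $|\rmh_0(x)-\rmh_0(y)|\le\sqrt{2N}\,|x-y|$, which is what feeds the $(2N)^{-\frac12\wedge\upbeta_{\textup{\tiny init}}}$ contribution), while the second part pits the decrement of the survival probability $\sum_k\big(\frg_t-\frg_{t'}\big)(k_{\tiN},l_{\tiN})$ against the vanishing of $\rmh_0$ near $l_{\tiN}\in\{0,1\}$. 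This last step requires some care near $t=0$ and near the spatial boundary but introduces no new phenomena; collecting the three estimates completes the proof. I expect the main obstacle to be the martingale term: both squeezing the bracket down to order $(t'-t)^{2\gamma}$ with the sharp threshold $\gamma<\tfrac14$, and extracting the extra negative power of $N$ from the jump-correction $[D]$ through the heat-kernel decay of Lemma~\ref{LemmaBoundsHeatKernel}.
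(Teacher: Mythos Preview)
Your drift and martingale estimates follow essentially the same path as the paper's proof, and both are correct. The minor differences --- your preliminary reduction to $t'-t>(2N)^{-2}$, and your handling of $B$ as a single martingale $Z$ rather than the paper's split $N^{t'}_{t'}-N^{t'}_t$ plus $N^{t'}_t-N^t_t$ --- are cosmetic; the paper's partition of the time interval into pieces of length $(2N)^{-2}$ together with Lemma~\ref{LemmaBoundsHeatKernel}(iii) is what underlies the bound on $\|[D]\|_{p/4}$, and your sketch points in the same direction.

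The genuine gap is in your treatment of the initial-condition term $A$. Your decomposition $\rmh_0(k_{\tiN})=(\rmh_0(k_{\tiN})-\rmh_0(l_{\tiN}))+\rmh_0(l_{\tiN})$ followed by Lemma~\ref{LemmaBoundsHeatKernel}(ii) gives a pointwise bound of order $(2N)^{-1}t^{-1/2}\big(\tfrac{t'-t}{t}\big)^\gamma$ per site, so after summing over $k$ you are left with a factor $t^{-1/2-\gamma}$ that blows up as $t\downarrow0$; the second piece $\rmh_0(l_{\tiN})\sum_k(\frg_t-\frg_{t'})$ has the same defect. You flag this as needing ``some care near $t=0$'' and say it ``introduces no new phenomena'', but in fact this is exactly where the key idea is needed. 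The paper's fix is to extend $\rmh_0$ to a $2$-periodic odd function on all of $\bbZ$, so that the killed heat flow on $[0,1]$ becomes the restriction of the \emph{free} heat flow on $\bbZ$ with translation-invariant kernel $\frf$. The semigroup identity $\frf_{t'}=\frf_{t'-t}*\frf_t$ then rewrites $A$ as $\sum_i\frf_{t'-t}((l-i)_{\tiN})\big(u_t(i_{\tiN})-u_t(l_{\tiN})\big)$, where $u_t$ is the free heat flow of the extended datum; translation invariance of $\frf$ makes the inner difference inherit the $\upbeta_{\textup{\tiny init}}$-H\"older modulus of $\rmh_0$ \emph{uniformly in $t$}, and the exponential concentration bound $\sup_r\sum_i\frf_r((l-i)_{\tiN})\exp\big(|l-i|_{\tiN}/(\sqrt{r}\vee(2N)^{-1})\big)<\infty$ then converts this into $(t'-t)^{\upbeta_{\textup{\tiny init}}/2}+(2N)^{-\upbeta_{\textup{\tiny init}}}$, with no dependence on $t$ whatsoever. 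Neither ingredient --- the odd-periodic extension to pass from the killed kernel $\frg$ to the free kernel $\frf$, nor the use of the semigroup property to move the time increment entirely onto a convolution with $\frf_{t'-t}$ --- appears in your outline, and without them the bound on $A$ does not close.
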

\begin{proof}
Fix $N,l,t,t',\gamma$ as in the statement. Using (\ref{EqMildDiscrete}), we treat separately the initial condition, the asymmetric terms and the martingale term by writing
$$\left\| \rmh_{t'}\big(l_{\tiN}\big)-\rmh_t\big(l_{\tiN}\big) \right\|_{p} \leq \cI(l,t,t') + \cA(l,t,t') + \cN(l,t,t') $$
where
\begin{eqnarray*}
\cI(l,t,t') \!\!\!&:=&\!\!\! \left\| \sum_{k=0}^{2N}\big(\frg_{t'}(k_{\tiN},l_{\tiN})-\frg_{t}(k_{\tiN},l_{\tiN})\big)\rmh_0(k_{\tiN}) \right\|_p,\\
\cA(l,t,t') \!\!\!&:=&\!\!\! 2\,\sum_{k=0}^{2N}\frac{|\pN(k_{\tiN})\!-\frac{(2N)^2}{2}|}{\sqrt{2N}}\Bigg\|\Big(\!\int_0^{t'}\!\!\!\!\! \frg_{t'-r}(k_{\tiN},l_{\tiN})\tun_{\{\Delta\rmh_r(k_{\tiN})\ne 0\}}dr\\
&&\hspace{3.8cm}-\!\!\int_0^{t}\!\!\!\!\frg_{t-r}(k_{\tiN},l_{\tiN})\tun_{\{\Delta\rmh_r(k_{\tiN})\ne 0\}}dr\Big) \Bigg\|_p,\\
\cN(l,t,t') \!\!\!&:=&\!\!\! \left\|N_{t'}^{t'}(l_{\tiN})-N_{t}^{t}(l_{\tiN})\right\|_p.
\end{eqnarray*}
Below, we prove that each of the three terms separately satisfies the bound of the statement. We start with the initial condition. Observe that one can extend $\rmh_0$ into a $2$-periodic asymmetric function on the whole of $\bbR$. The solution to the discrete heat equation on $[0,1]$ starting from $\rmh_0$ is then the restriction of the solution to the discrete heat equation on $\bbR$ starting from $\rmh_0$. The fundamental solution $\frf$ for this discrete heat equation on $\bbR$ is translation invariant, and we can write
\begin{eqnarray*}
\cI(l,t,t') \!\!&:=&\!\! \left\| \sum_{k\in\bbZ}\big(\frf_{t'}(l_{\tiN}-k_{\tiN})-\frf_{t}(l_{\tiN}-k_{\tiN})\big)\rmh_0(k_{\tiN}) \right\|_p\\
\!\!&=&\!\! \left\| \sum_{i\in\bbZ}\frf_{t'-t}(l_{\tiN}-i_{\tiN})\big(\sum_{k\in\bbZ}\frf_{t}(i_{\tiN}-k_{\tiN})\rmh_0(k_{\tiN})-\sum_{k\in\bbZ}\frf_{t}(l_{\tiN}-k_{\tiN})\rmh_0(k_{\tiN}) \big)\right\|_p\\
\!\!&\leq&\!\! \tc_{\textup{\tiny init}}\sum_{i\in\bbZ}\frf_{t'-t}((l-i)_{\tiN})\big|(l-i)_{\tiN}\big|^{\upbeta_{\textup{\tiny init}}},
\end{eqnarray*}
where we have used the semigroup property in the second line. A simple calculation (or Proposition A.1 in~\cite{DemboTsai13}) ensures that $\sup_{r\geq 0}\sum_{i\in\bbZ}\frf_{r}((l-i)_{\tiN})\exp(\frac{|l-i|_{\tiN}}{\sqrt{r}\vee\frac{1}{2N}}) < \infty$.
Since $x^{\upbeta_{\textup{\tiny init}}} e^{-|x|}$ is bounded on $\bbR$, we deduce that
$$\sum_{i\in\bbZ}\frf_{t'-t}((l-i)_{\tiN})\big|(l-i)_{\tiN}\big|^{\upbeta_{\textup{\tiny init}}} \apprle |t'-t|^{\frac{\upbeta_{\textup{\tiny init}}}{2}} + \frac{1}{(2N)^{\upbeta_{\textup{\tiny init}}}}.$$
This implies the bound for the initial condition. The asymmetric term can be handled using Equation (\ref{EqAsym}) and Lemma \ref{LemmaBoundsHeatKernel} \rm{(i)} and \rm{(ii)}:
\begin{eqnarray*}
\cA(l,t,t') &\leq& 2\tilde{\sigma}\sum_{k=0}^{2N}\int_0^{t}\!\! \big|\frg_{t'-r}(k_{\tiN},l_{\tiN})-\frg_{t-r}(k_{\tiN},l_{\tiN})\big|dr\!+\,2\tilde{\sigma}\sum_{k=0}^{2N}\int_{t}^{t'}\!\!\!\frg_{t'-r}(k_{\tiN},l_{\tiN})dr\\
&\leq& 2\tilde{\sigma}\int_0^{t}\!\! \frac{\tc_{\textup{\tiny kernel}}'(\gamma,T)}{\sqrt{t-r}}\bigg(\frac{t'-t}{t-r}\bigg)^{\gamma}dr+2\tilde{\sigma}\int_{t}^{t'}\!\!\!\sqrt{\frac{2\pi}{t'-r}}\,dr\\
&\leq& \frac{4\tilde{\sigma}\, T\tc_{\textup{\tiny kernel}}'(\gamma,T)}{1-2\gamma}(t'-t)^{\gamma} + 4\tilde{\sigma}\sqrt{\pi(t'-t)}.
\end{eqnarray*}
This ensures the expected bound for the asymmetric term since $(t'-t)^{\frac{1}{2}} \leq (t'-t)^{\gamma}T^{\frac{1}{2}-\gamma}$.\\
We turn to the martingale term. We want to bound, for all $0 \leq t \leq t+\delta \leq T$, the $\rL^p$-norm of $N_{t+\delta}^{t+\delta}(l_{\tiN})-N_t^{t}(l_{\tiN})$. To that end, we split it into $N_{t+\delta}^{t+\delta}(l_{\tiN})-N_t^{t+\delta}(l_{\tiN})$ and $N_{t}^{t+\delta}(l_{\tiN})-N_t^{t}(l_{\tiN})$. To deal with the first term, we introduce
$$ \forall u\in[0,\delta],\;\; A_u^{t+\delta}(l) := \sum_{k=0}^{2N}\int_t^{t+u}\frg_{t+\delta-r}(k_{\tiN},l_{\tiN})dM_r(k_{\tiN}),$$
which is an $\cF_{t+u}$-martingale. We easily see that $A_\delta^{t+\delta}(l)=N_{t+\delta}^{t+\delta}(l_{\tiN})-N_t^{t+\delta}(l_{\tiN})$. We introduce $D_u^{t+\delta}(l):=\big[A^{t+\delta}(l)\big]_u-\left\llangle A^{t+\delta}(l)\right\rrangle_u$. Let us partition the interval $[0,\delta]$ into the subintervals $I_i:=[i(2N)^{-2},(i+1)(2N)^{-2}]$ for all $i=0,\ldots,\left\lfloor \delta(2N)^2\right\rfloor-1$ and $I_{\left\lfloor \delta(2N)^2\right\rfloor}:=[\left\lfloor \delta(2N)^2\right\rfloor(2N)^{-2},\delta]$. Observe that
\[ \left\|\cL^{\tiN}_{(i+1)(2N)^2}(k_{\tiN})-\cL^{\tiN}_{i(2N)^2}(k_{\tiN})+\cR^{\tiN}_{(i+1)(2N)^2}(k_{\tiN})-\cR^{\tiN}_{i(2N)^2}(k_{\tiN}) \right\|_p\]
is the $\rL^p$-norm of a Poisson random variable with mean $1$; let us denote this quantity by $a(p)$. Then we obtain
\begin{eqnarray*}
&&\left\| \big[D^{t+\delta}(l)\big]_\delta \right\|_p = \left\|\sum_{\tau \in (0,\delta]}\sum_{k=0}^{2N}\frg_{t+\delta-\tau}(k_{\tiN},l_{\tiN})^4\, \big(\rmh_{t+\tau}(k)-\rmh_{t+\tau-}(k_{\tiN})\big)^4\right\|_p\\
&&\leq\frac{2^4}{(2N)^2}\sum_{k=0}^{2N}\sum_{i=0}^{\left\lfloor \delta(2N)^2\right\rfloor}
\!\!\!\!\sup_{r\in I_i}\frg_{t+\delta-r}(k_{\tiN},l_{\tiN})^4\, a(p).
\end{eqnarray*}
Additionally, for every $i$ in the above sum we bound $\sum_k\sup_{r\in I_i}\frg_{t+\delta-r}(k_{\tiN},l_{\tiN})^4$ by
\[e^4 \sup_{k}\frg_{t+\delta-i(2N)^{-2}}(k_{\tiN},l_{\tiN})^3 \sum_k\frg_{t+\delta-i(2N)^{-2}}(k_{\tiN},l_{\tiN})\leq e^4 (1 \wedge\frac{2\pi}{(2N)^2(t+\delta-i(2N)^{-2})})^{3/2},\]
using Lemma \ref{LemmaBoundsHeatKernel} \rm{(i)} and \rm{(iii)}. Consequently
\[ \left\| \big[D^{t+\delta}(l)\big]_\delta \right\|_p \leq \frac{2^4}{(2N)^2} e^4 a(p) \bigg(1+\sum_{i=0}^{\left\lfloor \delta(2N)^2\right\rfloor-1} \Big(\frac{2\pi}{(2N)^2(t+\delta-i(2N)^{-2})}\Big)^{\frac{3}{2}}\bigg).\]
The r.h.s.~can be bounded by $(2N)^{-2}$ times a constant $d(p)$ that does not depend on $t,\delta$. Moreover, we use the $\bbQN_{\nuN}$-a.s.~bound $d\left\llangle M(k_{\tiN}) \right\rrangle_r \leq 8Ndr$ to write
\begin{eqnarray*}
\left\| \left\llangle A^{t+\delta}(l) \right\rrangle_\delta \right\|_p &=& \left\|\int_t^{t+\delta}\sum_{k=0}^{2N}\frg_{t+\delta-r}(k_{\tiN},l_{\tiN})^2\, d\left\llangle M(k_{\tiN}) \right\rrangle_r \right\|_p\\
&\leq& 8N \int_t^{t+\delta}\sqrt{\frac{2\pi}{(2N)^2(t+\delta-r)}}dr \leq 8\sqrt{2\pi\delta}.
\end{eqnarray*}
Thus, using (\ref{Eq:BDGTwice}), we obtain
\[ \left\| N_{t+\delta}^{t+\delta}(l_{\tiN})-N_t^{t+\delta}(l_{\tiN}) \right\|_p \leq 2\sqrt{2}\,\tcBDG(p)(2\pi\delta)^{\frac{1}{4}}+\frac{\tcBDG(p)\tcBDG(p/2)^{\frac{1}{2}}d(p/4)^{\frac{1}{4}}}{\sqrt{2N}}.\]
Consequently for all $0 \leq t \leq t' \leq T$ we have
\begin{eqnarray*}
\left\|N_{t'}^{t'}(l_{\tiN})-N_{t}^{t'}(l_{\tiN})\right\|_p\!\!\! &\leq&\!\!\!2\sqrt{2}\tcBDG(p)\big(2\pi(t'-t)\big)^{\frac{1}{4}} + \frac{\tcBDG(p)\tcBDG(p/2)^{\frac{1}{2}}d(p/4)^{\frac{1}{4}}}{\sqrt{2N}},
\end{eqnarray*}
which proves the bound for the first part of the martingale term. We now turn to the second part, $N_{t}^{t+\delta}(l_{\tiN})-N_t^{t}(l_{\tiN})$, of the martingale term. For every $0 \leq t \leq t +\delta \leq T$, we introduce $[0,t]\ni s\mapsto B_s^t(\delta,l):=N_{s}^{t+\delta}(l_{\tiN})-N_s^{t}(l_{\tiN})$ and $[0,t]\ni s\mapsto E_s^t(\delta,l):=\big[B^t(\delta,l)\big]_s-\left\langle B^t(\delta,l)\right\rangle_s$. Both are $\cF_s$-martingales. As above we subdivide $[0,t]$ into subintervals of length $(2N)^{-2}$ and we obtain
\begin{eqnarray*}
&&\left\| \big[E^t(\delta,l)\big]_t \right\|_p = \left\| \sum_{\tau\in[0,t]}\sum_{k=0}^{2N}\big(\frg_{t+\delta-\tau}(k_{\tiN},l_{\tiN})-\frg_{t-\tau}(k_{\tiN},l_{\tiN})\big)^4\big(\rmh_\tau(k_{\tiN})-\rmh_{\tau-}(k_{\tiN})\big)^4\right\|_p\\
&\leq& \frac{2^8}{(2N)^2} e^4 a(p)\Big( 2+\sum_{i=0}^{\lfloor t(2N)^2\rfloor -1}\Big(\frac{2\pi}{(2N)^2(t+\delta-i(2N)^{-2})}\Big)^{\frac{3}{2}}+\Big(\frac{2\pi}{(2N)^2(t-i(2N)^{-2})}\Big)^{\frac{3}{2}}\Big),
\end{eqnarray*}
where the r.h.s.~can be bounded by $(2N)^2$ times a constant, say $d'(p)$, that does not depend on $t,\delta$. Concerning the bracket of $B^t$, Lemma \ref{LemmaBoundsHeatKernel} \rm{(ii)} with $2\gamma\in(0,1/2)$ instead of $\gamma$ yields
\begin{eqnarray*}
\left\|\left\llangle B^t(\delta,l) \right\rrangle_t \right\|_p &=& \left\| \int_0^t\sum_{k=0}^{2N}\big(\frg_{t+\delta-r}(k_{\tiN},l_{\tiN})-\frg_{t-r}(k_{\tiN},l_{\tiN})\big)^2 d\left\llangle M(k_{\tiN}) \right\rrangle_r\right\|_p\\
&\leq& 16N\int_0^t\sup_{k}|\frg_{t+\delta-r}(k_{\tiN},l_{\tiN})-\frg_{t-r}(k_{\tiN},l_{\tiN})| dr\\
&\leq& 8\tc_{\textup{\tiny kernel}}'(2\gamma,T)\delta^{2\gamma} \int_0^t\frac{dr}{(t-r)^{\frac{1}{2}+2\gamma}} \leq \frac{16\,T^{\frac{1}{2}-2\gamma}\tc_{\textup{\tiny kernel}}'(2\gamma,T)\delta^{2\gamma}}{1-4\gamma}.
\end{eqnarray*}
Arguing as above, for all $0 \leq t \leq t' \leq T$ we have
\begin{eqnarray*}
\left\|N_{t}^{t'}(l)-N_{t}^{t}(l)\right\|_p\!\!\! &\leq&\!\!\! 4\tcBDG(p)(t'-t)^{\gamma}\sqrt{\frac{\tc_{\textup{\tiny kernel}}'(2\gamma,T)T^{\frac{1}{2}-2\gamma}}{1-4\gamma}}+\frac{\tcBDG(p)\tcBDG(p/2)^{\frac{1}{2}}d'(p/4)^{\frac{1}{4}}}{\sqrt{2N}},
\end{eqnarray*}
which proves the bound for the second part of the martingale term.\cqfd
\end{proof}
We now state a similar result for the space increments; the proof rests on the same arguments.
\begin{lemma}\label{LemmaHolderSpace}
Fix $p\in [4,\infty)$ and $\beta \in (0,\frac{1}{2}\wedge\upbeta_{\textup{\tiny init}})$. There exists $\tk_{\textup{\tiny space}}=\tk_{\textup{\tiny space}}(p,T,\beta) > 0$ such that for all $N\geq 1$ and all $x, y\in [0,1]$, under $\bbQN_{\nuN}$ we have
$$ \sup\limits_{t\in[0,T]}\left\| \rmh_{t}(x)-\rmh_{t}(y) \right\|_{p} \leq \tk_{\textup{\tiny space}}|x-y|^\beta .$$
\end{lemma}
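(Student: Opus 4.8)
\noindent\textbf{Proof plan for Lemma~\ref{LemmaHolderSpace}.} The plan is to mimic the proof of Lemma~\ref{LemmaHolderTime}, systematically replacing temporal increments of $\rmh$ and of the heat kernel $\frg$ by spatial ones. The first step is a reduction to lattice points. Every $h\in\cCNMun$ is affine with slope $\pm\sqrt{2N}$ on each cell $[k_{\tiN},(k+1)_{\tiN}]$, so $|\rmh_t(x)-\rmh_t(y)|\le\sqrt{2N}\,|x-y|$ whenever $|x-y|\le(2N)^{-1}$; since $\beta<\tfrac12$ this is $\le(2N)^{\beta-\frac12}|x-y|^\beta\le|x-y|^\beta$, which settles that regime deterministically. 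For $|x-y|>(2N)^{-1}$ I would round $x,y$ to the nearest lattice points $k_{\tiN},l_{\tiN}$ (distance $<(2N)^{-1}$), use the triangle inequality, control the two rounding errors by the same Lipschitz bound (each is $\lesssim(2N)^{-1/2}\le|x-y|^{1/2}\le|x-y|^\beta$), and thereby reduce to bounding $\|\rmh_t(k_{\tiN})-\rmh_t(l_{\tiN})\|_p$ with $|k_{\tiN}-l_{\tiN}|\asymp|x-y|\ge(2N)^{-1}$. The point of this reduction is that from now on $N^{-1}\lesssim|x-y|$, so \emph{any} factor $(2N)^{-a}$ with $a\ge\beta$ can be reabsorbed into $|x-y|^\beta$ — this is exactly what will dispose of the $N$-dependent error terms that are unavoidable because $\rmh$ jumps in discrete time.

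Then I would use the mild formulation (\ref{EqMildDiscrete}) at the two lattice points, subtract, and split $\|\rmh_t(k_{\tiN})-\rmh_t(l_{\tiN})\|_p\le\cI+\cA+\cN$ into the initial-condition term $\cI:=\bigl\|\sum_m(\frg_t(m_{\tiN},k_{\tiN})-\frg_t(m_{\tiN},l_{\tiN}))\rmh_0(m_{\tiN})\bigr\|_p$, the asymmetric drift term $\cA$, and the martingale term $\cN:=\bigl\|\sum_m\int_0^t(\frg_{t-r}(m_{\tiN},k_{\tiN})-\frg_{t-r}(m_{\tiN},l_{\tiN}))\,dM_r(m_{\tiN})\bigr\|_p$. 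All three rely on a spatial analogue of Lemma~\ref{LemmaBoundsHeatKernel}: for lattice $x,y$ and $s>0$, and any $\theta\in(0,1)$,
\[\sup_m\bigl|\frg_s(m_{\tiN},x)-\frg_s(m_{\tiN},y)\bigr|\;\lesssim\;\frac{1}{2N}\Bigl(\frac{1}{\sqrt s}\wedge\frac{|x-y|^{\theta}}{s^{(1+\theta)/2}}\Bigr),\qquad \sum_m\bigl|\frg_s(m_{\tiN},x)-\frg_s(m_{\tiN},y)\bigr|\;\lesssim\;1\wedge\frac{|x-y|}{\sqrt s},\]
which I would deduce from the integral representation of $\frg$ used in the proof of Lemma~\ref{LemmaBoundsHeatKernel} together with the standard Hölder and $\rL^1$-modulus-of-continuity estimates for the Gaussian density.

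Granting those kernel bounds, $\cA$ is the easy term: bound the indicator $\tun_{\{\Delta\rmh_r\ne0\}}$ by $1$, use (\ref{EqAsym}) to get $\cA\le2\tilde\sigma\int_0^t\sum_m|\frg_{t-r}(m_{\tiN},x)-\frg_{t-r}(m_{\tiN},y)|\,dr\lesssim\int_0^t\bigl(1\wedge|x-y|(t-r)^{-1/2}\bigr)dr\lesssim_T|x-y|\le|x-y|^\beta$. The term $\cI$ I would treat verbatim as the corresponding term in Lemma~\ref{LemmaHolderTime}: extend $\rmh_0$ to a $2$-periodic antisymmetric function, pass to the translation-invariant kernel $\frf$ on $\bbZ$, perform an index shift to reach $\sum_m\frf_t((k-m)_{\tiN})\bigl(\rmh_0(m_{\tiN})-\rmh_0((m+l-k)_{\tiN})\bigr)$, and invoke the Hölder hypothesis on $\nuN$ — using also the deterministic Lipschitz bound for lattice paths and the reduction $|x-y|\ge(2N)^{-1}$ — together with the Gaussian tail estimate $\sup_{r\ge0}\sum_m\frf_r((k-m)_{\tiN})e^{|k-m|_{\tiN}/(\sqrt r\vee(2N)^{-1})}<\infty$; this yields $\cI\lesssim|x-y|^\beta$.

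The martingale term $\cN$ is the main work. Here $\cN=\|A_t\|_p$ with $A_s:=\sum_m\int_0^s(\frg_{t-r}(m_{\tiN},x)-\frg_{t-r}(m_{\tiN},y))\,dM_r(m_{\tiN})$ an $\ccF_s$-martingale (this is cleaner than the time case, which forced a two-part $A,B$ decomposition). I would apply the Burkholder--Davis--Gundy inequality twice as in (\ref{Eq:BDGTwice}). For the predictable bracket, $d\llangle M(m_{\tiN})\rrangle_r\le 8N\,dr$ and the two kernel bounds above with $\theta$ chosen in $[2\beta,1)$ (possible as $\beta<\tfrac12$) give $\|\llangle A\rrangle_t\|_{p/2}\le8N\int_0^t\sum_m(\frg_{t-r}(m_{\tiN},x)-\frg_{t-r}(m_{\tiN},y))^2dr\lesssim|x-y|^{\theta}\int_0^t(t-r)^{-(1+\theta)/2}dr\lesssim_T|x-y|^{\theta}$, so the first BDG contribution is $\lesssim|x-y|^{\theta/2}\le|x-y|^\beta$. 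For the correction $[D]_t$ with $D:=[A]-\llangle A\rrangle$, I would partition $[0,t]$ into intervals of length $(2N)^{-2}$ exactly as in Lemma~\ref{LemmaHolderTime} (each contributing a mean-$1$ Poisson number of jumps, with $\sup_{r\in I_i}|\frg_{t-r}(m_{\tiN},x)-\frg_{t-r}(m_{\tiN},y)|$ controlled via Lemma~\ref{LemmaBoundsHeatKernel}(iii) up to a factor $e^4$), obtaining $\|[D]_t\|_{p/4}^{1/4}\lesssim_T(2N)^{-1/2}\le|x-y|^{1/2}\le|x-y|^\beta$ by the reduction. Combining the three estimates proves the lemma. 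The hardest parts will be (i) establishing the spatial kernel-increment bounds with the exponents needed above, and (ii) the bookkeeping in the double-BDG step so that the output is genuinely of order $|x-y|^\beta$; the conceptual key throughout is the reduction to $|x-y|\ge(2N)^{-1}$, which converts the inevitable $N$-decaying error terms into harmless powers of $|x-y|$.
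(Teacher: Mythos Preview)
Your proposal is correct and is exactly what the paper intends: it explicitly states that ``the proof rests on the same arguments'' as Lemma~\ref{LemmaHolderTime}, and your plan carries out precisely that program, replacing temporal by spatial increments of the mild formulation~(\ref{EqMildDiscrete}) and of the kernel $\frg$. Your reduction to lattice points with $|x-y|\ge(2N)^{-1}$, the spatial kernel-increment estimates derived from~(\ref{EqDiscreteHeatKernel}), and the double-BDG treatment of the martingale term via~(\ref{Eq:BDGTwice}) all mirror the corresponding steps in the proof of Lemma~\ref{LemmaHolderTime}; in fact you supply more detail than the paper does.
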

\noindent Since $\rmh$ is not continuous in time, we consider its interpolation $\brmh$ defined in (\ref{EqInterpol}). The proof of the next result is an easy adaptation of that of Lemma \ref{Lemma:BoundUnifHatBarh}.
\begin{lemma}\label{LemmaBarTime}
Fix $p\in [4,\infty)$ and $\gamma \in (0,\frac{1}{4}\wedge\frac{\upbeta_{\textup{\tiny init}}}{2})$. There exists $\bar{\tk}_{\textup{\tiny time}}=\bar{\tk}_{\textup{\tiny time}}(p,T,\gamma) > 0$ such that for all $0 \leq t \leq t' \leq T$ and all $N\geq 1$, under $\bbQN_{\nuN}$ we have
$$ \sup\limits_{x\in[0,1]}\left\| \brmh_{t'}(x)-\brmh_t(x) \right\|_{p} \leq \bar{\tk}_{\textup{\tiny time}}(t'-t)^\gamma.$$
\end{lemma}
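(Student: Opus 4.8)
The plan is to transport the two-regime argument of Lemma~\ref{Lemma:BoundUnifHatBarh} from the Fourier coefficients $\Hbrmh_\cdot(n)$ to $\brmh_\cdot(x)$ itself. First I would note that for each fixed $s$ the map $x\mapsto\brmh_s(x)$ is affine on every cell $[k_{\tiN},(k+1)_{\tiN}]$, hence so is $x\mapsto\brmh_{t'}(x)-\brmh_t(x)$; therefore $\sup_{x\in[0,1]}|\brmh_{t'}(x)-\brmh_t(x)|=\max_{l\in\lbr 0,2N\rbr}|\brmh_{t'}(l_{\tiN})-\brmh_t(l_{\tiN})|$, and it suffices to bound $\|\brmh_{t'}(l_{\tiN})-\brmh_t(l_{\tiN})\|_p$ uniformly in $l$ and $N$.

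In the regime $t'-t<(2N)^{-2}$ I would use that, by~(\ref{EqInterpol}), $r\mapsto\brmh_r(l_{\tiN})$ is continuous and piecewise affine, with slope on the cell $[i(2N)^{-2},(i+1)(2N)^{-2}]$ equal to $(2N)^2(\rmh_{(i+1)(2N)^{-2}}(l_{\tiN})-\rmh_{i(2N)^{-2}}(l_{\tiN}))$, of modulus at most $2(2N)^{3/2}J^{(i)}_l$, where $J^{(i)}_l$ is the (mean-one Poisson) number of jumps of $\cLN(l_{\tiN})+\cRN(l_{\tiN})$ on that cell. Since $[t,t']$ then meets at most two consecutive cells, integrating the slope bounds the increment $\bbQN_{\nuN}$-a.s.\ by $2(2N)^{3/2}(t'-t)(J^{(i)}_l+J^{(i+1)}_l)$ for the relevant $i$. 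The elementary facts that $t'-t<(2N)^{-2}$ forces $(2N)^{3/2}(t'-t)<(t'-t)^{1/4}$, and that $(t'-t)^{1/4}\le T^{1/4-\gamma}(t'-t)^\gamma$ since $\gamma<1/4$ and $t'-t\le T$, together with the boundedness of $\|J^{(i)}_l+J^{(i+1)}_l\|_p$, then give a bound of the form $c_1(p,T,\gamma)(t'-t)^\gamma$.

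In the regime $t'-t\ge(2N)^{-2}$ I would insert the dyadic times $\underline t:=\lfloor t(2N)^2\rfloor(2N)^{-2}$ and $\underline t{}':=\lfloor t'(2N)^2\rfloor(2N)^{-2}$, at which $\brmh$ and $\rmh$ coincide, and split $\brmh_{t'}(l_{\tiN})-\brmh_t(l_{\tiN})$ as $(\brmh_{t'}(l_{\tiN})-\brmh_{\underline t{}'}(l_{\tiN}))+(\rmh_{\underline t{}'}(l_{\tiN})-\rmh_{\underline t}(l_{\tiN}))+(\brmh_{\underline t}(l_{\tiN})-\brmh_t(l_{\tiN}))$. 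The two outer terms span time gaps $<(2N)^{-2}$ (and are $\le t'-t$), so they are handled by the previous regime. For the middle term I would apply Lemma~\ref{LemmaHolderTime}, and then use $|\lfloor t'(2N)^2\rfloor-\lfloor t(2N)^2\rfloor|\le 3(t'-t)(2N)^2$ (valid because $t'-t\ge(2N)^{-2}$) to turn its $(\underline t{}'-\underline t)^\gamma$ contribution into $3^\gamma(t'-t)^\gamma$. The one point requiring genuine care is the $N$-dependent error term $(2N)^{-(1/2\wedge\upbeta_{\textup{\tiny init}})}$ coming out of Lemma~\ref{LemmaHolderTime}: in this regime $(2N)^{-1}\le(t'-t)^{1/2}$, so that term is at most $(t'-t)^{1/4\wedge\upbeta_{\textup{\tiny init}}/2}\le T^{(1/4\wedge\upbeta_{\textup{\tiny init}}/2)-\gamma}(t'-t)^\gamma$ since $\gamma<\tfrac14\wedge\tfrac{\upbeta_{\textup{\tiny init}}}{2}$, and is thus absorbed. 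Combining the two regimes yields the assertion with $\bar{\tk}_{\textup{\tiny time}}=\bar{\tk}_{\textup{\tiny time}}(p,T,\gamma)$.

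The argument is otherwise routine; the main obstacle is purely bookkeeping — getting the sub-$(2N)^{-2}$ increment right when $t$ and $t'$ straddle a cell boundary, and noticing that the $N$-dependent remainder of Lemma~\ref{LemmaHolderTime} is controlled precisely because that lemma is only invoked once $t'-t\ge(2N)^{-2}$, which is exactly why no extra $N$-dependent term survives in $\brmh$ even though one is present for $\rmh$.
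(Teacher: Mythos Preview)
Your proposal is correct and follows precisely the two-regime argument the paper has in mind (indeed, the paper's omitted proof---which exists in commented-out form in the source---does exactly this: a Poisson slope bound when $t'-t<(2N)^{-2}$, and for $t'-t\ge(2N)^{-2}$ the three-term split via the floor times, invoking Lemma~\ref{LemmaHolderTime} on the middle piece and absorbing its $(2N)^{-(1/2\wedge\upbeta_{\textup{\tiny init}})}$ remainder via $(2N)^{-1}\le(t'-t)^{1/2}$). The only cosmetic difference is that you first reduce to lattice points $l_{\tiN}$ by piecewise linearity, whereas the paper works at a general $x$ and applies Lemma~\ref{LemmaHolderTime} at the two neighbouring lattice sites; both are equivalent.
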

\noindent\textit{Proof of Proposition \ref{PropHolderSpaceTimeBar}.} Fix $N\geq 1$, $\gamma\in(0,\frac{1}{4}\wedge\frac{\upbeta_{\textup{\tiny init}}}{2})$ and $p\in(\frac{2}{\gamma},\infty)$. For all $0 \leq t \leq t' \leq T$ and $x,y \in [0,1]$ we have
\begin{eqnarray*}
\left\|\brmh_t(x)-\brmh_{t'}(y) \right\|_p \leq \left\|\brmh_t(x)-\brmh_t(y) \right\|_p+\left\|\brmh_t(y)-\brmh_{t'}(y) \right\|_p.
\end{eqnarray*}
Using Lemma \ref{LemmaHolderSpace}, we bound the first term on the right
\begin{eqnarray*}
\left\|\brmh_t(x)-\brmh_t(y) \right\|_p &\leq& \left\|\rmh_{\frac{\lfloor t(2N)^2\rfloor}{(2N)^2}}(x)-\rmh_{\frac{\lfloor t(2N)^2\rfloor}{(2N)^2}}(y) \right\|_p + \left\|\rmh_{\frac{\lfloor t(2N)^2\rfloor+1}{(2N)^2}}(x)-\rmh_{\frac{\lfloor t(2N)^2\rfloor+1}{(2N)^2}}(y) \right\|_p\\
&\leq& 2\,\tk_{\textup{\tiny space}}|x-y|^\gamma
\end{eqnarray*}
while the second term on the right can be dealt with using Lemma \ref{LemmaBarTime}. Consequently we obtain
\begin{eqnarray*}
\forall x,y\in[0,1], \forall t,t'\in[0,T],\;\;\;\left\|\brmh_t(x)-\brmh_{t'}(y) \right\|_p &\leq& 2\,\tk_{\textup{\tiny space}}|x-y|^\gamma + \bar{\tk}_{\textup{\tiny time}}|t'-t|^\gamma\\
&\leq& \big(2\,\tk_{\textup{\tiny space}} + \bar{\tk}_{\textup{\tiny time}}\big)\big(|x-y|+|t'-t|\big)^\gamma.
\end{eqnarray*}
Using Kolmogorov's Continuity Theorem, we obtain the existence of a modification of $\brmh$ satisfying the statement of the proposition for all $a \in (0,\frac{p\gamma-2}{p})=(0,\gamma -\frac{2}{p})$. Notice that $\brmh$ is already continuous in the variable $(x,t)$ (it is the interpolation of $\rmh$ taken at the values $x=l/2N,t=k/(2N)^2$ for all integers $l,k$), so it coincides with its modification.\cqfd

\paragraph{Acknowledgements.} We are grateful to Jon Warren for mentioning the existence of the watermelon, to Jeremy Quastel for pointing out Varadhan's lecture notes and to two anonymous referees for their careful reading of this article. C.L.~would also like to thank Lorenzo Zambotti for a fruitful discussion on this work and Julien Reygner for several stimulating discussions. This work was initiated during a visit of C.L.~to the Department of Statistics in Oxford in 2012, this visit was partially funded by the Fondation Sciences Math\'ematiques de Paris. A.M.E.~was supported in part by EPSRC grant EP/I01361X/1.


\begin{thebibliography}{37}

\bibitem{AmbrosioSavareZambotti09}
L.~Ambrosio, G.~Savar{\'e} and L.~Zambotti.
\newblock Existence and stability for {F}okker-{P}lanck equations with
              log-concave reference measure.
\newblock {\em Probab. Theory Related Fields}, 145(3-4):517--564, 2009.

\bibitem{BertiniGiacomin97}
L.~Bertini and G.~Giacomin.
\newblock Stochastic {B}urgers and {KPZ} equations from particle systems.
\newblock {\em Comm. Math. Phys.}, 183(3):571--607, 1997.

\bibitem{Billingsley_CVPM}
P.~Billingsley.
\newblock {\em Convergence of probability measures}.
\newblock Wiley Series in Probability and Statistics: Probability and
  Statistics. John Wiley \& Sons Inc., New York, second edition, 1999.

\bibitem{CaputoMartinelliToninelli08}
P.~Caputo, F.~Martinelli and F.~Toninelli.
\newblock On the approach to equilibrium for a polymer with adsorption and repulsion.
\newblock {\em Electronic Journal of Probability}, vol.~13:213--258, 2008.

\bibitem{CaravennaDeuschel08}
F.~Caravenna and J.~D.~Deuschel.
\newblock Pinning and wetting transition for {$(1+1)$}-dimensional fields with Laplacian interaction.
\newblock {\em Ann. Probab.}, 36(6):2388--2433, 2008.

\bibitem{CaravennaDeuschel09}
F.~Caravenna and J.~D.~Deuschel.
\newblock Scaling limits of {$(1+1)$}-dimensional pinning models with
              {L}aplacian interaction.
\newblock {\em Ann. Probab.}, 37(3):903--945, 2009.

\bibitem{DalMueZam06}
R.~C.~Dalang, C.~Mueller, and L.~Zambotti.
\newblock {Hitting properties of parabolic s.p.d.e.'s with reflection}.
\newblock {\em Ann. Probab.}, 34(4):1423--1450, 2006.

\bibitem{DaPratoZabczykBook}
G.~Da Prato and J.~Zabczyk.
\newblock {\em Stochastic equations in infinite dimensions}, volume 44 of
  {\em Encyclopedia of Mathematics and its Applications}.
\newblock Cambridge University Press, Cambridge, 1992.

\bibitem{DebusscheZambotti07}
A.~Debussche and L.~Zambotti.
\newblock Conservative stochastic {C}ahn-{H}illiard equation with reflection.
\newblock {\em Ann. Probab.}, 35(5):1706--1739, 2007.

\bibitem{DeMasiPresuttiScacciatelli89}
A.~De~Masi, E.~Presutti, and E.~Scacciatelli.
\newblock The weakly asymmetric simple exclusion process.
\newblock {\em Ann. Inst. H. Poincar\'e Probab. Statist.}, 25(1):1--38, 1989.

\bibitem{DemboTsai13}
A.~Dembo and L.~-C.~Tsai.
\newblock Weakly asymmetric non-simple exlcusion process and the KPZ equation.
\newblock {\em arXiv:1302.5760}, 2013.

\bibitem{Hitchhiker12}
E.~Di~Nezza, G.~Palatucci, and E.~Valdinoci.
\newblock Hitchhiker's guide to the fractional {S}obolev spaces.
\newblock {\em Bull. Sci. Math.}, 136(5):521--573, 2012.

\bibitem{DunlopFerrariFontes02}
F.~M.~Dunlop, P.~A.~Ferrari and L.~R.~G.~Fontes.
\newblock A dynamic one-dimensional interface interacting with a wall.
\newblock {\em J. Statist. Phys.}, 107(3-4):705--727, 2002.

\bibitem{Dyson62}
F.~J.~Dyson.
\newblock A {B}rownian-motion model for the eigenvalues of a random matrix.
\newblock {\em J. Mathematical Phys.}, 3:1191--1198, 1962.

\bibitem{Funaki05}
T.~Funaki.
\newblock {\em Stochastic interface models}, volume 1869 of
  {\em Lecture Notes in Mathematics}.
\newblock Springer, Berlin, 2005.

\bibitem{FunaOll01}
T.~Funaki and S.~Olla.
\newblock Fluctuations for {$\nabla\varphi$} interface model on a wall.
\newblock {\em Stochastic Process. Appl.}, 94(1):1--27, 2001.

\bibitem{FunakiSasada10}
T.~Funaki and M.~Sasada.
\newblock Hydrodynamic limit for an evolutional model of two-dimensional
  {Y}oung diagrams.
\newblock {\em Comm. Math. Phys.}, 299(2):335--363, 2010.

\bibitem{FunakiSasadaSauerXie13}
T.~Funaki, M.~Sasada, M.~Sauer, and B.~Xie.
\newblock Fluctuations in an evolutional model of two-dimensional {Y}oung
  diagrams.
\newblock {\em Stochastic Process. Appl.}, 123(4):1229--1275, 2013.

\bibitem{Gartner88}
J.~G{\"a}rtner.
\newblock Convergence towards {B}urgers' equation and propagation of chaos for
  weakly asymmetric exclusion processes.
\newblock {\em Stochastic Process. Appl.}, 27(2):233--260, 1988.

\bibitem{GiacominOllaSpohn01}
G.~Giacomin, S.~Olla and H.~Spohn.
\newblock Equilibrium fluctuations for {$\nabla\varphi$} interface model.
\newblock {\em Ann. Probab.}, 29(3):1138--1172, 2001.

\bibitem{Gillet03}
F.~Gillet.
\newblock Asymptotic behaviour of watermelons.
\newblock {\em arXiv:0307204}, 2003.

\bibitem{JanowskyLebowitz92}
S.~Janowsky and J.~Lebowitz.
\newblock Finite-size effects and shock fluctuations in the asymmetric
  simple-exclusion process.
\newblock {\em Physical Review A}, 45(2):618--625, 1992.

\bibitem{JacodShiryaev}
J.~Jacod and A.~N.~Shiryaev.
\newblock {\em Limit theorems for stochastic processes}, volume 288 of {\em
  Grundlehren der Mathematischen Wissenschaften [Fundamental Principles of
  Mathematical Sciences]}.
\newblock Springer-Verlag, Berlin, second edition, 2003.

\bibitem{Kaigh76}
W.~D.~Kaigh.
\newblock An invariance principle for random walk conditioned by a late return
  to zero.
\newblock {\em Ann. Probab.}, 4(1):115--121, 1976.

\bibitem{KipnisLandimBook}
C.~Kipnis and C.~Landim.
\newblock {\em Scaling limits of interacting particle systems}, volume 320 of
  {\em Grundlehren der Mathematischen Wissenschaften [Fundamental Principles of
  Mathematical Sciences]}.
\newblock Springer-Verlag, Berlin, 1999.

\bibitem{KhorunzhiyMarckert09}
O.~Khorunzhiy and J.~-F.~Marckert.
\newblock Uniform bounds for exponential moment of maximum of a {D}yck path.
\newblock {\em Electron. Commun. Probab.}, 14:327--333, 2009.

\bibitem{KOV89}
C.~Kipnis, S.~Olla, and S.~R.~S. Varadhan.
\newblock Hydrodynamics and large deviation for simple exclusion processes.
\newblock {\em Comm. Pure Appl. Math.}, 42(2):115--137, 1989.

\bibitem{Lacoin13}
H.~Lacoin.
\newblock The scaling limit of polymer pinning dynamics and a one dimensional
  Stefan freezing problem.
\newblock {\em Comm. Math. Phys.}, 331(1):21--66, 2014.

\bibitem{Liggett68}
T.~M.~Liggett.
\newblock An invariance principle for conditioned sums of independent random
  variables.
\newblock {\em J. Math. Mech.}, 18:559--570, 1968.

\bibitem{LLP80}
E.~Lenglart, D.~L{\'e}pingle and M.~Pratelli.
\newblock Pr\'esentation unifi\'ee de certaines in\'egalit\'es de la th\'eorie
  des martingales.
\newblock In {\em Seminar on {P}robability, {XIV} ({P}aris, 1978/1979)
  ({F}rench)}, volume 784 of {\em Lecture Notes in Math.}, pages 26--52.
  Springer, Berlin, 1980.

\bibitem{NualartPardoux92}
D.~Nualart and {\'E}.~Pardoux.
\newblock White noise driven quasilinear {SPDE}s with reflection.
\newblock {\em Probab. Theory Related Fields}, 93(1):77--89, 1992.

\bibitem{RevuzYor}
D.~Revuz and M.~Yor.
\newblock {\em Continuous martingales and Brownian motion}, volume 293 of {\em
  Grundlehren der Mathematischen Wissenschaften [Fundamental Principles of
  Mathematical Sciences]}.
\newblock Springer-Verlag, Berlin, 3rd edition, 1999.

\bibitem{Spitzer76}
F.~Spitzer.
\newblock {\em Principles of random walk}.
\newblock Springer-Verlag, New York, second edition, 1976.
\newblock Graduate Texts in Mathematics, Vol. 34.

\bibitem{Triebel78}
H.~Triebel.
\newblock {\em Interpolation theory, function spaces, differential operators},
  volume~18 of {\em North-Holland Mathematical Library}.
\newblock North-Holland Publishing Co., Amsterdam, 1978.

\bibitem{VaradhanLectureNotes}
S.~R.~S. Varadhan.
\newblock Lectures on hydrodynamic scaling.
\newblock In {\em Hydrodynamic limits and related topics ({T}oronto, {ON},
  1998)}, volume~27 of {\em Fields Inst. Commun.}, pages 3--40. Amer. Math.
  Soc., Providence, RI, 2000.

\bibitem{Vervaat79}
W.~Vervaat.
\newblock A relation between {B}rownian bridge and {B}rownian excursion.
\newblock {\em Ann. Probab.}, 7(1):143--149, 1979.

\bibitem{XuZhang09}
T.~Xu and T.~Zhang.
\newblock {White noise driven SPDEs with reflection: existence, uniqueness and
  large deviation principles}.
\newblock {\em Stochastic Process. Appl.},
  119(10):3453--3470, 2009.

\bibitem{Zambotti01}
L.~Zambotti.
\newblock A reflected stochastic heat equation as symmetric dynamics with
  respect to the 3-d {B}essel bridge.
\newblock {\em J. Funct. Anal.}, 180(1):195--209, 2001.

\end{thebibliography}
\end{document}